\newtheorem{theorem}{Theorem}
\newtheorem{proposition}{Proposition}
\newtheorem{lemma}{Lemma}
\newtheorem{corollary}{Corollary}
\newtheorem{assumption}{Assumption}
\newtheorem{remark}{Remark}
\crefname{assumption}{Assumption}{Assumptions}
\newcommand{\params}{\phi}       
\newcommand{\TrueSize}{t}            
\newcommand{\size}{s}       
\newcommand{\spars}{\sigma} 
\newcommand{\PoiPr}{M}           
\newcommand{\PPlab}{\theta}      
\newcommand{\PPlat}{\vartheta}   
\newcommand{\nedg}{D_{\TrueSize}^{\star}}  
\renewcommand{\deg}[1]{D_{\TrueSize,#1}}        
\newcommand{\nver}{N_{\TrueSize}}          
\newcommand{\ndeg}[1]{N_{\TrueSize,#1}}      
\newcommand{\nedgsq}{D_{\TrueSize}^{\star 2}}
\newcommand{\tiltstab}{G_{\params}}                 
\newcommand{\LTS}{\psi}                             
\newcommand{\loglik}{L_{\TrueSize}}                 
\newcommand{\qloglik}{\mathcal{Q}_{\TrueSize}}      
\newcommand{\sparsfunc}{\mathcal{C}_{\TrueSize}}
\def\steepfunc{\@ifnextchar[{\@SteepWith}{\@SteepWithout}}
\def\@SteepWith[#1]{F_{#1}}
\def\@SteepWithout{F_{\params}}
\newcommand{\KLparams}{\params_{*}}
\newcommand{\KLsize}{\size_{*,t}}
\newcommand{\KLspars}{\spars_{*}}
\newcommand{\KLtau}{\tau_{*}}
\newcommand{\MLEparams}{\hat\params_{\TrueSize}}
\newcommand{\qMLEparams}{\hat\params_{\TrueSize}}
\newcommand{\qMLEsize}{\hat\size_{\TrueSize}}
\newcommand{\qMLEspars}{\hat\spars_{\TrueSize}}
\newcommand{\qMLEtau}{\hat\tau_{\TrueSize}}
\newcommand{\qMLEeps}{\hat{\varepsilon}_t}
\newcommand{\qMLEu}{\hat{u}_t}
\newcommand{\EE}{\mathbb{E}}            
\newcommand{\PP}{\mathbb{P}}            
\newcommand{\var}{\mathrm{var}}
\newcommand{\poiDist}{\mathrm{Poisson}} 
\newcommand{\gammaDist}{\mathrm{Gamma}} 
\renewcommand{\digamma}{\varphi}
\global\long\def\Reals{\mathbb{R}}
\global\long\def\Complex{\mathbb{C}}
\global\long\def\Nats{\mathbb{N}}
\global\long\def\Ints{\mathbb{Z}}
\global\long\def\NNInts{\Ints_{+}}
\global\long\def\NNReals{\Reals_{+}}
\global\long\def\intd{\mathrm{d}} 
\newcommand{\Ind}{\boldsymbol{1}}
\newcommand{\1}{\boldsymbol{1}}
\providecommand\given{} 
\newcommand\SetSymbol[1][]{
  \nonscript\,#1:\nonscript\,\mathopen{}\allowbreak}
\DeclarePairedDelimiterX\Set[1]{\lbrace}{\rbrace}%
{ \renewcommand\given{\SetSymbol[]} #1 }
\DeclareMathOperator*{\argmax}{arg\,max}
\DeclareMathOperator{\Poissonpdf}{Poisson}
\newcommand{\multigraphon}{W_m}
\newcommand{\oW}{\overline W}
\newcommand{\tW}{\widetilde W}
\newcommand{\tmu}{\widetilde \mu}
\renewcommand{\1}[1]{\mathds{1}_{#1}}
\newcommand{\Tr}{\mathrm{Tr}}
\begin{document}

\defaultbibliography{bnpnetwork}


\begin{bibunit}[imsart-number]
  \startcontents[main]%


\begin{frontmatter}
\title{Asymptotic Analysis of Statistical Estimators related to MultiGraphex Processes under Misspecification}
\runtitle{Asymptotic Analysis of Statistical Estimators related to MultiGraphex Processes}

\begin{aug}
\author[A]{\fnms{Zacharie} \snm{Naulet}\ead[label=e1]{zacharie.naulet@universite-paris-saclay.fr}},
\author[B]{\fnms{Judith} \snm{Rousseau}\ead[label=e2,mark]{judith.rousseau@stats.ox.ac.uk}}
\and
\author[B]{\fnms{François} \snm{Caron}\ead[label=e3,mark]{caron@stats.ox.ac.uk}}
\address[A]{Université Paris-Saclay, Laboratoire de mathématiques d’Orsay, 91405, Orsay, France.
\printead{e1}}

\address[B]{University of Oxford, Department of Statistics, Oxford, UK.
\printead{e2,e3}}
\end{aug}

\begin{abstract}
  This article studies the asymptotic properties of Bayesian or frequentist
estimators of a vector of parameters related to structural properties of sequences of graphs. The estimators studied originate from a particular class of graphex model introduced by Caron and Fox \cite{Caron2017}. The analysis is however performed here under very weak assumptions on the underlying data generating process, which may be different from the model of \cite{Caron2017} or from a graphex model. In particular, we consider generic sparse graph models, with unbounded degree, whose degree distribution satisfies some assumptions. We show that one can relate the limit of the estimator of one of the parameters to the sparsity
constant of the true graph generating process. When taking a Bayesian approach, we also show that the posterior distribution is asymptotically normal. We discuss situations where classical random graphs models such as configuration models,
sparse graphon models, edge exchangeable models or graphon processes satisfy our assumptions. 
\end{abstract}

\begin{keyword}
\kwd{Bayesian Nonparametrics}
\kwd{Networks}
\kwd{Random Graphs}
\kwd{Sparsity}
\kwd{Caron and Fox model}
\kwd{Inference}
\kwd{Maximum Likelihood Estimation}
\kwd{Bayesian Estimation}
\kwd{Misspecification}
\end{keyword}

\end{frontmatter}

\section{Introduction}
\label{sec:intro}

\par The increasing availability of large graph-valued data has led to the
development of a variety of statistical network models, capturing different
features of real-world graphs, together with computational methods to estimate their parameters and, in some cases, the theoretical analysis of the statistical properties of these procedures. In this
paper we are interested in the latter, i.e. the analysis of the statistical
properties of inference procedures in networks, with a particular focus on the class of
sparse multigraphs with unbounded average degree: that is graphs such that the
number of edges grows sub-quadratically, but super-linearly, with the number of
nodes.

\par Probabilistic sparse network models with unbounded average degree include
sparse
graphons~\cite{Bollobas2009,BickelChen2009,BorgsChayesCohnZhao2014a,BorgsChayesCohnZhao2014b},
models based on exchangeable random measures (also known as graphex or graphon
processes)~\cite{Caron2017,Veitch2015,Borgs2018}, some sparse configuration
models~\cite{Borgs2019}, and edge-exchangeable
graphs~\cite{Crane2018,Cai2016,Williamson2016}. While these models can all
capture sparsity, they exhibit very different structural properties. For
instance, in sparse graphon models, the clustering coefficient, a measure of the
transitivity of the graph, vanishes as the size of the graph increases.
Additionally, the proportion of nodes with a given degree $j\geq 1$ goes to zero
for any $j$. Other models, such as graphex models, can exhibit a very different
behaviour, as shown by~\cite{caron:rousseau:18}. For some models within this
class, the clustering coefficient converges to a strictly positive constant;
moreover, the proportion of nodes of a given degree $j$ tends to a constant
$p_j>0$, which behaves like a power-law for large $j$.

\par Most of the literature on statistical properties of estimators in sparse
graphs with unbounded degrees has focused on the specific class of sparse
graphon models, with a particular interest in sparse stochastic
blockmodels~\cite{BickelChen2009,BickelChenLevina2011,Wolfe2013,Borgs2015,Gao2016,Klopp2017,klopp2019optimal}; see \cite{Gao2021} for a recent survey on optimal procedures for (sparse) graphon estimation.
Little work has been done besides sparse graphon models. Notable exceptions are
the work of \cite{Veitch2019} which considers consistent estimators of the
(generalised) graphon function, and \cite{Naulet2017} who propose a novel
estimator for the power-law exponent in graphex-based models and analyse some of
its properties. Both of these work consider the well-specified case, where the
graphs are sampled from a graphex process.

\par \cite{Caron2017} introduced and studied a specific model of sparse random
multigraph based on exchangeable random measures. This model, which belongs to
the class of multigraphex and has connections to sparse configuration
models~\cite{Borgs2019}, is parameterized by three parameters
 $ (\sigma,\tau,s)\in (-\infty,1)\times(0,\infty)^2 \eqqcolon \mathcal{S}$.
Although simple, the model captures important properties of real-world graphs
via its interpretable parameters. The parameter $\sigma$ controls both the sparsity
rate and the exponent of the asymptotic degree distribution, larger values
corresponding to sparser graphs; the parameter $\tau$ tunes the exponential tilting
of the degree distribution for finite graphs; the parameter $s$ corresponds to
an effective sample size of the graph. Additionally, \cite{Caron2017} develops
efficient algorithms to perform a Bayesian inference for this model.

\par The aim of this paper is to study the properties of Bayesian or frequentist
methods of estimation of $\phi=(\sigma,\tau,s)$ based on the likelihood of the model of
\cite{Caron2017}. More precisely let $\MLEparams$ and $\Pi_t(\cdot)$ denote
respectively the maximum likelihood estimator and the posterior distribution for
some prior distribution $p(\cdot)$ over the parameter $\phi$ based on the observation
of a graph of effective sample size $t$ under this model. We study the existence
and the convergence of the estimator $\MLEparams$ in~\cref{thm:concent:1}, as
well as the asymptotic behaviour of the distribution $\Pi_t(\cdot)$
in \cref{posterior:cons:1,posterior:cons:2} as the size of the graph goes to
infinity. In both cases, we do not assume that the true generating distribution
of the graph belongs to the model of \cite{Caron2017} nor do we even assume that
it belongs to a multigraphex model. Our assumptions on the true data generating
process are very weak.

\par In particular, we consider generic sparse graphs models satisfying, in
probability or almost surely
\begin{align*}
  \nedg \sim C \nver^{\frac{2}{1 + \alpha_0}}, \quad \nver \rightarrow \infty
\end{align*}
for some sparsity constant $\alpha_0\in(0,1)$ and some $C>0$, where $\nedg$ is twice
the number of (multi)edges and $N_t$ the number of nodes. We show that, under
some additional assumptions on the observed degree distribution, one can relate
the limit of the estimator of the sparsity parameter $\qMLEspars$ to the
sparsity constant $\alpha_0$ of the true graph generating process. When taking a
Bayesian approach, we also show that the posterior distribution $\Pi_t(\cdot)$
is asymptotically normal with mean $\MLEparams$ and a variance which we
characterize.

\par We then investigate situations where classical random graphs models such as
\textit{configuration models}, \textit{sparse graphon models}, or \textit{edge exchangeable models} may or may not generate sequence of graphs for which $\MLEparams$ and $\Pi_t(\cdot)$
do concentrate. Of special interest, we study the case where the true data
generating process is a multigraphex process, possibly different from
\cite{Caron2017}'s model. This section has interests in its own right since we
provide some theoretical results on the asymptotic behaviour of the degrees in
this class of multigraphex, complementing the results of
\cite{caron:rousseau:18}.

In \cref{sec:mainresults} we describe the model which is used to make inference and  we  present the results on the limiting
behaviour of the maximum likelihood estimator and of the posterior distribution, in the well and  mis-specified cases.
In \cref{sec:examples-processes}, we discuss the applicability of our results to
various classes of probabilistic sparse graphs or multigraphs and in
\cref{sec:multigraphex} we study in details the case where the true data
generating process is a general multigraphex process, showing that the results
of \cref{sec:mainresults} apply under mild assumptions. Additionally, we derive
the sparsity properties and power-law properties of multigraphex process. The
main proofs are given in \cref{sec:proofs-main-results}.

\textbf{Notations.}
Throughout the document, $(\mathcal G_t)_{t\geq 0}$ denotes a deterministic or
random family of multigraphs, without vertices of degree zero, indexed by a size
parameter $t\geq 0$. We note $\mathcal G_t=(\mathcal V_t, \mathcal E_t)$, where $\mathcal V_t\subset\mathbb N$ is the vertex set, and  $\mathcal E_t=\{(i,j,\widetilde n_{i,j})\text{, for }i\leq j \in \mathcal V_t \}$ denotes the edge set, composed of the number of multiedges $\widetilde n_{i,j}$ between nodes $i$ and $j$ in the vertex set. By convention, assume $\widetilde n_{j,i}=\widetilde n_{i,j}$. The size parameter may correspond to the number of nodes or edges or of some other quantity related to these numbers.

For any $i\in\mathcal V_t$, let $D_{t,i}=\sum_{j\in\mathcal V_t} \widetilde n_{i,j}\geq 1$ denote the degree of node $i$, so that
$\nedg=\sum_{i=1}^{N_t}D_{t,i}$ is twice the number of multiedges (up to the number
of self edges). Denote also
 $ N_{t,j}=\sum_{i=1}^{N_t} \Ind_{D_{t,i}=j}$,
the number of nodes with degree $j$. Observe that $\sum_{j\geq 1} N_{t,j}=N_t$ and
$\sum_{j\geq 1} j N_{t,j}=\nedg$.

Throughout the paper $a_n\sim b_n$ means that $a_n/b_n$ converges to $1$,
$a_n =o(b_n)$ means that $a_n/b_n $ converges to $0$, and $a_n = O(b_n)$ means that
$a_n/b_n$ is bounded. We also use the notation $a_n\lesssim b_n$ to denote
$a_n = O(b_n)$. We say that $a_n\asymp b_n$ if there exist constants $c_1, c_2>0$
such that $c_1a_n \leq b_n \leq c_2 a_n$. Also for any $B$ subset of $\mathcal X$ $B^c$ denotes the complementary of $B$ in $\mathcal X$;  in particular, if the set refers to a function $f$ whose domain is $\mathcal X \subset \mathbb R^d$, then $B^c$ denotes the complementary set of $B$ in  $\mathcal X$.


\section{Main results}
\label{sec:mainresults}

In this section, we derive our main results on the maximum likelihood estimator
and the posterior distribution. We consider a given deterministic family of
multigraphs $(\mathcal G_t)_{t\geq 0}$.



In this section we are studying the statistical properties of either the maximum
likelihood or the Bayesian approach associated to the particular multigraphex
process proposed by \cite{Caron2017}, without assuming that the true data
generating process belongs to the model of \cite{Caron2017}.

In order to make explicit the likelihood, we briefly summarize the model of
\cite{Caron2017}. This model is a special example of the class of multigraphex models, which are presented in more details in \cref{sec:multigraphex}. Let $M$ be
a unit rate Poisson process:
$M \coloneqq \Set{(\vartheta_i,\theta_i) \given i \in \Nats}$ on $\NNReals^2$. In the model, the observations consist of a subset of an  infinite symmetric array $(\tilde{n}_{i,j})_{(i,j)\in \Nats^2}$ of multi-edges whose conditional distribution given $M$ is given by
\begin{equation}
  \label{eq:55}
  \tilde n_{i,j}\mid M \sim%
  \begin{cases}
    \poiDist( 2 \bar{\rho}^{-1} ( \vartheta_i)\bar{\rho}^{-1} ( \vartheta_j)) &\mathrm{if}\ i < j,\\
    \poiDist( \bar{ \rho}^{-1} ( \vartheta_i)^2) &\mathrm{if}\ i =j,
  \end{cases}
\end{equation}
where
$\bar{\rho}^{-1}(y) \coloneqq \inf\Set{x \in \NNReals^{*} \given \bar{\rho}(x) \leq y}$
denotes the generalized inverse of the tail Lévy intensity of a generalized gamma process~\cite{Brix1999}:

\begin{align}
  \label{eq:GGLevyMeasure}
 \bar{\rho}(x) = \int_x^\infty \rho(w)dw, \quad  \rho(w)=\frac{1}{\Gamma(1-\sigma)}w^{-1-\sigma}e^{-\tau w}, \quad (\sigma,\tau) \in (-\infty,1)\times \NNReals^{*}.
\end{align}
The form \cref{eq:55} of the conditional distribution of the multi-edges is a special case of  multigraphon, defined in particular in \cite[Definition
4]{Borgs2019}. In \cref{sec:multigraphex} we study the behaviour of the multigraphex for more general forms of multigraphons.

The infinite array $(\tilde{n}_{i,j})_{(i,j)\in \Nats^2}$ can be seen as the
adjacency matrix of an infinite multigraph $\mathcal{G}$. A projective sequence
of (almost-surely) finite multigraphs $(\mathcal{G}_{\size})_{\size> 0}$ is obtained from
$\mathcal{G}$ by keeping only the nodes $i \in \Nats$ such that $\theta_i \leq \size$ and such
that they have at least one connection with a node $\theta_j \leq \size$ (with possibly $j=i$): $\mathcal{G}_{\size} = (\mathcal V_\size, \mathcal E_\size)$,  with $\mathcal V_s = \Set{ i \given\, \theta_i \leq \size,\ \sum_{j; \theta_j\leq \size} \tilde n_{i,j} >0}$.

The previous construction describes a parametric model for multigraphs whose
parameters are $(\sigma,\tau)$, for a given size $\size$. As shown by \cite{Caron2017,caron:rousseau:18}, within this model, the parameter $\sigma$, when in the range $(0,1)$, can be interpreted as the sparsity parameter $\alpha_0$ presented in the introduction; as we show in \cref{sec:multigraphex}, it also controls the asymptotic degree distribution. The expected number of edges under this model is asymptotically $\mathbb E[D^\star_s] \sim 2s^2 \tau^{2\sigma-2}$, and thus $\tau$ controls the overall number of edges.

 As the effective size $\size$ is typically unobserved,
it is customary to also consider it as a parameter of the model, so that we have
a parametric family with parameter $\phi \coloneqq (\sigma,\tau,\size)$. An important
aspect of this model is that $(N_{t,j})_{j_{\geq 1}}$ is a sufficient statistic for
$\phi$. Note that in the notation $N_{t,j}$, $t$ represents a size index, which can be thought of as the number nodes or the true (unknown) size if the observed multigraph is generated from the model of \cite{Caron2017}.

\subsection{Maximum likelihood estimator (MLE) and concentration of the likelihood function}
\label{sec:conc-likel-funct}
 Having observed a multigraph $\mathcal G_t$,  the log-likelihood $L_t(\phi)$, with $\phi \coloneqq (\sigma,\tau,\size)$,  has the following expression (see  \cite[\cref{supp-sec:proof-crefl-like}]{caron:naulet:rousseau:supplement})
  \begin{equation}
    \label{eq:likelihood}
    e^{\loglik(\params)}%
    \propto s^{\nver}%
    \prod_{i=1}^{\nver}\left[
      \frac{\Gamma(\deg{i} -\spars)}{\Gamma(1-\spars)}\right] %
    \int_{\NNReals^2}%
    e^{-(x+y)^{2}}\,
    \frac{%
      x^{\nedg - \spars\nver -1} e^{-\tau x}%
    }{%
      \Gamma(\nedg - \spars\nver)%
    }%
    \tiltstab(\intd y)\intd x,
  \end{equation}
  where $\tiltstab$ is the \textit{exponentially tilted stable distribution} on $\NNReals$,
  whose Laplace transform is
  \begin{equation}
    \label{eq:4}
    \int_{\NNReals} e^{-\xi x}\,\tiltstab(\intd x)=e^{-\size\psi(\spars,\tau;\xi+\tau)},%
     \quad
     \psi(\sigma,\tau;\xi)=
    \begin{cases}
      \frac{1}{\spars}\left( \xi^\spars-\tau^\spars\right ) &\mathrm{if}\ \spars \ne 0,\\
      \log(\xi/\tau) &\mathrm{if}\ \spars = 0,
    \end{cases}
  \end{equation}
for $\xi \in \Complex \backslash \Reals^{*}_-$.



The log-likelihood defined in \eqref{eq:likelihood} is complex, but we show in \cref{lem:3} that it is close to the function
\begin{equation}
  \label{eq:304}
  \qloglik(\params)%
  \coloneqq
  \nver \log(\size) + \sparsfunc(\spars) - \nedg \mathcal{A}(\params;\zeta(\params))%
  - \frac{\log(2)}{2},
\end{equation}
where  $\sparsfunc : (-\infty,1) \rightarrow \Reals$ is defined by
\begin{equation*}
  \sparsfunc(\spars) \coloneqq \log\left( \prod_{i=1}^{\nver}\left[
      \frac{\Gamma(\deg{i} -\spars)}{\Gamma(1-\spars)}\right]\right) = \sum_{j\geq 2} \ndeg{j}
  \sum_{k=1}^{j-1}\log(k-\spars),
\end{equation*}
and,
\begin{equation}
  \label{eq:2}
  \mathcal{A}(\params;z) \coloneqq%
  - \frac{(z - \tau)^2}{4\nedg}
  + \Big(1 - \frac{\spars \nver}{\nedg}\Big)\log(z)%
  + \frac{\size}{\nedg} \LTS(z;\sigma,\tau); \quad z\in \Complex \backslash \Reals_-^{*} ,
\end{equation}
and $\zeta(\params)$ is the (unique) positive real value solution of the equation in $z$: $\partial_z  \mathcal{A}(\params;z) = 0$. Note that $ \sparsfunc(\spars)$ appears in the definition of $\loglik(\params)$ in \eqref{eq:likelihood}.

In this paper we do not characterize the maximizer of $\loglik$ but instead the maximizer $\qMLEparams$ of $\qloglik$, which, from \cref{lem:3}, is close to any maximizer of $\loglik$. With a slight abuse, we call $\qMLEparams$ the MLE.

To ensure that $\qMLEparams$ has a limit in $t$, we also require some
assumptions on the observed multigraph. Let $\hat{\alpha}_t \in (0,1)$ be the solution
to the equation $ \hat{\alpha}_t \sparsfunc'(\hat{\alpha}_t) = - \nver$. It can be easily
seen that $\hat{\alpha}_t$ exists uniquely as soon as $\ndeg{1}\ne \nver$.  We then
consider the following assumption on $\mathcal G_t$.
\begin{assumption}
  \label{ass:concent:degdist}
  We assume that $\ndeg{1} \ne \nver$ and the existence of $\alpha_0 \in (0,1)$
  and $\tau_{*} \in \NNReals^{*}$ such that the following two conditions are
  satisfied:
  \begin{align}
      &\lim_{t\to\infty} \sqrt{2\nedg}\Big(\frac{\alpha_0
      \nver}{\nedg}\Big)^{\frac{1}{1-\alpha_0}} = \tau_{*}, \quad \lim_{t \to \infty}\log (\nver ) |\hat{\alpha}_t -
    \alpha_0| = 0 \label{eq:assumpt1_cond1}
  \end{align}
  where $\hat{\alpha}_t$ is solution of the equation
  $- \hat{\alpha}_t \sparsfunc'(\hat{\alpha}_t) = \nver$; that is solution of
  \begin{equation}
    \label{eq:defalphat}
    \sum_{j\geq 2}\ndeg{j}\sum_{k=1}^{j-1} \frac{\hat{\alpha}_t}{k -
      \hat{\alpha}_t} = \nver.
  \end{equation}
\end{assumption}

The first part of \eqref{eq:assumpt1_cond1} requires the graph sequence to be sparse, with sparsity constant $\alpha_0$, as it implies that as $t\to\infty$
\begin{align}
  \label{eq:constantsparsity}
  \nedg \sim C(\alpha_0,\tau_{*}) \times\nver^{2/(1+\alpha_0)},~~~\text{ where }~~~
  C(\alpha_0,\tau_{*})=\left (\Big(\frac{\sqrt{2}}{\tau_{*}}\Big)^{1-\alpha_0}\alpha_0\right )^{2/(1+\alpha_0)}.
\end{align}
To interpret the second part of \eqref{eq:assumpt1_cond1}, first note that we will see in the proof of \cref{thm:concent:1} in Section \ref{sec:concent:proofs-conc-likel-funct} that  $\qMLEspars$ is very close to $ \hat{\alpha}_t$  since $\qMLEspars$ satisfies
$\sparsfunc'(\qMLEspars) = -\nver/\qMLEspars( 1 +o(1)) $. Note also that $\hat{\alpha}_t$ is a stationary point of functions of $\alpha \mapsto N_t\log(\alpha)+C_t(\alpha)$. It follows that $\hat{\alpha}_t$ takes the alternative form
\begin{align}
  \label{eq:alphahatMLE}
\hat{\alpha}_t&=\argmax_{\alpha\in(0,1)} ~~\log\left( \prod_{i=1}^{N_t} \frac{\alpha\Gamma(D_{t,i}-\alpha)}{D_{t,i}!\Gamma(1-\alpha)}\right ),
\end{align}
and may therefore be interpreted as the maximum likelihood estimator under a model where the degrees $D_{t,i}$ are drawn i.i.d. from the Karlin-Rouault distribution~\cite{Karlin1967,Rouault1978}, whose probability mass function is
\begin{equation}
  \label{eq:KarlinRouaultdist}
  p_j=\frac{\alpha\Gamma(j-\alpha)}{j!\Gamma(1-\alpha)},~~j\geq 1.
\end{equation}
This distribution originally arose as the limit of the proportion of clusters of a given size in infinite-urn/random partition models~\cite{Gnedin2007}. Under some assumptions, this distribution also arises as the asymptotic degree distribution for graphex processes~\citep{caron:rousseau:18}, multigraphex processes (see \cref{sec:multigraphex} and in particular \cref{th:sparsity1}), and some edge-exchangeable multigraphs~\cite{Crane2018,Cai2016}. In light of this, the second condition in ~\eqref{eq:assumpt1_cond1} may be interpreted as an implicit assumption on the asymptotic degree distribution of the graph, and how this asymptotic degree distribution relates to the sparsity parameter $\alpha_0$. Note that this condition is weaker than requiring that the asymptotic degree distribution of the sequence of graphs $(\mathcal G_t)_{t\geq 0}$ is the Karlin-Rouault law; we provide some examples in \cref{sec:examples-processes} where it departs from it.

The estimator $\hat{\alpha}_t$ may also be related to the popular two-parameter Poisson-Dirichlet partition model~\cite{Pitman1995}. If one interprets $N_t$ as the number of clusters, and $D_{t,i}$ as the size of cluster $i$, then the right-handside of \cref{eq:alphahatMLE} is almost identical (up to a constant independent of $\alpha$) to the Poisson-Dirichlet partition model with parameters $(\alpha,0)$ (also known as $\alpha$-stable Poisson-Kingman
  model, see~\cite{Pitman2003,favaro2021near}), which is recovered if one replaces the term $\alpha^{N_t}$ by $\alpha^{N_t-1}$. See also \cref{sec:examples-processes} for the discussion on the Hollywood model, a random graph based on the two-parameter Poisson-Dirichlet partition model.


\begin{theorem}
  \label{thm:concent:1}
  Let \cref{ass:concent:degdist} be valid. 
  Then,
  \begin{enumerate}
    \item\label{thm:concent:1:item:1} There exists a unique maximizer
    $\qMLEparams = (\qMLEspars,\qMLEtau,\qMLEsize)$ of $\qloglik$, which we call
    MLE.

    \item\label{thm:concent:1:item:2}  For every $K > 0$, there exists $K' > 0$ such that
    for all $\params$ satisfying $|\spars - \qMLEspars| > K' \sqrt{\log(\nedg)/\nver}$ or
    $|\tau - \qMLEtau| > K' \sqrt{\log(\nedg)/(\nedg)^{1/2}}$ or
    $|\frac{\size}{\qMLEsize} - 1| > K' \sqrt{\log(\nedg)/\nver}$, then
    \begin{equation*}
      \qloglik(\params) - \sup\qloglik \leq -K \log(\nedg).
    \end{equation*}

    \item\label{thm:concent:1:item:3} With
    $\KLsize \coloneqq \tau_{*}^{1-\alpha_0}\sqrt{2\nedg}/2$, the MLE
    satisfies %
    \begin{align*}
      \lim_{t\to \infty}\log (\nedg) |\qMLEspars-\alpha_0| = 0,\qquad%
      \lim_{t\to\infty} \qMLEtau = \tau_{*},\qquad%
      \lim_{t \to \infty} \frac{\qMLEsize}{\KLsize}%
      = 1.
    \end{align*}

  \end{enumerate}
\end{theorem}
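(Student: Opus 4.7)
The plan is to reduce the four-dimensional stationarity problem (three parameters plus the implicit $\zeta$) to a two-dimensional one by explicit elimination, then to use \cref{ass:concent:degdist} together with the sparsity rate \eqref{eq:constantsparsity} to pin down the asymptotic values of $\qMLEspars$ and $\qMLEtau$, and finally to establish the concentration statement through a Hessian expansion at the MLE.

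\emph{Step 1 (first-order conditions and profiling).} Since $\zeta(\params)$ is defined by $\partial_z \mathcal{A}(\params;\zeta) = 0$, the envelope theorem gives $\partial_{\size} \qloglik = \nver/\size - \LTS(\zeta;\spars,\tau)$, $\partial_{\tau}\qloglik = -(\zeta-\tau)/2 + \size\tau^{\spars-1}$, and $\partial_{\spars}\qloglik = \sparsfunc'(\spars) - \nedg\,\partial_{\spars}\mathcal{A}(\params;\zeta)$. Combining the $\size$, $\tau$ and $z$ stationarity equations by direct elimination, I would derive at any interior critical point the identities $\size = \sqrt{\nedg/2}\,\tau^{1-\spars}$ and $\zeta = \tau + \sqrt{2\nedg}$. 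Substituting these back leaves two scalar equations in $(\spars,\tau)$, namely $\spars\sparsfunc'(\spars) = -\nver + \sqrt{\nedg/2}\,\tau\log(1+\sqrt{2\nedg}/\tau)$ and $\tau\bigl[(1+\sqrt{2\nedg}/\tau)^{\spars}-1\bigr] = \sqrt{2/\nedg}\,\spars\nver$; these determine $(\qMLEspars,\qMLEtau)$, after which $\qMLEsize$ is recovered from the first identity.

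\emph{Step 2 (items 1 and 3).} Under \cref{ass:concent:degdist}, the sparsity rate \eqref{eq:constantsparsity} yields $\sqrt{\nedg}\log \nedg = o(\nver)$ for any $\alpha_0 \in (0,1)$, so the correction on the right of the $\spars$-equation is $o(\nver)$ uniformly over bounded $\tau$, and the equation reduces to $\spars\sparsfunc'(\spars) = -\nver(1+o(1))$. Since $\hat{\alpha}_t \sparsfunc'(\hat{\alpha}_t) = -\nver$ by definition and $\spars\mapsto \spars\sparsfunc'(\spars)$ is strictly monotonic in a neighborhood of $\alpha_0$ with a derivative of order $\nver$, a perturbation argument yields both the existence and uniqueness of $\qMLEspars$ and the rate $\log(\nedg)\,|\qMLEspars - \hat{\alpha}_t| \to 0$; combined with the second part of \eqref{eq:assumpt1_cond1} this gives $\log(\nedg)\,|\qMLEspars - \alpha_0| \to 0$. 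Plugging $\qMLEspars$ into the second scalar equation and using $(1+x)^{\spars} \sim x^{\spars}$ as $x\to\infty$ for $\spars \in (0,1)$ reduces it to $\qMLEtau^{1-\qMLEspars} \sim \tau_{*}^{1-\alpha_0}$, so $\qMLEtau\to\tau_{*}$. The claim $\qMLEsize/\KLsize \to 1$ is then immediate from $\qMLEsize = \sqrt{\nedg/2}\,\qMLEtau^{1-\qMLEspars}$ and $\KLsize = \tau_{*}^{1-\alpha_0}\sqrt{2\nedg}/2$.

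\emph{Step 3 (item 2 and main obstacle).} For the concentration statement, I would reparameterize $u = \log(\size/\qMLEsize)$ and perform a second-order Taylor expansion of $\qloglik$ around $\qMLEparams$. The key is that the Hessian is diagonal-dominant with scalings $\partial^{2}_{uu}\qloglik \asymp -\nver$, $\partial^{2}_{\tau\tau}\qloglik \asymp -\nedg$, $\partial^{2}_{\spars\spars}\qloglik \asymp -\nver$, and strictly smaller off-diagonal terms. The $\spars$-estimate rests on the lower bound $|\sparsfunc''(\spars)| \gtrsim \nver$ in a neighborhood of $\alpha_0$, which follows from $\nver - \ndeg{1} \asymp \nver$ (itself implicit under \cref{ass:concent:degdist} through $\hat{\alpha}_t \to \alpha_0 \in (0,1)$). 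The local quadratic bound then yields $\qloglik(\params) - \qloglik(\qMLEparams) \leq -K\log\nedg$ for $K' = K'(K)$ large enough whenever one of the three deviation conditions holds, and a global extension is obtained by combining coordinatewise concavity of the profile likelihood with boundary estimates ensuring $\qloglik\to -\infty$ as $\spars\to 1$, $\tau\to 0$ or $\infty$, or $\size\to 0$ or $\infty$. The principal technical obstacle I expect is the sharp uniform lower bound $|\sparsfunc''| \asymp \nver$ near $\alpha_0$, since \cref{ass:concent:degdist} controls $\sparsfunc'$ only at the single point $\hat{\alpha}_t$; bridging this gap will require additional uniform control on the degree distribution and on the implicit dependence of $\zeta$ on $\params$.
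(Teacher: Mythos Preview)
Your Step~1 is elegant and genuinely different from the paper's route: the exact identity $\zeta(\hat\phi_t)-\hat\tau_t=\sqrt{2\nedg}$ (hence $\hat s_t=\sqrt{\nedg/2}\,\hat\tau_t^{1-\hat\sigma_t}$) follows by combining the $\tau$-, $s$- and $z$-stationarity equations and does hold exactly at any interior critical point. The paper does not exploit this identity; it instead reparameterizes to $(\sigma,\varepsilon,u)$ with $\varepsilon(\phi)=\tau/\zeta(\phi)$ and $u(\phi)=s\zeta(\phi)^\sigma/(\nedg\beta_\sigma)$, and studies the profile $\Psi(\sigma)=\sup_{\varepsilon,u}\qloglik^*(\sigma,\varepsilon,u)$. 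Your reduction to two scalar equations in $(\sigma,\tau)$ is cleaner for identifying the limit, while the paper's reparameterization is better suited for the global analysis below.

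There are, however, two genuine gaps. First, your Hessian scaling for $\tau$ is wrong: at the MLE, $\partial_\tau^2\qloglik=-\hat s_t(1-\hat\sigma_t)\hat\tau_t^{\hat\sigma_t-2}+O(1)\asymp -\sqrt{\nedg}$, not $-\nedg$. This is exactly what matches the rate $\sqrt{\log\nedg/(\nedg)^{1/2}}$ in item~(ii); with $-\nedg$ you would be proving a strictly stronger (and false) concentration. The paper obtains the correct order via explicit computation of $\Sigma_t^{-1}$.

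Second, your existence and uniqueness argument is only local. The perturbation near $\hat\alpha_t$ in Step~2 says nothing about critical points with $\sigma\leq 0$ (where your asymptotic $(1+x)^\sigma\sim x^\sigma$ fails and the correction term in the $\sigma$-equation is no longer $o(\nver)$), nor about the global shape of $\qloglik$ away from $\hat\phi_t$. The claimed ``coordinatewise concavity of the profile likelihood'' in Step~3 is not established and is not obviously true in the original $(\sigma,\tau,s)$ coordinates: the paper needs four separate lemmas in the $(\sigma,\varepsilon,u)$ parameterization to show that $\Psi$ drops by at least a constant times $\nver$ when $\sigma\leq -C$ or $\sigma\geq c_2$, that the conditional maximizer $(\tilde\varepsilon(\sigma),\tilde u(\sigma))$ is unique on $[-C,c_2]$, and that $\Psi''<0$ only on $[c_1,c_2]\subset(0,1)$. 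Finally, the ``obstacle'' you identify (a uniform lower bound $|\mathcal C_t''|\asymp\nver$) is not actually needed in the paper's proof: concavity of $\Psi$ comes from $\Psi''(\sigma)=\mathcal C_t''(\sigma)-\nver(1+o(1))/\sigma^2$, where the second term already suffices since $\mathcal C_t''<0$ trivially.
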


The proof of \cref{thm:concent:1} is given in
\cref{sec:concent:proofs-conc-likel-funct}. A remarkable feature of
\cref{thm:concent:1} is that it does not require the model to be well specified
and it holds under a weak assumption on the true data generating process, namely \cref{ass:concent:degdist}. In Section \ref{sec:examples-processes} we discuss the generality of this assumption in more details.  Note also that \cref{thm:concent:1} is a consistency result not a convergence rate. We study in the case were the true generating process of $\mathcal G_t$ belongs to the model of \cite{CaronFox2014} a much faster rate is obtained, as seen in Section \ref{sec:wellspecified}.



\subsection{Posterior concentration  and asymptotic normality of the posterior distribution}
\label{sec:conc-post-asympt}

In this Section we study the posterior distribution associated to Model
\eqref{eq:likelihood} and a prior distribution on $\phi= ( \sigma, \tau, s)$. We derive two
results on posterior consistency: one under \cref{ass:concent:degdist},
\textit{i.e}. for sparse graphs and the other for dense graphs. We also provide
a Bernstein von Mises type of result in the sparse regime. The Bernstein von
Mises result applies to the rescaled parameters $\params_u = (\spars, \tau, u)$,
with $u \coloneqq \size/\KLsize$ and to the rescaled estimate
$\hat\params_{\TrueSize,u} = (\qMLEspars, \qMLEtau, \hat u)$ with
$\hat u \coloneqq \qMLEsize/\KLsize$. Define the $3\times 3$ matrix $\Sigma_t(\params_u)$ (as a function of $\params_u$) as minus the inverse of   the Hessian matrix of the mapping
$\params_u \mapsto \qloglik(\spars,\tau,u \KLsize)$
and set $\hat{\Sigma}_{\TrueSize} = \Sigma_t(\hat\params_{\TrueSize,u})$. In many ways, the
matrix $\hat{\Sigma}_{\TrueSize}^{-1}$ can be seen as the empirical Fisher Information
matrix. Analytic expressions as well as various estimates for
$\Sigma_{\TrueSize}(\params_u)^{-1}$ are available in \cite[\cref{supp-sec:local-analys-likel}]{caron:naulet:rousseau:supplement}.

\begin{theorem}[Bayes, Sparse regime]\label{posterior:cons:1}
  Assume model \eqref{eq:likelihood} together with a proper prior on $\phi$, which
  has positive and continuous density on $\mathcal S$ and such that for all
  $x > 0$ there are $c,B> 0$ such that for all $u \leq \nedg$
  \begin{equation}
    \label{cond:Pis}
    \Pi\left( \left| \frac{ \size}{\sqrt{2\nedg}} - x\right| \leq u \right) \geq c \left(\frac{u}{\nedg}\right)^B .
  \end{equation}
  If \cref{ass:concent:degdist} is valid, then for all $\epsilon>0$,
  \begin{equation}
    \label{post:consist}
    \lim_{t\to\infty}\Pi\left( | \sigma - \alpha_0| +|\tau -\tau_{*}| + \Big|\frac{\size}{\KLsize} - 1 \Big| >\epsilon \ \Big|\ \mathcal G_t \right) = 0.
  \end{equation}
  Moreover, assume that the prior has a density $\pi$ such that for all $\epsilon'>0$ there exists $\epsilon>0$
  such that for all $\TrueSize$ large enough,
  \begin{equation}
    \label{prior:conti}
    \sup_{B_\epsilon(\KLparams)} \left| \frac{ \pi(\params) }{ \pi( \KLparams ) } - 1 \right| \leq \epsilon',%
    \quad B_\epsilon(\KLparams) = \Set*{ \params \given |\spars - \KLspars| + |\tau - \KLtau| + \left| \frac{ \size}{ \KLsize} - 1\right| \leq \epsilon }.
  \end{equation}
  Then, under \cref{ass:concent:degdist}, the posterior distribution of
  $\hat{\Sigma}_{\TrueSize}^{-1/2} (\params_u - \hat\params_{\TrueSize,u})$, written
  here $\tilde \Pi_t$, satisfies,%

  \begin{equation}
    \label{eq:BvM}
    \lim_{t\to \infty}\| \tilde \Pi_t - \mathcal{N}(0,1)^{\otimes 3}\|_{\mathrm{TV}} =  0,
  \end{equation}
  where $\mathcal{N}(0,1)^{\otimes 3}$ denote the standard Normal distribution on
  $\Reals^3$. Furthermore when $t$ is large enough, there are constants
  $C_1,C_2 > 0$ such that for all $x = (x_1,x_2,x_3) \in \Reals^3$%
  \begin{align*}
    x^t\hat \Sigma_\TrueSize^{-1} x%
    &\geq
    C_1\Big( \KLsize^{1+\alpha_0}x_1^2 + \KLsize x_2^2 + \frac{\nver}{\log(\nedg)^2}x_3^2 \Big) \\
   & \leq
    C_2\Big( \KLsize^{1+\alpha_0}\log(\nedg)^2x_1^2 + \KLsize x_2^2 + \nver x_3^2\Big).
  \end{align*}
\end{theorem}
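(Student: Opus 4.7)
The plan is to split the argument into three pieces corresponding to the three conclusions: (i) posterior consistency around $\qMLEparams$; (ii) a Laplace-type asymptotic expansion giving the Bernstein--von Mises statement \eqref{eq:BvM}; and (iii) direct computation of the spectral bounds on $\hat\Sigma_{\TrueSize}^{-1}$. Throughout I would replace $\loglik$ by $\qloglik$ using \cref{lem:3}, as the approximation error is uniformly negligible compared with the likelihood separation provided by \cref{thm:concent:1}\eqref{thm:concent:1:item:2}. Writing the posterior of any measurable $B$ as
\[
\Pi(B \mid \mathcal G_t) = \frac{\int_B e^{\qloglik(\params) - \qloglik(\qMLEparams)}\pi(\params)\,\intd\params}{\int e^{\qloglik(\params) - \qloglik(\qMLEparams)}\pi(\params)\,\intd\params},
\]
consistency \eqref{post:consist} is obtained by a numerator/denominator argument. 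By the MLE consistency in \cref{thm:concent:1}\eqref{thm:concent:1:item:3}, for $t$ large the complement in \eqref{post:consist} is contained in $\{|\spars - \qMLEspars| > \epsilon/3\} \cup \{|\tau - \qMLEtau| > \epsilon/3\} \cup \{|\size/\qMLEsize - 1| > \epsilon/3\}$, on which \cref{thm:concent:1}\eqref{thm:concent:1:item:2} gives $\qloglik(\params) - \sup \qloglik \leq -K\log(\nedg)$ with $K$ arbitrary. The denominator is lower bounded by restricting the integral to a box around $\qMLEparams$ of side $1/\nedg$ in each rescaled coordinate: on this box \cref{thm:concent:1}\eqref{thm:concent:1:item:2} yields $\qloglik(\params) - \qloglik(\qMLEparams) = O(1)$, and the prior mass hypothesis \eqref{cond:Pis} provides a $(\nedg)^{-B'}$-type lower bound. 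Taking $K > B'$ closes the argument.

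For \eqref{eq:BvM} I would change variables to $\params_u = (\spars,\tau,u)$ with $u = \size/\KLsize$ and carry out a second-order Taylor expansion of $\params_u \mapsto \qloglik(\spars,\tau,u\KLsize)$ around $\hat\params_{\TrueSize,u}$ on the shrinking ellipsoid $E_t$ of radii prescribed by \cref{thm:concent:1}\eqref{thm:concent:1:item:2}, to which part~(i) restricts the posterior mass. The quadratic term is $-\tfrac{1}{2}(\params_u - \hat\params_{\TrueSize,u})^\top \hat\Sigma_{\TrueSize}^{-1}(\params_u - \hat\params_{\TrueSize,u})$ by definition of $\hat\Sigma_{\TrueSize}$; the cubic remainder is controlled by the third-derivative estimates for $\qloglik$ from the supplement, which should give $\sup_{E_t}|R_t| \to 0$. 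The prior-continuity assumption \eqref{prior:conti} combined with consistency replaces $\pi(\params)/\pi(\qMLEparams)$ by $1$ uniformly on $E_t$. After the linear substitution $x = \hat\Sigma_{\TrueSize}^{-1/2}(\params_u - \hat\params_{\TrueSize,u})$, the density of the rescaled posterior converges pointwise to $(2\pi)^{-3/2}e^{-\|x\|^2/2}$, and convergence in total variation follows from Scheffé's lemma, with tails outside $\hat\Sigma_{\TrueSize}^{-1/2}E_t$ handled by (i).

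The spectral bounds on $\hat\Sigma_{\TrueSize}^{-1}$ would come from direct differentiation of $\qloglik$ at $\hat\params_{\TrueSize,u}$. Using the envelope property of $\zeta(\params)$ (critical point of $z \mapsto \mathcal{A}(\params;z)$) yields compact expressions: the $(\tau,\tau)$ entry is of order $\KLsize\tau_{*}^{-2}$; the $(u,u)$ entry is of order $\nver$ from the $\nver\log u$ term; and the $(\spars,\spars)$ entry is dominated by $-\sparsfunc''(\qMLEspars) \asymp \nver$, which via \eqref{eq:constantsparsity} $\nver \asymp \KLsize^{1+\alpha_0}$ gives the base scaling $\KLsize^{1+\alpha_0}$. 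The main technical obstacle, and the source of the $\log(\nedg)^2$ gap between the upper and lower bounds in the $\spars$- and $u$-directions, is the slow convergence $|\qMLEspars - \alpha_0| = o(1/\log\nedg)$ inherited from \cref{ass:concent:degdist}: it forces the $\spars$-direction concentration radius in part~(i) to carry an extra $\sqrt{\log(\nedg)}$ factor, and it propagates through the $(\spars,u)$ off-diagonal block coupled via $\zeta(\params)$ and through the $\spars$-derivatives of $\psi$, which generate factors of $\log\zeta \asymp \log\nedg$. Ensuring that the cubic remainder $R_t$ in (ii) stays $o(1)$ over this enlarged $\spars$-window, despite those $\log$ factors, is the most delicate step of the whole proof; the rest is then routine Laplace asymptotics.
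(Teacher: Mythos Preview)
Your proposal is correct and follows essentially the same three-part architecture as the paper: numerator/denominator concentration via \cref{thm:concent:1}\eqref{thm:concent:1:item:2} and \cref{lem:3}, a Laplace-type quadratic expansion of $\qloglik$ in the $\params_u$-coordinates combined with the prior continuity \eqref{prior:conti}, and direct Hessian computation (the paper's supplementary \cref{supp-lem:Sigmat}) for the spectral bounds, including the near-degenerate $(\spars,u)$ block you correctly flag as the source of the $\log(\nedg)^2$ gap. One small correction: the $O(1)$ bound on $\qloglik(\params)-\qloglik(\qMLEparams)$ over the small denominator box does \emph{not} follow from \cref{thm:concent:1}\eqref{thm:concent:1:item:2}, which only controls the complement of a neighborhood; the paper obtains it (and you should too) by a second-order Taylor expansion using the Hessian upper bound from part (iii), and likewise handles the BvM remainder by showing $\Sigma_t(\bar\params_u)^{-1}=\hat\Sigma_t^{-1}(1+o(1))$ at an intermediate point rather than bounding third derivatives---but this is equivalent to your cubic-remainder formulation.
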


The proof of \cref{posterior:cons:1} is given in \cref{sec:posterior:cons:1}.
From \cref{post:consist} we see that the posterior concentrates around
$(\alpha_0, \tau_{*}, \KLsize)$ and the \cref{eq:BvM} refines the analysis by stating a
Bernstein von Mises type of result, namely that the posterior is asymptotically
normally around the MLE with variance matrix $\hat \Sigma_t$. It is interesting to see that the posterior on $\spars$ and $\size$ concentrates at the rate $\sqrt{\nver}$ while the posterior on  $\tau$  concentrates at the rate $(\nedg)^{1/4} = o(\sqrt{\nver})$, up to $\log \nver$ terms. This reveals that there is much more signal in the likelihood on $\spars$ and $\size$.

Note that condition \eqref{cond:Pis} on the prior is very mild and allows any smooth prior on $\size$ which decreases polynomially with $\size$. Condition \eqref{prior:conti} is very similar. It is in particular satisfied if $\spars, \tau, \size$ are a priori independent with positive and continuous density and if the prior density on $\size$ is regularly varying at infinity. For instance the conditions are satisfied if $1 - \spars $ and  $\tau $ follow  Gamma distributions, and $\size$ is the positive part of a student random variable.

\Cref{posterior:cons:1}  does not  cover the dense
case. In the following theorem we prove that in the dense case the posterior
distribution concentrates on non positive values of $\sigma$.
\begin{theorem}[Bayes, dense regime]
  \label{posterior:cons:2}
Assume model \eqref{eq:likelihood} together with a proper prior on $\phi$. Assume that there exists $c_1>0$ such that
\begin{equation}\label{ass:dense}
 \forall c>0, \quad \liminf_t\frac{ \nedg }{\nver^{2-c} } = +\infty, \quad \liminf_t \frac{ \sum_{j\geq 2} \ndeg{j} \log (j) }{ \nver \log \nedg } \geq c_1,
 \end{equation}
 and assume that a priori $\spars, \tau, \size$ are independent  and  that there
 exists  $ c_2, c_3, c_4>0$ such that for all $(\nedg)^{-c_3} \leq x \leq  1$ and all
 $\tau < 1$
 \begin{gather}
   \label{cond:Pis2}
   \Pi\left( \left| \frac{ \size}{\KLsize} - x \right| \leq \frac{ 1 }{ \nver (\log \nedg)^2} \right) \geq e^{-c_2
     \nver},\qquad%
   \pi_\tau(\tau) \geq e^{-c_4/\tau^{c_4}}.
\end{gather}
Then for all $\epsilon >0$,
$$
\Pi\left( \sigma  >\epsilon  \mid \mathcal G_{\TrueSize} \right) \rightarrow 0.
$$%
\end{theorem}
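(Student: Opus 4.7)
The plan is to use the standard posterior ratio
\[
\Pi(\sigma > \epsilon \mid \mathcal G_t) = \frac{\int_{\sigma > \epsilon} e^{\loglik(\params)}\pi(\params)\,d\params}{\int e^{\loglik(\params)}\pi(\params)\,d\params},
\]
and to exhibit a log-likelihood gap of order $\nver \log \nedg$ between every $\params$ with $\sigma > \epsilon$ and a well-chosen reference point where $\sigma \leq 0$. By \cref{lem:3} the discrepancy between $\loglik$ and $\qloglik$ is negligible at the scale of interest, so the analysis can be carried out on $\qloglik$ throughout.

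The crucial input is an upper bound on the term $\sparsfunc$ appearing in \eqref{eq:304}. Writing $\sum_{k=1}^{j-1}\log(k-\spars) = \log\Gamma(j-\spars) - \log\Gamma(1-\spars)$ and applying Stirling's expansion gives, uniformly over $\spars$ in compact subsets of $(-\infty, 1)$,
\[
\sparsfunc(\spars_1) - \sparsfunc(\spars_2) = -(\spars_1 - \spars_2) \sum_{j\geq 2}\ndeg{j}\log j + O(\nver),
\]
for $\spars_1 > \spars_2$, so by the second half of \eqref{ass:dense} one has $\sparsfunc(\spars) - \sparsfunc(0) \leq -\epsilon c_1 \nver \log\nedg + O(\nver)$ for every $\spars > \epsilon$. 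It remains to show that the remaining $\spars$-dependent pieces of $\qloglik$ cannot compensate this loss. Solving the saddle-point equation $\partial_z \mathcal{A} = 0$ yields $\zeta(\params) \asymp \sqrt{\nedg}$ on the relevant range of parameters, so the $\spars$-dependence of $-\nedg \mathcal{A}$ enters only through $+\spars \nver \log \zeta(\params)$ and the explicit Laplace exponent $s(\zeta^\spars - \tau^\spars)/\spars$; after profiling out $\tau$ and $s$ and using the first half of \eqref{ass:dense} to control $\log s$ and $\log \zeta$ (both $\asymp \log \nver \asymp \log \nedg$), one checks that this contributes at most $\tfrac12 \spars \nver \log\nedg (1+o(1))$, a fraction that remains strictly smaller than $\epsilon c_1 \nver \log \nedg$ once $\epsilon$ has been shrunk (and, if needed, the reference $\spars_{*}$ is chosen slightly negative to absorb constants).

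To close the argument, I would lower-bound the denominator by restricting the integral to a neighbourhood of a reference $\params_{*} = (\spars_{*}, \tau_{*}, s_{*})$ with $\spars_{*} \in (-\nedg^{-c_3}, 0]$ and $s_{*}$ proportional to $\KLsize$: the prior mass assumption \eqref{cond:Pis2}, together with positivity of the marginal on $\spars$ at $0$ and the polynomial tail bound on $\pi_\tau$, gives a prior mass at least $\exp(-C\nver)$ on this neighbourhood, while $\loglik$ there is within $O(\nver)$ of $\sup_{\spars \leq 0} \loglik$. Combining the numerator bound with this denominator bound yields $\Pi(\sigma > \epsilon \mid \mathcal G_t) \leq \exp(-c'\nver \log\nedg + C\nver) \to 0$, as required. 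The hard part of the argument is the profile-likelihood analysis sketched above: one must track the interplay between $\spars$, $\tau$ and $s$ through the saddle-point $\zeta(\params)$ precisely enough to prove that no choice of $(\tau, s)$ can buy back the loss in $\sparsfunc$, and this is exactly where the strong density condition $\liminf_t \nedg/\nver^{2-c} = +\infty$ plays its role, by pinning down the order of $\log s$ and $\log \zeta$ and hence the constants in the compensation term.
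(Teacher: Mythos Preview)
Your overall architecture matches the paper's: compare the numerator restricted to $\{\sigma>\epsilon\}$ against a reference region in the denominator, show that the $\sparsfunc$ term produces a deficit of order $\nver\log\nedg$, and lower-bound the prior mass of the reference region by $e^{-O(\nver)}$. Your Stirling estimate for $\sparsfunc(\spars_1)-\sparsfunc(\spars_2)$ is also fine.

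The gap is in the ``compensation'' step. You assert that, after profiling out $(\tau,s)$, the $\spars$-dependent part of $-\nedg\mathcal{A}(\params;\zeta(\params))$ contributes at most $\tfrac12\spars\nver\log\nedg(1+o(1))$, and that this is ``strictly smaller than $\epsilon c_1\nver\log\nedg$''. But $c_1$ is a \emph{fixed} constant from the hypothesis \eqref{ass:dense}; nothing prevents $c_1<\tfrac12$, in which case your bound does not close the inequality for any choice of $\epsilon$ or of a slightly negative reference $\spars_{*}$ (both the loss $\sim\spars c_1\nver\log\nedg$ and your claimed gain $\sim\tfrac12\spars\nver\log\nedg$ scale the same way in $\spars$). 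Incidentally, your side remark that $\log s$ and $\log\zeta$ are both $\asymp\log\nver\asymp\log\nedg$ is not supported by \eqref{ass:dense}, which gives only $\log\nedg\geq(2-o(1))\log\nver$ with no matching upper bound.

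What actually happens is that the two leading $\spars$-dependent contributions cancel: the explicit term $+\spars\nver\log\zeta$ is exactly offset, after profiling $s$, by $-\nver\log\psi(\spars,\tau;\zeta)\approx -\spars\nver\log\zeta$, leaving a remainder of order $O(\nver)$ (at worst $O(\nver\log\log\nedg)$ if you insist on a reference at $\spars_{*}=0$). The paper obtains this cleanly by working in the reparameterization $(\spars,\varepsilon,u)$ of \cref{sec:concent:proofs-conc-likel-funct}: the profile $\Psi(\spars)=\sup_{\varepsilon,u}\qloglik^{*}(\spars,\varepsilon,u)$ satisfies $\Psi(\spars)-\Psi(\epsilon)=\sparsfunc(\spars)-\sparsfunc(\epsilon)+O(\nver)$ for $\spars>\epsilon>0$ (this is the content of the case $\spars>c_2$ in \cref{pro:concent:4}). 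With the compensation genuinely $O(\nver)$, the comparison works for \emph{any} $c_1>0$. Note also that the paper compares to the reference $\spars=\epsilon>0$ rather than $\spars_{*}\leq 0$, which keeps the profiled quantities in the same regime and makes the $O(\nver)$ remainder transparent.
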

The proof of \cref{posterior:cons:2} is given in \cref{sec:posterior:cons:2}. To
simplify the presentation we have assumed that $\spars , \tau, \size$ are a priori
independent. This is however not necessary. A weaker sufficient condition is
that %
for all $\epsilon , c>0 $, $\Pi(B_{\TrueSize}) > e^{-c \nver}$ when $\TrueSize$ is large
enough, where $B_{\TrueSize}$ is a set whose precise definition is given in the proof of
the \cref{posterior:cons:2}.

A consequence of \cref{cond:Pis2} is that when the true graph is dense then the posterior distribution on $\sigma $ concentrates on $(-\infty, 0]$, which in the model of \cite{CaronFox2014} also corresponds to dense graphs.

As proved in \cref{sec:rates-conv-mle}, if the true data generating process
comes from \cite{Caron2017}, i.e. if the model is well specified
\cref{ass:concent:degdist} and \cref{posterior:cons:1} and
\cref{posterior:cons:2} are valid with
$\alpha_0 = \spars_0, \tau_{*} = \tau_0, \TrueSize = t$, i.e. the true values of the
parameters. In the following Section we prove that $\qMLEparams$ concentrates
around the true value of the parameters with a rate given by $\Sigma_t$.

\subsection{Well-specified case}\label{sec:wellspecified}

We now investigate the behaviour of the MLE when the multigraph $\mathcal{G}_t$
has been obtained as a sample of the GGP model of \cite{Caron2017}; see also
\cref{sec:multigraphex} below for details. In other words, we consider the
situation where the model \eqref{eq:likelihood} is well specified. In
particular, we show that if $\mathcal{G}_t$ is generated according to the GGP
model with parameters $(\sigma_0,\tau_0)$ and size $t$, then the MLE
concentrates on $(\qMLEspars,\qMLEtau) \approx (\sigma_0,\tau_0)$ and $\qMLEsize/t \approx 1$,
at least when $\sigma_0 > 0$. This specializes the \cref{thm:concent:1} to the
well-specified scenario. Obviously this implies similar results for the Bayesian
counterparts of \cref{sec:conc-post-asympt} with straightforward adaptations.

\begin{theorem}
  \label{thm:well-specified}
  Let $(\mathcal{G}_t)_{t>0}$ be a sequence of random multigraphs generated
  according to the multigraph GGP model of \cite{Caron2017} with parameters
  $(\sigma_0,\tau_0)$. If $\sigma_0 > 0$, then in addition to the results of Theorems
  \ref{thm:concent:1} and \ref{posterior:cons:1}, we have
  \begin{align*}
    | \qMLEspars - \sigma_0| =O_p\left(t^{-(1+\sigma_0)/2} \right),\qquad%
    |\qMLEtau - \tau_0 | =O_p\left(\frac{1}{ t } \right),\qquad%
    \left| \frac{\qMLEsize}{t} -  1 \right| =O_p\left(t^{-(1+\sigma_0)/2}\log(t) \right).
  \end{align*}

\end{theorem}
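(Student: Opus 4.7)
The plan is twofold: first, specialize Theorem~\ref{thm:concent:1} to the well-specified GGP setting by verifying Assumption~\ref{ass:concent:degdist} with $\alpha_0 = \sigma_0$ and $\tau_{*} = \tau_0$; second, upgrade the qualitative statements of Theorem~\ref{thm:concent:1}\eqref{thm:concent:1:item:3} to the quantitative $O_p$ rates through a refined fluctuation analysis of the sufficient statistics of the GGP model.

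For the first step, I rely on the asymptotic analysis of multigraphex processes developed in Section~\ref{sec:multigraphex} applied to the CF model. These yield, in probability (indeed almost surely) as $t\to\infty$,
\[
\nedg \sim 2 t^2 \tau_0^{2\sigma_0 - 2}, \qquad \nver \sim c(\sigma_0,\tau_0)\, t^{1+\sigma_0},
\]
and a direct algebraic computation then shows that $\sqrt{2\nedg}\,(\sigma_0 \nver/\nedg)^{1/(1-\sigma_0)} \to \tau_0$, which is the first half of \eqref{eq:assumpt1_cond1}. Moreover the normalized degree counts $\ndeg{j}/\nver$ converge to the Karlin--Rouault$(\sigma_0)$ probabilities $p_j$ (see \cref{th:sparsity1}), so $\hat{\alpha}_t$, being the $M$-estimator of $\sigma_0$ based on asymptotically i.i.d.\ Karlin--Rouault degrees (cf.~\eqref{eq:alphahatMLE}), converges to $\sigma_0$; a Taylor expansion of \eqref{eq:defalphat} around $\sigma_0$ combined with a CLT for the empirical degree histogram yields $\hat{\alpha}_t - \sigma_0 = O_p(\nver^{-1/2}) = O_p(t^{-(1+\sigma_0)/2})$, which is much smaller than $1/\log \nver$ and thus verifies the second part of Assumption~\ref{ass:concent:degdist}.

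For the second step, I refine the consistency of Theorem~\ref{thm:concent:1} into rates. The rate for $\qMLEspars$ combines the sandwich $|\qMLEspars - \hat{\alpha}_t| = o(1/\log \nedg)$ extracted from the proof of Theorem~\ref{thm:concent:1} with the CLT rate $|\hat{\alpha}_t - \sigma_0| = O_p(t^{-(1+\sigma_0)/2})$ from Step~1. For $\qMLEtau$ and $\qMLEsize/t$ I expand the first-order conditions $\partial_z\mathcal{A} = 0$ and the analogous equations in $\tau$ and $\size$ around $(\sigma_0,\tau_0,t)$; because these reduce essentially to the identities $\KLsize = \tau_0^{1-\sigma_0}\sqrt{\nedg/2}$ and $\KLtau = \tau_0$ when evaluated at the true parameters, the claimed rates follow by propagating through the MLE map the CLT-type fluctuations $\nedg - 2t^2\tau_0^{2\sigma_0-2} = O_p(t)$ (obtained from the conditional-Poisson representation \eqref{eq:55} together with concentration of the underlying stable subordinator at time $t$) and $\nver - c(\sigma_0,\tau_0)t^{1+\sigma_0} = O_p(t^{(1+\sigma_0)/2})$, while absorbing the $\log t$ factor in $\qMLEsize/t$ into the Hessian bounds of Theorem~\ref{posterior:cons:1}.

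The main obstacle is the joint fluctuation analysis at the correct scale. Proving $\hat{\alpha}_t - \sigma_0 = O_p(\nver^{-1/2})$ requires a CLT, or at least a Gaussian-tail concentration, for the empirical degree histogram with uniform control over the heavy power-law tail $j \to \infty$, since $\hat{\alpha}_t$ depends on the full histogram and not merely on $\nver$ or $\nedg$. Similarly, the tight $O_p(1/t)$ bound for $|\qMLEtau - \tau_0|$ (noticeably sharper than the $O_p(\KLsize^{-1/2})$ Bernstein--von Mises rate naively suggested by Theorem~\ref{posterior:cons:1}) requires exploiting the specific conditional-Poisson--stable structure of the CF model via the Laplace transform \eqref{eq:4}, and in particular the fact that $D_{t}^{\star}$ is determined by a squared sum of $\bar{\rho}^{-1}$-weights whose stable fluctuations are of smaller order than their mean.
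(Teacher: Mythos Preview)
Your sandwich route through $\hat{\alpha}_t$ does not deliver the rate for $\qMLEspars$. From the proof of Theorem~\ref{thm:concent:1} (equations \eqref{qMLspars}--\eqref{qMLeps}) the gap between $\qMLEspars$ and $\hat{\alpha}_t$ is a \emph{deterministic} approximation error
\[
|\qMLEspars - \hat{\alpha}_t| \;=\; O\!\Big(\tfrac{\nver}{\nedg}\ \vee\ \qMLEeps^{\qMLEspars}\log\tfrac{1}{\qMLEeps}\Big)
\;=\; O\!\big(t^{\sigma_0-1}\vee t^{-\sigma_0}\log t\big),
\]
since $\nedg\asymp t^2$, $\nver\asymp t^{1+\sigma_0}$, $\qMLEeps^{\qMLEspars}\asymp\sqrt{\nedg}/\nver$. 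For every $\sigma_0\in(0,1)$ this is strictly larger than $t^{-(1+\sigma_0)/2}$, so even a perfect CLT $\hat{\alpha}_t-\sigma_0=O_p(t^{-(1+\sigma_0)/2})$ yields, by triangle inequality, only $|\qMLEspars-\sigma_0|=O_p(t^{-\sigma_0}\log t\vee t^{\sigma_0-1})$, not the claimed rate. The same deterministic $O(t^{-\sigma_0})$ error contaminates the identities \eqref{qMLE:equiv}, so propagating fluctuations of $\nedg,\nver$ through them cannot produce the sharp rates for $\qMLEtau$ or $\qMLEsize/t$ either.

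The paper does not route through $\hat{\alpha}_t$ for the rates. After obtaining a preliminary polynomial rate from \eqref{eq:16}, it runs the standard parametric $M$-estimator expansion directly on $\qloglik^{*}$: Taylor expand the score at $\hat\phi_{t,u}$ around $\phi_{0,u}$, invert the Hessian $\Sigma_t(\phi_{0,u})^{-1}$, and conclude $\Sigma_t(\phi_{0,u})^{-1/2}(\hat\phi_{t,u}-\phi_{0,u})=O_p(1)$ from $\Sigma_t(\phi_{0,u})^{1/2}\nabla L_t(\phi_{0,u})=O_p(1)$, which in the well-specified case follows from the information identity $\EE[\nabla L_t\nabla L_t^{T}]=-\EE[D^2L_t]$. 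The rates then come from the explicit bounds on $\Sigma_t(\phi_{0,u})$. A non-trivial ingredient you are missing entirely is the control of the bias between the surrogate $\qloglik^{*}$ and the true $\loglik$ at the level of \emph{gradients and Hessians} (Lemma~\ref{lem:bias:score}): since $\qloglik$ only approximates $\loglik$ to $O(1)$ uniformly, showing that $\Sigma_t^{1/2}\nabla(\qloglik^{*}-\loglik)(\phi_{0,u})=o_p(1)$ requires going back to the saddlepoint representation of the integral $I(\phi)$.
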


The proof of \cref{thm:well-specified} is given in \cref{sec:proof-crefthm:w-spec}.
We see in the proof of \cref{thm:well-specified}, that in fact
\begin{equation*}
  \left( \qMLEspars - \sigma_0, \qMLEtau - \tau_0,  \frac{\qMLEsize}{t}-1 \right) = \Sigma_t( \params_0 )^{-1} \nabla L_t(\params_0) +o_p(1),
\end{equation*}
where the variance of $\Sigma_t( \params )^{-1} \nabla L_t(\params_0)$ is equal to 1.
Asymptotic normality of the MLE could then be deduced from the asymptotic
normality of the asymptotic normality of $\nabla L_t(\params_0)$, which is a non
trivial result. In particular, $\nabla L_t(\params_0)$ is a nonlinear functional of a
Poisson process, for which no generic theorem for asymptotic normality exists in
the literature. A first step toward understanding the asymptotic normality of
$\nabla L_t(\params_0)$ has been investigated in \cite{caron:rousseau:18}, where the
authors establish the (marginal) asymptotic normality of $\nedg$ and $\nver$,
yet it is unclear if the techniques they use can be adapted to understand the
limiting distribution of $\nabla L_t(\params_0)$.

An interesting consequence of \cref{thm:well-specified}, together with \cref{post:consist} is that in the well specified case, Bayesian credible regions on $\params$ will have reasonnable frequentist coverage.
 For instance consider a credible interval for $\spars$ in the form
 $(\spars_1, \spars_2) $ with
 $$\Pi ( \spars <\spars_1 \mid \mathcal G_{\TrueSize} ) = \Pi ( \spars >\spars_2 \mid \mathcal G_{\TrueSize} ) = \gamma/2 ,$$
 then \cref{eq:BvM} in \cref{posterior:cons:1} implies that
$$ \spars_1 = \qMLEspars + (\hat\Sigma_{\TrueSize}^{-1})_{1,1} \Phi^{-1}(\gamma/2) (1 + o(1)), \quad \spars_2 = \qMLEspars + (\hat\Sigma_{\TrueSize}^{-1})_{1,1} \Phi^{-1}(1-\gamma/2)(1 +o(1))$$
and
$$ \mathbb P( \spars_0 \notin (\spars_1, \spars_2) ) \leq \frac{ 2  }{ \Phi^{-1}(1-\gamma/2)(1+o(1))} $$
is small when $\gamma $ is small.



\section{Examples of Random Graphs satisfying \texorpdfstring{\cref{ass:concent:degdist}}{Assumption \ref{ass:concent:degdist}}}
\label{sec:examples-processes}


Theorems \ref{thm:concent:1}, \ref{posterior:cons:1} and \ref{posterior:cons:2}
provide results which are non stochastic in nature, in the sense that the
results hold true as soon as \cref{ass:concent:degdist} or \cref{ass:dense} are
valid. The interest is that these results do not require particular
probabilistic models on the multi-graph. However, \cref{ass:concent:degdist} can
be shown to hold for a variety of probabilistic models on sparse graphs or
multigraphs. In particular we do not require that $\mathcal G_{\TrueSize}$ is a
multigraph and \cref{ass:concent:degdist} can still hold if we observe a simple
graph. In thiss section we investigate which types of  models  satisfy
\cref{ass:concent:degdist}. A special interest will be given in multigraphex
processes in \cref{sec:multigraphex}, which the model of \cite{Caron2017} is a
special case of. In this section, we restrict our attention to models outside
the family of multigraphex processes.

\paragraph*{Configuration models}

A first example is the class of configuration models, which by
definition are constructed conditionally on the degree sequence $(D_{t,i})_{i\geq 1}$, see
\cite{molloy1995critical,vanderHofstad2014,Borgs2019}. The simplest version of
configuration models is to consider that the degrees $D_{t,1},\ldots,D_{t,N_t}$
are drawn i.i.d. from some distribution\footnote{Except the last degree, which
  should be chosen so that the sum of the degrees is even; as this makes no
  difference asymptotically, we ignore this technical point in this
  discussion.}, where the number of nodes $N_t=t$ is deterministic. An obvious
example would be to take this distribution to be the Karlin-Rouault distribution
with pmf given by \eqref{eq:KarlinRouaultdist} for some $\alpha_0\in(0,1)$. This
would automatically satisfy the second part of condition
\eqref{eq:assumpt1_cond1} of Assumption \ref{ass:concent:degdist} as
$\widehat\alpha_t$ is the MLE under this model, as discussed in
\cref{sec:mainresults}. However, using limit theorems for iid random variables
with polynomial tails, we have
$\nedg=\sum_{i=1}^{N_t} D_{t,i}\asymp N_t^{1/\alpha_0}$ almost-surely, which does not match the first part of condition \eqref{eq:assumpt1_cond1}  of Assumption \ref{ass:concent:degdist}.

We now consider a variation of the above iid configuration model, proposed by
\cite{VanDenEsker2005}. In this model, we condition the degrees to be less or
equal than some value $D_{\max,t}$, which increases with $t$. More specifically,
for a a pmf $(f_j)_{j\geq 1}$ and a given sequence $(D_{\max,t})_{t\geq 0}$ of
maximum degrees, we
draw a sequence $(D_{t,1},\dots,D_{t,t})$ that are independent and
identically distributed according to
\begin{align}\label{eq:iidoncfigurationconstrained}
\Pr(D_{t,i}=j)= \left \{
\begin{array}{ll}
  \frac{f_j}{\sum_{k=1}^{D_{\max,t}} f_k} & j=1,\ldots,D_{\max,t} \\
  0 & \text{otherwise}
\end{array} \right .
\end{align}
We obtain the degree sequence $(\tilde{D}_{t,1},\dots,\tilde{D}_{t,t})$ of the
configuration model from $(D_{t,1},\dots,D_{t,t})$ by letting
$\tilde{D}_{t,i} = D_{t,i}$ if $i=1,\dots, t-1$, and $\tilde{D}_{t,t} = D_t$ if
$\sum_{i=1}^tD_{t,i}$ is even, $\tilde{D}_{t,t} = D_{t,i} + 1$ otherwise. This
guarantees that $\sum_{i=1}^t\tilde{D}_{t,i}$ is always even, as required to
build the graph \cite{molloy1995critical}.

\begin{lemma}
  \label{lem:configuration-models}
Let $(f_j)_{j\geq 1}$ be a probability mass function on $\{1,2,,\ldots\}$.
Assume $f_1< 1$ and $1 - \sum_{k=1}^jf_k\sim Lj^{-\alpha_1}$ as $j\to\infty$, for some
$\alpha_1\in(0,1)$ and $c>0$. Then the equation
\begin{equation}
  \label{eq:9}
  \sum_{j\geq 1} f_j \sum_{k=1}^{j-1} \frac{\alpha}{k - \alpha}=1.
\end{equation}
has a unique solution $\alpha_0$ in $(0,1)$. Moreover, if
$D_{\max,t}=A N_t^{\frac{1-\alpha_0}{(1+\alpha_0)(1-\alpha_1)}}$ for
some $A > 0$, then there exists $\tau_{*} > 0$ (which can be made
explicit; see the proof for the exact expression) such that the iid
configuration model with constrained maximum degree
\eqref{eq:iidoncfigurationconstrained} satisfies the two conditions
\eqref{eq:assumpt1_cond1} of Assumption \ref{ass:concent:degdist} (in
probability as $t\to \infty$).
\end{lemma}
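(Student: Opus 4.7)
The strategy has three parts.

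First, to prove uniqueness of $\alpha_0\in(0,1)$ solving \eqref{eq:9}, I would set $g(\alpha) := \sum_{j\ge 1} f_j \sum_{k=1}^{j-1} \alpha/(k-\alpha)$. The bound $\sum_{k=1}^{j-1}\alpha/(k-\alpha) \lesssim \log j$ together with $\sum_j f_j \log j <\infty$ (which follows from the tail $1 - \sum_{k=1}^j f_k \sim Lj^{-\alpha_1}$ with $\alpha_1 > 0$) guarantee that $g$ is well-defined, continuous and, since each $\alpha \mapsto \alpha/(k-\alpha)$ is strictly increasing on $(0,1)$, strictly increasing. One has $g(0)=0$, and since $f_1<1$ the $k=1$, $j\geq 2$ contributions force $g(\alpha) \ge (1-f_1)\alpha/(1-\alpha) \to \infty$ as $\alpha\uparrow 1$. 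The intermediate value theorem yields a unique $\alpha_0 \in (0,1)$.

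Second, I would work out moment asymptotics for the truncated law $\mathbb{P}(D_{t,1}=j) = f_j/Z_t$ on $\{1,\dots,D_{\max,t}\}$, $Z_t := 1 - \bar{F}_{D_{\max,t}}$. Summation by parts against the tail $\bar{F}_j \sim L j^{-\alpha_1}$ gives
\[
\mathbb{E}[D_{t,1}] \sim \frac{L\alpha_1}{1-\alpha_1}\, D_{\max,t}^{1-\alpha_1}, \qquad \mathbb{E}[D_{t,1}^2] \sim \frac{L\alpha_1}{2-\alpha_1}\, D_{\max,t}^{2-\alpha_1}.
\]
Plugging in $D_{\max,t} = A\, N_t^{(1-\alpha_0)/((1+\alpha_0)(1-\alpha_1))}$ produces $\mathbb{E}[\nedg] = N_t\,\mathbb{E}[D_{t,1}] \sim C_0\, N_t^{2/(1+\alpha_0)}$ with $C_0 := L\alpha_1 A^{1-\alpha_1}/(1-\alpha_1)$. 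A triangular-array weak law for the iid sample $(D_{t,i})_{i\le N_t}$ (the single-degree parity adjustment shifts $\nedg$ by at most $1$ and is negligible) then delivers $\nedg = C_0\, N_t^{2/(1+\alpha_0)}(1+o_p(1))$. Inserting this into $\sqrt{2\nedg}(\alpha_0 N_t/\nedg)^{1/(1-\alpha_0)}$ and collecting powers, $N_t$ cancels exactly and the limit in probability is the explicit deterministic constant $\tau_{*} := \sqrt{2C_0}\,\alpha_0^{1/(1-\alpha_0)}\, C_0^{-1/(1-\alpha_0)}$, which is the first half of \eqref{eq:assumpt1_cond1}.

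Third, for $\log(N_t)|\hat\alpha_t - \alpha_0| \to 0$, I would read $\hat\alpha_t$ as the $M$-estimator solving $g_t(\alpha) = 1$, where $g_t(\alpha) := N_t^{-1}\sum_{i=1}^{N_t} h_\alpha(D_{t,i})$ and $h_\alpha(j) := \sum_{k=1}^{j-1}\alpha/(k-\alpha)$. Because $h_\alpha(j) \lesssim \log j$ and $\sum_j f_j(\log j)^2 < \infty$, the per-sample variance is uniformly bounded on compact $\alpha$-subsets of $(0,1)$, so Chebyshev gives $g_t(\alpha) - \mathbb{E}[g_t(\alpha)] = O_p(N_t^{-1/2})$, while the truncation bias $\mathbb{E}[g_t(\alpha)] - g(\alpha) = O(D_{\max,t}^{-\alpha_1}\log D_{\max,t})$ is polynomially small in $N_t$. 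Combining these at $\alpha = \alpha_0$ with $g'(\alpha_0) > 0$ via a local linearization of $g_t$ yields $|\hat\alpha_t - \alpha_0| = O_p(N_t^{-1/2}) = o_p(1/\log N_t)$.

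The most delicate point is the triangular-array weak law for $\nedg$: one has $\mathrm{Var}(D_{t,1}) \asymp D_{\max,t}^{2-\alpha_1}$, exceeding $\mathbb{E}[D_{t,1}]^2$ by a factor $D_{\max,t}^{\alpha_1}$, so the concentration argument must be controlled carefully in the heavy-tail regime. The exponent $(1-\alpha_0)/((1+\alpha_0)(1-\alpha_1))$ in $D_{\max,t}$ is precisely the one for which the sparsity relation \eqref{eq:constantsparsity} emerges; once the weak law is secured, the remaining pieces of the argument are standard LLN and $M$-estimator arguments.
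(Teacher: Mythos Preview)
Your proposal is correct and follows essentially the same route as the paper: monotonicity of $g$ for existence and uniqueness of $\alpha_0$; summation-by-parts moment asymptotics (the paper states these as a separate lemma) combined with Chebyshev for the weak law on $\nedg$; and a linearization/M-estimator argument bounding $|\hat\alpha_t-\alpha_0|$ via the bias $O(D_{\max,t}^{-\alpha_1})$ and the variance $O(t^{-1})$ of $g_t(\alpha_0)$. One small slip: your final rate $O_p(N_t^{-1/2})$ should be $O_p(N_t^{-1/2}\vee D_{\max,t}^{-\alpha_1}\log D_{\max,t})$, since the truncation bias may dominate the stochastic term for some $(\alpha_0,\alpha_1)$, but either way the needed conclusion $o_p(1/\log N_t)$ holds.
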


In general, $\alpha_1\neq \alpha_0$, and the sparsity parameter $\alpha_0$ does not correspond to the exponent of the tail of the degree distribution. An exception is when $(f_j)$ is the Karlin-Rouault distribution \eqref{eq:KarlinRouaultdist}. In this case, $\alpha_1=\alpha_0$.

\paragraph*{Sparse heterogeneous random graphs}

Degree corrected sparse graph models will typically satisfy
\cref{ass:concent:degdist} if the sequence of degrees is chosen accordingly, see
for instance \cite{ouadah20}. We illustrate this point in the next lemma in the
case of the simpler degree corrected Erdös--Rényi graph. The proof of \cref{lem:z9f-xsj8-bpx} is
given in \cite[\cref{supp-sec:yyv-dh1c-y6a}]{caron:naulet:rousseau:supplement}.

\begin{lemma}
  \label{lem:z9f-xsj8-bpx}
  Consider the degree corrected Erdös--Rényi graph where the probability of an
  edge between two vertices $i,j$ is given by:
  \begin{equation*}
    P(Z_{ij}=1)  = \theta_i\theta_j p_N, \quad p_N = p_0N^{-2\alpha_0/(1 + \alpha_0)}.
  \end{equation*}
  Define $\bar{\theta} \coloneqq N^{-1}\sum_{i=1}^N\theta_i$, and suppose $(\theta_i)_{i\leq N}$ is
  chosen so that $p_N(N\bar{\theta})^2 = c_0N^{2/(1+\alpha_0)}$ for some constant $c_0$,
  so that for all $j=1,\dots,N$
  \begin{equation*}
    p(j)  \coloneqq \frac{ \sum_{u=1}^N 1_{\lceil\theta_u Np_N \bar \theta \rceil =j} }{ N }, \quad \text{with} \quad  \sum_{j=2}^N p(j) \sum_{k=1}^{j-1} \frac{ \alpha_0 }{ k - \alpha_0} = 1.
  \end{equation*}
  Then \cref{ass:concent:degdist} is satisfied with $\tau_{*} = c_0^{-(1+\alpha_0)/(2-2\alpha_0)}$.
\end{lemma}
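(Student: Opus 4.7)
The plan is to verify the two conditions of \cref{ass:concent:degdist} by concentrating the random quantities $\nedg$, $N_t$, and $(N_{t,j})$ around their means, then matching those means to the non-random specification given by $p(j)$. A preliminary simplification: combining the constraint $p_N(N\bar{\theta})^2 = c_0 N^{2/(1+\alpha_0)}$ with $p_N = p_0 N^{-2\alpha_0/(1+\alpha_0)}$ forces $\bar{\theta} = \sqrt{c_0/p_0}$ to be an absolute constant, so that the expected degree of vertex $u$ is $\mu_u := \theta_u N p_N \bar\theta = \theta_u \sqrt{p_0 c_0}\, N^{(1-\alpha_0)/(1+\alpha_0)}$ and $\EE[\nedg] = c_0 N^{2/(1+\alpha_0)}$.

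For the first part of \eqref{eq:assumpt1_cond1}, Chebyshev's inequality applied to the independent Bernoullis $Z_{ij}$ would yield $\nedg = c_0 N^{2/(1+\alpha_0)}(1+o_p(1))$. A similar second-moment argument applied to $N - N_t = \sum_u \prod_{j\ne u}(1 - \theta_u\theta_j p_N)$, combined with the Poisson approximation $\prod_{j\ne u}(1 - \theta_u\theta_j p_N) = e^{-\mu_u}(1+o(1))$, shows $N_t/N \to q \in (0,1]$ in probability for a constant $q$ determined by the empirical distribution of $(\mu_u)$. Substituting into $\sqrt{2\nedg}(\alpha_0 N_t/\nedg)^{1/(1-\alpha_0)}$, the exponents of $N$ cancel (precisely because the sparsity rate $2/(1+\alpha_0)$ matches \eqref{eq:constantsparsity}), and simplification recovers the stated $\tau_* = c_0^{-(1+\alpha_0)/(2-2\alpha_0)}$ up to a dimensionless prefactor depending on $\alpha_0$ and $q$.

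For the second part I would exploit that $\hat\alpha_t$ is the unique root of the strictly increasing map $\alpha \mapsto g_t(\alpha) := \sum_{j\geq 2}\frac{N_{t,j}}{N_t}\sum_{k=1}^{j-1}\frac{\alpha}{k-\alpha} - 1$, with $g_t'(\alpha_0)$ bounded away from $0$, and compare $g_t(\alpha_0)$ to the hypothesized identity $\sum_{j\geq 2}p(j)\sum_{k=1}^{j-1}\frac{\alpha_0}{k-\alpha_0} = 1$. The decomposition
\begin{equation*}
  g_t(\alpha_0) = \sum_{j\geq 2}\Big(\frac{N_{t,j}}{N_t} - \frac{N}{N_t}p(j)\Big)\sum_{k=1}^{j-1}\frac{\alpha_0}{k-\alpha_0} + \Big(\frac{N}{N_t}-1\Big)
\end{equation*}
leads to a fluctuation term that can be controlled by McDiarmid's inequality (changing a single edge changes $N_{t,j}$ by at most $4$) after truncating at a maximum degree $D_{\max}\asymp N^{(1-\alpha_0)/(1+\alpha_0)}\log N$. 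Dividing by $g_t'(\alpha_0)$ yields $|\hat\alpha_t - \alpha_0| = O_p(1/\sqrt{N})$, which is $o_p(1/\log N)$ as required.

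The main obstacle is the bias term $N/N_t - 1$, which does not vanish because a constant fraction of vertices are isolated owing to the Poisson fluctuations of $D_{t,u}$ around $\mu_u$ when $\mu_u = O(1)$. Handling this cleanly requires expanding $\EE[\sum_{k=1}^{D_{t,u}-1}\alpha_0/(k-\alpha_0)]$ as a function of $\mu_u$ and comparing it with the deterministic analogue $\sum_{k=1}^{\lceil\mu_u\rceil-1}\alpha_0/(k-\alpha_0)$; summed over $u$ and weighted by the tail decay of $p(j)$, the discrepancy should be $o(N/\log N)$. The resulting shift gets absorbed into a harmless redefinition of the prefactor in $\tau_*$, as the first-part cancellation above already shows that the sparsity rate is invariant to the proportion $q$ of non-isolated vertices.
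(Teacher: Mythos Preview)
Your approach diverges from the paper's in two places, and one of them contains a genuine gap.

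For the second condition, the paper does not concentrate the histogram $(N_{t,j})$ via McDiarmid. It uses the vertex-level identity
\[
\sum_{j\geq 2}\Big(\frac{N_{t,j}}{N}-p(j)\Big)\sum_{k=1}^{j-1}\frac{\alpha_0}{k-\alpha_0}
=\frac{1}{N}\sum_{i=1}^N\bigl[G(D_i)-G(\mu_i)\bigr],
\]
where $G(m)\coloneqq \sum_{k=1}^{m-1}\frac{\alpha_0}{k-\alpha_0}$ and $\mu_i=\lceil\theta_i N p_N\bar\theta\rceil$, and then bounds each summand by a first-order piece $|D_i-\mu_i|\cdot\alpha_0/(\mu_i-\alpha_0)$ plus a second-order piece $(D_i-\mu_i)^2\cdot\alpha_0/(\mu_i\wedge D_i-\alpha_0)^2$. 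Second and fourth moment bounds for sums of independent Bernoullis (via \cite{ouadah20}) give $\frac{1}{N}\sum_i\EE[(D_i-\mu_i)^r/(\mu_i-\alpha_0)^2]\lesssim p_N$ for $r=2,4$, which yields $|\hat\alpha_t-\alpha_0|=O_p(\sqrt{p_N})$. Your proposed fix for the bias term --- expanding $\EE[G(D_{t,u})]$ around $G(\mu_u)$ and summing over $u$ --- \emph{is} the paper's argument; you have relegated the main mechanism to a patch and placed a McDiarmid step (which would in any case need an Efron--Stein or Bernstein variant to see the sparsity $p_N$) in front of it.

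The genuine gap is in the constants. You write that the first-part computation recovers $\tau_{*}$ ``up to a dimensionless prefactor depending on $\alpha_0$ and $q$'' and that the bias shift ``gets absorbed into a harmless redefinition of the prefactor in $\tau_{*}$''. Neither is admissible: $\tau_{*}$ is pinned down in the statement, and a leftover factor of $q<1$ would falsify the conclusion rather than redefine anything. The paper sidesteps this entirely by working with $N$ rather than $N_t$ throughout --- it never excises isolated vertices --- so there is no $q$ and no bias term $(N/N_t-1)$ in its bookkeeping; the first condition is obtained by substituting $\EE[\nedg]=c_0N^{2/(1+\alpha_0)}$ and $N$ directly into the defining expression and reading off $c_0^{-(1+\alpha_0)/(2-2\alpha_0)}$.
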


\paragraph*{Edge-exchangeable models and extensions}

Edge-exchangeable (multi)graphs \cite{Crane2018} are natural candidates for
checking \cref{ass:concent:degdist} due to their close proximity with graphex
processes. We show that, however, they behave rather differently and do not
satisfy the \cref{ass:concent:degdist}. To illustrate this, let consider the
so--called \textit{Hollywood process} \cite[Section~5]{Crane2018}. This process
generates almost-surely sequence of multigraphs $(\mathcal{G}_t)_{t> 0}$ whose
degree distributions satisfy for some $\alpha \in (0,1)$
\begin{equation*}
  \frac{N_{t,j}}{N_t} \sim  \frac{\alpha j^{-(1+\alpha)}}{\Gamma(1-\alpha)},\quad t\to\infty.
\end{equation*}
This power-law behaviour of the degree distribution is identical to those of
the model of \cite{Caron2017} (in this respect, see also \cref{rmk:3}), so that one might expect the Hollywood process to
satisfy the \cref{ass:concent:degdist} as well. In fact they don't, because the
Hollywood process generates multigraphs with way more multi-edges. In
particular, from \cite{Crane2018} we see that the number of multi-edges for the
Hollywood process is almost-surely
 $ \nedg \asymp \nver^{1/\alpha},\quad t \to \infty,$
while we need $\nedg \asymp \nver^{2/(1+\alpha)}$ in order to satisfy
\cref{ass:concent:degdist}. This begs the question of what happens if
we consider the simple random graph obtained from the Hollywood process by
merging the multiedges. It is inferred in \cite{Crane2018} that the power-law
behaviour may be preserved in this case and later \cite{Janson2018} has studied
the property of such graphs, yet the analysis is tricky and it is not obvious
if whether or not they can satisfy our assumptions.

Interestingly, \cite{di2017non} proposes a non-exchangeable partition model that
can be used as model for random multigraphs, in the same way that exchangeable
partitions are used in edge-exchangeable models. The non-exchangeability brings
additional flexibility. In particular, replacing the two-parameters Poisson
Dirichlet process used by \cite{Crane2018} to construct the Hollywood process
by the random partition model of \cite[Section~3]{di2017non} adds an extra
parameter $\xi$ such that
$D^*_t\asymp (N_t)^{(1+\xi)/(\sigma+\xi)},\quad t\to \infty$,
while the shape of the degree distribution remains preserved. Taking $\xi=1$ gives
a model with the same behaviour than the model of \cite{Caron2017} and such
model may thus satisfy \cref{ass:concent:degdist}.

\section{The special case of multigraphex processes}
\label{sec:multigraphex}

In this section we study a wide class of random multigraphs, called multigraphex, and show that the assumptions considered in the previous section are satisfied for this class. Multigraphex are extensions of graphex models~\cite{CaronFox2014,Veitch2019,Borgs2018,caron:rousseau:18} to multigraphs. The first multigraphex model was introduced by \cite{CaronFox2014}; the term multigraphex was coined by  \cite{Borgs2019}, who provided a general definition and made the connection between multigraphex and some sparse configuration models.

This Section derives additional properties of multigraphex. In particular it shows that, multigraphex models satisfy \cref{ass:concent:degdist}. Additionally, an interesting feature of these graphs is that most nodes have ties with multiplicity 1, see \cref{lemma:Ntilde}; hence the sparsity property and power-law properties of such multigraph can be directly derived from the properties of graphex models~\cite{caron:rousseau:18}, see \cref{th:asympmultigraphex} and \cref{th:sparsity1}.

\subsection{Definition of multigraphex processes}

A multigraphex process is a growing family of random multigraphs
$(\mathcal G_t)_{t\geq 0}$, indexed by the size $t\geq 0$, where $\mathcal G_t$ is a
multigraph with no isolated vertex (that is, no vertex of degree zero). Ignoring
terms corresponding to isolated edges and (multi-)stars, a multigraphex process
is characterized by a measurable function
$\multigraphon:\mathbb R_+^2\times \NNInts \rightarrow[0,1]$, called a
\textit{multigraphon}, and satisfying
$\multigraphon(x,y,k)=\multigraphon(y,x,k)$, $\sum_{k=0}^\infty \multigraphon(x,y,k)=1$
as well as some integrability conditions. We refer to \cite[Definition
4]{Borgs2019} for details.
To generate a  multigraphex process, one first
draws a unit rate Poisson process
$M \coloneqq \Set{(\vartheta_i,\theta_i) \given i \in \Nats}$ on $\NNReals^2$.
Conditional on $M$, generate an infinite symmetric array
$(\tilde{n}_{i,j})_{(i,j)\in \Nats^2}$ such that for $i \leq j$ the variables
$\tilde{n}_{i,j}$ are independent with distribution on $\NNInts$ given by
\begin{equation*}
  \Pr(\tilde n_{ij}=k\mid M)=\multigraphon(\vartheta_i,\vartheta_j,k),\qquad k \in \NNInts.
\end{equation*}
The infinite array $(\tilde{n}_{i,j})_{(i,j)\in \Nats^2}$ can be seen as the
adjacency matrix of an infinite multigraph $\mathcal{G}$. Then for $t>0$, $\mathcal{G}_t$ is obtained from
$\mathcal{G}$ by keeping only the nodes $i \in \Nats$ such that $\theta_i \leq t$ and such
that they have at least one connection with a node $\theta_j \leq t$ (with possibly
$j=i$). We note that this construction implies that each $\mathcal{G}_t$ has no
isolated vertex. Also, the statistics of interest for us can be easily seen to
satisfy
\begin{equation*}
  D_{t,i} = \sum_k\tilde{n}_{i,k}\1{\theta_{k}\leq t},\quad%
  N_{t,j} = \sum_{i}\1{\theta_{i}\leq t}\1{D_{t,i}=j},\quad%
  \nedg = \sum_{i\geq 1} D_{t,i}\1{\theta_i\leq t}.
\end{equation*}

\begin{remark}
  \label{rmk:4}
  Most of the recent attention on multigraphex processes has been drawn on the
  situation where the multigraphon function is of the form
  \begin{equation}
    \label{eq:poissonmultigraphonrank1}
    \multigraphon(x,y,k)=%
    \begin{cases}
      \Poissonpdf(k;2\bar{\rho}^{-1}(x)\bar{\rho}^{-1}(y) ) & x\neq y\\
      \Poissonpdf(k;\bar{\rho}^{-1}(x)^2) & x = y,
    \end{cases}
  \end{equation}
  with $\Poissonpdf(\cdot;\lambda)$ the probability density function of the $\poiDist(\lambda)$
  distribution, and
  with $\bar{\rho}^{-1}$ the generalized inverse of the tail of a Lévy intensity
  $\bar{\rho}(x) = \int_x^{\infty}\rho(\intd w)$, where $\rho$ is some Lévy measure on
  $\NNReals$. Depending on the properties of the tail L\'evy intensity, one can
  obtain sparse and dense multigraph processes, potentially with power-law
  degree distributions. \cite{Borgs2019} recently showed that this class of
  models is the limit of some configuration and preferential attachment models,
  underlining the relevance of such models for statistical network modeling. It
  is also worth mentioning that, the model of \cite{Caron2017}, which is also
  summarized in \cref{sec:conc-likel-funct}, is easily seen to be a special case
  of multigraphex processes with $W_m$ as in \cref{eq:poissonmultigraphonrank1}
  with $\rho$ the Lévy measure defined in \cref{eq:GGLevyMeasure}. The multigraphs
  are sparse for $\sigma_0\geq 0$ and dense otherwise, with power-law degree
  distribution if $\sigma_0>0$.
\end{remark}

\subsection{Assumptions}

We start by assuming some integrability conditions on functions related to
$W_m$. These integrability conditions are sufficient (though not necessary) for
the number of edges and vertices to be almost surely finite, and the
multigraphex process well defined~\cite[Definition 4]{Borgs2019}. In particular,
we define for $r \geq 0$ the $r$'s multigraphon moment function as
$$
\overline W_r(x,y)=\sum_{k=1}^{\infty}k^r \multigraphon(x,y,k),
$$
and we assume the following.
\begin{assumption}
  \label{ass:integgraphon}
  $W_m$ is such that
  \begin{equation*}
    \int_0^\infty \overline W_2(x,x)dx<\infty,\ %
    \mathrm{and},\quad \int_{\mathbb R_+^2} \overline W_2(x,y)dxdy<\infty.
  \end{equation*}
\end{assumption}

In many occasions it is of interest to consider the simple graph obtained from a
multigraphex process by merging the multiedges. It is easily seen that a simple
graph obtained by this mechanism is a ``standard'' graphex process in the sense
of \cite{Caron2017,Veitch2015}, with graphon function given by
\begin{align}
  \label{eq:defW}
  W(x,y) \coloneqq 1-\multigraphon(x,y,0)
\end{align}
In particular, it is a consequence of  \cref{lemma:Ntilde} below that many
properties of the multigraph can be deduced from their counterparts in the
simple graph model with graphon $W$. It is known from
\cite{Veitch2015,caron:rousseau:18} that a key quantity to characterize the
behaviour of the simple graph is the marginal graphon function%
\begin{equation*}
  \mu(x)\coloneqq\int_0^\infty W(x,y)dy=\int_0^\infty (1-W_m(x,y,0))dy.
\end{equation*}

We observe that the \cref{ass:integgraphon} implies that $\mu$ is
an integrable function.

The following assumption, used by \cite{caron:rousseau:18}, characterises the
behaviour of $\mu$ at infinity or, equivalently, of its generalised inverse
$\mu^{-1}$ at 0. We require $\mu^{-1}$ to behave approximately as a power function
$x^{-\alpha_0}$ around 0, for some $\alpha_0\in[0,1]$. As demonstrated in the simple graph
case in \cite{caron:rousseau:18}, this power law behaviour drives the
asymptotics of key statistics of the graph such as the number of vertices or the
degree distribution. \cref{th:asympmultigraphex} establishes the same result
in the multigraph context.

\begin{assumption}
  \label{assumpt:1}
  Assume $\mu$ is non-increasing, right-continuous, with generalised inverse
  $\mu^{-1}(x)=\inf\Set{ y>0 \given \mu(y)\leq x}$, such that
  \begin{equation*}
    \mu^{-1}(x)\sim\ell_1(1/x)x^{-\alpha_0}\text{ as }x\rightarrow0,
  \end{equation*}
  where $\alpha_0\in\lbrack0,1]$ and $\ell_1$ is a slowly varying function at
  infinity.
\end{assumption}

Many examples of (simple) graphon functions are studied  in\cite{caron:rousseau:18}
which satisfy \cref{assumpt:1}.
In particular, we show in \cref{thm:2} that the GGP model of \cite{Caron2017} satisfies
\cref{assumpt:1} with $\ell_1$ a constant function.

The following is a technical assumption needed in order to obtain the
almost sure results. A similar assumption has been made by~\cite{Veitch2015} and
\cite{caron:rousseau:18} for the analysis of graphex models. In particular it
permits to obtain sufficiently sharp bounds on the variance of the key
statistics of the multigraphs. It is slightly stronger than Assumption 2 in \cite{caron:rousseau:18}.
\begin{assumption}\label{assumpt:2} Let $\nu(x,y)\coloneqq\int_0^\infty W(x,z)W(y,z)dz$,
and assume that there exists a positive, locally bounded function $\ell_3$ such
that $\ell_3(x)$ converges to a strictly positive constant as $x\to\infty$, and
for all $x,y > 0$
\begin{align*}
  \nu(x,y)\leq \ell_3(x)\ell_3(y)\mu(x)^{a}\mu(y)^a,\quad%
  \begin{cases}
    a>\max\left(\frac{1}{2},\alpha_0\right) & \mathrm{if}\ \alpha_0\in[0,1),\\
    a=1 & \mathrm{if}\ \alpha_0=1.
  \end{cases}
\end{align*} 
\end{assumption}

In comparison with \cite{caron:rousseau:18} which only deals with the simple
graph models, in order to study multigraphex we
need to consider assumptions on the distribution of the number of multiedges
given that there is an edge. This is provided by the following assumption.

\begin{assumption}
  \label{ass:Wm}
  Assume that, for all $x,y > 0$,
  \begin{align*}
    W_m(x,y,1)\geq W(x,y)(1-W(x,y)).
  \end{align*}
\end{assumption}

Clearly the \cref{ass:Wm} is satisfied if $W_m$ is of the form of
\cref{eq:poissonmultigraphonrank1}, and hence it is satisfied by the important
class of graphex processes considered in \cite{Borgs2019} and also by the model
of \cite{Caron2017}. Interestingly, the \cref{ass:Wm} is also satisfied if $W_m$
is of the form of \cref{eq:poissonmultigraphonrank1} but with the Poisson
distribution replaced with a Binomial, a Geometric, or even a Negative Binomial
distribution.

The \cref{ass:integgraphon,assumpt:1,assumpt:2,ass:Wm} allow us to derive the
first order asymptotic behaviour of $\nver, \nedg, N_{\TrueSize,j}$. However to
check if the multigraphex satisfies \cref{ass:concent:degdist} we need to
consider a strengthened version of \cref{assumpt:1}.
\begin{assumption}
  \label{ass:tails}
There exist $\beta, c_0  >0$   such that
  \begin{equation*}
    |\mu^{-1}(x) - x^{ -\alpha_0} c_0 | = O( x^{\beta-\alpha_0}) , \quad \text{when } x \rightarrow 0.
  \end{equation*}
 \end{assumption}

 \Cref{ass:tails} is a second order assumption on the tail behaviour of $\mu^{-1}$
 similar to the assumptions found in extreme value theory, see for instance
 \cite{carpentier2015adaptive,boucheron2015tail}. The first-order assumption
 \cref{assumpt:1} is sufficient to show that $\qMLEspars \to \alpha_0$ almost-surely.
 Unfortunately, to ensure the concentration of the other parameters
 $(\qMLEtau,\qMLEsize)$, we need to establish that
 $\log(t)|\hat{\alpha}_t - \alpha_0| = o(1)$, which cannot be guaranteed if we only assume
 $\mu^{-1}(x) \sim x^{-\alpha_0}\ell_1(1/x)$; whence the second order assumption.

\subsection{Sparsity and Power-law properties of multigraphex processes}

Here we derive sparsity and power-law properties of multigraphex
processes. These results are the counterpart of the results derived by
\cite{Caron2017} for (simple) graphex processes.

The results follow from the following Proposition which shows that the number of
nodes connected to at least one edge of multiplicity larger than 1 is
negligible.

\begin{proposition}
  \label{lemma:Ntilde}
  Let $(\mathcal G_t)$ be a multigraphex with multigraphon function $W_m$. Let
  $\widetilde{\mathcal G}_t$ denote the subgraph of $\mathcal G_t$ obtained by
  removing all edges of multiplicity 1, and all nodes whose ties all have
  multiplicity 1. Denote $\widetilde N_t$ the number of nodes in
  $\widetilde{\mathcal G}_t$. Suppose
  \cref{ass:integgraphon,assumpt:1,assumpt:2,ass:Wm} hold for some $\alpha_0>0$. We
  have, for any $\delta>0$,
  $$
  \widetilde N_t =o(t^{1+\alpha_0/2+\delta})=o(N_t)
  $$
  almost surely as $t\to\infty$.
\end{proposition}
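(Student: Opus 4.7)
The plan is to dominate $\widetilde N_t$ by the number of points of the Poisson process $M$ with $\theta_i\leq t$ that are incident to at least one edge of multiplicity $\geq 2$, and compute via \cref{ass:Wm} and Campbell's formula. For $i\ne j$, \cref{ass:Wm} yields
\begin{equation*}
  \Pr(\tilde n_{ij}\geq 2 \mid \vartheta_i = x, \vartheta_j = y)
  = 1 - W_m(x,y,0) - W_m(x,y,1) \leq W(x,y)^2,
\end{equation*}
with $W$ as in \cref{eq:defW}. Conditioning on $\vartheta_i = x$ and thinning the Poisson process of the other points, the number of ``multi-edge'' neighbours of node $i$ is stochastically dominated by a Poisson variable with mean $t\phi(x)$, where $\phi(x)\coloneqq\int_0^\infty W(x,y)^2\,dy$. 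Hence
\begin{equation*}
  \EE[\widetilde N_t]
  \leq
  t\int_0^\infty\bigl[1-e^{-t\phi(x)}\bigr]dx
  + t\int_0^\infty W(x,x)^2\,dx,
\end{equation*}
and the self-loop term is $O(t)$ by \cref{ass:integgraphon}.

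To control the main term I would use $\phi(x)=\nu(x,x)\leq \ell_3(x)^2\mu(x)^{2a}$ from \cref{assumpt:2}, together with the regular variation of $\mu^{-1}$ from \cref{assumpt:1}. Splitting the integral at $x_*$ chosen so that $t\phi(x_*)\asymp 1$ (which behaves like $t^{\alpha_0/(2a)}$ up to slowly varying factors), the ``heavy head'' $\{x\leq x_*\}$ contributes at most $x_*$ via $1-e^{-u}\leq 1$, while the ``light tail'' $\{x> x_*\}$ contributes $\lesssim t\int_{x_*}^\infty\ell_3(x)^2\mu(x)^{2a}\,dx\asymp t\cdot x_*^{1-2a/\alpha_0}$, which by the choice of $x_*$ is of the same order. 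Multiplying by the outer factor $t$ yields $\EE[\widetilde N_t]\lesssim t^{1+\alpha_0/(2a)}$ up to slowly varying factors.

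For the almost-sure conclusion, I would estimate $\mathrm{Var}(\widetilde N_t)$ via the second-order Campbell formula (the pair interaction being controlled again by $\nu$ and thus by \cref{assumpt:2}), apply Chebyshev along the geometric subsequence $t_n=2^n$, and conclude via Borel--Cantelli together with a monotonicity/sandwich argument to interpolate between $t_n$ and $t_{n+1}$ (using that adding Poisson points can only increase $\widetilde N_t$ and that degree increments of new points are controlled by Poisson concentration). The hardest step is obtaining the sharp exponent $1+\alpha_0/2$: \cref{assumpt:2} only guarantees $a>\max(1/2,\alpha_0)$, so the direct route produces $t^{1+\alpha_0/(2a)}$, which is already enough for $\widetilde N_t=o(N_t)$ since $\alpha_0/(2a)<\alpha_0$. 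Reaching the stated $o(t^{1+\alpha_0/2+\delta})$ requires $a$ essentially $\geq 1$, which is the natural situation for rank-1 Poisson multigraphons as in \cref{rmk:4} (in particular the Caron--Fox model); in full generality, the gap must be closed by combining \cref{assumpt:2} with an additional tail bound on $W$ (for instance through \cref{ass:integgraphon} and interpolation against the trivial $W\leq 1$) that refines the decay of $\phi$.
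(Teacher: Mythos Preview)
Your approach is essentially the paper's. Both reduce to the observation that, by \cref{ass:Wm}, $\Pr(\tilde n_{ij}\geq 2\mid M)=W(x,y)-W_m(x,y,1)\leq W(x,y)^2$, so $(\widetilde{\mathcal G}_t)$ is a graphex with graphon $\widetilde W\leq W^2$ and marginal $\widetilde\mu(x)\leq\int_0^\infty W(x,y)^2\,dy=\nu(x,x)$, whence $\EE[\widetilde N_t]\leq t\int_0^\infty\bigl(1-e^{-t\widetilde\mu(x)}\bigr)dx+O(t)$. The paper's framing---noting explicitly that $(\widetilde{\mathcal G}_t)$ is itself a multigraphex and quoting the standard vertex-count formula from \cite{Veitch2015}---is slightly cleaner, but the content is identical.

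Two remarks. First, your almost-sure step is heavier than needed. The paper does not estimate $\var(\widetilde N_t)$: once $\EE[\widetilde N_t]=o(t^{1+\alpha_0/2+\delta})$, Markov's inequality alone gives $\Pr\bigl(\widetilde N_t>\epsilon\,t^{1+\alpha_0/2+2\delta}\bigr)=o(t^{-\delta})$; Borel--Cantelli along the polynomial subsequence $t_m=m^{2/\delta}$ together with the monotonicity of $t\mapsto\widetilde N_t$ then finishes the argument without any second-moment computation.

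Second, your concern about the exponent is well founded and in fact applies to the paper's own proof. The paper writes $\widetilde\mu(x)\leq\ell_3(x)^2\mu(x)^2$ and deduces $o(t^{\alpha_0/2+\delta})$, but \cref{assumpt:2} as stated only yields $\nu(x,x)\leq\ell_3(x)^2\mu(x)^{2a}$ with $a>\max(1/2,\alpha_0)$, producing the exponent $\alpha_0/(2a)$ you identify. Since $a>1/2$ this is still strictly below $\alpha_0$, so $\widetilde N_t=o(N_t)$ survives---and that is the only consequence the rest of the paper actually uses. The sharper bound $o(t^{1+\alpha_0/2+\delta})$ appears to implicitly require $a\geq1$, which holds for the GGP model and the rank-one Poisson multigraphons of \cref{rmk:4} but is not forced by the stated assumptions; your proposed interpolation fix does not obviously close this gap without strengthening the hypothesis.
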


The proof of \cref{lemma:Ntilde} is given in \cref{sec:proof-crefl}. As a
consequence of \cref{lemma:Ntilde}, most of analyses involving multigraphexes
can be reduced to the analysis of a classical graphex process, \textit{i.e.} a
simple graph analysis. This turns out to be convenient since asymptotic
behaviour of simple graphex processes has been extensively studied in
\cite{caron:rousseau:18}. In particular, from their result, we can establish the
following theorem for multigraphex processes.

\begin{theorem}[Asymptotic properties of the multigraphex process]
  \label{th:asympmultigraphex}
  Suppose \cref{ass:integgraphon} holds. All the results below hold in mean and
  almost surely as $t\to\infty$. We have
  \begin{equation*}
    D_{t}^{\ast}\sim t^{2}\int_{\mathbb{R}_{+}^{2}}\oW_1(x,y)dxdy,\quad%
    N_{t}\sim%
    \begin{cases}
      t^{1+\alpha_{0}}\Gamma(1-\alpha_{0})\ell_{1}(t) &\mathrm{if}\ \alpha_{0}\in \lbrack0,1)\\
      t^{2}\widetilde{\ell}_{1}(t) & \mathrm{if}\ \alpha_0 =1,
    \end{cases}
\end{equation*}
where the result on $\nver$ holds under the extra \cref{assumpt:1,assumpt:2} for
some $\alpha_0\in[0,1]$ and some slowly varying function $\ell_1$, with
$ \widetilde{\ell}_{1}(t)=\int_{t}^{\infty}y^{-1}\ell_{1}(y)dy$. If $\alpha_{0}=0$, then for any
$j\geq1$ we have
$ N_{tj}=o(t\ell_{1}(t)). $ Assume now \cref{ass:Wm} also hold. Then, for any
$j\geq1$,%
\begin{equation*}
  N_{t,j}\sim\frac{\alpha_{0}\Gamma(j-\alpha_{0})}{j!}t^{1+\alpha_{0}}\ell
  _{1}(t).
\end{equation*}
Finally, if $\alpha_{0}=1$, then  for any $j\geq1$  we have
\begin{equation*}
  N_{t,j}%
  \sim
  \begin{cases}
    t^{2}\widetilde{\ell}_{1}(t) &\mathrm{if}\ j= 1,\\
    o(t^{2}\widetilde{\ell}_{1}(t)) &\mathrm{if}\ j\geq 2.
  \end{cases}
\end{equation*}
\end{theorem}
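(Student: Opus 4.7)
The plan is to handle the three assertions in sequence, using \cref{lemma:Ntilde} to reduce the vertex and degree statistics to their simple graphex counterparts so that the asymptotics of~\cite{caron:rousseau:18} can be invoked directly.

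For $\nedg$, I would write $\nedg = \sum_{i,k} \tilde n_{i,k}\1{\theta_i\leq t}\1{\theta_k\leq t}$ as a bilinear Poisson functional of $M$. Conditioning on $M$ and applying Campbell's formula gives $\EE[\nedg] = t^2 \int_{\NNReals^2} \oW_1(x,y)\,dx\,dy + O(t)$, with the diagonal contribution finite because $\oW_1\leq \oW_2$ and $\int \oW_2(x,x)\,dx < \infty$ by \cref{ass:integgraphon}. A variance computation based on Mecke's formula, combined with the Jensen inequality $\oW_1^2 \leq \oW_2$ and the integrability in \cref{ass:integgraphon}, yields $\var(\nedg) = O(t^3) = o(\EE[\nedg]^2)$. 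This delivers $L^2$ convergence, and the almost sure statement follows by a Borel--Cantelli argument along $t_n = n$ interpolated via the monotonicity of $t \mapsto \nedg$.

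For $\nver$, the decisive observation is that a node is retained in $\mathcal G_t$ iff it has at least one tie of any multiplicity, so $\nver$ coincides with the vertex count of the simple graphex process $\mathcal G'_t$ whose graphon is $W = 1-\multigraphon(\cdot,\cdot,0)$ as in~\eqref{eq:defW}. Since \cref{assumpt:1,assumpt:2} are formulated precisely for this $W$, the claimed asymptotic $\nver \sim t^{1+\alpha_0}\Gamma(1-\alpha_0)\ell_1(t)$ (and the $\alpha_0=1$ variant with $\widetilde\ell_1$) is then a direct transcription, in mean and almost surely, of the corresponding vertex-count theorem of~\cite{caron:rousseau:18}.

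For $\ndeg{j}$, let $N'_{t,j}$ denote the degree-$j$ count in $\mathcal G'_t$. A node contributes differently to $\ndeg{j}$ and $N'_{t,j}$ only if its multigraph and simple graph degrees differ, which happens iff it carries some tie of multiplicity $\geq 2$, i.e.\ iff it belongs to $\widetilde{\mathcal G}_t$. Hence $|\ndeg{j} - N'_{t,j}| \leq 2\widetilde N_t$. \Cref{lemma:Ntilde} gives $\widetilde N_t = o(t^{1+\alpha_0/2+\delta})$ almost surely for every $\delta>0$; for $\alpha_0>0$, choosing $\delta < \alpha_0/2$ makes this $o(t^{1+\alpha_0}\ell_1(t))$. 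Applying the graphex degree-distribution theorem of~\cite{caron:rousseau:18} to $\mathcal G'_t$ yields $N'_{t,j} \sim (\alpha_0 \Gamma(j-\alpha_0)/j!)\, t^{1+\alpha_0}\ell_1(t)$, and the conclusion follows. The $\alpha_0=0$ and $\alpha_0=1$ cases are handled by the same comparison with the relevant simple graph asymptotic. The main obstacle is the almost sure rate for $\nedg$: it is the one statistic not amenable to the reduction via \cref{lemma:Ntilde}, so one must genuinely control the variance of a bilinear Poisson functional, and the integrability of $\oW_2$ in \cref{ass:integgraphon} is precisely tailored for that.
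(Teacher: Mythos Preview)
Your approach is essentially the paper's: Slivnyak--Mecke for $\EE[\nedg]$ and $\var(\nedg)$, the observation that $\nver$ coincides with the simple-graph vertex count so the results of \cite{caron:rousseau:18} apply verbatim, and the comparison $|N_{t,j}-N'_{t,j}|\leq \widetilde N_t$ via \cref{lemma:Ntilde} for the degree counts when $\alpha_0\in(0,1]$.

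There is, however, a genuine gap in your treatment of the case $\alpha_0=0$. You write that it is ``handled by the same comparison'', but \cref{lemma:Ntilde} is stated only for $\alpha_0>0$, and even if one had a bound of the form $\widetilde N_t=o(t^{1+\delta})$ here, it would not imply $\widetilde N_t=o(t\ell_1(t))$ since $\ell_1$ is merely slowly varying. Moreover, the $\alpha_0=0$ claim in the theorem does not assume \cref{ass:Wm}, which \cref{lemma:Ntilde} requires. The paper instead uses the elementary comparison $N_{t,j}\leq \sum_{k=1}^j N^{(s)}_{t,k}$: a node of multigraph degree $j$ has simple-graph degree at most $j$, so each $N_{t,j}$ is dominated by a finite sum of simple-graph counts, each of which is $o(t\ell_1(t))$ by \cite{caron:rousseau:18}. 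This bypasses \cref{lemma:Ntilde} entirely.

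A minor technical point: with $\var(\nedg)=O(t^3)$ and $\EE[\nedg]\asymp t^2$, Chebyshev gives tail probability $O(t^{-1})$, which is not summable along $t_n=n$; you need a sparser subsequence (e.g.\ $t_n=n^2$) before invoking Borel--Cantelli and then interpolating by monotonicity. The paper delegates this to a lemma in \cite{caron:rousseau:18}.
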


The proof of \cref{th:asympmultigraphex} is given in
\cite[\cref{supp-sec:proof-theorem-asympmultigraphex}]{caron:naulet:rousseau:supplement}. The following is a straightforward
corollary showing giving the power-law and sparsity properties of the
multigraph.
\begin{corollary}[Sparsity and power-law degree distribution]
  \label{th:sparsity1}
  Suppose \cref{ass:integgraphon,assumpt:1,assumpt:2,ass:Wm} hold. The multigraph is dense if $\alpha_0=0$ and $\lim_{t\rightarrow\infty} \ell_1(t)=C<\infty$, as
$D_{t}^{\ast}/N_{t}^2\rightarrow C^2\int_{\mathbb{R}_{+}^{2}}\oW_1(x,y)dxdy $ almost surely. Otherwise, if $\alpha_0>0$ or $\alpha_0=0$ and $\lim_t \ell_1(t)=\infty$, the multigraph is sparse, as
$D_{t}^{\ast}/N_{t}^2\rightarrow 0$. Additionally, for $\alpha_0\in [0,1)$, for any $j=1,2,\ldots$,
\begin{align*}
  \frac{N_{t,j}}{N_t}\rightarrow \frac{\alpha_0 \Gamma(j-\alpha_0)}{j!\Gamma(1-\alpha_0)}
\end{align*}
almost surely as $t\to\infty$. If $\alpha_0>0$, this corresponds to a degree distribution with a power-law behaviour as, for $j$ large
$$
\frac{\alpha_0 \Gamma(j-\alpha_0)}{j!\Gamma(1-\alpha_0)}\sim \frac{\alpha_0}{\Gamma(1-\alpha_0)j^{1+\alpha_0}}.
$$
For $\alpha_0=1$,
$N_{t,1}/N_t\rightarrow 1$ and $N_{t,j}/N_t\rightarrow 0\text{ for $j\geq 2$}
$, hence the nodes of degree 1 dominate in the graph.
\end{corollary}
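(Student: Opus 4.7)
The plan is to derive the corollary as an immediate consequence of \cref{th:asympmultigraphex}, by taking ratios of the first-order asymptotics it supplies for $\nedg$, $\nver$, and $\ndeg{j}$. Since both convergences in that theorem hold almost surely, every ratio below will inherit almost-sure convergence without further work.

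First I would dispose of the sparsity statement. Combining $\nedg\sim t^2\int \oW_1(x,y)\,dxdy$ with $\nver\sim t^{1+\alpha_0}\Gamma(1-\alpha_0)\ell_1(t)$ (for $\alpha_0\in[0,1)$) gives
\begin{equation*}
\frac{\nedg}{\nver^2}\;\sim\;\frac{\int \oW_1(x,y)\,dxdy}{t^{2\alpha_0}\,\bigl(\Gamma(1-\alpha_0)\ell_1(t)\bigr)^2},
\end{equation*}
and the analogous ratio with $\nver\sim t^2\widetilde{\ell}_1(t)$ when $\alpha_0=1$. From this one reads off the dense case $\alpha_0=0,\ \ell_1(t)\to C$ directly, while in every other case the numerator factor $t^{-2\alpha_0}$ (for $\alpha_0>0$), respectively $\ell_1(t)^{-2}$ (for $\alpha_0=0$ with $\ell_1(t)\to\infty$), or $t^{-2}\widetilde{\ell}_1(t)^{-2}$ (for $\alpha_0=1$, using that $\widetilde{\ell}_1$ is slowly varying), forces the ratio to $0$.

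Second, for the degree-distribution statement I would simply divide the asymptotic $\ndeg{j}\sim \bigl(\alpha_0\Gamma(j-\alpha_0)/j!\bigr)t^{1+\alpha_0}\ell_1(t)$ by the corresponding $\nver\sim t^{1+\alpha_0}\Gamma(1-\alpha_0)\ell_1(t)$, which yields the claimed constant whenever $\alpha_0\in(0,1)$. The boundary $\alpha_0=0$ case is handled separately using the bound $\ndeg{j}=o(t\ell_1(t))$ from \cref{th:asympmultigraphex} together with $\nver\sim t\ell_1(t)$, giving $\ndeg{j}/\nver\to 0$, which also agrees with evaluating the limit formula at $\alpha_0=0$. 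For $\alpha_0=1$, both $\ndeg{1}$ and $\nver$ are equivalent to $t^2\widetilde{\ell}_1(t)$, while $\ndeg{j}=o(t^2\widetilde{\ell}_1(t))$ for $j\geq 2$, so the stated limits follow immediately.

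Finally, the power-law tail $\alpha_0\Gamma(j-\alpha_0)/(j!\,\Gamma(1-\alpha_0))\sim \alpha_0/(\Gamma(1-\alpha_0)\,j^{1+\alpha_0})$ as $j\to\infty$ is a one-line application of Stirling's ratio $\Gamma(j-\alpha_0)/\Gamma(j+1)\sim j^{-1-\alpha_0}$. In short, the corollary is entirely a bookkeeping consequence of \cref{th:asympmultigraphex}, with no real obstacle; the only mild care needed is to split the argument according to whether $\alpha_0=0$, $\alpha_0\in(0,1)$, or $\alpha_0=1$, since the normalisation of $\nver$ changes form at the endpoints.
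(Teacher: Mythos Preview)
Your approach is correct and is exactly what the paper intends: the corollary is stated without proof beyond the remark that it is ``a straightforward corollary'' of \cref{th:asympmultigraphex}, and your derivation---forming the ratios $\nedg/\nver^2$ and $\ndeg{j}/\nver$ from the almost-sure asymptotics supplied there, splitting into the three regimes $\alpha_0=0$, $\alpha_0\in(0,1)$, $\alpha_0=1$, and invoking $\Gamma(j-\alpha_0)/\Gamma(j+1)\sim j^{-1-\alpha_0}$ for the tail---is precisely that bookkeeping. One small remark: your displayed ratio actually yields $\int\oW_1/C^2$ in the dense case rather than $C^2\int\oW_1$, so the discrepancy lies in the statement as printed, not in your argument.
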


\subsection{Multigraphex processes satisfy \texorpdfstring{\cref{ass:concent:degdist}}{Assumption \ref{ass:concent:degdist}}}
\label{sec:rates-conv-mle}

Here we prove that sequence of multigraphs issued from certain multigraphex
processes satisfy the \cref{ass:concent:degdist}, and thus for those processes, \cref{thm:concent:1} and  \cref{posterior:cons:1} hold.


\begin{theorem}
  \label{thm:2}
  Let $(\mathcal{G}_t)_{t>0}$ be generated according to a multigraphex process
  with multigraphon function $W_m$ satisfying all the
  \cref{assumpt:1,assumpt:2,ass:Wm,ass:tails}. If in addition we have
  $\int_{\NNReals}\mu(x)^2\,\intd x < \infty$, then \cref{ass:concent:degdist} holds in
  probability with
  \begin{equation*}
    \tau_{*}%
    = \Bigg\{%
    \frac{2\alpha_0 c_0 \Gamma(1-\alpha_0)}{\big(2 \int_{\NNReals^2}
      \bar{W}_1(x,y)\intd x\intd y \big)^{(1+\alpha_0)/2}}
    \Bigg\}^{1/(1-\alpha_0)}.
  \end{equation*}
\end{theorem}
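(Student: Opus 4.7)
The overall plan is to verify the two conditions of \cref{ass:concent:degdist} by combining the first-order asymptotics of \cref{th:asympmultigraphex} with the quantitative refinement allowed by \cref{ass:tails}. The first condition reduces to a one-line algebraic computation from the limits of $\nedg$ and $\nver$; the second, the logarithmic rate on $\hat\alpha_t - \alpha_0$, requires an implicit-function argument together with a weighted convergence of the empirical degree distribution that goes beyond the pointwise limits of \cref{th:asympmultigraphex}. The latter step is the main technical obstacle.

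For the first part of \cref{eq:assumpt1_cond1}: under \cref{ass:tails}, the expansion $\mu^{-1}(x) = c_0 x^{-\alpha_0} + O(x^{\beta-\alpha_0})$ combined with \cref{assumpt:1} forces $\ell_1(t) \to c_0$. The additional integrability $\int \mu(x)^2\,\intd x <\infty$ together with the other integrability assumptions entails $\bar W_1 \in L^1(\NNReals^2)$, so \cref{th:asympmultigraphex} applies and gives
\begin{equation*}
  \nedg \sim A\, t^2, \qquad \nver \sim c_0 \Gamma(1-\alpha_0)\, t^{1+\alpha_0}, \qquad A \coloneqq \int_{\NNReals^2} \bar W_1(x,y)\,\intd x\,\intd y,
\end{equation*}
in probability. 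Substituting these into $\sqrt{2\nedg}\,(\alpha_0 \nver/\nedg)^{1/(1-\alpha_0)}$ and simplifying gives $\sqrt{2A}\,(\alpha_0 c_0\Gamma(1-\alpha_0)/A)^{1/(1-\alpha_0)}$, which after rearrangement is exactly the closed form claimed for $\tau_*$.

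For the second part, set
\begin{equation*}
  F_t(\alpha) \coloneqq \sum_{j \geq 2} \ndeg{j} \sum_{k=1}^{j-1} \frac{\alpha}{k-\alpha},
\end{equation*}
so that $\hat\alpha_t$ is characterised by $F_t(\hat\alpha_t) = \nver$. Differentiating the log-likelihood of the Karlin--Rouault distribution one checks that $\alpha_0$ is the unique point in $(0,1)$ at which the population analogue $\sum_j p_j \sum_{k=1}^{j-1} \alpha/(k-\alpha) = 1$ holds, with $p_j$ the pmf \cref{eq:KarlinRouaultdist}; combined with the pointwise convergence $\ndeg{j}/\nver \to p_j$ from \cref{th:asympmultigraphex}, this already gives $\hat\alpha_t \to \alpha_0$ in probability. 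A mean-value expansion
\begin{equation*}
  \hat\alpha_t - \alpha_0 = \frac{1 - F_t(\alpha_0)/\nver}{F_t'(\xi_t)/\nver},
\end{equation*}
together with the fact that $F_t'(\alpha)/\nver$ converges to a strictly positive constant at $\alpha_0$, reduces the required rate to showing
\begin{equation*}
  \sum_{j \geq 2} \Big( \frac{\ndeg{j}}{\nver} - p_j \Big) \sum_{k=1}^{j-1} \frac{\alpha_0}{k-\alpha_0}
  = o_{\PP}(1/\log \nedg).
\end{equation*}

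The main obstacle is this weighted deviation bound, since $\sum_{k=1}^{j-1} \alpha_0/(k-\alpha_0)$ grows like $\log j$ and the random maximum degree $D_{\max,t}$ can be polynomial in $t$. I would attack it in three steps: (i) sharpen the intensity computations behind \cref{th:asympmultigraphex} using \cref{ass:tails} to obtain, for each $j$, an expansion $\EE \ndeg{j} = \nver p_j + R_{t,j}$ with an algebraic bias remainder of order $t^{1+\alpha_0-\gamma}$ for some $\gamma>0$; (ii) use Poisson tail bounds to confine $D_{\max,t}$ to a polynomial regime with high probability, so that $\log D_{\max,t} = O(\log t)$ and the sum can be truncated without loss; and (iii) control the stochastic fluctuations of $\sum_j(\ndeg{j} - \EE \ndeg{j})\log j$ via the variance-of-Poisson-functional estimates already used to prove \cref{lemma:Ntilde} and in \cite{caron:rousseau:18}. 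The role of \cref{ass:tails} is precisely to provide the second-order information needed to turn the pointwise convergence $\ndeg{j}/\nver \to p_j$ into a weighted rate of $o(1/\log t)$; without it, one recovers only the first-order limit $\hat\alpha_t \to \alpha_0$ and loses the second part of \cref{eq:assumpt1_cond1}.
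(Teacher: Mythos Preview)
Your treatment of the first condition and the formula for $\tau_*$ matches the paper exactly: \cref{th:asympmultigraphex} with $\ell_1\to c_0$ (forced by \cref{ass:tails}) gives the asymptotics of $\nedg$ and $\nver$, and the algebra is as you wrote.

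For the second condition your high-level strategy---mean-value expansion of $F_t$ at $\alpha_0$, then bias/variance control of $F_t(\alpha_0)/\nver-1$---is the paper's as well, but the technical route you outline is different and more laborious than what the paper actually does. You propose to control the bias of each $\ndeg{j}$ individually (step~(i)), truncate at a maximal degree (step~(ii)), and then sum against the $\log j$ weight. The paper never touches individual $\ndeg{j}$'s: it treats $-\sparsfunc'(\alpha)=\sum_i\sum_{k=1}^{D_{t,i}-1}(k-\alpha)^{-1}$ as a single Poisson functional, first reduces to the underlying simple graph via \cref{lemma:Ntilde}, and then replaces the inner sum by the digamma integral representation $\sum_{k=1}^{m-1}(k-\alpha)^{-1}=\int_0^1[1-(1-u)^{m-1}]\,u^{-1}(1-u)^{-\alpha}\,\intd u$. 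After Slivnyak--Mecke and Campbell's formula this produces a closed-form expression for $\EE[-\sparsfunc'(\alpha)]$ in terms of $\mu^{-1}(t^{-1})$ and Gamma-function ratios (the paper's \cref{lem:2}), with an $O(t^{-\eta})$ remainder read off directly from \cref{ass:tails}; no uniform-in-$j$ control and no $D_{\max,t}$ argument are needed. The variance of $\sparsfunc'(\alpha)$ is handled by the same integral trick (splitting the functional into two pieces $U_t+V_t$) and a fairly long but mechanical second-moment computation. A side benefit is that the paper obtains the much stronger polynomial rate $|\hat\alpha_t-\alpha_0|=O_P(t^{-\eta})$, not merely $o_P(1/\log t)$. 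Your plan is not wrong in principle, but the digamma integral is the device that makes the whole computation tractable, and without it steps~(i)--(iii) would be substantially harder to execute cleanly.
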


The proof of \cref{thm:2} is given in \cref{sec:proof-thm:2-1}.

\begin{remark}
  \label{rmk:3}
  The fact that $\ell_1$ converges to a constant in the previous theorem seems
  necessary to ensure concentration of the MLE. Indeed, by analysis of the proof
  of the theorem, we infer that if $\ell_1$ converges to zero or diverge, then
  either $\qMLEtau \to 0$ and $\qMLEsize/t \to 0$ or $\qMLEtau \to \infty$ and
  $\qMLEsize/t \to \infty$ in probability. The difficulty comes in particular from the
  fact that the size of the network is inferred.%
\end{remark}

\begin{remark}
  \label{rmk:2}
  The results shown in \cref{thm:concent:1} suggest that $\hat{\alpha}_t$ can also be used as an alternative to
  \cite{Naulet2017,caron:rousseau:18} for producing an estimator for the
  tail-index of graphex processes.
\end{remark}

%
%

\section{Proofs of the main results}
\label{sec:proofs-main-results}

\subsection{Approximation of the likelihood: Theorem \ref{lem:3}}
\label{sec:proof-lem:3}

A key step to analyse the likelihood which is only available in the form of an
  integral is to obtain
  an  approximation of the integral in \cref{eq:likelihood}.
In \cref{lem:3}, we obtain an upper-bound on the log-likelihood which is valid
for all parameters $\params$, and an asymptotic equivalence which is valid
uniformly over all parameters in the bounded sets
\begin{equation*}
  \mathcal{S}_K%
  \coloneqq \Set*{ \params = (\spars,\tau,\size) \given -K \leq \spars < 1,\, K^{-1} \leq
    \tau \leq K,\, 0 < \size \leq K \sqrt{2\nedg} },
\end{equation*}
with $K>0$ an arbitrarily large number.
\begin{theorem}
  \label{lem:3}
  For every $\spars \in (-\infty,1)$, every $\tau > 0$ and every $\size > 0$,
  let $\params = (\spars,\tau,\size)$, and let $\zeta(\params)$ be a
  non-negative real solution to $\partial_z\mathcal{A}(\params;z) = 0$. Then,
  \begin{enumerate}
    \item\label{item:lem:3:1} For every $\params$, $\zeta(\params)$ exists uniquely.
    \item\label{item:lem:3:2} There exists $\mathcal{E}_t \in \Reals$ such that
    for every $\params$,%
    \begin{equation*}
      \loglik(\params)%
      \leq%
      \nver \log(\size) + \sparsfunc(\spars)%
      - \nedg \mathcal{A}(\params; \zeta(\params)) + \mathcal{E}_t,
    \end{equation*}
    \item\label{item:lem:3:3} For the same $\mathcal{E}_t$ as in
    \cref{item:lem:3:2}, for all $K > 1$, as $\nedg \to \infty$,
    \begin{equation*}
      \sup_{\params\in \mathcal{S}_K}\Big|\loglik(\params)%
      - \nver \log (s) - \sparsfunc(\spars) + \nedg
      \mathcal{A}(\params;\zeta(\params))%
      - \mathcal{E}_t%
      + \frac{\log(2)}{2}\Big|  =  o(1).
    \end{equation*}
  \end{enumerate}
\end{theorem}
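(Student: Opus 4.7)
The plan is to represent the integral in \cref{eq:likelihood} as a contour integral and apply the saddle-point method. First, I would rewrite the Gaussian kernel via the Fourier identity $e^{-a^2} = \pi^{-1/2} \int_{\mathbb{R}} e^{-u^2 - 2iua}\, \intd u$ with $a = x+y$, and exchange the order of integration in \cref{eq:likelihood} (justified by absolute convergence since $|e^{-u^2 - 2iu(x+y)}| = e^{-u^2}$). The integral against $G_\phi(\intd y)$ then collapses to $e^{-s\psi(\sigma, \tau; \tau + 2iu)}$ via \cref{eq:4}, while the integral in $x$ becomes the analytic continuation $(\tau + 2iu)^{-(\nedg - \sigma \nver)}$ of the Gamma integral, valid because $\Re(\tau + 2iu) = \tau > 0$. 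The substitution $z = \tau + 2iu$ produces the key representation
\begin{equation*}
  I_t(\phi) \;=\; \frac{1}{2 i \sqrt{\pi}} \int_{\tau - i\infty}^{\tau + i\infty} e^{-\nedg\, \mathcal{A}(\phi; z)}\, \intd z,
\end{equation*}
where $I_t(\phi)$ denotes the double integral in \cref{eq:likelihood}, and a direct check confirms the exponent matches \cref{eq:2}.

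For item~\cref{item:lem:3:1}, I would differentiate $\mathcal{A}(\phi;\cdot)$ twice to obtain
\begin{equation*}
  \partial_z^2 \mathcal{A}(\phi; z) \;=\; -\frac{1}{2\nedg} - \frac{1 - \sigma \nver/\nedg}{z^2} + \frac{s(\sigma - 1)}{\nedg}\, z^{\sigma - 2},
\end{equation*}
which is strictly negative on $(0,\infty)$ because $\sigma < 1$ and $\sigma \nver < \nedg$ (since $\nver \leq \nedg$ as every vertex has degree at least one). Combined with $\partial_z\mathcal{A}(\phi;z) \to +\infty$ as $z \to 0^+$ and $\partial_z\mathcal{A}(\phi;z) \to -\infty$ as $z \to +\infty$, strict concavity yields a unique positive saddle $\zeta(\phi)$.

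For item~\cref{item:lem:3:3}, I would deform the contour from $\tau + i\mathbb{R}$ to $\zeta(\phi) + i\mathbb{R}$, which is permitted by analyticity of the integrand on $\mathbb{C}\setminus\mathbb{R}_{-}$ together with sufficient decay at $\pm i\infty$. A standard saddle-point analysis on the new contour, exploiting $\mathcal{A}''(\phi; \zeta) < 0$ through the quadratic lower bound $\Re\,\mathcal{A}(\phi; \zeta + it) - \mathcal{A}(\phi; \zeta) \geq c\,|\mathcal{A}''(\phi; \zeta)|\,t^2$ near the saddle (with negligible tail contribution), delivers
\begin{equation*}
  I_t(\phi) \;=\; \frac{e^{-\nedg\, \mathcal{A}(\phi; \zeta(\phi))}}{\sqrt{2\, \nedg\, |\mathcal{A}''(\phi; \zeta(\phi))|}}\, \bigl(1 + o(1)\bigr).
\end{equation*}
An auxiliary estimate on the saddle location then shows that on $\mathcal{S}_K$ one has $\zeta(\phi)^2 \asymp \nedg$ (from the saddle equation together with $s \leq K\sqrt{2\nedg}$ and $\sigma < 1$), and substituting back into $\mathcal{A}''$ reveals that the first two terms each contribute asymptotically $1/(2\nedg)$ while the third is of strictly smaller order. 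Hence $2\,\nedg\,|\mathcal{A}''(\phi; \zeta(\phi))| \to 2$ uniformly on $\mathcal{S}_K$, giving $\log I_t(\phi) + \nedg\,\mathcal{A}(\phi; \zeta(\phi)) \to -\tfrac{1}{2}\log 2$ uniformly, and item~\cref{item:lem:3:3} follows after identifying $\mathcal{E}_t$ with the logarithm of the proportionality constant in \cref{eq:likelihood}.

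Item~\cref{item:lem:3:2}, valid for every $\phi$, follows from the triangle inequality applied to the contour representation: $I_t(\phi) \leq (2\sqrt{\pi})^{-1} e^{-\nedg\, \mathcal{A}(\phi; \zeta(\phi))} \int_{\mathbb{R}} e^{-\nedg\,[\Re\mathcal{A}(\phi; \zeta + it) - \mathcal{A}(\phi; \zeta)]}\, \intd t$, where the remaining integral is controlled by a $\phi$-independent quantity using a global quadratic lower bound on the exponent, which is then absorbed into $\mathcal{E}_t$. The main obstacle will be making the Laplace approximation uniform on $\mathcal{S}_K$: the natural rescaling of the contour depends on $\phi$ through $\zeta(\phi)$, so the Taylor remainder, the Gaussian tails, and the contour deformation must all be controlled uniformly in $\phi$. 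An analogous difficulty arises for the global upper bound in item~\cref{item:lem:3:2}, where the saddle may degenerate in the regimes $\sigma \to 1^{-}$ or $s \to \infty$, and a robust bound on the contour integral that does not exploit the detailed saddle geometry has to be found.
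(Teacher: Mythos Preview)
Your overall strategy---Fourier representation of the Gaussian, reduction to a contour integral, saddle-point analysis after deforming the contour to pass through $\zeta(\phi)$---is exactly the paper's approach, and your treatment of items~\ref{item:lem:3:1} and~\ref{item:lem:3:3} matches the paper's (the paper also uses Cauchy's integral formula to control the third-order remainder uniformly on $\mathcal{S}_K$, and verifies $-\nedg\,\partial_z^2\mathcal{A}(\phi;\zeta(\phi)) = 1+o(1)$ just as you claim).

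The gap is in item~\ref{item:lem:3:2}. You write that the remaining integral is ``controlled by a $\phi$-independent quantity\ldots\ which is then absorbed into $\mathcal{E}_t$.'' But $\mathcal{E}_t$ is already pinned down by item~\ref{item:lem:3:3}: since the saddle-point asymptotic forces $\log I_t(\phi)+\nedg\,\mathcal{A}(\phi;\zeta(\phi))\to -\tfrac{1}{2}\log 2$, item~\ref{item:lem:3:3} holds only when $\mathcal{E}_t$ equals the proportionality constant (up to $o(1)$). Any fixed positive constant you absorb into $\mathcal{E}_t$ for item~\ref{item:lem:3:2} will then destroy item~\ref{item:lem:3:3}. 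So you need the \emph{sharp} bound
\[
  \nedg\,\Re\bigl[\mathcal{A}(\phi;\zeta(\phi)+it)-\mathcal{A}(\phi;\zeta(\phi))\bigr]\;\ge\;\frac{t^2}{4}
  \qquad\text{for all }t\in\Reals\text{ and all }\phi,
\]
which makes the triangle-inequality integral exactly $\le 2\sqrt{\pi}$ and hence $\log I_t(\phi)\le -\nedg\,\mathcal{A}(\phi;\zeta(\phi))$ with no leftover. This is the content of the paper's Lemma~\ref{pro:5}. The $t^2/4$ comes trivially from the quadratic part of $\mathcal{A}$; the real work is to show that the remaining contributions are nonnegative for every $\phi$, which reduces to showing $\Re\bigl(\psi(\sigma,\tau;\zeta-it)-\psi(\sigma,\tau;\zeta)\bigr)\ge 0$. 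Writing this out gives the function $f_\sigma(y)=\sigma^{-1}\bigl((1+y^2)^{\sigma/2}\cos(\sigma\arctan y)-1\bigr)$, whose nonnegativity for all $\sigma<1$ requires a short but genuine argument (monotonicity for $0<\sigma<1$, a direct bound for $\sigma<0$). This is precisely the ``robust bound\ldots\ that does not exploit the detailed saddle geometry'' you flag at the end, and it is the missing piece.
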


We can see from
\cref{item:lem:3:3} that the upper-bound of \cref{item:lem:3:2} is sharp up to
$\log(2)/2$ for all $\params$ in $\mathcal{S}_K$. Interestingly, we note that
\cref{item:lem:3:3} gives an upper bound on $\loglik(\params)$ which is valid
for all $\params \in \mathcal{S}$, while the asymptotic equivalence only holds
uniformly on $\mathcal{S}_K$. This is not a problem, as to understand the
maximum likelihood estimator we only need to lower bound $\loglik(\phi)$ for $\phi$ in
a neighborhood of the maximizer. Indeed, by \cref{lem:3}, we have for any
$\params \in \mathcal{S}$ that 
$\loglik(\params) - \loglik(\qMLEparams) \leq \qloglik(\params) -
\qloglik(\qMLEparams) + \frac{\log(2)}{2} + o(1) \leq 0$.
$\params$ is away from $\qMLEparams$, where the latter is proved in \cref{thm:concent:1}. Hence, this will imply that any maximizer
of $\loglik$ must be close to $\qMLEparams$.

\begin{proof}[Proof of \cref{lem:3}]
Here we prove only \cref{item:lem:3:1,item:lem:3:2}. The proof of the
\cref{item:lem:3:3} is similar to the proof of \cref{item:lem:3:2} but with
extra cares to handle the asymptotic equivalence instead of simply getting an
upper bound. For this reason, the proof of the \cref{item:lem:3:3} is deferred
to \cite[\cref{supp-sec:proof-lem:6}]{caron:naulet:rousseau:supplement}. The proof of \cref{item:lem:3:1} is a
consequence of the \cref{pro:4} below. We now prove \cref{item:lem:3:2}. Set
\begin{equation*}
  I(\params) \coloneqq%
  \int_{\NNReals^2}%
  e^{-(x+y)^{2}}\,
  \frac{%
    x^{\nedg - \spars\nver -1} e^{-\tau x}%
  }{%
    \Gamma(\nedg - \spars\nver)%
  }%
  \tiltstab(\intd y)\intd x.
\end{equation*}
Then, by \cref{eq:likelihood}, there is a constant $\mathcal{E}_t \in \Reals$ such
that $\loglik$ can be written as
\begin{align}
  \label{eq:80}
  \loglik(\params )%
  &= \nver \log \size + \sum_{i=1}^{\nver}\log\frac{\Gamma(\deg{i} -
    \spars)}{\Gamma(1-\spars)} + \log I(\params) + \mathcal{E}_t \nonumber\\
  &= \nver \log \size + \sum_{j\geq 2}\sum_{i=1}^{\nver}\bm{1}_{\deg{i}=j}\log\frac{\Gamma(j -
    \spars)}{\Gamma(1-\spars)} + \log I(\params) + \mathcal{E}_t \nonumber\\
  &= \nver \log \size + \sparsfunc(\spars) + \log I(\params) + \mathcal{E}_t.
\end{align}
Hence to approximate $\loglik(\params )$ we need only approximate $I(\params)$.

 The first step is to express $I(\params) = (2 \sqrt{\pi})^{-1} \int_{\Gamma} \exp\{-\nedg
\mathcal{A}(\params;z) \}\,\intd z$ where $\Gamma$ denotes the line in the
complex plane that goes from $(\tau, + i \infty)$ to $(\tau, -i\infty)$. This is done  in
\cref{pro:4} by noting that
$
 \int_{\Reals}\exp\{ -\frac{\xi^2}{4} + i(x+y)\xi
  \}\, \intd \xi
  = 2\sqrt{\pi}\exp\{ -(x+y)^2 \}.
$
 This trick allows in particular to separate $x$ and $y$ inthe integral. We then analyse $I(\params)$ using a saddlepoint approximation: first, by the \cref{pro:4},
the saddlepoint $\zeta(\params)>\tau$. Thus, we can deform the contour of integration so that it
goes through $\zeta(\params)$. For every $L > 0$ let $\mathcal{C}_L$ be the
contour that goes from $(\tau,+iL)$ to $(\tau,-iL)$, then from $(\tau,-iL)$ to
$(\zeta(\params),-iL)$, and then from $(\zeta(\params),-iL)$ to
$(\zeta(\params),+iL)$, and finally from $(\zeta(\params),+iL)$ to
$(\tau,+iL)$. Then $\mathcal{C}_L$ is a rectifiable and closed path, and
$z \mapsto \mathcal{A}(\params;z)$ is complex-analytic in the region delimited
by $\mathcal{C}_L$. Hence Cauchy's integral theorem implies that
\begin{equation*}
  \int_{\mathcal{C}_L}\exp\{-\nedg \mathcal{A}(\params;z) \}\,\intd z = 0.
\end{equation*}
As $L \rightarrow \infty$, the integrals of
$z\mapsto \exp\{-\nedg \mathcal{A}(\params;z) \}$ from $(\tau,-iL)$ to
$(\zeta(\params),-iL)$ and from $(\zeta(\params),+iL)$ to $(\tau,+iL)$ vanish
and thus we obtain from the previous display that,
\begin{align*}
  I(\params)%
  &= - \frac{1}{2 \sqrt{\pi}}
    \int_{(\zeta(\params),-i\infty)}^{(\zeta(\params),+i\infty)}%
    \exp\{-\nedg \mathcal{A}(\params;z)\}\,\intd z\\
  &= \frac{1}{2 \sqrt{\pi}} \int_{\Reals}\exp\{-\nedg \mathcal{A}(\params;
    \zeta(\params) - iu) \}\,\intd u.
\end{align*}

Now let $R_1(\params;z)$ denote the second-order remainder of the Taylor
expansion of $z \mapsto \mathcal{A}(\params;z)$ near $\zeta(\params)$, that is we have,
\begin{equation}
  \label{eq:1}
  \mathcal{A}(\params;\zeta(\params) + z)%
  = \mathcal{A}(\params; \zeta(\params) )%
  + \frac{z^2}{2} \partial_z^2\mathcal{A}(\params;\zeta(\params) ) + R_1(\params;z),
\end{equation}
and then,
\begin{equation}
  \label{eq:6}
  I(\params) = \frac{\exp \big\{- \nedg \mathcal{A}(\params; \zeta(\params))
    \big\}}{\sqrt{- 2\nedg \partial_z^2\mathcal{A}(\params; \zeta(\params) ) }}\Big(%
  1 + \Delta(\params) \Big),
\end{equation}
where,
\begin{equation}
    \label{eq:8}
    \Delta(\params)%
    \coloneqq%
    -1%
    + \sqrt{%
      \frac{%
        - \nedg\partial_z^2\mathcal{A}(\params; \zeta(\params))%
      }{2\pi}}%
    \int_{\Reals} \exp\Big\{\frac{\nedg}{2}\partial_z^2\mathcal{A}(\params; \zeta(\params))
    z^2 -\nedg R_1(\params;-iz) \Big\}\,\intd z.
\end{equation}

It is easily seen that $\partial_z^2\mathcal{A}(\params;\zeta(\params))$ is pure
real and always (strictly) negative, so that \cref{eq:6,eq:8} are indeed
well-defined.  Using \cref{pro:5} combined with
 \cref{eq:8}, we find that
\begin{equation*}
  |1 + \Delta(\params)|%
  \leq \sqrt{%
    \frac{%
      - \nedg\partial_z^2\mathcal{A}(\params; \zeta(\params))%
    }{2\pi}}%
  \int_{\Reals} e^{-z^2/4}\,\intd z%
  = \sqrt{-2\nedg \partial_z^2\mathcal{A}(\params;\zeta(\params))}.
\end{equation*}
The last display and \cref{eq:6} give the bound which established the
\cref{item:lem:3:2},
\begin{equation*}
  \log I(\params)%
  \leq - \nedg \mathcal{A}(\params;\zeta(\params)).
\end{equation*}

\end{proof}

\begin{lemma}
  \label{pro:4}
  For every $\params$, let $(\params,z) \mapsto \mathcal{A}(\params;z)$ be the
  function defined in \cref{eq:2}. Then, for every $\params$, we have
  \begin{equation} \label{eq:Iphi}
    I(\params) =%
    \frac{1}{2\sqrt{\pi}} \int_{\Reals} \exp\{ - \nedg
    \mathcal{A}(\params;\tau - i\xi )\}\,\intd\xi.
  \end{equation}
  In addition, there exists a unique non-negative solution $\zeta(\params)$ for
  the equation $\partial_z \mathcal{A}(\params;z) = 0$, which satisfies
  $\zeta(\params) > \tau$ and
  \begin{equation}
    \label{eq:7}
    \zeta(\params)^2%
    = \tau \zeta(\params)%
    + 2\size \zeta(\params)^{\spars} + 2\nedg \Big(1 - \frac{\spars\nver}{\nedg}\Big).
  \end{equation}
\end{lemma}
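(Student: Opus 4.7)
The plan is to handle the two parts of the lemma separately: first establish the contour-integral representation for $I(\params)$, then analyze $\partial_z \mathcal{A}(\params;z)$ directly to get existence, uniqueness, and the algebraic equation.

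For the integral representation \eqref{eq:Iphi}, the key trick is the Fresnel-type identity
\begin{equation*}
  e^{-(x+y)^2} \;=\; \frac{1}{2\sqrt{\pi}} \int_{\Reals} \exp\Big\{-\frac{\xi^2}{4} + i(x+y)\xi\Big\}\,\intd\xi,
\end{equation*}
which decouples $x$ and $y$ in the definition of $I(\params)$. I would substitute this into the formula for $I(\params)$ and apply Fubini (justifying by the absolute integrability coming from the Gaussian factor). The $x$-integral then becomes the Laplace transform of a Gamma density evaluated at $\tau - i\xi$, giving $(\tau - i\xi)^{-(\nedg - \spars\nver)}$; this requires checking that $\nedg - \spars\nver > 0$, which follows from $\spars<1$ and $\nver\leq\nedg$. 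The $y$-integral is precisely the Laplace transform of $\tiltstab$ at $-i\xi$, which by \cref{eq:4} equals $\exp\{-\size\,\psi(\spars,\tau;\tau - i\xi)\}$. Putting the three factors together and writing $z = \tau - i\xi$, so that $(z-\tau)^2 = -\xi^2$, the exponent reassembles exactly into $-\nedg\,\mathcal{A}(\params;\tau - i\xi)$ by the definition \eqref{eq:2} of $\mathcal{A}$. The main technical care is to check absolute integrability on $\Reals$ for all admissible $\params$, in particular when $\spars < 0$ where the gamma prefactor behaves differently.

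For the saddlepoint equation and its positive solution, I would differentiate \eqref{eq:2} in $z$ to get
\begin{equation*}
  \partial_z \mathcal{A}(\params;z) \;=\; -\,\frac{z-\tau}{2\nedg} \;+\; \Big(1 - \frac{\spars\nver}{\nedg}\Big)\frac{1}{z} \;+\; \frac{\size}{\nedg}\, z^{\spars - 1},
\end{equation*}
using $\partial_z \psi(\spars,\tau;z) = z^{\spars-1}$ (valid in both cases $\spars \ne 0$ and $\spars = 0$). Setting $\partial_z\mathcal{A}(\params;z)=0$ and multiplying through by $2\nedg z$ gives \eqref{eq:7} directly. For existence, note that at $z = \tau$ the first term vanishes while the other two are strictly positive (since $\spars < 1$ forces $1 - \spars\nver/\nedg > 0$, as $\nver \leq \nedg$), and as $z \to \infty$ the linear term $-(z-\tau)/(2\nedg)$ drives $\partial_z\mathcal{A}$ to $-\infty$; the intermediate value theorem then yields a root $\zeta(\params) > \tau$.

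Uniqueness is the only mildly delicate piece: I would compute
\begin{equation*}
  \partial_z^2 \mathcal{A}(\params;z) \;=\; -\,\frac{1}{2\nedg} \;-\; \Big(1 - \frac{\spars\nver}{\nedg}\Big)\frac{1}{z^2} \;+\; \frac{\size(\spars - 1)}{\nedg}\, z^{\spars - 2},
\end{equation*}
and observe that every term is strictly negative on $(0,\infty)$ under the standing constraints $\spars < 1$, $\size,\tau>0$, and $\nver \leq \nedg$. Hence $\partial_z \mathcal{A}(\params;\cdot)$ is strictly decreasing on $\NNReals^{*}$, so its zero $\zeta(\params)$ is unique; this also records the negativity of $\partial_z^2 \mathcal{A}(\params;\zeta(\params))$ that is used later in the saddlepoint step of \cref{lem:3}. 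The main obstacle, such as it is, lies in the first part: pushing the Fubini/Fourier inversion through for all parameter ranges simultaneously and identifying the resulting exponent as exactly $-\nedg\,\mathcal{A}$ without sign errors in the branch of $\log(\tau - i\xi)$; the second part is then essentially calculus on the real line.
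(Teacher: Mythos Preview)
Your proposal is correct and follows essentially the same route as the paper: the same Gaussian Fourier identity to decouple $x$ and $y$, the same identification of the $x$- and $y$-integrals as Gamma and tilted-stable Laplace transforms, and the same calculus on $\partial_z\mathcal{A}$ and $\partial_z^2\mathcal{A}$ for the saddlepoint. The only cosmetic difference is that you establish $\zeta(\params)>\tau$ by evaluating $\partial_z\mathcal{A}$ at $z=\tau$ and applying the intermediate value theorem on $(\tau,\infty)$, whereas the paper applies it on $(0,\infty)$ via the limit $z\to 0$ and then reads $\zeta>\tau$ off the algebraic equation \eqref{eq:7}; both are equivalent once you have strict concavity.
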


\begin{lemma}
  \label{pro:5}
  For every $\params$ and for every $z \in \Reals$,
  \begin{equation*}
    \Re\Big(\frac{\nedg}{2}\partial_z^2\mathcal{A}(\params; \zeta(\params)) z^2
    -\nedg R_1(\params;-iz) \Big)%
    \leq - \frac{z^2}{4},
  \end{equation*}
  where $\Re(z)$ denotes the real part of $z \in \Complex$.
\end{lemma}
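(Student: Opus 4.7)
The plan is to exploit the fact that, by definition of the Taylor remainder $R_1$, the entire expression inside the real part collapses into a clean difference of values of $\mathcal{A}$. Indeed, from the Taylor expansion \cref{eq:1} applied to the complex increment $-iz$ (noting that $\partial_z \mathcal{A}(\params;\zeta(\params))=0$ by \cref{pro:4}), one has
\begin{equation*}
  R_1(\params;-iz) = \mathcal{A}(\params;\zeta(\params)-iz) - \mathcal{A}(\params;\zeta(\params)) + \frac{z^2}{2}\partial_z^2 \mathcal{A}(\params;\zeta(\params)),
\end{equation*}
so that after cancellation,
\begin{equation*}
  \frac{\nedg}{2}\partial_z^2 \mathcal{A}(\params;\zeta(\params)) z^2 - \nedg R_1(\params;-iz) = \nedg\, \mathcal{A}(\params;\zeta(\params)) - \nedg\, \mathcal{A}(\params;\zeta(\params)-iz).
\end{equation*}
Since $\mathcal{A}(\params;\zeta(\params))$ is real, the inequality to prove reduces to showing $\nedg\,\Re\bigl[\mathcal{A}(\params;\zeta(\params)-iz)\bigr] - \nedg\,\mathcal{A}(\params;\zeta(\params)) \geq z^2/4$.

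Next I would split $\mathcal{A}$ as in \cref{eq:2} into its three summands and analyse each contribution separately, with $\zeta=\zeta(\params)$. The quadratic piece $-(z-\tau)^2/(4\nedg)$ evaluated at $\zeta - iz$ contributes exactly $+z^2/(4\nedg)$ to the real part, and multiplying by $\nedg$ gives precisely the desired $z^2/4$. The logarithmic piece contributes $\frac{\nedg - \spars\nver}{2}\log(1 + z^2/\zeta^2)$, since $\Re[\log(\zeta-iz)] = \tfrac12\log(\zeta^2+z^2)$. Because no vertex is isolated, $\nedg \geq \nver$, hence $\nedg - \spars\nver > 0$ for every $\spars < 1$, and this contribution is non-negative. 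It therefore suffices to show that the $\LTS$ contribution is also non-negative, i.e. $\Re[\LTS(\zeta-iz;\spars,\tau)] \geq \LTS(\zeta;\spars,\tau)$.

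The hard part is verifying this $\LTS$ inequality uniformly in $\spars \in (-\infty,1)$. Writing $\zeta - iz$ in polar form and using $u = z/\zeta$, $\theta = \arctan(u) \in (-\pi/2,\pi/2)$, one gets
\begin{equation*}
  \Re[(\zeta-iz)^{\spars}] = \zeta^{\spars}(\cos\theta)^{-\spars}\cos(\spars\theta),
\end{equation*}
so the required inequality reduces (using the explicit form of $\LTS$ and keeping track of the sign of $\spars$) to $\cos(\spars\theta) \geq (\cos\theta)^{\spars}$ when $\spars \in (0,1)$, and to $\cos(\spars\theta) \leq (\cos\theta)^{\spars}$ when $\spars < 0$. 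For $\spars \in (0,1)$ I would prove this via concavity: the map $\spars \mapsto f(\spars) := \log\cos(\spars\theta) - \spars\log\cos\theta$ satisfies $f(0)=f(1)=0$ and $f''(\spars) = -\theta^2\sec^2(\spars\theta) < 0$ on $[0,1]$ (since $|\spars\theta|<\pi/2$), hence $f \geq 0$. For $\spars < 0$, writing $\spars = -\beta$ with $\beta > 0$, the inequality becomes $\cos(\beta\theta)(\cos\theta)^{\beta} \leq 1$, which holds because each factor is at most $1$ (with no positivity issue: if $\cos(\beta\theta)$ is negative the bound is trivial). The limiting case $\spars = 0$ is handled directly via $\Re[\log(\zeta-iz)/\tau] = \tfrac12\log((\zeta^2+z^2)/\tau^2) \geq \log(\zeta/\tau)$.

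Collecting all three contributions, the logarithmic and $\LTS$ parts are non-negative while the quadratic part yields exactly $z^2/4$, which establishes the claimed bound. The only delicate step is the trigonometric inequality $\cos(\spars\theta) \geq (\cos\theta)^{\spars}$ on $[0,1]$, which I expect to be the main technical obstacle; once that is done, everything else is algebraic manipulation.
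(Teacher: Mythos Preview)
Your proof is correct and follows the same overall architecture as the paper: reduce via the Taylor relation to $\Re[\mathcal{A}(\params;\zeta-iz)] - \mathcal{A}(\params;\zeta) \geq z^2/(4\nedg)$, then split $\mathcal{A}$ into its quadratic, logarithmic, and $\LTS$ parts, with the first giving exactly $z^2/(4\nedg)$ and the other two shown non-negative.

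The one substantive difference is how you handle the trigonometric inequality $\cos(\spars\theta) \geq (\cos\theta)^{\spars}$ for $\spars\in(0,1)$. The paper fixes $\spars$, differentiates the function $y \mapsto \frac{(1+y^2)^{\spars/2}\cos(\spars\arctan y) - 1}{\spars}$ in $y$, and shows the derivative is non-negative via $y > \tan(\spars\arctan y)$. You instead fix $\theta$ and exploit concavity of $\spars \mapsto \log\cos(\spars\theta) - \spars\log\cos\theta$ on $[0,1]$ together with the vanishing at both endpoints. Your argument is slightly cleaner and avoids the somewhat ad hoc monotonicity computation; the paper's version has the minor advantage of working directly with the function $f_{\spars}$ that appears naturally in the integral expression. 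For $\spars < 0$ and $\spars = 0$ the two proofs are essentially identical.
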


The proofs of \cref{pro:4,pro:5} as well as the remaining parts of the proof of
the \cref{lem:3} can be found in \cite[\cref{supp-sec:proofs-relat-asympt}]{caron:naulet:rousseau:supplement}.

\subsection{Proof of \texorpdfstring{\cref{thm:concent:1}}{Theorem \ref{thm:concent:1}}}
\label{sec:concent:proofs-conc-likel-funct}

A first difficulty in the proof of \cref{thm:concent:1} is to show that there
exists a unique MLE. Indeed the function $\qloglik$ is a complicated function of
$\params$. We first introduce a new parameterization which simplifies
its analysis. Define
\begin{equation*}
  \varepsilon(\params)%
  \coloneqq 1 - \frac{2\size \zeta(\params)^{\spars} + 2\nedg(1-\spars\nver/\nedg)}{\zeta(\params)^2},\qquad%
  u(\params) \coloneqq \frac{2\size
    \zeta(\params)^{\spars}}{2\nedg(1 - \spars\nver/\nedg)}.
\end{equation*}
Observe that $\varepsilon(\params) \in (0,1)$ and $u(\params) > 0$. We also define for
convenience $\beta_{\spars} \coloneqq 1 - \spars \nver/\nedg$. Our goal is to
re-express the likelihood in term of the new parameterization $(\spars,\varepsilon,u)$. By
\cref{eq:7}, $\tau = \zeta(\params)\cdot \varepsilon(\params)$; therefore we can also rewrite
$\zeta(\params) - \tau = \zeta(\params)\cdot(1 - \varepsilon(\params))$ so that
\begin{equation*}
  \zeta(\params)^2%
  = 2\nedg \beta_{\spars}%
  \frac{1 + u(\params)}{1 - \varepsilon(\params)}.
\end{equation*}
Define $f(\spars,\varepsilon) \coloneqq (1 - \varepsilon^{\spars})/\spars$ if $\spars\ne 0$, and
$f(\spars,\varepsilon) = \log(1/\varepsilon)$ if $\spars = 0$. Then, we have
$\size \LTS(\spars,\tau;\zeta(\params)) = \nedg \beta_{\spars} u(\params) f(\spars,\varepsilon(\params))$
for all $\spars \in (-\infty,1)$ and
$\size \LTS(\spars,\tau;\zeta(\params)) = \nedg \beta_{\spars}u(\params)g(\spars,\varepsilon(\params)) + \frac{\nedg}{2}(1-\varepsilon(\params)) \beta_{\spars} u(\params)$,
where $g(\spars,\varepsilon) \coloneqq f(\spars,\varepsilon) - \frac{1 -\varepsilon}{2}$. Therefore,
\begin{align*}
  \qloglik(\params)%
  &= \nver \log u(\params)%
    + \nver \log\nedg%
    + \nver \log \beta_{\spars}%
    +\sparsfunc(\spars)%
    + \frac{\nedg}{2}\beta_{\spars}(1 + u(\params))(1-\varepsilon(\params))\\
  &\quad%
    -\frac{\nedg}{2}\log(2\nedg)%
    - \frac{\nedg}{2}\log \beta_{\spars}%
    + \frac{\nedg}{2}\log(1 - \varepsilon(\params))%
    - \frac{\nedg}{2}\log(1 + u(\params))\\
  &\quad%
    - \nedg \beta_{\spars}u(\params) g(\spars,\varepsilon(\params))%
    - \frac{\nedg}{2}(1 - \varepsilon(\params))\beta_{\spars}u(\params)%
    + \frac{\log(2)}{2}.
\end{align*}
This leads us to define,
\begin{multline*}
  \qloglik^{*}(\spars,\varepsilon,u)%
  \coloneqq%
  \nver \log \beta_{\spars}%
  - \frac{\nedg}{2}\log \beta_{\spars}%
  + \sparsfunc(\spars)%
  + \nver \log u%
  + \frac{\nedg}{2}(1-\varepsilon)\beta_{\spars}\\%
  + \frac{\nedg}{2}\log(1-\varepsilon)%
  - \frac{\nedg}{2}\log(1+u)%
  - \nedg \beta_{\spars}ug(\spars,\varepsilon).%
\end{multline*}
which satisfies  $\qloglik(\params) -
\qloglik(\params') = \qloglik^{*}(\spars,\varepsilon(\params),u(\params)) -
\qloglik^{*}(\spars',\varepsilon(\params'),u(\params'))$ for any pair
$(\params,\params') \in \mathcal{S}^2$. In many occasions, we will decompose
$\qloglik^{*}(\spars,\varepsilon,u)$ as
$\qloglik^{*}(\spars,\varepsilon,u) = H(\spars,\varepsilon,u) + K(\spars)$, where
\begin{gather*}
  H(\spars,\varepsilon,u)
  \coloneqq \nver \log u
  - \frac{\nedg}{2}\log(1 + u)
  - \nedg\beta_{\spars}g(\spars,\varepsilon)u
  + \frac{\nedg}{2}\log(1- \varepsilon)
  + \frac{\nedg}{2}(1-\varepsilon)\beta_{\spars},\\
  K(\spars)%
  \coloneqq \nver \log \beta_{\spars}%
  - \frac{\nedg}{2}\log \beta_{\spars} + \sparsfunc(\spars).
\end{gather*}

From that we analyze
$\Psi(\spars) \coloneqq \sup\Set{\qloglik^{*}(\spars,\varepsilon,u) \given \varepsilon \in (0,1),\, u > 0 }$,
proving first that
$\Psi(\spars) =\qloglik^{*}(\spars,\tilde{\varepsilon}(\spars),\tilde{u}(\spars))$ where
$(\tilde{\varepsilon}(\spars),\tilde{u}(\spars))$ is unique and that under
\cref{ass:concent:degdist}, $\Psi$ has a unique maximizer $\qMLEspars$, which
satisfies $\lim_t\qMLEspars = \alpha_0$.

We first show in \cref{pro:concent:4} that $\Psi$ is small for $\sigma \leq -C$ or $\sigma >c_2$ for any $C>0$ and if $c_2<1$ is large enough. More precisely, for all $C>0$, $c_0 \in (0,1)$, there exists $c_2\in (0,1)$ and $K>0$ such that
$\Psi(\spars) - \Psi(c_0) \leq -K \log \nedg $ over $(-\infty , -C) \cup [c_2, 1)$, which implies
that the maximizer must be within $[-C,c_2]$.

Moreover from \cref{pro:concent:5}, for any $\spars \in [-C,c_2]$, there exists
a unique $(\tilde{\varepsilon}(\spars),\tilde{u}(\spars))$ such that
$\Psi(\spars)
=\qloglik^{*}(\spars,\tilde{\varepsilon}(\spars),\tilde{u}(\spars))$, also
$\Psi'(c_2)<0$ and there exists $c_1>0$ such that $\Psi'(\spars) >0 $ on
$[-C,c_1]$ so that $\Psi(\spars) \leq \Psi(c_1) $ if $\spars \leq c_1$ and any
maximizer of $\Psi$ must belong to $[c_1,c_2]$.  From \cref{pro:concent:11}
$\Psi''(\spars) <0$ on $[c_1,c_2]$, and since $\Psi'(c_1)>0, \Psi'(c_2)<0$, there is a
unique $\qMLEspars \in (c_1,c_2)$ which maximizes $\Psi$. Hence
$(\qMLEspars, \tilde{\varepsilon}(\qMLEspars),\tilde{u}(\qMLEspars))$ maximizes
$\qloglik^{*}$ and it must be the unique maximizer.

We now study $\qMLEparams$ and we prove \cref{thm:concent:1:item:1}. Indeed, we
established that $\qloglik^{*}$ has a unique maximizer
$(\hat{\sigma}_t,\hat{\varepsilon}_t,\hat{u}_t)$ with
$\hat{\varepsilon}_t \coloneqq \tilde{\varepsilon}(\hat{\sigma}_t)$ and
$\hat{u}_t = \tilde{u}(\hat{\sigma}_t)$, yet it remains to establish that this
implies that $\qloglik$ also has a unique minimizer. We proceed by obtaining
asymptotic expressions for $(\hat{\sigma}_t,\hat{\varepsilon}_t,\hat{u}_t)$ and showing that
the map $\phi \mapsto (\sigma,\varepsilon(\phi),u(\phi))$ is locally invertible. Note that if
$\spars \in [c_1,c_2] $ then
$\partial_{\spars}\log f(\spars,\varepsilon) = -\frac{1 + O(\varepsilon^{\spars}\log\varepsilon^{-1})}{\spars}$.
Hence, by \eqref{eq:concent:103} in \cref{pro:concent:5}, it comes that
$\qMLEspars$ must satisfy
\begin{align*}
  - \sparsfunc'(\qMLEspars)%
  &= \frac{\nver}{\qMLEspars}\Big(1 + O\Big(\frac{\nver}{\nedg}\Big)
    \Big)\Big(1 + O\Big(\qMLEeps^{\qMLEspars}\log \frac{1}{\qMLEeps}\Big)\Big)
    + O\Big( \frac{\nver^2}{\nedg}\Big)\\
  &= \frac{\nver}{\qMLEspars}\Big(1 + O\Big(\frac{\nver}{\nedg} \bigvee
    \qMLEeps^{\qMLEspars}\log \frac{1}{\qMLEeps}   \Big) \Big).
\end{align*}
Moreover using \cite[\cref{supp-pro:concent:3}]{caron:naulet:rousseau:supplement}, since
$\qMLEu = \bar{u}(\qMLEspars,\qMLEeps)$ and
$\frac{1}{2} + \beta_{\spars}g(\qMLEspars,\qMLEeps) = \frac{1}{2} + \frac{1-\qMLEeps^{\qMLEspars}}{\qMLEspars} - \frac{1-\qMLEeps}{2} + O(\nver/\nedg) = \frac{1}{\qMLEspars}\big(1 + O(\frac{\nver}{\nedg}\bigvee \qMLEeps^{\qMLEspars})\big)$,
then,
\begin{align}\label{hatu}
  \qMLEu = \frac{\qMLEspars\nver}{\nedg}\Big(1 +
  O\Big(\frac{\nver}{\nedg}\bigvee \qMLEeps^{\qMLEspars} \Big)\Big)=o(1)
\end{align}
Finally since $\partial_{\varepsilon}H(\qMLEspars,\qMLEeps,\qMLEu) = 0$, we deduce that,
\begin{align*}
  0
  &=\nedg\qMLEu\Big(1 - \frac{\qMLEspars \nver}{\nedg}\Big)
    \Big(\qMLEeps^{-1+\qMLEspars} - \frac{1}{2}\Big)%
    - \frac{\nedg}{2}\frac{1}{1- \qMLEeps} - \frac{\nedg}{2}\Big(1 -
    \frac{\qMLEspars\nver}{\nedg}\Big)\\
  &= \nedg\Big\{ \qMLEu\Big(1 - \frac{\qMLEspars \nver}{\nedg}\Big)%
    \Big(\qMLEeps^{-1+\qMLEspars} - \frac{1}{2}\Big)%
    - 1 + O\Big( \frac{\nver}{\nedg}\bigvee \qMLEeps\Big) \Big\}.
\end{align*}
That is, using \eqref{hatu},
\begin{align*}
  \qMLEeps^{-1+\qMLEspars}%
  &= \frac{1}{2} + \frac{1 + O\big(\frac{\nver}{\nedg}\bigvee \qMLEeps
    \big)}{\qMLEu}%
    = \frac{\nedg}{\qMLEspars \nver}\Big(1 + O\Big( \frac{\nver}{\nedg}\bigvee
    \qMLEeps^{\qMLEspars}\Big) \Big).
\end{align*}
From the previous, we also obtain
$\qMLEeps \asymp \big(\frac{\nver}{\nedg}\big)^{1/(1-\qMLEspars)}$. Then, under
\cref{ass:concent:degdist}, since $x \rightarrow x\sparsfunc'(x)$ is decreasing and since
its derivative is bounded from below by $-C \nver$ for some
$C>0$,
\begin{equation} \label{qMLspars}
|\hat\alpha_t - \qMLEspars| = O\Big(\frac{\nver}{\nedg} \bigvee \qMLEeps^{\qMLEspars}\log \frac{1}{\qMLEeps} \Big)
\end{equation}
and
\begin{equation}
  \label{qMLeps}
  \qMLEeps \asymp 1 / \sqrt{\nedg}, \quad \text{and} \quad
  \qMLEeps^{\qMLEspars} \asymp \frac{\sqrt{\nedg}}{\nver}.
\end{equation}
This finished to establish the asymptotic expressions of
$(\hat{\sigma}_t,\hat{\varepsilon}_t,\hat{u}_t)$. To prove that $\qMLEparams$ exists and is
unique we prove in \cref{pro:concent:16} that
\begin{equation}
  \label{eq:14}
  \sup_{(\spars, \epsilon, u) \notin \varphi(U_t(K'))} \qloglik^{*}(\spars, \epsilon, u) -  \sup_{(\spars, \epsilon, u) \in \mathbb R_+\times (0,1) \times \mathbb R_+} \qloglik^{*} (\spars, \epsilon, u)  \leq - K \log \nedg,
\end{equation}
where
\begin{equation*}
  U_t(C) \coloneqq \Set*{\params \given%
    \textstyle%
    |\spars - \qMLEspars|^2 \leq \frac{C\log(\nedg)}{\nver},\ %
    |\varepsilon(\params) - \qMLEeps|^2 \leq \frac{C\log(\nedg)}{(\nedg)^{3/2}},\ %
    |u(\params) - \qMLEu|^2 \leq \frac{C \nver \log(\nedg)}{\nedgsq}%
  },
\end{equation*}
and that on $U_t$ the function
$\params \rightarrow \varphi(\params)=(\spars, \varepsilon(\params) , u(\params))$ is invertible. This
guarantees that
$\qMLEparams = \varphi^{-1}( (\qMLEspars, \tilde{\varepsilon}(\qMLEspars),\tilde{u}(\qMLEspars))) \in U_t$.
Hence the \ref{thm:concent:1:item:1}  in \cref{thm:concent:1:item:1} is proved.

We now prove points \ref{thm:concent:1:item:2} of \cref{thm:concent:1}. The starting point is the
\cref{eq:14}. Moreover, from part (ii) of \cref{pro:concent:16} the function
$\varphi(\params) = ( \spars, \varepsilon(\phi), u(\phi))$ has a continuously differentiable inverse
whose gradient is given by \eqref{eq:concent:35}. Hence by doing a Taylor
expansion of $\varphi^{-1}$, if $\params \in U_t$, we have,
\begin{equation*}
  |\tau - \qMLEtau |%
  \lesssim  \sqrt{2\nedg}|\varepsilon - \qMLEeps|%
    \lesssim \frac{  \sqrt{ \log \nedg} }{ (\nedg)^{1/4}},%
\end{equation*}
and,
\begin{equation*}
  | \size - \qMLEsize |
  \lesssim
    c_8 \sqrt{2\nedg}\log(2\nedg)|\spars- \qMLEspars|
    + c_7\frac{(2\nedg)^{3/2}}{\nver}|u(\params) - \qMLEu|%
    \lesssim \sqrt{2\nedg}  \frac{ \log \nedg }{ \sqrt{\nver}},
\end{equation*}

It remains to prove that \ref{thm:concent:1:item:3} of \cref{thm:concent:1} holds. But, from the expression of
$\qMLEspars, \qMLEeps, \qMLEu$, we obtain
\begin{equation}\label{qMLE:equiv}
  \qMLEtau%
  = \sqrt{2\nedg}\Big(\frac{\qMLEspars \nver}{\nedg}
  \Big)^{1/(1-\qMLEspars)}\Big(1 + O\Big(\frac{\nver}{\nedg}\bigvee
  \frac{\sqrt{\nedg}}{\nver} \Big)\Big),
  \quad
  \qMLEsize%
  = \frac{\qMLEspars \nver}{(\sqrt{2\nedg})^{\qMLEspars}}%
  \Big(1 + O\Big(\frac{\nver}{\nedg}\bigvee
  \frac{\sqrt{\nedg}}{\nver} \Big)\Big).
\end{equation}
which terminates the proof of \cref{thm:concent:1}.

\begin{lemma}
  \label{pro:concent:4}
  Under \cref{ass:concent:degdist}, for all $c_0 \in (0,1)$ fixed, we have:
  \begin{itemize}
\item  For all $C > 0$   there exists
  $B,t_0 > 0$ such that for all $\spars \leq -C$, and for all $t > t_0$,
  \begin{equation}
    \label{eq:concent:63}
    \Psi(\spars) - \Psi(c_0)%
    \leq -B\nver \log \frac{\nedg}{\nver}.
  \end{equation}
  \item  For all $K > 0$ there exists $c_2 > 0$
  \begin{equation}
    \label{pro:concent:9}
    \Psi(\spars) - \Psi(c_0)\leq -K \nver \quad \forall \spars \in (c_2,1).
  \end{equation}
  \end{itemize}
\end{lemma}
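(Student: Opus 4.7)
Work with the reparameterization $\qloglik^{*}(\spars,\varepsilon,u) = K(\spars) + H(\spars,\varepsilon,u)$ introduced above and let $\bar H(\spars) \coloneqq \sup_{\varepsilon \in (0,1),\, u > 0} H(\spars,\varepsilon,u)$, so that $\Psi(\spars) = K(\spars) + \bar H(\spars)$. The idea is to handle the two unbounded tails separately, using tight upper bounds on $K$ and matching upper bounds on $\bar H$, and then compare with a lower bound on $\Psi(c_0)$.

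For the regime $\spars \leq -C$, I first bound $K(\spars)$ uniformly. The elementary inequality $\log(k + |\spars|) \leq \log k + \log(1 + |\spars|)$ for $k \geq 1$ gives $\sparsfunc(\spars) \leq (\nedg - \nver) \log(1 + |\spars|) + O(\nedg \log(\nedg/\nver))$, which combined with a two-regime bound on $(\nver - \nedg/2)\log \beta_\spars$ (splitting whether $|\spars|\nver/\nedg$ is smaller or larger than $1$) yields $K(\spars) \leq (\nedg/2)\log(1+|\spars|) + O(\nedg \log(\nedg/\nver))$. Next, for $\bar H(\spars)$ I would use the test point $\varepsilon = 1 - a/|\spars|$ with fixed $a > 0$, which keeps $\varepsilon^\spars$ bounded so that $f(\spars,\varepsilon), g(\spars,\varepsilon) = O(1/|\spars|)$ and the optimal $u$ is $O(1)$; direct substitution gives $\bar H(\spars) \leq -(\nedg/2)\log|\spars| + O(\nedg + \nver \log|\spars|)$. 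The $(\nedg/2)\log|\spars|$ contributions cancel, so $\Psi(\spars) \leq O(\nedg \log(\nedg/\nver))$ uniformly on $\spars \leq -C$. On the other hand, plugging the near-MLE values derived in the proof of \cref{thm:concent:1} into $\qloglik^{*}(c_0, \cdot, \cdot)$ gives $\Psi(c_0) \gtrsim c(c_0)\, \nedg \log(\nedg/\nver)$ for some $c(c_0) > 0$; subtracting and using $\nedg \geq \nver$ yields the stated bound $\Psi(\spars) - \Psi(c_0) \leq -B \nver \log(\nedg/\nver)$ with $B = B(C, c_0) > 0$.

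For the regime $\spars \in (c_2, 1)$, first note that $f(\spars,\varepsilon) \to 1 - \varepsilon$ and $g(\spars,\varepsilon) \to (1-\varepsilon)/2$ smoothly as $\spars \to 1^{-}$, so $\bar H(\spars)$ extends continuously on $[c_0, 1)$ with $|\bar H(\spars) - \bar H(c_0)| = O(\nver)$, and similarly $(\nver - \nedg/2)(\log \beta_\spars - \log \beta_{c_0}) = O(\nver)$ uniformly. The whole divergence of $\Psi(\spars) - \Psi(c_0)$ as $\spars \to 1^{-}$ originates from $\sparsfunc$: using $\log((k-\spars)/(k-c_0)) \leq 0$ for $k \geq 1$ and $\spars > c_0$, and keeping only the $k=1$ summand,
\begin{equation*}
  \sparsfunc(\spars) - \sparsfunc(c_0) \leq (\nver - \ndeg{1}) \log \frac{1-\spars}{1-c_0}.
\end{equation*}

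The main obstacle is to deduce from \cref{ass:concent:degdist} the uniform lower bound $\nver - \ndeg{1} \geq c\nver$ for some $c > 0$. I argue by contradiction: if $\varepsilon_t \coloneqq (\nver - \ndeg{1})/\nver \to 0$, applying Jensen's inequality to the defining equation \eqref{eq:defalphat} combined with the asymptotic $T(j,\alpha) \sim \alpha \log j$ forces the mass of $(\ndeg{j})_{j \geq 2}$ to concentrate on degrees $j_t$ of order $\exp(\Theta(1/\varepsilon_t))$; combined with the sparsity rate $\nedg \sim C\nver^{2/(1+\alpha_0)}$ this pins $\varepsilon_t \asymp 1/\log \nver$. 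A mean-value expansion $T(j_t,\hat\alpha_t) - T(j_t,\alpha_0) = (\hat\alpha_t - \alpha_0)\, \partial_\alpha T(j_t, \cdot)$ with $\partial_\alpha T(j_t, \cdot) \asymp \log j_t \asymp 1/\varepsilon_t$ then yields $|\hat\alpha_t - \alpha_0| = \Theta(\varepsilon_t) = \Theta(1/\log \nver)$, contradicting the second half of \eqref{eq:assumpt1_cond1}. Once $\nver - \ndeg{1} \geq c\nver$ is established, picking $c_2 \in (c_0, 1)$ so that $c\log((1-c_2)/(1-c_0)) \leq -(K + M)$, with $M$ bounding the $O(\nver)$ remainders from the previous paragraph, completes the proof.
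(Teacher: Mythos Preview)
Your argument has genuine gaps in both regimes.

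For $\spars \leq -C$, you claim to upper-bound $\bar H(\spars) = \sup_{\varepsilon,u} H(\spars,\varepsilon,u)$ by plugging in a ``test point'' $\varepsilon = 1 - a/|\spars|$. But evaluating at a specific point gives a \emph{lower} bound on a supremum, not an upper bound; this direction error breaks the chain. (The true maximizer computed in the paper is $\varepsilon_* = (\beta_\spars - 1)/\beta_\spars$, which behaves quite differently.) Even granting your bounds, subtracting ``$\Psi(\spars) \leq O(\nedg\log(\nedg/\nver))$'' from ``$\Psi(c_0) \gtrsim c(c_0)\,\nedg\log(\nedg/\nver)$'' does not yield a negative quantity unless the implicit constants are controlled, which you never do. The paper's route is substantially different: it first computes $\sup_{\varepsilon,u} H(\spars,\varepsilon,u)$ exactly via a two-step optimization (first $u$, then $\varepsilon$), and then splits into $|\spars| > b\,\nedg/\nver$ versus $|\spars| \leq b\,\nedg/\nver$. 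In the first range a Jensen-plus-second-order-Taylor bound on $\sparsfunc(\spars) - \sparsfunc(0)$ (expanding $j \mapsto j\log(1 + |\spars|/j)$) produces a dominant $-\nedg/(1+\tfrac{1}{2b})$ contribution; in the second range the derivative of the resulting one-variable function is shown to be positive, so everything is bounded by its value at $\spars = -C$.

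For $\spars \in (c_2,1)$, your reduction to the $k=1$ term in $\sparsfunc(\spars) - \sparsfunc(c_0)$ and the $O(\nver)$ control of the remaining pieces match the paper. The gap is your contradiction argument for $\nver - \ndeg{1} \gtrsim \nver$. Jensen applied to \cref{eq:defalphat} gives only that the \emph{average} of $\log j$ over degree-$\geq 2$ nodes is $\Theta(1/\varepsilon_t)$; it does not force mass concentration on a single $j_t$. More seriously, your mean-value step presumes control of $\sum_{j\geq 2}\ndeg{j}T(j,\alpha_0) - \nver$, which \cref{ass:concent:degdist} alone does not provide. Concretely, take the two-point degree sequence $\ndeg{1} = (1-\varepsilon_t)\nver$, $\ndeg{j_t} = \varepsilon_t\nver$, all other $\ndeg{j} = 0$, with $j_t$ and $\varepsilon_t$ tuned so that $\varepsilon_t T(j_t,\alpha_0) = 1$ and $j_t\varepsilon_t\nver \sim C\,\nver^{2/(1+\alpha_0)}$: both parts of \cref{ass:concent:degdist} are satisfied (the second exactly, since $\hat\alpha_t = \alpha_0$), yet $\nver - \ndeg{1} = \varepsilon_t \nver \asymp \nver/\log\nver$. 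Hence your conclusion $|\hat\alpha_t - \alpha_0| = \Theta(1/\log\nver)$ cannot follow from the hypotheses as stated.
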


\begin{lemma}
  \label{pro:concent:5}
  Let $-C \leq \spars < c_2 < 1$. Then, if $\nver = o(\nedg)$, the equation
  $(\varepsilon,u) \mapsto H(\spars,\varepsilon,u)$ has a unique maximizer
  $(\tilde{\varepsilon}(\spars),\tilde{u}(\spars))$. Furthermore
  $\tilde{\varepsilon}(\spars) \leq \frac{3C\nver}{\nedg}$ ,
  $\tilde{u}(\spars) = \frac{\nver}{\nedg}\frac{1 +
    O(\nver/\nedg)}{f(\spars,\tilde{\varepsilon}(\spars))}$ and
         \begin{equation}
    \label{eq:concent:103}
    \Psi'(\spars)%
    = \sparsfunc'(\spars)%
    -\nver\Big(1+O\Big(\frac{\nver}{\nedg}\Big)\Big)\frac{\partial_{\spars}f(\spars,\tilde{\varepsilon}(\spars))}{f(\spars,\tilde{\varepsilon}(\spars))}%
    + O\Big(\frac{\nver^2}{\nedg}\Big), \quad \spars \in [-C,c_2].
  \end{equation}
In addition, under \cref{ass:concent:degdist}, there exists $0 < c_1 < c_2$
  such that $\Psi'(\spars) > 0$ for all $- C \leq \spars \leq c_1$, and for
  $c_2$ sufficiently large $\Psi'(c_2) < 0$.
\end{lemma}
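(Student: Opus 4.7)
\textbf{Proof plan for \cref{pro:concent:5}.}

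My plan is to analyze the first-order conditions for $H(\spars,\cdot,\cdot)$, extract asymptotic expressions for $(\tilde\varepsilon(\spars),\tilde u(\spars))$, derive $\Psi'(\spars)$ via the envelope theorem, and finally match the leading behaviour against the definition of $\hat\alpha_t$ to conclude on the signs of $\Psi'$. Differentiating $H$ (using $\partial_\varepsilon g(\spars,\varepsilon) = -\varepsilon^{\spars-1} + 1/2$) yields the system
\begin{equation*}
  \tfrac{\nver}{u} = \tfrac{\nedg}{2(1+u)} + \nedg\beta_\spars g(\spars,\varepsilon), \qquad \beta_\spars u \varepsilon^{\spars-1} = \tfrac{\beta_\spars u}{2} + \tfrac{1}{2(1-\varepsilon)} + \tfrac{\beta_\spars}{2}.
\end{equation*}
Rewriting the first equation as $u = \nver/[\nedg(1/2+\beta_\spars g + O(u))]$ and using the identity $1/2 + \beta_\spars g(\spars,\varepsilon) = \beta_\spars f(\spars,\varepsilon) + O(\nver/\nedg + \varepsilon)$ together with $\beta_\spars = 1 + O(\nver/\nedg)$ yields the announced expansion $\tilde u = (\nver/\nedg)(1+O(\nver/\nedg))/f(\spars,\tilde\varepsilon)$; to leading order the second equation forces $u\varepsilon^{\spars-1} = 1 + o(1)$.

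Combining the two leading relations yields $\varepsilon^{1-\spars} \asymp \nver/(\nedg\, f(\spars,\varepsilon))$. For $\spars \in (0,c_2)$ with $c_2 < 1$, $f \approx 1/\spars$ so $\tilde\varepsilon \lesssim (\spars\nver/\nedg)^{1/(1-\spars)} \leq \nver/\nedg$; for $\spars \in [-C,0]$, $f \approx \varepsilon^{\spars}/|\spars|$ so $\tilde\varepsilon \approx |\spars|\nver/\nedg \leq C\nver/\nedg$. In both regimes $\tilde\varepsilon \leq 3C\nver/\nedg$. Existence and uniqueness of the critical point follow from the monotonicity structure: the second FOC defines a strictly increasing $u_B(\varepsilon) : (0,1) \to (0,\infty)$ (since $\varepsilon^{\spars-1}$ is strictly decreasing on $(0,1)$), while the first FOC restricted to the regime $u \ll u_{*} := (\sqrt{\nedg/(2\nver)}-1)^{-1}$, where $u \mapsto \nver/u - \nedg/(2(1+u))$ is strictly decreasing, defines $u_A(\varepsilon)$ crossing $u_B$ exactly once at the predicted location. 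Applying the envelope theorem then gives $\Psi'(\spars) = \partial_\spars H(\spars,\tilde\varepsilon,\tilde u) + K'(\spars)$, where (using $\partial_\spars \beta_\spars = -\nver/\nedg$ and $\partial_\spars g = \partial_\spars f$)
\begin{equation*}
  \partial_\spars H = \nver g \tilde u - \nedg\beta_\spars \partial_\spars f \cdot \tilde u - \tfrac{\nver}{2}(1-\tilde\varepsilon), \qquad K'(\spars) = \sparsfunc'(\spars) + \tfrac{\nver}{2}\bigl(1+O(\nver/\nedg)\bigr).
\end{equation*}
Substituting the expressions for $\tilde u$ and $\tilde\varepsilon$, the terms $\nver g \tilde u$ and $\nver\tilde\varepsilon/2$ are both $O(\nver^2/\nedg)$ (since $g/f$ is bounded), the two $\nver/2$ contributions cancel, and the dominant correction is $-\nver\, \partial_\spars f/f \cdot(1 + O(\nver/\nedg))$, producing exactly \eqref{eq:concent:103}.

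For the sign claim, recall that $\hat\alpha_t$ solves $-\hat\alpha_t \sparsfunc'(\hat\alpha_t) = \nver$ and $\hat\alpha_t \to \alpha_0 \in (0,1)$ by \cref{ass:concent:degdist}, while $\spars \mapsto \spars\sparsfunc'(\spars)$ is strictly decreasing on $(0,1)$ since $\sparsfunc'' < 0$. For $\spars \in (0,c_2)$, $\partial_\spars \log f(\spars,\tilde\varepsilon) = -1/\spars + o(1)$ (the correction from $\varepsilon^{\spars}|\log\varepsilon|$ is negligible), so \eqref{eq:concent:103} reads $\Psi'(\spars) = (\spars\sparsfunc'(\spars) + \nver)/\spars + o(\nver)$; picking $c_1 < \alpha_0 < c_2 < 1$ makes this positive on $(0,c_1]$ and, since $\sparsfunc'(\spars) \leq -(\nver - \ndeg{1})/(1-\spars) \to -\infty$ as $\spars \to 1^-$ (with $\ndeg{1} < \nver$ by \cref{ass:concent:degdist}), negative at $\spars = c_2$ for $c_2$ sufficiently close to $1$. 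For $\spars \in [-C, 0]$, $\partial_\spars \log f \sim \log \tilde\varepsilon \lesssim -\log(\nedg/\nver)$, so $-\nver \partial_\spars \log f \gtrsim \nver \log(\nedg/\nver)$ dominates $\sparsfunc'(\spars)$ (which is $O(\nver)$ in magnitude), giving $\Psi'(\spars) > 0$. The main obstacle I expect is the uniform control of the remainder terms across the transition $\spars = 0$, where $f$ passes from bounded to logarithmically divergent; this will likely require a careful case split and uniform tracking of implicit constants in $\spars$.
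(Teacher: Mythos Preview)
Your overall strategy matches the paper's: derive the first-order conditions, extract the asymptotics of $(\tilde\varepsilon,\tilde u)$, apply the envelope theorem for $\Psi'$, and compare with $\sparsfunc'$ for the sign conclusions. The computations you write for $\partial_\spars H$, $K'(\spars)$, and the resulting cancellation are correct and coincide with the paper's derivation of \eqref{eq:concent:103}. However, there is one genuine gap and one looser point.

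\textbf{Uniqueness.} Your crossing argument is incomplete: both curves $u_A(\varepsilon)$ and $u_B(\varepsilon)$ are \emph{increasing} in $\varepsilon$. Indeed, as $\varepsilon$ increases $g(\spars,\varepsilon)$ decreases, so the right-hand side of your first FOC decreases, and since $u\mapsto \nver/u - \nedg/(2(1+u))$ is decreasing on $(0,u_*)$, the solution $u_A(\varepsilon)$ increases. Thus ``$u_A$ decreasing, $u_B$ increasing, hence one crossing'' does not apply; you only get existence of at least one crossing from the boundary behaviour $u_B(0^+)=0<u_A(0^+)$ and $u_B(1^-)=\infty>u_A(1^-)$, but not uniqueness. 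The paper avoids this by profiling out $u$ first: with $\bar u(\spars,\varepsilon)$ the unique maximizer in $u$ (your first FOC), it sets $\tilde H_\spars(\varepsilon)=H(\spars,\varepsilon,\bar u(\spars,\varepsilon))$, establishes the a~priori bound $\tilde\varepsilon\le 3C\nver/\nedg$, and then computes
\[
\tilde H_\spars''(\varepsilon)=\partial_\varepsilon^2 H+\frac{(\partial_\varepsilon\partial_u H)^2}{-\partial_u^2 H}
\]
and shows this is strictly negative on $(0,3C\nver/\nedg)$. Strict concavity of the one-dimensional profile gives uniqueness directly; your two-curve picture would need a comparable second-order argument to close.

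\textbf{Sign of $\Psi'$ near $\spars=0$.} Your claim $\partial_\spars\log f(\spars,\tilde\varepsilon)=-1/\spars+o(1)$ fails uniformly on $(0,c_1]$: near $\spars=0$ one has $\partial_\spars\log f\sim -\tfrac12\log(1/\tilde\varepsilon)$, not $-1/\spars$. The paper sidesteps the case split by using the exact expression
\[
-\frac{\partial_\spars f(\spars,\varepsilon)}{f(\spars,\varepsilon)}=\frac{1}{\spars}\cdot\frac{\varepsilon^{-\spars}-1-\spars\log(1/\varepsilon)}{\varepsilon^{-\spars}-1},
\]
which is positive for all $\spars\in(-\infty,1)$ and whose minimum over $[-C,c_1]$ is attained at $\spars=c_1$ with value $\sim 1/c_1$. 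Combined with $|\sparsfunc'(\spars)|\le |\sparsfunc'(c_1)|=O(\nver)$ on $[-C,c_1]$ (from $\sparsfunc''<0$), choosing $c_1$ small enough gives $\Psi'>0$ uniformly without splitting at $\spars=0$. Your acknowledged obstacle at the transition is real for your case-split approach, but disappears with this uniform bound.
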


\begin{lemma}
  \label{pro:concent:11}
  For all $\spars \in [c_1,c_2]$,
  $$
    \Psi''(\spars)%
    =\sparsfunc''(\spars) - \frac{\nver(1+o(1))}{\spars^2} < 0.%
$$
\end{lemma}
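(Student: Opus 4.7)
The plan is to differentiate the expression for $\Psi'(\spars)$ from \cref{pro:concent:5} once more with respect to $\spars$, tracking the dependence of the inner maximizer $\tilde\varepsilon(\spars)$ on $\spars$ by implicit differentiation. The leading $\nver$-scale contributions come from $\sparsfunc''(\spars)$ and from the inner derivative of $-\nver \,\partial_\spars \log f(\spars, \tilde\varepsilon(\spars))$; everything else must be shown to be $o(\nver)$.

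First I would split $\Psi(\spars) = K(\spars) + \tilde H(\spars)$ with $K(\spars) = (\nver - \nedg/2)\log\beta_\spars + \sparsfunc(\spars)$ and $\tilde H(\spars) = H(\spars, \tilde\varepsilon(\spars), \tilde u(\spars))$. A direct computation, using $\partial_\spars \log \beta_\spars = -(\nver/\nedg)/\beta_\spars$, yields
\begin{equation*}
K''(\spars) = \sparsfunc''(\spars) - \frac{\nver^2(\nver - \nedg/2)}{\nedgsq \beta_\spars^2} = \sparsfunc''(\spars) + O\Big(\frac{\nver^2}{\nedg}\Big).
\end{equation*}
For $\tilde H$, the envelope theorem and the FOCs $\partial_\varepsilon H = \partial_u H = 0$ at $(\tilde\varepsilon, \tilde u)$ give $\tilde H'(\spars) = \partial_\spars H|_{(\tilde\varepsilon, \tilde u)}$ and
\begin{equation*}
\tilde H''(\spars) = \partial_\spars^2 H + \partial_{\spars\varepsilon}^2 H \cdot \tilde\varepsilon'(\spars) + \partial_{\spars u}^2 H \cdot \tilde u'(\spars),
\end{equation*}
with all partials evaluated at $(\spars, \tilde\varepsilon, \tilde u)$. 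The implicit derivatives $\tilde\varepsilon'$ and $\tilde u'$ are recovered by differentiating the FOCs, producing a $2\times 2$ linear system whose solution can be controlled from the asymptotics $\tilde u \asymp \spars\nver/\nedg$ and $\tilde\varepsilon^{1-\spars} \asymp \spars \nver/\nedg$ already established in the proof of \cref{thm:concent:1}.

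The core calculation is to expand $\partial_\spars \log f(\spars, \varepsilon) = -1/\spars - \varepsilon^\spars \log\varepsilon/(1-\varepsilon^\spars)$ and differentiate once more, obtaining $\partial_\spars^2 \log f(\spars, \varepsilon) = 1/\spars^2 + O(\varepsilon^\spars (\log\varepsilon)^2)$ for $\spars \in [c_1, c_2]$ and $\varepsilon$ small. Substituting this together with $\nedg \tilde u \beta_\spars = \spars\nver(1+o(1))$ into $\partial_\spars^2 H$ isolates the dominant contribution $-\nver/\spars^2$. The cross-terms satisfy $\partial_{\spars\varepsilon}^2 H \cdot \tilde\varepsilon' = O\big(\nver \tilde\varepsilon^\spars (\log\tilde\varepsilon)^2\big) = O\big(\nver (\nver/\nedg)^{\spars/(1-\spars)}(\log\nedg)^2\big) = o(\nver)$, and similarly $\partial_{\spars u}^2 H \cdot \tilde u' = o(\nver)$ given $\tilde u' = O(\tilde u \log \tilde u)$ from the implicit system.

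Combining the pieces yields $\Psi''(\spars) = \sparsfunc''(\spars) - \nver/\spars^2\,(1+o(1))$, and both leading terms are strictly negative since $\sparsfunc''(\spars) = -\sum_{j\geq 2} \ndeg{j}\sum_{k=1}^{j-1}(k-\spars)^{-2} \leq 0$ and $\nver/\spars^2 > 0$. The main obstacle I anticipate is the uniform control of the cross-term contributions together with the implicit big-$O$ corrections inherited from \cref{pro:concent:5}: this reduces to sharp bounds on the Hessian of $H$ in $(\varepsilon, u)$ combined with the smallness of $\tilde\varepsilon$ and $\tilde u$, which must be shown to hold uniformly in $\spars \in [c_1, c_2]$.
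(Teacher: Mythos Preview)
Your overall strategy coincides with the paper's: split $\Psi = K + \tilde H$, use the envelope identity $\tilde H'=\partial_\spars H$, expand $\tilde H''=\partial_\spars^2 H + \partial_{\spars\varepsilon}^2 H\cdot\tilde\varepsilon' + \partial_{\spars u}^2 H\cdot\tilde u'$, and control $\tilde\varepsilon',\tilde u'$ via the implicit system coming from the first-order conditions. The paper packages the needed second-order estimates in \cref{pro:concent:10}. However, two of your intermediate claims are wrong, and only by accident do they cancel to produce the correct answer.

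First, $\partial_\spars^2 H(\spars,\tilde\varepsilon,\tilde u)\sim -2\nver/\spars^2$, not $-\nver/\spars^2$. The issue is that what enters $\partial_\spars^2 H$ is $\partial_\spars^2 g=\partial_\spars^2 f$, not $\partial_\spars^2\log f$: one has $\partial_\spars^2 f = f\big(\partial_\spars^2\log f + (\partial_\spars\log f)^2\big)=(1/\spars)\cdot(1/\spars^2+1/\spars^2)(1+o(1))=2/\spars^3(1+o(1))$, so that $-\nedg\beta_\spars\tilde u\,\partial_\spars^2 f \sim -\spars\nver\cdot 2/\spars^3=-2\nver/\spars^2$. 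Second, $\partial_{\spars u}^2 H\cdot\tilde u'$ is \emph{not} $o(\nver)$: from $\partial_{\spars u}^2 H = -\nedg\beta_\spars\partial_\spars g + \nver g \sim \nedg/\spars^2$ and $\tilde u' = (\partial_{\spars u}^2 H)/(-\partial_u^2 H) + O(\tilde\varepsilon') \sim (\nedg/\spars^2)/(\nedgsq/(\spars^2\nver)) = \nver/\nedg$, this cross term contributes $+\nver/\spars^2(1+o(1))$ at leading order. It is precisely this contribution that reduces $-2\nver/\spars^2$ to the correct $-\nver/\spars^2$. Your estimate of the $\partial_{\spars\varepsilon}^2 H\cdot\tilde\varepsilon'$ term as $o(\nver)$ is correct, but you must keep $\partial_{\spars u}^2 H\cdot\tilde u'$ to leading order rather than discard it.
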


\begin{lemma}
  \label{pro:concent:16}
    Under \cref{ass:concent:degdist}, we have :
    \begin{enumerate}
    \item\label{item:pro:concent:16:1} For all $K>0$ there exists $K'>0$ such that
    \begin{equation*}
     \sup_{(\spars, \epsilon, u) \in \varphi(U_t(K'))^c} \qloglik^{*} (\spars, \epsilon, u)- \sup\qloglik^{*} \leq -K\log \nedg.
   \end{equation*}
 \item\label{item:pro:concent:16:2}  The map $\varphi$ is invertible on
  $U_t(C)$, for any $C>0$ if $\TrueSize$ is large enough. Consequently, there is a unique $\qMLEparams =
  (\qMLEspars,\qMLEtau,\qMLEsize) \in U_t$, and thus in $\mathcal{S}$ which
  maximizes $\qloglik$ and the Jacobian is given by
    \begin{equation}
     \label{eq:concent:35}
     J(\params)^{-1}
     \sim
     \begin{pmatrix}
       1 & 0 & 0\\
      0  &\sqrt{2\nedg}        &  0 \\
       c_8 \sqrt{2\nedg}  \log( 2 \nedg)
       & 0
       & c_7 \frac{(2\nedg)^{3/2}}{ \nver}
     \end{pmatrix}.
   \end{equation}
  \end{enumerate}
\end{lemma}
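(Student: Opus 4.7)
The plan is to reduce \cref{item:pro:concent:16:1} to separate coordinatewise analyses of $\qloglik^{*}$ around its maximum, combining the earlier $\spars$-direction lemmas with a Hessian analysis of $H$ in $(\varepsilon, u)$; and to prove \cref{item:pro:concent:16:2} by implicit differentiation of the saddlepoint equation \eqref{eq:7} followed by explicit block inversion of the resulting Jacobian.

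The key decomposition for \cref{item:pro:concent:16:1} is
\begin{equation*}
  \qloglik^{*}(\spars,\varepsilon,u) - \sup \qloglik^{*}
  = \bigl[\Psi(\spars) - \Psi(\qMLEspars)\bigr] + \bigl[H(\spars,\varepsilon,u) - H(\spars, \tilde\varepsilon(\spars), \tilde u(\spars))\bigr],
\end{equation*}
where both brackets are non-positive. \Cref{pro:concent:4,pro:concent:5} handle the $\spars$-bracket outside $[c_1, c_2]$, while on $[c_1, c_2]$ the curvature estimate $\Psi''(\spars) \asymp -\nver / \spars^2$ from \cref{pro:concent:11} gives $\Psi(\spars) - \Psi(\qMLEspars) \lesssim -\nver(\spars - \qMLEspars)^2$ by Taylor expansion. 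Thus violating the $\spars$-bound already costs $K \log \nedg$ when $K'$ is taken large enough, and one can restrict to $|\spars - \qMLEspars|^2 \leq K'\log(\nedg)/\nver$ for the remainder. For such $\spars$ I would compute the Hessian of $H$ in $(\varepsilon, u)$ using \eqref{qMLeps}, \eqref{hatu} and $\nver \asymp \nedg^{(1+\alpha_0)/2}$, obtaining the scales $|\partial_\varepsilon^2 H| \asymp \nedg^{3/2}$, $|\partial_u^2 H| \asymp \nedg^2 / \nver$, $|\partial_\varepsilon \partial_u H| \asymp \nedg^{(3-\alpha_0)/2}$. These make the Hessian uniformly negative definite because the square of the cross term is negligible compared to the product of the diagonal entries; a Taylor expansion then yields a quadratic lower bound on the second bracket matching the radii of $U_t(K')$. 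Smoothness of $\tilde\varepsilon, \tilde u$ as functions of $\spars$ lets me replace them by $\qMLEeps, \qMLEu$ at the cost of corrections smaller than the allowed rates, and the divergences of $\log(1-\varepsilon)$, $\log u$ and $-\nedg\beta_\spars g(\spars,\varepsilon) u$ handle the tails away from the critical point.

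For \cref{item:pro:concent:16:2}, since $\varphi(\params) = (\spars, \varepsilon(\params), u(\params))$ preserves the first coordinate, invertibility reduces to that of $(\tau, \size) \mapsto (\varepsilon, u)$ at fixed $\spars$. Implicit differentiation of \eqref{eq:7} yields
\begin{equation*}
  \partial_\tau \zeta = \frac{\zeta}{2\zeta - \tau - 2\size\spars\zeta^{\spars - 1}}, \qquad
  \partial_\size \zeta = \frac{2\zeta^\spars}{2\zeta - \tau - 2\size\spars\zeta^{\spars-1}},
\end{equation*}
which at the MLE with $\zeta \asymp \sqrt{2\nedg}$ reduce to $\partial_\tau \zeta \sim 1/2$ and $\partial_\size \zeta \sim \zeta^{\spars-1}$. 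From $\varepsilon = \tau/\zeta$ and $u = \size \zeta^\spars / (\nedg \beta_\spars)$ one then reads off $\partial_\tau \varepsilon \sim 1/\sqrt{2\nedg}$ and $\partial_\size u \sim \zeta^\spars/(\nedg\beta_\spars) \asymp \nver/\nedg^{3/2}$, with the off-diagonal entries $\partial_\size \varepsilon$ and $\partial_\tau u$ strictly subdominant, and $\partial_\spars u$ carrying an extra $\log \zeta \asymp \log\nedg$ from differentiating $\zeta^\spars$. Inverting the resulting quasi-block-triangular Jacobian gives \eqref{eq:concent:35}. Since $\varphi$ is then a local diffeomorphism on $U_t(C)$, the unique maximizer $(\qMLEspars, \tilde\varepsilon(\qMLEspars), \tilde u(\qMLEspars))$ of $\qloglik^{*}$ pulls back to a unique $\qMLEparams \in U_t(C) \subset \mathcal{S}$; combined with \cref{item:pro:concent:16:1} this proves uniqueness of the MLE.

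The main obstacle is the $(\varepsilon, u)$-Hessian analysis: the diagonal entries differ by the factor $\nedg^{(\alpha_0 - 1)/2}$ with the cross derivative of intermediate order, so strict negative definiteness must be verified uniformly on a neighborhood of the critical point whose radii shrink with $t$ at very different rates, not merely at the critical point itself. A secondary subtlety is controlling $|\tilde\varepsilon(\spars) - \qMLEeps|$ and $|\tilde u(\spars) - \qMLEu|$ by the much finer rates allowed in the $\spars$-direction, so that the two brackets in the decomposition can be bounded separately without eroding the margin $K \log \nedg$.
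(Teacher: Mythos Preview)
Your proposal is correct and closely parallels the paper's proof. The decomposition into the $\Psi$-bracket and the $H$-bracket, the use of \cref{pro:concent:4,pro:concent:5,pro:concent:11} for the $\spars$-direction, the Jacobian computation via implicit differentiation of \eqref{eq:7}, and the inversion through the block-triangular structure are all exactly what the paper does.

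The one tactical difference is in the $(\varepsilon,u)$-analysis. You propose working with the full $2\times 2$ Hessian of $H$ and verifying negative definiteness via the determinant test; the paper instead proceeds \emph{sequentially}, first profiling out $u$ to form $\tilde H_\spars(\varepsilon) = H(\spars,\varepsilon,\bar u(\spars,\varepsilon))$ and using the concavity of $\tilde H_\spars$ already established in \cref{pro:concent:5}, then handling $u$ separately at fixed $(\spars,\varepsilon)$. These are equivalent: the second derivative $\tilde H_\spars''$ computed in the proof of \cref{pro:concent:5} is precisely the Schur complement $\partial_\varepsilon^2 H - (\partial_\varepsilon\partial_u H)^2/(-\partial_u^2 H)$, which is what your determinant test controls. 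The paper's sequential route is marginally cleaner because it reuses the global concavity of $\tilde H_\spars$ on $(0, 3C\nver/\nedg)$ rather than re-establishing uniform Hessian bounds on a shrinking neighborhood, and it handles the tails $u > 3\nver/\nedg$ by the monotonicity $\partial_u H < 0$ already proved in \cref{pro:concent:3}. Your anticipated obstacle---that $|\partial_\varepsilon\partial_u H|$ and $|\partial_u^2 H|$ are of the same order $\nedg^{(3-\alpha_0)/2}$---is real but benign, since the determinant still wins by a factor $\nedg^{\alpha_0/2}$; the paper's Schur-complement formulation makes this cancellation automatic. Your identification of the need to control $|\tilde\varepsilon(\spars)-\qMLEeps|$ and $|\bar u(\spars,\varepsilon)-\qMLEu|$ is spot on: the paper does this explicitly via \eqref{eq:concent:73} and a Taylor expansion of $\bar u$, obtaining bounds well inside the allowed radii.
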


\cref{pro:concent:4,pro:concent:5,pro:concent:11,pro:concent:16} are proved in \cite[\cref{supp-sec:concent:case-where-spars}]{caron:naulet:rousseau:supplement}.

\subsection{Proof of \texorpdfstring{\cref{posterior:cons:1}}{Theorem \ref{posterior:cons:1}}}
\label{sec:posterior:cons:1}

Set $\phi_\ast = (\alpha_0, \KLtau, \KLsize)$ with $\KLsize = \sqrt{2\nedg}\tau_\ast^{1-\alpha_0}/2$
and $\KLtau$ as defined in \cref{thm:concent:1}, and define
\begin{equation*}
  B_t(M)%
  = \Set*{  \params \given  \left| \frac{\size}{\KLsize}  - \frac{\qMLEsize}{\KLsize} \right|\leq \frac{ M }{  \KLsize^{(1 + \qMLEsize)/2} }; \, |\tau - \qMLEtau| \leq \frac{ M }{ \sqrt{\KLsize} }; \, | \spars - \qMLEspars| \leq  \frac{M  }{\KLsize^{(1 + \qMLEsize)/2}} }.
\end{equation*}
Then under \cref{ass:concent:degdist}, for all $\epsilon>0$,
$ |\qMLEspars - \alpha_0| +|\qMLEtau - \KLtau| + |\qMLEsize/\KLsize - 1 | \leq \epsilon/2$
if $t$ is large enough.
Hence for all $M>0$, when $t$ is large enough,
$$\Set*{ |\spars - \alpha_0| +|\tau - \KLtau| + \Big|\frac{\size}{\KLsize} - 1 \Big| > \epsilon } \subset B_t(M\log \KLsize)^c .$$
Moreover, using \cref{lem:3}
\begin{align} \label{PiBtc}
\Pi ( B_t(M\log^2 \KLsize)^c \mid \mathcal G_t) & = \frac{ \int_{ B_t(M\log^2 \KLsize)^c } e^{L_t(\params)-L_t(\qMLEparams)} d\pi(\phi) }{ \int_{\mathcal S} e^{L_t(\phi)-L_t(\qMLEparams)} d\pi(\phi) } \nonumber\\
 & \leq
\frac{2 \int_{B_t(M\log^2 \KLsize)^c} e^{\qloglik(\params)-\qloglik(\qMLEparams)} d\pi(\phi) }{ \int_{B_t(1/\log \KLsize)} e^{\qloglik(\params)-\qloglik(\qMLEparams)} d\pi(\phi) }
\end{align}
Using \cite[\cref{supp-lem:Sigmat}]{caron:naulet:rousseau:supplement}, on $B_t(1/\log \KLsize) $,
  $\Sigma_t (\spars, \tau, \size/\KLsize)^{-1}  \lesssim \text{diag}\left( \KLsize^{1 + \alpha_0}\log^2 \KLsize,  \KLsize, \KLsize^{1 + \alpha_0} \right)$
so by a Taylor expansion, for all $\phi \in B_t(1/\log \KLsize)$ and since $\nabla \qloglik(\qMLEparams)=0$
\begin{align*}
  &\qloglik(\params)-\qloglik(\qMLEparams)\\
  &\quad\gtrsim - (\spars - \qMLEspars)^2\KLsize^{1 + \alpha_0} \log^2 \KLsize- (\tau - \qMLEtau )^2\KLsize - \left(\frac{ \size - \qMLEsize}{\KLsize}\right)^2 \KLsize^{1+\alpha_0} \gtrsim - C
\end{align*}
 for some $C > 0$. Thus the denominator of the right hand side of \cref{PiBtc} is bounded from below by $e^{-C}\Pi(B_t(1/\log \KLsize)$. We use  part 2 of \cref{thm:concent:1}  to bound the numerator  by $(\nedg)^{-M'}$ with  $M'>0$ arbitrarily large by choosing $M>0$ large enough and using \cref{cond:Pis},
\begin{align*}
  \Pi \big( B_t(M\log^2 \KLsize)^c \mid \mathcal G_t\big)%
  &\leq \frac{e^C (\nedg)^{-M'} }{ \Pi(B_t(1/\log \KLsize))} = o(1)
\end{align*}

We now prove \eqref{eq:BvM}. On $ B_t(M\sqrt{\log \KLsize})$, using the notation
from \cite[\cref{supp-sec:Sigmat}]{caron:naulet:rousseau:supplement} in particular $\qloglik^*(\spars, \tau, u ) = \qloglik(\spars, \tau,\KLsize u)$ and $\params_u = (\spars, \tau, u )$, and by a Taylor expansion,
\begin{align*}
  L_t(\params)-L_t(\qMLEparams)
  &=\qloglik(\params)-\qloglik(\qMLEparams) + o(1) \eqqcolon Q_t^*(\params_u)-Q_t^*(\hat\params_{\TrueSize,u})+o(1)\\
  &=\frac{ -(\params_u - \hat\params_{\TrueSize,u})^T \Sigma_t( \bar{\params}_u)^{-1}(\params_u - \hat\params_{\TrueSize,u})}{2} + o(1),
\end{align*}
where $\bar{\phi}_u = \gamma \phi_u + (1-\gamma)\hat{\phi}_{\TrueSize,u}$ for some $\gamma \in (0,1)$.
From \cite[\cref{supp-lem:Sigmat}]{caron:naulet:rousseau:supplement}, the matrix $\Sigma_t(\bar{\params}_u)^{-1}$ is positive
definite and satisfies
$\Sigma_t(\bar{\params}_u)^{-1} = \Sigma_t(\hat\params_{\TrueSize,u})^{-1} (1 + o(1)),$
so that on $ B_t(M\log^2 \KLsize)$%
$$
L_t(\params)-L_t(\qMLEparams)  =\frac{ -(\params_u - \hat\params_{\TrueSize,u})^T \Sigma_t(\hat\params_{\TrueSize,u})^{-1}(\params_u - \hat\params_{\TrueSize,u})}{2}(1 +o(1)).
$$
Now  denote $\tilde \pi_u$  (resp. $\tilde \pi_u( \params \mid \mathcal G_t)$) the prior  (resp. posterior) density of $\params_u$,
$\tilde \pi_u( \params_u \mid \mathcal G_t) = \KLsize\pi_{\TrueSize}( \params\mid \mathcal G_t) $ with $pi_{\TrueSize}( \params\mid \mathcal G_t) $ the posterior density of $\phi$; and  keeping
the notation $B_t(M\log^2 \KLsize)$ for its transformation by the function
$\params \mapsto \params_u$,%
\begin{multline*}
  \int_{B_t(M\log^2 \KLsize)} e^{\frac{ -(\params_u - \hat\params_{\TrueSize,u})^T \Sigma_t(\hat\params_{\TrueSize,u})^{-1}(\params_u - \hat\params_{\TrueSize,u})}{2}(1 +o(1))} \tilde \pi_u(\params_u) d\params_u \\
  = \tilde \pi_u(\hat\params_{\TrueSize,u}))(1+o(1))\int_{B_t(M\log^2 \KLsize)} e^{\frac{ -(\params_u - \hat\params_{\TrueSize,u})^T \Sigma_t(\hat\params_{\TrueSize,u})^{-1}(\params_u - \hat\params_{\TrueSize,u})}{2}}d\params_u.
\end{multline*}
Moreover using the lower bound on $\Sigma_t(\hat\params_{\TrueSize,u})^{-1}$ from \cite[\cref{supp-sec:Sigmat}]{caron:naulet:rousseau:supplement}
$$ (\params_u - \hat\params_{\TrueSize,u})^T \Sigma_t(\hat\params_{\TrueSize,u})^{-1}(\params_u - \hat\params_{\TrueSize,u}) \gtrsim ( \spars - \qMLEspars)^2 \KLsize^{1 + \alpha_0} + (\tau - \qMLEtau)^2 \KLsize + (\KLsize - \qMLEsize)^2 \KLsize^{\alpha_0-1} /\log^2 \KLsize .$$
Hence there exists $c>0$ such that writing $\tilde B_t = \{ \Sigma_t(\hat\params_{\TrueSize,u})^{-1/2}(\params_u - \hat\params_{\TrueSize,u}); \, \params_u \in  B_t(M\log^2 \KLsize)\}$,
\begin{multline*}
  \int_{B_t(M\log^2 \KLsize)} e^{\frac{ -(\params_u - \hat\params_{\TrueSize,u})^T \Sigma_t(\hat\params_{\TrueSize,u})^{-1}(\params_u - \hat\params_{\TrueSize,u})}{2}(1 +o(1))} \tilde \pi_u(\params_u) d\params_u \\
  =(1 +o(1))\tilde \pi_u(\hat\params_{\TrueSize,u})) \int_{B_t(M\log \KLsize)} e^{\frac{ -(\params_u - \hat\params_{\TrueSize,u})^T \Sigma_t(\hat\params_{\TrueSize,u})^{-1}(\params_u - \hat\params_{\TrueSize,u})}{2}(1 +o(1))} d\params_u \\
=  (1 +o(1))\tilde \pi_u(\hat\params_{\TrueSize,u})) | \Sigma_t(\hat\params_{\TrueSize,u}) |^{1/2} \left[ \int_{\mathbb R^3} e^{\frac{ -\|z\|^2}{2}} dz - \int_{\tilde B_t^c} e^{\frac{ -\|z\|^2}{2}}  dz\right] \\
\end{multline*}
  and using $ \int_{\tilde B_t^c} e^{\frac{ -\|z\|^2}{2}}  dz \leq (2\pi)^{3/2} Pr ( \|\mathcal X^2(3)\|^2 > M  ) =o(1) $ by choosing $M$ large, we obtain that
\begin{align*}
\tilde \pi_u( \params_u \mid \mathcal G_t) &= \frac{ (1 + o(1) )\tilde \pi_u(\params_u)  e^{\frac{ -(\params_u - \hat\params_{\TrueSize,u})^T \Sigma_t(\hat\params_{\TrueSize,u})^{-1}(\params_u - \hat\params_{\TrueSize,u})}{2}(1 +o(1))} }{ \int_{B_t(M\log^2 \KLsize)} e^{\frac{ -(\params_u - \hat\params_{\TrueSize,u})^T \Sigma_t(\hat\params_{\TrueSize,u})^{-1}(\params_u - \hat\params_{\TrueSize,u})}{2}(1 +o(1))} \tilde \pi_u(\params_u) d\params_u } \\
& = \frac{ (1 + o(1) )\tilde \pi_u(\KLparams)  e^{\frac{ -(\params_u - \hat\params_{\TrueSize,u})^T \Sigma_t(\hat\params_{\TrueSize,u})^{-1}(\params_u - \hat\params_{\TrueSize,u})}{2}(1 +o(1))} }{ \tilde \pi_u(\hat\params_{\TrueSize,u})|\Sigma_t(\hat\params_{\TrueSize,u})|^{1/2}(2\pi)^{3/2} } \\
& = \frac{ (1 + o(1) )  e^{\frac{ -(\params_u - \hat\params_{\TrueSize,u})^T \Sigma_t(\hat\params_{\TrueSize,u})^{-1}(\params_u - \hat\params_{\TrueSize,u})}{2}(1 +o(1))} }{ | \Sigma_t(\hat\params_{\TrueSize,u}) |^{1/2} (2\pi)^{3/2} },
\end{align*}
uniformly  over $\params \in  B_t(M\log^2 \KLsize)$, where we have used the consistency of $\hat\params_{\TrueSize,u}$. This implies that
\begin{align*}
\| \Pi_t - \mathcal{N}(0,1)^{\otimes 3}\|_{TV} \leq o(1) + \Pi ( B_t(M \log^2 \KLsize)^c  \mid \mathcal G_t) = o(1) .
\end{align*}

\subsection{Proof of \texorpdfstring{\cref{posterior:cons:2}}{Theorem \ref{posterior:cons:2}}}
\label{sec:posterior:cons:2}

Following from the proof of \cref{pro:concent:9}, for any $\epsilon>0$ fixed and $\spars >\epsilon$,
$$ \Psi(\spars) - \Psi(\epsilon) \leq \sum_{j\geq 2} \ndeg{j} \sum_{k=1}^{j-1}\log \frac{ k-\spars }{ k-\epsilon } + O(N_t)  \leq -(\spars - \epsilon)  \sum_{j\geq 2} \ndeg{j} \sum_{k=1}^{j-1} \frac{ 1 }{ k-\epsilon }+ O(N_t)  $$
Moreover
$$ \sum_{k=1}^{j-1} \frac{ 1 }{ k-\epsilon } \geq  \sum_{k=1}^{j-1} \frac{ 1 }{ k}  \geq \log j + O(1) $$
Hence, when $t$ is large enough and under \cref{ass:dense},
$$ \Psi(\spars) - \Psi(\epsilon) \leq -(\spars - \epsilon)  \sum_{j\geq 2} \ndeg{j} \log j+ O(N_t) \leq - c_2 (\spars - \epsilon) \nver \log \nedg + O(\nver) .$$
which thus leads to : for all $\params$ such that $\spars > 2 \epsilon$
\begin{equation*}
\loglik(\spars, \tau, \size) - \sup_{\tau, \size}\loglik(\epsilon, \tau, \size)\leq - c_2 (\spars - \epsilon) \nver \log \nedg +O(\nver)
\end{equation*}

Using the computations of the derivatives of $\qloglik$ in the proof of \cite[\cref{supp-lem:Sigmat}]{caron:naulet:rousseau:supplement}, $\sup_{\tau, \size}\loglik(\epsilon, \sigma, \tau)$ is attained at
$$\hat \tau_\epsilon = \left( \frac{ 2 \hat \size_\epsilon }{ \sqrt{2\nedg}} \right)^{1/(1-\epsilon)}(1 + o(1)) , \quad \hat \size_\epsilon = \frac{ \nver \epsilon }{ (1 -\epsilon/2) (2\nedg)^{\epsilon/2}}(1 + o(1))$$
and choosing
\begin{equation*}
\params \in B_{\TrueSize} = \left\{ | \spars - \epsilon| + |\tau - \hat \tau_\epsilon| \leq \frac{ \epsilon }{ \nver \log \nedg }, \,  |\size - \hat\size_\epsilon | \leq \frac{ \epsilon}{ \nver} \right\}
\end{equation*}
together with  $\nabla \qloglik$ in \cite[\cref{supp-sec:Sigmat}]{caron:naulet:rousseau:supplement},
$$\left| \loglik(\spars, \tau, \size) - \sup_{\tau, \size}\loglik(\epsilon, \tau, \size) \right|  \leq C\epsilon $$
for some positive constant  $C$. Hence
$$ \int_{B_{\TrueSize}}e^{\loglik(\spars, \tau, \size) - \Psi(\epsilon) }\pi(\params)d\params \geq e^{-\epsilon C} \Pi(B_{\TrueSize})$$
and from \cref{cond:Pis2},
\begin{align*}
  \Pi(B_{\TrueSize})
  &\geq \pi( \epsilon, \hat \tau_\epsilon)\left(\frac{ \epsilon }{ \nver \log \nedg } \right)^2(1+o(1)) \Pi\left( \left|\frac{\size}{\sqrt{2\nedg}}  - \frac{ \nver \epsilon }{ (1 -\epsilon/2) (2\nedg)^{(1+\epsilon)/2}}(1 + o(1)) \right| \leq \frac{ \epsilon}{ \nver} \right) \\
  &\geq e^{-\frac{ c_2 \nver}{2}},
\end{align*}
since if $\epsilon$ is small enough and when $\TrueSize$ is large enough
$$ \frac{ 1 }{(\nedg)^{c_3}} \leq  \frac{ \nver \epsilon }{ (1 -\epsilon/2) (2\nedg)^{(1+\epsilon)/2}}  \leq 1, \quad \text{and } \quad o(1) = \hat \tau_\epsilon \geq \frac{ \epsilon }{ 2} \frac{ 1 }{ (2\nedg)^{\epsilon)/2}}.$$
We finally obtain
$$\Pi( \sigma > 2 \epsilon \mid \mathcal G_{\TrueSize}) \leq e^{ - \frac{c_2 (\spars - \epsilon) \nver \log \nedg }{2} }=o(1).$$

\subsection{Proof of \texorpdfstring{\cref{thm:well-specified}}{Theorem \ref{thm:well-specified}}}
\label{sec:proof-crefthm:w-spec}

Since the GGP model of \cite{Caron2017} is a special case of multigraphex
processes, we establish the concentration result in the well-specified case by
using the concentration result for more general multigraphex processes of
\cref{thm:2}. To do so, in a first time we check that the GGP model satisfies
all the \cref{assumpt:1,assumpt:2,ass:Wm,ass:tails} and we characterize what are
the constants involved in those assumptions in term of the true parameters. In
particular we show that the GGP model satisfies the \cref{ass:concent:degdist}
by the general result of \cref{thm:2}, with $\alpha_0 = \sigma_0$, $\tau_{*} = \tau_0$ and
$\tau_{*}^{1-\alpha_0} = 2$. Hence, by the \cref{thm:concent:1} $(\qMLEspars,\qMLEtau)$
is a consistent estimator of $(\sigma_0,\tau_0)$ and $\qMLEsize/t \to 1$ in probability as well. In a
second time, we leverage the well-specification of the model to obtain the exact
rates of convergence via adaptation of standard techniques involving asymptotic
expansion of the log-likelihood.

\paragraph*{The GGP model satisfies \cref{assumpt:1,assumpt:2,ass:Wm,ass:tails}}

Recall that we consider only $\sigma_0 \in (0,1)$. From the definition of the GGP model
(see \cref{sec:multigraphex}), we obtain that the model is a multigraphex
process with marginal graphon function
\begin{equation}
  \label{eq:26}
  \mu(x)%
  = \int_{\NNReals}\Big(1 - e^{-2
    \bar{\rho}^{-1}(x)\bar{\rho}^{-1}(y)}\Big)\intd y,
\end{equation}
where $\bar{\rho}$ is the tail of the Lévy intensity defined in
\cref{eq:GGLevyMeasure}. It has been established in
\cite[Proposition~16]{caron:rousseau:18} that $\mu$ in \cref{eq:26} satisfies
the \cref{assumpt:1,assumpt:2} with $\alpha_0 = \sigma_0$ and $a=1$. By
\cref{eq:poissonmultigraphonrank1}, we also have that when $x\ne y$,
\begin{align*}
  W(x,y)(1 - W(x,y))%
  &= \Big(1 - e^{-2
    \bar{\rho}^{-1}(x)\bar{\rho}^{-1}(y)}\Big) e^{-2
    \bar{\rho}^{-1}(x)\bar{\rho}^{-1}(y)}\\
  &\leq 2\bar{\rho}^{-1}(x,y)\bar{\rho}^{-1}(y) e^{-2
    \bar{\rho}^{-1}(x)\bar{\rho}^{-1}(y)}= W_m(x,y,1).
\end{align*}
A similar computation in the case $x=y$ gives that the GGP model also satisfies
the \cref{ass:Wm}. It remains to establish the validity of \cref{ass:tails}. The
proof is very similar to \cite[Proposition~16]{caron:rousseau:18} but with extra
cares to get the second order terms. They in particular show that
$\mu^{-1}(x) = \bar{\rho}(\psi^{-1}(x)/2)$ where
$\psi(t) \coloneqq \int_0^{\infty}(1 - e^{-wt})\rho(\intd w)$. Then,
$\psi(t) = t \int_0^{\infty} w \rho(\intd w) + O(t^2)$ as $t\to 0$, namely
$\psi(t) = t \cdot \tau_0^{-1+\sigma_0} + O(t^2)$. We deduce that as $t\to 0$,
\begin{equation}
  \label{eq:66}
  \psi^{-1}(t) = t \cdot \tau_0^{1-\sigma_0} + O(t^2).
\end{equation}
On the other hand,
\begin{align*}
  \bar{\rho}(x)%
  &= \int_x^{\infty}\frac{w^{-1-\sigma_0}e^{-\tau_0 w}\intd w}{\Gamma(1-\sigma_0)}= \frac{x^{-\sigma_0}}{\Gamma(1-\sigma_0)}\Big\{\frac{1}{\sigma_0} -
    \int_1^{\infty}w^{-1-\sigma_0}(1 - e^{-\tau_0x w})\intd w \Big\}.
\end{align*}
But, for $x\to 0$,
\begin{align*}
  0
  &\leq \int_1^{\infty}w^{-1-\sigma_0}(1 - e^{-\tau_0x w})\intd w \leq \int_{1}^{1/x} x \tau_0 w^{-\sigma_0}\intd w%
    + \int_{1/x}^{\infty} w^{-1-\sigma_0}\intd w\\
  &\leq \frac{\tau_0x^{\sigma_0}}{1 - \sigma_0}%
    + \frac{x^{\sigma_0}}{\sigma_0}.
\end{align*}
Therefore, as $x \to 0$,
\begin{align}
  \label{eq:50}
  \bar{\rho}(x)%
  &= \frac{x^{-\sigma_0}}{\sigma_0\Gamma(1-\sigma_0)} + O(1).
\end{align}
Combining \cref{eq:66,eq:50}, we find as $x \to 0$,
\begin{equation*}
  \mu^{-1}(x) - \frac{2^{\sigma_0}}{\sigma_0
    \tau_0^{\sigma_0(1-\sigma_0)}\Gamma(1-\sigma_0)}x^{-\sigma_0} = O(1).
\end{equation*}
Then \cref{ass:tails} is satisfied by the GGP model with $\alpha_0 = \sigma_0$,
$\beta = \sigma_0 > 0$ and
\begin{equation}
  \label{eq:68}
  c_0%
  = \frac{2^{\sigma_0}}{\sigma_0
    \tau_0^{\sigma_0(1-\sigma_0)}\Gamma(1-\sigma_0)}.
\end{equation}
We note that the previous also implies that \cref{assumpt:1} is satisfied. We
now show that the previous implies $\tau_{*} = \tau_0$. The expression for
$\tau_{*}$ when $(\mathcal{G}_t)_{t\geq 0}$ is a multigraphex process is given
in the \cref{thm:2}. The only term that remains to compute is
\begin{equation}
\label{eq:69}
  \int_{\NNReals}\bar{W}_1(x,y)\intd x \intd y%
  \notag
 = 2 \int_{\NNReals^2}\bar{\rho}^{-1}(x)\bar{\rho}^{-1}(y) \intd x \intd y = 2 \Big\{ \int_{\NNReals} \frac{w^{-\sigma_0}e^{-\tau_0 w}\intd
    w}{\Gamma(1-\sigma_0)} \Big\}^2  = 2 \tau_0^{-2+2\sigma_0}.
\end{equation}

\paragraph*{Rates of convergence via asymptotic expansion of the log-likelihood}

We start by getting a slow, but polynomial, rate of convergence. Indeed, we have
from \cref{eq:16}, $|\qMLEspars - \sigma_0| = O_p(t^{-\eta})$ for some $\eta>0$. By
\cref{eq:68}, \cref{eq:69} and \cref{thm:2},
\begin{align*}
  \tau_{*}%
  &= \Bigg\{ \frac{2^{1+\sigma_0} \tau_0^{-\sigma_0(1-\sigma_0)} }{\big(4
    \tau_0^{-2+2\sigma_0} \big)^{(1+\sigma_0)/2} } \Bigg\}^{1/(1-\sigma_0)} = \tau_0.
\end{align*}
and  using \cite[\Cref{supp-sec:asympt-prop-mult}]{caron:naulet:rousseau:supplement}, for some $\eta>0$,  $$ \left| \sqrt{ 2\nedg} \left( \frac{ \sigma_0 \nver }{ \nedg } \right)^{1/(1-\sigma_0)} - \tau_0 \right| = O_p(  t^{-\eta} ).$$
Combining the above with  \cref{qMLE:equiv}, we obtain that
\begin{equation}\label{qMLE-well}
|\qMLEtau - \tau_0| = O_p(t^{-\eta}), \quad \left| \qMLEsize - t\frac{\sqrt{2\int_{\NNReals^2}\bar{W}_1(x,y)\intd x\intd y}
    \tau_0^{1-\sigma_0}}{2}\right| = \left| \qMLEsize - t\right| =O_p(t^{1-\eta}).
\end{equation}

We now refine these rates. The \cref{lem:bias:score} below controls the bias of
the score based on $\mathcal Q_t^{*} (\phi_{0,u})$ with $\phi_{0,u}= (\sigma_0, \tau_0, 1)$
and implies that
\begin{align}\label{mle:eq}
\nabla_{\phi_{u}} \mathcal Q^{*}_t(\hat \phi_{t,u} )= 0 = \nabla_{\phi_{u}} \mathcal Q^{*}_t(\phi_{0,u} )+\int_0^1 D_{\phi_{u}}^2\mathcal Q^{*}_t(v\hat \phi_{t,u} +(1-v) \phi_{0,u}) (\hat \phi_{t,u} -  \phi_{0,u})dv
\end{align}
and
$$-D_{\phi_{u}}^2\mathcal Q^{*}_t(v\hat \phi_{t,u} +(1-v) \phi_{0,u}) = \Sigma(v\hat \phi_{t,u} +(1-v) \phi_{0,u})^{-1} $$
which is definite positive from \cite[\cref{supp-sec:Sigmat}]{caron:naulet:rousseau:supplement}. 
We then write $\bar \Sigma^{-1} = \int_0^1 \Sigma(v\hat \phi_{t,u} +(1-v) \phi_{0,u})^{-1} dv$, so
\begin{align*}
 \hat \phi_{t,u} -  \phi_{0,u} = \bar \Sigma \nabla_{\phi_{u}} \mathcal Q^{*}_t(\phi_{0,u} )
\end{align*}
Since $|\hat \phi_{t,u} - \phi_{0,u}| = O(t^{-\delta})$, we have uniformly for $v$ in $(0,1)$
$$\Sigma(v\hat \phi_{t,u} +(1-v) \phi_{0,u})^{-1} = \Sigma(\phi_{0,u})^{-1} (1 +o_p(1))$$
and $\bar \Sigma = \Sigma(\phi_{0,u})(1 +o_p(1))$. Hence \cref{mle:eq} becomes
\begin{align*}
\Sigma_t(\phi_{0,u})^{-1/2} &(\hat \phi_{t,u} -  \phi_{0,u}) =(1 +o(1))\Sigma_t(\phi_{0,u})^{1/2} \nabla_{\phi_{u}} \mathcal Q^{*}_t(\phi_{0,u} )  \\
                    &=  (1 +o(1))\Sigma_t(\phi_{0,u})^{1/2}  \nabla_{\phi_{u}} L_t(\phi_{0,u} )\\
  &\quad+ (1 +o(1))\Sigma_t(\phi_{0,u})^{1/2} [\nabla_{\phi_{u}} \mathcal Q^{*}_t(\phi_{0,u} ) - \nabla_{\phi_{u}} L_t(\phi_{0,u} ) ]\\
& = (1 +o(1))\Sigma_t(\phi_{0,u})^{1/2}  \nabla_{\phi_{u}} L_t(\phi_{0,u} )  + o(1)
\end{align*}
where the term $o(1)$ comes from \cref{lem:bias:score}.
From the beginning of \cite[\cref{supp-sec:Sigmat}]{caron:naulet:rousseau:supplement},
$$ \Sigma_t(\phi_{0,u})^{-1} \geq
\left( \begin{array}{ccc}
c t^{1+ \sigma_0} + M_1(\params_0) & 0 & M_2(\params_0) \\
0 & c t & 0 \\
M_2(\params_0) & 0 &  \nver  \end{array}\right) ( 1+ o(1)) $$
so that
$$ \Sigma_t(\phi_{0,u}) \leq \frac{ c' }{ t }
\left( \begin{array}{ccccc}
 c_1t^{-\sigma_0}&  & 0 & & c_2t^{-\sigma_0}\log t \\
0 & & 1 & & 0 \\
t^{-\sigma_0}\log t &  &0 & & t^{-\sigma_0} \log^2 t   \end{array}\right) ( 1+ o(1)),  $$
for some $c', c_1, c_2>0$.
Moreover
\begin{align*}
  &\mathbb P( \|\Sigma_t(\phi_{0,u})^{1/2}  \nabla_{\phi_{u}} L_t(\phi_{0,u} )  \|> M )\leq
    \frac{ \Tr\left[\Sigma_t(\phi_{0,u}) \mathbb E[-D^2{\phi_{u}} L_t(\phi_{0,u} )]\right]}{ M }\\
  &\qquad\qquad\leq  \frac{ \Tr\left[\Sigma_t(\phi_{0,u}) \mathbb E[-D^2{\phi_{u}} \mathcal Q^{*}_t(\phi_{0,u} )]\right]}{ M }
    +\frac{ \Tr\left[\Sigma_t(\params_0) \mathbb E[-D^2\mathcal Q^{*}_t(\phi_{0,u} )] \right]}{ M } +o(1) \lesssim \frac{ 1}{M}
\end{align*}
using \cref{lem:bias:score}, so that
$
\Sigma_t(\phi_{0,u})^{-1/2} (\hat \phi_{t,u} -  \phi_{0,u}) = O_p(1).
$

 \begin{lemma}\label{lem:bias:score}
 There exists $\delta_0>0$ such that if $V_2 = \mathrm{diag}( t^{-(1+\spars_0)/2} , t^{-1/2},  t^{-(1+\spars_0)/2} \log t )$,
$$\mathbb P_0\left(\left\|V_2 \nabla_{\phi_u} (\mathcal Q^{*}_t-L_t) (\spars_0, \tau_0, 1) \right\| > t^{-\delta_0} \right)= o(1).$$
and
$$\Tr\left[ V_2D^2_{\phi_u} (\mathcal Q^{*}_t-L_t) (\spars_0, \tau_0, 1)\right] = O(1) .$$
\end{lemma}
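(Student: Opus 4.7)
The plan is to exploit the saddlepoint representation of $I(\params)$ from \cref{pro:4} and to push the function-level estimate of \cref{lem:3} to the level of gradients and Hessians. From the proof of \cref{lem:3} (in particular \cref{eq:6} and \cref{eq:80}), one has, with a $\params$-independent constant $c_t$,
\begin{equation*}
\loglik(\params) - \qloglik(\params) \;=\; R_t(\params) + c_t,\qquad R_t(\params) \;:=\; -\tfrac{1}{2}\log\!\bigl(-2\nedg\,\partial_z^2 \mathcal{A}(\params;\zeta(\params))\bigr) \,+\, \log\!\bigl(1+\Delta(\params)\bigr).
\end{equation*}
Hence the bias of the score and Hessian reduces to controlling $\nabla R_t$ and $D^2 R_t$ at $\params_{0,u}=(\spars_0,\tau_0,1)$. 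In the well-specified regime, Theorems \ref{thm:2} and \ref{th:asympmultigraphex} give $\nedg \asymp t^2$, $\nver \asymp t^{1+\spars_0}$, $\KLsize \asymp t$, and the saddlepoint $\zeta(\params_0) \asymp \sqrt{\nedg} \asymp t$ lies in the interior of the region where the approximation of \cref{lem:3} is controlled.

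First I would treat the curvature term. The explicit form $\partial_z^2 \mathcal{A}(\params;z) = -\tfrac{1}{2\nedg} - \beta_\spars/z^2 + (\size/\nedg)\partial_z^2 \psi(\spars,\tau;z)$ from \cref{eq:2}, evaluated at $z=\zeta(\params_0)$, yields $-\partial_z^2\mathcal{A}(\params_0;\zeta(\params_0)) \asymp 1/\nedg$, so $-\tfrac{1}{2}\log(-2\nedg\,\partial_z^2\mathcal{A})$ is $O(1)$. Differentiating in $\spars,\tau,u$ and applying the envelope identity $\partial_z\mathcal{A}(\params;\zeta(\params))=0$ reduces every derivative to an elementary algebraic expression in the known quantities $\nedg,\nver,\zeta(\params_0)$, of size $O(\log t)$ at worst. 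Next I would control $\Delta(\params)$ and its derivatives. The defining identity \cref{eq:8} combined with the Gaussian-envelope bound of \cref{pro:5} already gives $|\Delta(\params)| = O(\nedg^{-1/2})$ via a fourth-order Taylor expansion of the remainder $R_1(\params;-iz)$. Differentiation under the contour integral is justified by the exponential tail from \cref{pro:5}, and reproduces bounds of the same order (up to logarithms) for $\partial_\params \Delta$ and $\partial^2_\params \Delta$, by applying dominated convergence to the Gaussian-dominated integrand.

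Combining the two steps, the three coordinates of $\nabla R_t(\params_{0,u})$ are $O(\log t)$, $O(\log t)$ and $O(\log t/\sqrt{\nedg})$, and conjugation by $V_2 = \mathrm{diag}(t^{-(1+\spars_0)/2}, t^{-1/2}, t^{-(1+\spars_0)/2}\log t)$ yields $\|V_2 \nabla R_t(\params_{0,u})\| = O(t^{-\delta_0})$ for any $0<\delta_0<\tfrac{1}{2}$, giving the probabilistic statement even without using randomness (the bound is in fact deterministic, and the probability event is there to absorb the fluctuations of $\nedg,\nver$ away from their deterministic equivalents). The trace bound on $V_2 D^2(\qloglik-\loglik)(\params_{0,u})$ follows by exactly the same route, with one extra log factor absorbed by the $V_2$ conjugation. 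The main obstacle is the second step: justifying that $\Delta(\params)$ not only is small but has small derivatives. The integrand in \cref{eq:8} is not a standard expectation, so one needs to differentiate the steepest-descent contour integral carefully, keep track of the third- and fourth-order terms of $z\mapsto\mathcal{A}(\params;z)$ at $\zeta(\params)$ uniformly in a neighbourhood of $\params_{0,u}$, and verify that the Gaussian bound of \cref{pro:5} is robust under differentiation; this is where the bulk of the work lies.
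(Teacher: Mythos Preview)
Your approach is essentially the paper's: the paper writes $\nabla[\qloglik-\loglik]$ via the same decomposition $\log I = -\nedg\mathcal A(\zeta)-\tfrac12\log(-2\nedg\partial_z^2\mathcal A)+\log(1+\Delta)$, then differentiates the contour integral $\int H_\params(y)\,\intd y$ directly, splitting $|y|\le\sqrt{\zeta(\params_0)}$ versus its complement and Taylor-expanding $\mathcal A(\params;\zeta-iy)-\mathcal A(\params;\zeta)$ and $\partial_z\mathcal A(\params;\zeta-iy)$ in $y$ (using Cauchy's formula for the cubic remainder, exactly as you anticipate). This yields the sharp orders $\nabla_\spars R_t=O((\log t)^2 t^{\spars_0-1})$, $\nabla_\tau R_t=O(t^{-1}\log t)$, $\nabla_u R_t=O(t^{\spars_0-1}\log t)$, and analogously for the Hessian.

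One small correction to your quoted orders: your third coordinate $O(\log t/\sqrt{\nedg})=O(t^{-1}\log t)$ is too optimistic. Differentiating in $u=s/\KLsize$ brings in a factor $\KLsize\asymp t$, and the curvature term alone already gives $\partial_u\log a(\params)^2\asymp t^{\spars_0-1}$, not $t^{-1}$; the correct size is $O(t^{\spars_0-1}\log t)$. This does not affect the conclusion, since $t^{-(1+\spars_0)/2}\log t\cdot t^{\spars_0-1}\log t=(\log t)^2 t^{-(3-\spars_0)/2}=o(t^{-\delta_0})$ for any $\delta_0<1$. Your identification of the main work (controlling derivatives of $\Delta$ via dominated convergence under the Gaussian envelope of \cref{pro:5}) is exactly right.
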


The proof of  \cref{lem:bias:score} is given in \cite[\Cref{supp-sec:pr:lem:bias:score}]{caron:naulet:rousseau:supplement}.

\subsection{Proof of \texorpdfstring{\cref{lemma:Ntilde}}{Proposition \ref{lemma:Ntilde}}}
\label{sec:proof-crefl}

The family of random multigraphs $(\widetilde{\mathcal G}_t)$ is a multigraphex process with multigraphon function
\begin{align*}
\widetilde W_m(x,y,0)&=W_m(x,y,0)+W_m(x,y,1), \quad \widetilde W_m(x,y,1)=0\\
\widetilde W_m(x,y,k)&= W_m(x,y,k)\text{ for all }k\geq 2.
\end{align*}
The associated simple graph $\widetilde{\mathcal G}^{(s)}_t$ obtained by discarding multiple edges from $\widetilde{\mathcal G}_t$ is therefore a graphon process with graphon function
$\widetilde W(x,y)=W(x,y)-W_m(x,y,1).$
Denote
$\tmu(x)=\int_0^\infty \widetilde W(x,y)dy.$
If $\int\tmu(x)dx=0$, then $\widetilde N_t=0$ almost surely. We now assume $\int\tmu(x)dx>0$. Using~\cite{Veitch2015}, the expectation of the number of nodes $\widetilde N_t$ in the simple subgraph $\widetilde {\mathcal G}^{(s)}_t$ (which is the same as the number of nodes in $\widetilde {\mathcal G}_t$) is
\begin{align*}
\mathbb E[\widetilde N_t]&=t\int_0^\infty \left(1-(1-\tW(x,x))e^{-t\tmu(x)}\right)dx.
\end{align*}

Under \cref{ass:Wm}, $\widetilde W(x,y)\leq W(x,y)^2$ hence, under \cref{assumpt:2},
$
\widetilde \mu(x)\leq \ell_3(x)^2\mu(x)^2.
$
Under the \cref{assumpt:1} with $\alpha_0>0$, using Proposition 1.5.15 in \cite{Bingham1987}, we additionally have
\begin{align*}
\mu^{2}(x)\sim  x^{-2/\alpha_0} \widetilde \ell^*_1(x)\text{ as }x\to\infty .
\end{align*}
for some slowly varying function $\widetilde \ell^*_1$. We obtain, using Lemma S3.2 in~\cite{caron:rousseau:18} and assuming without loss of generality that $\ell_3$ is monotone decreasing and right-continuous
\begin{equation*}
\int_0^\infty(1-e^{-t\tmu(x)})dx\leq \int_0^\infty(1-e^{-t\ell_3(x)^2\mu(x)^2})dx= o(t^{\alpha_0/2+\delta})
\end{equation*}
for any $\delta>0$. By dominated convergence, as $\int \widetilde W(x,x)dx<\infty$, $\int \widetilde W(x,x) e^{-t\tmu(x)}dx=o(1)$. Hence, we obtain, for any $\delta>0$
$
\mathbb E[\widetilde N_t] =o( t^{1+\alpha_0/2+\delta}).
$
Using Markov inequality,
$$
\Pr\left (\frac{\widetilde N_t}{t^{1+\alpha_0/2+2\delta}}>\epsilon\right )=o(t^{-\delta}).
$$
Let $t_m=m^{2/\delta}$ for $m=1,2,\ldots$. By the Borel-Cantelli lemma, almost surely
$
\widetilde N_{t_m} =o(t_m^{1+\alpha_0/2+2\delta})
$. For any  $t_m<t \leq t_{m+1}$, noting that $\widetilde N_t$ is monotone increasing, we have
$$
\frac{\widetilde N_{t}}{t^{1+\alpha_0/2+2\delta}}\leq \frac{\widetilde N_{t_{m+1}}}{t_{m+1}^{1+\alpha_0/2+2\delta}}\left(\frac{t_{m+1}}{t_m}\right)^{1+\alpha_0/2+2\delta}=o(1)
$$
almost surely.

\subsection{Proof of \texorpdfstring{\cref{thm:2}}{Theorem \ref{thm:2}}}
\label{sec:proof-thm:2-1}

The fact that $\ndeg{1} \ne \nver$ is almost-surely true by
\cref{th:asympmultigraphex}. The same theorem implies that there is
$\tau_{*} > 0$ such that
$\lim_{t\to \infty} \sqrt{2\nedg}\big( \frac{\alpha_0\nver}{\nedg}
\big)^{1/(1-\alpha_0)} = \tau_{*}$ almost-surely. It also implies that
$\log \frac{\nedg}{\nver} \sim (1-\alpha_0)\log(t)$ almost-surely. We prove below that there exists $\eta>0$ such that
\begin{equation}
  \label{eq:16}
|\hat{\alpha}_t - \alpha_0| = O_p(t^{-\eta}),
\end{equation}
which then implies \cref{thm:2}.

where we call that $\hat{\alpha}_t$ is the (unique) solution of
$-\hat{\alpha}_t \sparsfunc'(\hat{\alpha}_t) = \nver$. We define the functions
$\hat{\Phi}(\alpha) \coloneqq \sparsfunc'(\alpha) + \alpha^{-1}\nver$, and
$\Phi(\alpha) \coloneqq \EE[\hat{\Phi}(\alpha)]$. Likewise, $\hat{\alpha}_t \in (0,1)$ is solution to
$\hat{\Phi}(\hat{\alpha}_t) = 0$. Similarly we let $\alpha_{*} \in (0,1)$ be solution to
$\Phi(\alpha_{*}) = 0$. We prove \cref{eq:16} by showing that $|\alpha_{*} - \alpha_0| \lesssim t^{-\eta}$
for some $\eta >0$ and $|\hat{\alpha}_t - \alpha_{*}| \lesssim t^{-\eta}$ with probability going to 1,
where $\eta > 0$ is defined in \cref{lem:2}.

\paragraph*{Bounding $\PP(|\hat{\alpha}_t - \alpha_{*}| > t^{-\eta/2})$}

Using a Taylor expansion of $\hat{\Phi}$  we find that there exists $\bar{\alpha}$
between $\hat{\alpha}_t$ and $\alpha_{*}$ such that
$\hat{\Phi}(\alpha_{*}) = \hat{\Phi}(\hat{\alpha}_t) +
\hat{\Phi}'(\bar{\alpha})(\alpha_{*} - \hat{\alpha}_t)$, whence
$\hat{\Phi}(\alpha_{*}) = \hat{\Phi}'(\bar{\alpha})(\alpha_{*} - \hat{\alpha}_t)$. But,
$\hat{\alpha}_t,\alpha_{*} \in [0,1]$, and so $\bar{\alpha} \in [0,1]$ as well. Since
$\hat{\Phi}'(\alpha) = - \sum_{j\geq 2}\ndeg{j}\sum_{k=1}^{j-1}\frac{1}{(k -
  \alpha)^2} - \alpha^{-2}\nver$, it follows that $|\Phi'(\bar{\alpha})| \geq \bar{\alpha}^{-2}\nver$, and hence
$\bar{\alpha}^{-2}\nver |\alpha_{*} - \hat{\alpha}_t| \leq
|\Phi'(\bar{\alpha})(\alpha_{*} - \hat{\alpha}_t)| = |\hat{\Phi}(\alpha_{*})|$. We deduce that
$$
|\hat{\alpha}_t - \alpha_{*}|
  \leq \frac{ \bar \alpha^2}{ \nver } \left( \left| \sparsfunc'(\alpha) - \EE(\sparsfunc'(\alpha) )\right| + \alpha_{*}^{-1} |\nver - \EE(\nver) | \right)
$$
so that for all $\delta>0$, when $t$ is large enough
\begin{align*}
\PP\left( |\hat{\alpha}_t - \alpha_{*}|> t^{-\eta + \delta} \nver \right)
 & \leq \PP\left( \nver < t^{1+\alpha_0-\delta/2} \right) +   \PP\left( |\hat{\alpha}_t - \alpha_{*}|>  t^{1+\alpha_0 -\eta+\delta/2 }  \right) \\
 & \leq \frac{ 4 \var (\nver ) }{ \EE(\nver)^2 } +t^{-\delta}  = O( t^{- \delta} )
\end{align*}
by choosing $\delta>0$ small enough and using Chebyshev inequality for the term $\nver$ and \cref{lem:2} for the second term.

\paragraph*{Bounding $|\alpha_{*} - \alpha_0|$}

Recall that $\Phi(\alpha) = \EE[\sparsfunc'(\alpha)] +
\alpha^{-1}\EE[\nver]$. Hence, by combining \cref{lem:2} with \cite[\cref{supp-lem:4}]{caron:naulet:rousseau:supplement}, we find that as $t \to \infty$, for some
$\eta > 0$,
\begin{equation}
  \label{eq:34}
  \Phi(\alpha)%
  = t \bar{F}(t^{-1})\Gamma(1-\alpha_0)\Big\{\Big(\frac{\alpha_0}{\alpha}-1
  \Big)\frac{\Gamma(1-\alpha)\Gamma(\alpha_0)}{\Gamma(1+\alpha_0 - \alpha)}  +
  O\big( t^{-\eta} \big)\Big\}.
\end{equation}
Remember that $\Phi(\alpha) = 0$ has a unique solution $\alpha_{*}$ (because
$\Phi$ is monotone), but from the previous, it is clear that asymptotically this
solution has to lie in an interval of the form
$[\alpha_0 - O(t^{-\eta}) ), \alpha_0 + O(t^{-\eta})]$. Indeed from
\cref{eq:34}, for $c > 0$ large enough, $\Phi(\alpha_0 + ct^{-\eta} ) < 0$,
while $\Phi(\alpha_0 - ct^{-\eta} ) > 0$. Since $\Phi$ is monotone, then
$\Phi(\alpha) < 0$ for all $\alpha \geq\alpha_0 + ct^{-\eta} $ and
$\Phi(\alpha) > 0$ for all $\alpha \leq \alpha_0 - ct^{-\eta} $. Consequently $|\alpha_{*} - \alpha_0| \leq c t^{-\eta}$.

\begin{lemma}
  \label{lem:2}
  Let \cref{assumpt:1,assumpt:2,ass:Wm,ass:tails} be satisfied, together with  $\int_0^\infty \mu(x)^2dx < \infty$.  Then, there exists $\eta > 0$, depending only on $\alpha_0$, $\gamma_0$ and
  $a$, such that as $t \to \infty$, for every fixed $\alpha \in (0,1)$,
  \begin{equation}
    \label{eq:170}
    \EE[-\sparsfunc'(\alpha)]%
    = t \mu^{-1}(t^{-1})\Gamma(1-\alpha_0)\Big(1 +
    O\big(t^{-\eta}\big) \Big)\Big\{ \frac{1}{\alpha} + \Big(1 -
    \frac{\alpha_0}{\alpha}\Big)\frac{\Gamma(1-\alpha)\Gamma(\alpha_0)}{\Gamma(1+\alpha_0
      - \alpha)}\Big\},
  \end{equation}
  and  for any $\delta >0$
  \begin{equation}
    \label{eq:232}
    \PP\left( |\sparsfunc'(\alpha) - \EE(\sparsfunc'(\alpha)) | > t^{ 1+\alpha_0 - \eta + \delta } \right)
       = O\Big(t^{-2\delta}\Big).
  \end{equation}
\end{lemma}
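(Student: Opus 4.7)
The plan is to compute the mean by expanding $-\sparsfunc'(\alpha) = \sum_{j\geq 2}\ndeg{j} h_j(\alpha)$ with $h_j(\alpha) \coloneqq \sum_{k=1}^{j-1}(k-\alpha)^{-1}$, taking expectations term-by-term using a refined second-order asymptotic for $\EE[\ndeg{j}]$, and evaluating the resulting infinite series in closed form via a generating-function identity. For the variance bound I would write $\sparsfunc'(\alpha)$ as a functional of the driving Poisson process $M$ and control its variance using second-moment (Mecke/Palm) formulas together with \cref{assumpt:2}, in the spirit of the arguments used for $\nver$ and $\ndeg{j}$ in \cite{caron:rousseau:18}; \cref{eq:232} then follows from Chebyshev's inequality.

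For the mean \eqref{eq:170}, I would first strengthen the first-order asymptotic of \cref{th:asympmultigraphex} by substituting the second-order expansion $\mu^{-1}(x) = c_0 x^{-\alpha_0} + O(x^{\beta-\alpha_0})$ from \cref{ass:tails} into the Poisson integrals defining $\EE[\ndeg{j}]$, obtaining a uniform refinement $\EE[\ndeg{j}] = \Gamma(1-\alpha_0)\, p_j \cdot t\mu^{-1}(t^{-1})\,(1+O(t^{-\eta_0}))$ for $j \leq J_t \coloneqq t^{\eta_1}$ (with some small $\eta_1>0$), where $p_j \coloneqq \alpha_0\Gamma(j-\alpha_0)/(j!\,\Gamma(1-\alpha_0))$ is the Karlin--Rouault pmf; the crude power-law bound $\EE[\ndeg{j}] \lesssim t^{1+\alpha_0} j^{-1-\alpha_0}$ would handle $j>J_t$. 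To evaluate $\sum_{j\geq 1} p_j h_j(\alpha)$ in closed form, I would use the integral representation $h_j(\alpha) = \int_0^1 x^{-\alpha}(1-x^{j-1})(1-x)^{-1}\,\intd x$, interchange sum and integral, and invoke the Karlin--Rouault generating identity $\sum_{j\geq 1} p_j\, x^j = 1-(1-x)^{\alpha_0}$; this collapses the sum to $\int_0^1 x^{-\alpha-1}[(1-x)^{\alpha_0-1}-1]\,\intd x$, which one identifies with $\tfrac{1}{\alpha}+(1-\alpha_0/\alpha)\tfrac{\Gamma(1-\alpha)\Gamma(\alpha_0)}{\Gamma(1+\alpha_0-\alpha)}$ by analytic continuation in $s$ of the Beta integral $B(s,\alpha_0)-s^{-1}$ from $\Re s > 0$ to $s=-\alpha$. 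The tail contribution $\sum_{j>J_t} \EE[\ndeg{j}]\, h_j(\alpha) = O(t^{1+\alpha_0} J_t^{-\alpha_0}\log J_t)$ is absorbed into the $O(t^{-\eta})$ factor by choosing $\eta$ small enough.

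For the variance \eqref{eq:232}, I would write $-\sparsfunc'(\alpha) = \sum_i h_{D_{t,i}-1}(\alpha)\,\1{\theta_i\leq t}\,\1{D_{t,i}\geq 2}$ and decompose $\var(\sparsfunc'(\alpha))$ via the Mecke/Palm formulas for $M$. The diagonal (Campbell) term $\EE[\sum_i h_{D_{t,i}-1}(\alpha)^2\,\1{\theta_i\leq t}]$ is $O(t^{1+\alpha_0}(\log t)^2)$, using $h_{D_{t,i}-1}(\alpha) = O(\log D_{t,i})$ together with second-moment bounds on degrees that follow from the hypothesis $\int\mu(x)^2\intd x<\infty$ and \cref{assumpt:2}. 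The off-diagonal covariance from pairs of nodes that share neighbours is handled analogously to the control of $\var(\ndeg{j})$ in \cite{caron:rousseau:18} via \cref{assumpt:2}, yielding a total variance $\var(\sparsfunc'(\alpha)) = O(t^{2+2\alpha_0-\eta'})$ for some $\eta'>0$; Chebyshev then gives \cref{eq:232} with any $\eta<\min(\eta_0,\eta'/2)$. The main obstacle I expect is this off-diagonal covariance step: unlike $\ndeg{j}$ for fixed $j$, the weights $h_{D_{t,i}-1}(\alpha) \asymp \log D_{t,i}$ are unbounded and couple to the global edge structure, so bounding the covariance between two nodes sharing a neighbour requires a careful Palm-type computation together with sharp moment bounds on degrees; by contrast, the closed-form series evaluation is self-contained, and the refined first-moment asymptotic is a routine second-order upgrade of the proof of \cref{th:asympmultigraphex}.
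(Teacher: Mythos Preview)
Your closed-form evaluation of $\sum_j p_j h_j(\alpha)$ via the Karlin--Rouault generating function and analytic continuation of $B(s,\alpha_0)-s^{-1}$ is correct and lands on the right answer, but the paper reaches \eqref{eq:170} by a different and more direct path that sidesteps the uniform-in-$j$ asymptotic for $\EE[\ndeg{j}]$ you would need. Two points of departure: first, the paper begins by reducing to the \emph{simple} graph, showing $-\sparsfunc'(\alpha) = X_t(\alpha) + O((\widetilde N_t + N_t^{\mathrm{se}})\log\nedg)$ via \cref{lemma:Ntilde}; you do not mention this step, and without it the degree $D_{t,i}$ has no clean conditional law. Second, rather than summing over $j$, the paper uses essentially the same digamma integral you write for $h_j(\alpha)$ to decompose $X_t(\alpha) = U_t(\alpha) + V_t(\alpha)$ as Poisson functionals of $(1-u)^{D_{t,i}^{(s)}}$; since $\EE[(1-u)^{D_{t,i}^{(s)}}\mid M]$ factors over Bernoulli edges, Slivnyak--Mecke plus Campbell give $\EE[U_t+V_t] = G(t,\mu)$ as an explicit double integral, and comparing $G(t,\mu)$ with $G(t,\bar\mu)$ for the exact power law $\bar\mu(x)=c_0^{1/\alpha_0}x^{-1/\alpha_0}$ yields the $O(t^{-\eta})$ remainder in one stroke, with no $j$-by-$j$ truncation.

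For the variance this integral/PGF device is not just convenient but what makes the computation go through. The paper bounds $\var U_t$ and $\var V_t$ separately by expanding the second moments via Slivnyak--Mecke; the off-diagonal pair terms then reduce to integrals involving factors like $e^{-ut\mu(x)}e^{-vt\mu(y)}(e^{uvt\nu(x,y)}-1)$, which \cref{assumpt:2} controls directly, giving $\var(\sparsfunc'(\alpha)) = O(t^{3-2a+2\alpha_0+\delta}\vee t^2\log t)$. Your plan to bound $\mathrm{Cov}(h_{D_i}(\alpha),h_{D_j}(\alpha))$ head-on runs into exactly the obstacle you flag: there is no tractable conditional expectation of $h_{D_i}(\alpha)$ to feed into Campbell's formula, because $h_j$ is not of PGF type. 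Since you already have the integral representation of $h_j(\alpha)$ for the mean, the fix is to deploy it for the variance as well, after the simple-graph reduction; this is precisely the paper's $U_t/V_t$ route.
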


The proof of \cref{lem:2} can be found in \cite[\cref{supp-sec:proof-lem:2}]{caron:naulet:rousseau:supplement}.

%

\section*{Acknowledgements}
The project leading to this work has received funding from the European Research Council
(ERC) under the European Union’s Horizon 2020 research and innovation programme
(grant agreement No 834175).

  \putbib%
  \stopcontents[main]%
\end{bibunit}

\newpage

\setcounter{page}{1}
\setcounter{section}{0}
\setcounter{table}{0}
\setcounter{figure}{0}
\setcounter{theorem}{0}
\setcounter{proposition}{0}
\setcounter{lemma}{0}
\setcounter{equation}{0}

\renewcommand{\thepage}{S\arabic{page}}
\renewcommand{\thesection}{S\arabic{section}}
\renewcommand{\thetable}{S\arabic{table}}
\renewcommand{\thefigure}{S\arabic{figure}}
\renewcommand{\thetheorem}{S\arabic{section}.\arabic{theorem}}
\renewcommand{\theproposition}{S\arabic{section}.\arabic{proposition}}
\renewcommand{\thelemma}{S\arabic{section}.\arabic{lemma}}
\renewcommand{\theequation}{S\arabic{section}.\arabic{equation}}


\begin{bibunit}[imsart-number]
  \startcontents[supp]%

\begin{frontmatter}
\title{Asymptotic Analysis of Statistical Estimators related to MultiGraphex
  Processes under Misspecification : supplementary material}
\runtitle{Asymptotic Analysis of Statistical Estimators related to MultiGraphex Processes}

\begin{aug}
\author[A]{\fnms{Zacharie} \snm{Naulet}\ead[label=e1]{zacharie.naulet@universite-paris-saclay.fr}},
\author[B]{\fnms{Judith} \snm{Rousseau}\ead[label=e2,mark]{judith.rousseau@stats.ox.ac.uk}}
\and
\author[B]{\fnms{François} \snm{Caron}\ead[label=e3,mark]{caron@stats.ox.ac.uk}}
\address[A]{Université Paris-Saclay, Laboratoire de mathématiques d’Orsay, 91405, Orsay, France.
\printead{e1}}

\address[B]{University of Oxford, Department of Statistics, Oxford, UK.
\printead{e2,e3}}
\end{aug}

\begin{abstract}
   This document is supplementary material for the article. It contains the missing proofs. We refer to the main
document for all the definitions.
\end{abstract}

\begin{keyword}
\kwd{Bayesian Nonparametrics}
\kwd{Networks}
\kwd{Random Graphs}
\kwd{Sparsity}
\kwd{Caron and Fox model}
\kwd{Inference}
\kwd{Maximum Likelihood Estimation}
\kwd{Bayesian Estimation}
\kwd{Misspecification}
\end{keyword}

\end{frontmatter}

\section{Asymptotic expansion of the log-likelihood: auxiliary results}
\label{sec:proofs-relat-asympt}


\subsection{Proof of the expression of the likelihood in \texorpdfstring{\cite[\cref{main-eq:likelihood}]{caron:naulet:rousseau:main}}{equation \ref{main-eq:likelihood}}}
\label{sec:proof-crefl-like}

\cite[\Cref{main-eq:likelihood}]{caron:naulet:rousseau:main} is a consequence of \cite[Theorem~6]{Caron2017}. Using
exchangeability, we assume without loss of generality that the non isolated
vertices corresponds to the indices $1,\dots,\nver$. Then, from their theorem,
we find that
\begin{align*}
  e^{\loglik(\params)}%
  &\propto \size^{\nver} \int_{\Reals_+}\int_{\Reals_+^{\nver}}%
    \exp\Big\{- \Big(\sum_{i=1}^{\nver}w_i + w_{*}\Big)^2 \Big\}\Big\{
    \prod_{i=1}^{\nver}w_i^{\deg{i}}\rho(\intd w_i) \Big\}
    \tiltstab(\intd w_{*})\\
  &=\size^{\nver} \int_{\Reals_+}\int_{\Reals_+^{\nver}}%
    \exp\Big\{- \Big(\sum_{i=1}^{\nver}w_i + w_{*}\Big)^2 \Big\}\Big\{
    \prod_{i=1}^{\nver}\frac{w_i^{\deg{i}-1-\spars}e^{-\tau w_i} \intd
    w_i}{\Gamma(1-\spars)}\Big\} \tiltstab(\intd w_{*}).
\end{align*}
That is, introducing the PDF of the Gamma distribution,
\begin{multline*}
  e^{\loglik(\params)}%
  \propto%
  s^{\nver}\Big\{\prod_{i=1}^{\nver}\frac{\Gamma(\deg{i} -
    \spars)}{\Gamma(1-\spars)\tau^{\deg{i}-\spars}}\Big\}
    \int_{\Reals_+}\int_{\Reals_+^{\nver}}%
    \exp\Big\{- \Big(\sum_{i=1}^{\nver}w_i + w_{*}\Big)^2 \Big\}\\
    \times \Big\{%
    \prod_{i=1}^{\nver}\frac{\tau^{\deg{i} -
    \spars}w_i^{\deg{i}-1-\spars}e^{-\tau w_i} \intd
    w_i}{\Gamma(\deg{i}-\spars) }\Big\}\tiltstab(\intd w_{*}).
\end{multline*}
Remark that the inner integral is the expectation of
$\exp\{-(\sum_{i=1}^{\nver}W_i + w_{*})^2\}$ under
$(W_1,\dots,W_{\nver}) \sim \otimes_{i=1}^{\nver}\gammaDist(\deg{i} -
\spars, \tau)$. But under this distribution we have that
$\sum_{i=1}^{\nver}W_i$ has a
$\gammaDist(\sum_{i=1}^{\nver}\deg{i} - \spars \nver,\tau)$
distribution. Hence, the conclusion follows.

\subsection{Proof of \texorpdfstring{\cite[\cref{main-pro:4}]{caron:naulet:rousseau:main}}{Lemma \ref{main-pro:4}}}
\label{sec:proof:pro:4}

Using the definition of $I(\params)$, we introduce the PDF of the
$\gammaDist(\nedg - \spars \nver, \tau)$ distribution,
\begin{align*}
  I(\params)
  &=\frac{1}{\tau^{\nedg - \spars \nver}}
    \int_{\NNReals^2}e^{-(x +y)^2}%
    \frac%
    {\tau^{\nedg-\spars \nver} x^{\nedg - \spars \nver - 1}e^{-\tau x}}%
    {\Gamma(\nedg - \spars \nver)}%
    \tiltstab(\intd y) \intd x.
\end{align*}
We rewrite the previous integral by using that the Fourier transform of a
Gaussian is Gaussian. In particular,
\begin{equation*}
  \frac{1}{2\sqrt{\pi}}\int_{\Reals}\exp\Big\{ -\frac{\xi^2}{4} + i(x+y)\xi
  \Big\}\, \intd \xi%
  = \exp\{ -(x+y)^2 \}.
\end{equation*}
Inserting the last display in the previous expression for $I(\params)$, it
follows from Fubini's theorem and the expressions for the Fourier transforms
of $\tiltstab$ (see \cite[\cref{main-eq:4}]{caron:naulet:rousseau:main}) and the Gamma distribution,%
\begin{align*}
  I(\params)
  &= \frac{1}{\tau^{\nedg - \spars \nver}}\int_{\NNReals^2} \frac{1}{2
    \sqrt{\pi}}\int_{\Reals}e^{-\frac{\xi^2}{4} + i(x + y)\xi}\,\intd \xi%
    \frac%
    {\tau^{\nedg - \spars \nver} x^{\nedg - \spars \nver - 1}e^{-x}}%
    {\Gamma(\nedg - \spars \nver)}%
    \tiltstab(\intd y) \intd x\\
  &= \frac{1}{2 \sqrt{\pi}\tau^{\nedg - \spars\nver}}\int_{\Reals}
    e^{-\frac{\xi^2}{4}}%
    \Big\{%
    \int_{\NNReals}e^{ix\tau^{-1}\xi}\frac%
    {x^{\nedg - \spars \nver - 1}e^{-x}}%
    {\Gamma(\nedg - \spars \nver)}\,\intd x\Big\}%
    \Big\{ \int_{\NNReals}e^{i\xi y}\tiltstab(\intd y) \Big\} \,\intd \xi\\
  &=
    \frac{1}{2\sqrt{\pi}\tau^{\nedg - \spars\nver}}\int_{\Reals}\Big(1 -
    \frac{i \xi}{\tau}\Big)^{-(\nedg - \spars \nver)}  \exp\Big\{-
    \frac{\xi^2}{4} - \size \LTS(\spars,\tau;\tau - i \xi) \Big\}\,\intd \xi\\
  &= \frac{1}{2 \sqrt{\pi}}%
    \int_{\Reals}\big( \tau- i\xi)^{-(\nedg - \spars \nver)}%
    \exp\Big\{-
    \frac{\xi^2}{4} - \size \LTS(\spars,\tau;\tau  - i \xi ) \Big\}\,\intd
    \xi.
\end{align*}
The conclusion follows because for any $\xi \in \Reals$, from \cite[\cref{main-eq:2}]{caron:naulet:rousseau:main},
\begin{equation*}
  - \frac{\xi^2}{4}%
  =-\nedg \mathcal{A}(\params;\tau-i\xi)%
  + \size \LTS(\spars,\tau;\tau -i\xi)%
  + (\nedg + \spars\nver)\log(\tau-i\xi ).%
\end{equation*}
Now we establish \cite[\cref{main-eq:7}]{caron:naulet:rousseau:main}, the uniqueness of $\zeta(\params)$, and that
$\zeta(\params) > \tau$. Remark that for all $\spars$ (including
$\spars = 0$), we have from the definition of $\LTS$ that
$\partial_z\LTS(\spars,\tau;z) = 1/z^{1-\spars}$ for all
$z \in \Complex \backslash \Reals_-^{*}$. Then for all $\params$,
\begin{equation}
  \label{eq:82}
  \partial_z\mathcal{A}(\params;z)
  = - \frac{z}{2\nedg}%
  + \frac{\tau}{2\nedg}%
  + \Big(1 - \frac{\spars \nver}{\nedg}\Big) \frac{1}{z}%
  + \frac{s}{\nedg}z^{-1+\spars}.
\end{equation}
Therefore,
\begin{equation*}
  \partial_z\mathcal{A}(\params;z) = 0%
  \iff%
  z^2%
  = \tau z%
  + 2s z^{\spars} + 2\nedg \Big(1 - \frac{\spars\nver}{\nedg}\Big).
\end{equation*}
Since $\nedg \geq \nver$ and $\spars < 1$, the last display implies that any
solution of $\partial_z\mathcal{A}(\params;z) = 0$ must satisfy
$z^2 > \tau z$, \textit{i.e.} $z >\tau$ since we retained the non-negative
solution. By differentiating \cref{eq:82} another time with respect to $z$, we
see that $\partial_z^2 \mathcal{A}(\params;z) < 0$ for $z > 0$, hence
$\zeta(\params)$ must be unique. Further
$\lim_{z\to 0}\partial_z \mathcal{A}(\params;z) = +\infty$ and
$\lim_{z\to \infty}\partial_z \mathcal{A}(\params;z) = -\infty$ by
\cref{eq:82} again, so $\zeta(\params)$ must exists.

\subsection{Proof of \texorpdfstring{\cite[\cref{main-pro:5}]{caron:naulet:rousseau:main}}{Lemma \ref{main-pro:5}}}%
\label{sec:proof:pro:5}

Using the defining equation of $z\mapsto R_1(\params;z)$, it is clear that for any
$z \in \Reals$,
\begin{equation}
  \label{eq:3}
  \frac{1}{2}\partial_z^2\mathcal{A}(\params; \zeta(\params))z^2 - R_1(\params;-iz)%
  =- \{\mathcal{A}(\params;\zeta(\params) - iz) - \mathcal{A}(\params; \zeta(\params)) \}.%
\end{equation}
From the definition of $\mathcal{A}$, we have,
\begin{align*}
  \mathcal{A}(\params;\zeta(\params) -i z) - \mathcal{A}(\params; \zeta(\params))%
  &=- \frac{(\zeta(\params)-iz)^2 - \zeta(\params)^2}{4\nedg}%
    + \frac{\tau(\zeta(\params) - iz) - \tau \zeta(\params)}{2\nedg}\\%
  &\quad + \Big(1 - \frac{\spars \nver}{\nedg}\Big)%
    \Big( \log(\zeta(\params) - iz)- \log(\zeta(\params)) \Big)\\
  &\quad + \frac{\size}{\nedg}\Big( \LTS(\spars,\tau;\zeta(\params) -i z) -
    \LTS(\spars,\tau;\zeta(\params)) \Big).
\end{align*}
Hence for every $z \in \Reals$,
\begin{align*}
  \Re\{\mathcal{A}(\params;\zeta(\params) -i z) - \mathcal{A}(\params; \zeta(\params)) \}
  &= \frac{z^2}{4\nedg}%
    + \Big(1 - \frac{\spars \nver}{\nedg}\Big)%
    \Big( \log \sqrt{\zeta(\params)^2 + z^2} - \log \zeta(\params)\Big)\\
  &\quad + \frac{\size}{\nedg} \Re\Big(\LTS(\spars,\tau;\zeta(\params) -i z) -
    \LTS(\spars,\tau;\zeta(\params)) \Big).
\end{align*}
Therefore, for all $z\in \Reals$,
\begin{equation}
  \label{eq:17}
  \Re\{\mathcal{A}(\params;\zeta(\params) -i z) - \mathcal{A}(\params; \zeta(\params)) \}%
  \geq \frac{z^2}{4\nedg}%
  + \frac{\size}{\nedg} \Re\Big(\LTS(\spars,\tau;\zeta(\params) -i z) -
  \LTS(\spars,\tau;\zeta(\params)) \Big).
\end{equation}
Assume first that $\sigma = 0$. Then, whenever $z \in \Reals$,
\begin{align*}
  \Re\Big(\LTS(\spars,\tau;\zeta(\params) -i z) -
  \LTS(\spars,\tau;\zeta(\params)) \Big)%
  &= \Re\Big( \log(\zeta(\params) - iz) - \log(\zeta(\params))\Big)\\%
  &= \log \sqrt{\zeta(\params)^2 + z^2} - \log \zeta(\params)\\%
  &\geq 0.
\end{align*}
The previous display combined with \cref{eq:3,eq:17} gives the proof of the
proposition in the case $\spars = 0$.

\par When $\spars \ne 0$, because $\zeta(\params) > 0$ we have
$\arg( \zeta(\params) - iz) = -\arctan(z/\zeta(\params))$, and then,
\begin{align*}
  \Re\Big(\LTS(\spars,\tau;\zeta(\params) -i z) -
  \LTS(\spars,\tau;\zeta(\params)) \Big)%
  &= \Re\Big(
    \frac{(\zeta(\params) - iz)^{\spars} -
    \zeta(\params)^{\spars}}{\spars}%
    \Big)\\
  &= \frac{ (\zeta(\params)^2 + z^2)^{\spars/2}\cos(\spars
    \arctan(z/\zeta(\params)) - \zeta(\params)^{\spars}  }{\spars}\\
  &= \zeta(\params)^{\spars}f_{\spars}(z/\zeta(\params)),
\end{align*}
where for simplicity, we defined $f_{\spars} : \Reals \rightarrow \Reals$ such
that
\begin{equation*}
  f_{\spars}(y)%
  \coloneqq%
  \frac{ (1 + y^2)^{\spars/2}\cos(\spars
    \arctan(y)) - 1  }{\spars}.
\end{equation*}
To finish the proof of the proposition, it suffices to show that $f$ is
non-negative. When $\spars < 0$, we have $\cos(\spars \arctan(y)) \leq 1$
uniformly, and hence
\begin{equation*}
  f_{\spars}(y)%
  \geq \frac{(1 + y^2)^{\spars/2} - 1}{\spars}%
  \geq \frac{1}{-\spars} - \frac{(1+y^2)^{\spars/2}}{-\spars}%
  \geq 0.%
\end{equation*}
Now we consider the case $0 < \spars < 1$. Because $f$ is symmetric, it
suffices to do the analysis for $y \geq 0$. By differentiation, we get
\begin{equation*}
  f_{\spars}'(y)%
  = (1 + y^2)^{-1+\spars/2}\cos(\spars \arctan(y))\Big(%
  y - \tan(\spars \arctan(y)) \Big).
\end{equation*}
But, $y \geq 0 \Rightarrow \arctan(y) \in [0,\pi/2]$, and hence
$\cos(\spars \arctan(y)) \geq 0$ as $0 < \spars < 1$. Similarly,
$y > \tan(\spars \arctan(y))$ when $0 < \spars < 1$, and thus
$f_{\spars}'(y) \geq 0$ when $y \geq 0$ and $0 < \spars < 1$. This means that
$f$ is increasing on $\NNReals$, and thus
$f_{\spars}(y) \geq f_{\spars}(0) = 0$.

\subsection{Remaining part of the proof of \texorpdfstring{\cite[\cref{main-lem:3}]{caron:naulet:rousseau:main}}{Theorem \ref{main-lem:3}}}
\label{sec:proof-lem:6}

To finish the proof of \cite[\cref{main-lem:3}]{caron:naulet:rousseau:main}, it
remains to prove the \cref{main-item:lem:3:3}.
The proof is almost identical to the proof of the \cref{main-item:lem:3:2} established in
\cite[\cref{main-sec:proof-lem:3}]{caron:naulet:rousseau:main}, and consists mostly on obtaining a refinement in the
bound for $\Delta(\params)$. In particular, the starting point to the proof is
\cite[\cref{main-eq:6}]{caron:naulet:rousseau:main}. To ease the notations, we write
$a(\params) \coloneqq (-\nedg
\partial_z^2\mathcal{A}(\params;\zeta(\params)))^{1/2}$. Then, it follows from
\cite[\cref{main-eq:1,main-eq:8}]{caron:naulet:rousseau:main} and a suitable change of variables that for some $C > 0$ to be
chosen accordingly%
\begin{multline}
  \label{eq:18}
  1 + \Delta(\params)%
  = \int_{[-C a(\params) ,C a(\params)]}
  \frac{1}{\sqrt{2\pi}}e^{-u^2/2}e^{-\nedg R_1(\params, -i u /
    a(\params))}\,\intd u\\
  +\int_{[-C,C]^c} \frac{1}{\sqrt{2\pi}}e^{- \frac{a(\params)^2}{2}z^2 -
    \nedg R_1(\params;-iz) }\,\intd z.
\end{multline}
Now remark that because of \cite[\cref{main-pro:5}]{caron:naulet:rousseau:main} the second integral in the last display
is a $O(e^{-C^2/4})$ as $C\to \infty$, where we have used the well-known tail
bound for the Normal distribution. On the other hand, it is seen that
$\nedg \partial_z^2\mathcal{A}(\params,z) \leq -1/2$ for all admissible
$\params$ and for all $z$, since $\spars < 1$, $\size > 0$ and
$\nver \leq \nedg$. This entails that $a(\params) \geq 1/2$ for any $\params$,
and thus
$\int_{[-Ca(\params),Ca(\params)]^c}e^{-u^2/2}\,\intd u = O(e^{-C^2/8})$. By
\cref{eq:18}, it follows as $C\to \infty$
\begin{equation}
  \label{eq:19}
  \Delta(\params)%
  =\frac{1}{\sqrt{2\pi}}\int_{[-Ca(\params)],Ca(\params)]}e^{-u^2/2}\Big(%
  e^{-\nedg R_1(\params;\frac{-iu}{a(\params)})} - 1\Big)\, \intd u%
  + O(e^{-C^2/8}).
\end{equation}

In order to control $\Delta(\params)$, it thus remain to control
$x \mapsto \nedg R_1(\params;-ix)$ for $x \in [-C,C]$, in virtue of
\cref{eq:19}. We will proceed using Cauchy's integral formula. In particular,
from our choice for the determination of the complex logarithm, the function
$z \mapsto \mathcal{A}(\params;\zeta(\params) + z)$ is complex-analytic on
$\Complex \backslash \Set{z \in \Complex \given \Re(z) = 0,\ \Im(z) \leq -
  \zeta(\params)}$. Then, by Cauchy's integral formula, for any $z\in \Complex$
with $|z| < \zeta(\params)/2$, we have
\begin{equation*}
  R_1(\params;z)%
  = \frac{1}{2\pi i}\oint_{|\xi| = \zeta(\params)/2} \frac{\mathcal{A}(\params; \zeta(\params) +
    \xi)}{\xi - z}\frac{z^3}{\xi^3}\,\intd \xi.
\end{equation*}
Since $|\xi - z| \geq |\xi| - |z|$, the previous implies for every $z \in
\Complex$ with $|z| \leq \zeta(\params)/4$ that
\begin{align}
  \notag
  |R_1(\params;z)|
  &\leq \frac{1}{2\pi} \frac{4}{\zeta(\params)}\frac{2^3}{\zeta(\params)^3}|z|^3%
    \sup_{|\xi| = \zeta(\params)/2}|\mathcal{A}(\params;\zeta(\params)+\xi)| \times 2\pi
    \frac{\zeta(\params)}{2 }\\
  \notag
  &= \frac{16 |z|^3}{\zeta(\params)^3}\sup_{\varphi \in [-\pi,\pi]}\mathcal{A}(\params;
    \zeta(\params)(1 + e^{i\varphi}/2))|\\
  \label{eq:24}
  &\leq \frac{16|z|^3}{\zeta(\params)^3} \sup_{1/2\leq x \leq 2}\sup_{\varphi\in
    [-\pi,\pi]}|\mathcal{A}(\params; x\zeta(\params)e^{i\varphi})|.
\end{align}
But, assuming without loss of generality $\spars \ne 0$, for all $x > 0$ and all
$\varphi \in (-\pi,\pi)$,
\begin{multline*}
  \mathcal{A}(\params;x \zeta(\params) e^{i\varphi})%
  = - \frac{\tau^2}{4\nedg}%
  - \frac{x^2 \zeta(\params)^2e^{2i\varphi}}{4\nedg}%
  + \frac{\tau \zeta(\params) xe^{i\varphi}}{2\nedg}%
  + \Big(1 - \frac{\spars \nver}{\nedg}\Big)\log\big(x \zeta(\params)
  \big)\\
  + i\Big(1 - \frac{\spars \nver}{\nedg}\Big)\varphi%
  + \frac{\size}{\nedg} \frac{(x \zeta(\params))^{\spars}e^{i\spars \varphi} -
    \tau^{\spars}}{\spars}.
\end{multline*}
Remark that,
\begin{align*}
  \frac{(x \zeta(\params))^{\spars}e^{i\spars
  \varphi} - \tau^{\spars}}{\spars}%
  &= \frac{(x \zeta(\params))^{\spars} - \tau^{\spars}}{\spars}e^{i \spars
    \varphi}%
    + \frac{\tau^{\spars}}{\spars}(e^{i \spars \varphi} - 1)\\
  &= \LTS(\spars,\tau;x \zeta(\params)) e^{i\spars \varphi}%
    + \frac{\tau^{\spars}}{\spars}(e^{i\spars \varphi/2} - e^{-i\spars
    \varphi/2})e^{i\spars \varphi/2}\\
  &=\LTS(\spars,\tau;x \zeta(\params)) e^{i\spars \varphi}%
    + \varphi\tau^{\spars} \frac{\sin(\spars \varphi/2)}{\spars \varphi/2}
    e^{i\spars \varphi/2}.
\end{align*}
Hence,%
\begin{multline*}
  |\mathcal{A}(\params;x \zeta(\params)e^{i\varphi})|%
  \leq \frac{\tau^2 + x^2 \zeta(\params)^2 + 2\tau \zeta(\params)}{4\nedg}%
  + 2\Big(1 - \frac{\spars \nver}{\nedg}\Big)\Big(\pi + \big|\log\big(x
  \zeta(\params) \big) \big|\Big)\\
  + \frac{s}{\nedg}\Big| \LTS(\spars,\tau;x \zeta(\params)) + \pi \tau^{\spars}
  \Big|.
\end{multline*}
By \cite[\cref{main-eq:7}]{caron:naulet:rousseau:main}, we find that for every $\params$ we always have
$\zeta(\params) \geq \tau$, and $\zeta(\params)^2 \geq 2\nedg(1 - \spars \nver/\nedg)$.
Thus, from the previous display we obtain for all $x\in [1/2,2]$ and all
$\varphi \in [-\pi,\pi]$
\begin{equation}
  \label{eq:25}
  |\mathcal{A}(\params;x \zeta(\params)e^{i\varphi})|%
  \lesssim \frac{\zeta(\params)^2|\log(\zeta (\params))|}{\nedg}%
  + \frac{\size}{\nedg}\Big| \LTS(\spars,\tau;x \zeta(\params)) +
  \pi\tau^{\spars}  \Big|.
\end{equation}

Now, consider the case $\spars \geq 0$. Then assuming without loss of generality
$\spars \ne 0$ (it suffices to extend by continuity), we have for any $x \in
[1/2,2]$ and any $\tau > 0$,
\begin{align}
  \notag
  |\size \LTS(\spars,\tau; x\zeta(\params))|
  &\leq \size \Big|\frac{x^{\spars} - 1}{\spars}\zeta(\params)^{\spars}  \Big|%
    + \size \Big|\frac{\zeta(\params)^{\spars} - \tau^{\spars}}{\spars}  \Big|\\
  \notag
  &\leq (1\vee x^{\spars}) \size
    \zeta(\params)^{\spars}\sup_{u\in[1/2,1]}\Big|\frac{1 -
    u^{\spars}}{\spars}\Big|%
    + \size \zeta(\params)^{\spars} \frac{1 - (\tau/\zeta(\params))^{\spars}}{\spars}\\
  \label{eq:29}
  &\leq \Big(2\log(2) + \log \frac{\zeta(\params)}{\tau}\Big)%
    \size \zeta(\params)^{\spars},
\end{align}
where for the last line we have used that
$(1 - z^{\spars})/\spars \leq \log(1/z)$ for all $z\in (0,1)$ and all
$\spars \in (0,1)$ (this can be seen by differentiating with respect to
$\spars$). We also have used that $\zeta(\params) \geq \tau$, which can be
deduced from \cite[\cref{main-eq:7}]{caron:naulet:rousseau:main}. From \cite[\cref{main-eq:7}]{caron:naulet:rousseau:main} again, we also see that
$\zeta(\params)^2 \geq \size \zeta(\params)^{\spars}$. Also, because
$\params \in \mathcal{S}_K$ we have
$\size \tau^{\spars} = O( \sqrt{2\nedg} ) = O(\zeta(\params))$ and
$\tau \geq K^{-1}$, and thus we obtain from \cref{eq:25} that
$|\mathcal{A}(\spars; x\zeta(\params)e^{i\varphi})| = O(\zeta(\params)^2
\log(\zeta(\params))/\nedg)$, at least for $\spars \geq 0$. When $\spars < 0$,
it is easily seen with a similar reasoning that
$|\size \LTS(\spars,\tau;x \zeta(\params))| \lesssim \zeta(\params)^2 +
\size\tau^{\spars} \log(\zeta(\params))$, and hence, we obtain from \cref{eq:25}
that for any $x\in [1/2,2]$, any $\varphi\in [-\pi,\pi]$ and any
$\params \in \mathcal{S}_K$,
\begin{equation*}
  |\mathcal{A}(\params;x \zeta(\params)e^{i\varphi})|%
  = o( \zeta(\params)^3 / \nedg).
\end{equation*}
It then follows by combining \cref{eq:19,eq:24,eq:25} that as $\nedg \to \infty$
\begin{align}
  \label{eq:27}
  \Delta(\params)
  &= C^3a(\params)^3 \times o(1) + O\big(e^{-C^2/8} \big),
\end{align}

We already have shown that $a(\params) \geq 1/2$, we now need a more precise
estimate. Recalling that
$a(\params)^2 = -\nedg \partial_z^2\mathcal{A}(\params;\zeta(\params))$, we have
under the assumptions of the Lemma,
\begin{align*}
  a(\params)^2
  &= \frac{1}{2} + \frac{1}{2}\Big(1 - \frac{\spars
    \nver}{\nedg}\Big)\frac{2\nedg}{\zeta(\params)^2}%
    + (1-\spars) \frac{\size \zeta(\params)^{\spars}}{\zeta(\params)^2}\\
  &= \frac{1}{2} + \frac{1}{2}\frac{1}{\zeta(\params)^2}\Big(%
    2\nedg\Big(1 - \frac{\spars \nver}{\nedg}\Big)+ (1-\spars)\size
    \zeta(\params)^{\spars} \Big)\\
  &=\frac{1}{2} + \frac{1}{2} \frac{1}{\zeta(\params)^2} \Big(\zeta(\params)^2 - \tau
    \zeta(\params) - \spars \size \zeta(\params)^{\spars}\Big),
\end{align*}
where the last line follows from the definition of $\zeta(\params)$, in
\cite[\cref{main-eq:7}]{caron:naulet:rousseau:main}. Thus, under the assumptions of the Lemma we have
$a(\params)^2 = 1 + o(1)$. Since \cref{eq:27} is true for arbitrary choice of
$C > 0$, this indeed shows that $\Delta(\params) = o(1)$ if the conditions of
the Lemma are met. Then by \cite[\cref{main-eq:6}]{caron:naulet:rousseau:main}, we have
\begin{align*}
  \log I(\params)%
  &= -\nedg \mathcal{A}(\params;\zeta(\params)) - \log \sqrt{2a(\params)} + \log(1 +
    \Delta(\params))\\
  &= -\nedg \mathcal{A}(\params;\zeta(\params)) - \frac{\log(2)}{2} + o(1).
\end{align*}

\section{Existence and uniqueness of the MLE, concentration of the likelihood:
  auxiliary results}
\label{sec:exist-uniq-mle}

\subsection{Proof of \texorpdfstring{\cite[\cref{main-pro:concent:4}]{caron:naulet:rousseau:main}}{Lemma \ref{main-pro:concent:4}}}
\label{sec:concent:case-where-spars}


We will use multiple times that under \cite[\cref{main-ass:concent:degdist}]{caron:naulet:rousseau:main} we have for any
$x \in (0,1)$ that $\sparsfunc'(x) = O(\nver)$, otherwise it cannot be the
case that $\hat{\alpha}_t$ converges in $(0,1)$.

We first study $\spars \leq -C$, for $C>0$. Observe that for any $c_0 \in (0,1)$, we
have $\sparsfunc(c_0) = \sparsfunc(0) + O(\nver)$. By definition $\Psi(\spars)$ and
by \cref{pro:concent:1},
\begin{align*}
  \Psi(\spars) - \Psi(c_0)%
  &\leq \nver \log(-\spars)%
    + \{\sparsfunc(\spars) - \sparsfunc(0)\}%
    - \nedg \log \beta_{\spars}%
    - \spars \nver \log \frac{\beta_{\spars}-1}{\beta_{\spars}}%
    + O(\nver)\\
  &\eqqcolon F(\spars) + O(\nver).
\end{align*}
We consider two situations here, according to whether
$\spars \leq - b\nedg /\nver$ or $\spars > - b\nedg/\nver$ for some constant
$b> 0$. For any $b>0$, if $\spars \leq -b \nedg/\nver$,  \cref{pro:concent:concent:1} implies that
\begin{align*}
  \Psi(\spars) - \Psi(c_0)%
  \leq -\frac{1}{2} \frac{\nedg(1+o(1))}{1 + \frac{1}{2b}}%
  + O(\nver).
\end{align*}
If $\spars \in (-b\nedg/\nver,-C)$,  then using
$\beta_{\spars} = 1 - \spars \nver/\nedg$
\begin{align*}
  F'(\spars)%
  &= - \frac{\nver}{-\spars}%
    +\partial_\spars \zeta(\params)%
    +\nver \frac{1}{1 - \spars\nver/\nedg}%
    +\frac{\nver^2}{\nedg}\frac{-\spars}{1 - \spars\nver/\nedg}%
    + \nver \log\Big(1 - \frac{\spars\nver}{\nedg}\Big)\\
  &\quad%
    - \nver - \nver \log\Big(\frac{-\spars \nver}{\nedg}\Big)\\
  &= - \frac{\nver}{-\spars}%
    +\partial_\spars \zeta(\params)%
    + \nver \log \Big(1 - \frac{\spars \nver}{\nedg}\Big)%
    - \nver \log\Big(\frac{-\spars\nver}{\nedg}\Big)\\
  &=- \frac{\nver}{-\spars}%
    +\partial_\spars \zeta(\params)%
    + \nver \log \Big(1 + \frac{\nedg}{-\spars \nver} \Big).
\end{align*}
Since $\partial_\spars^2 \zeta(\params)< 0$ on $(-\infty, 1)$,
$\sup_{\spars \leq -C}\{-\sparsfunc'(\spars)\} \leq - \sparsfunc'(\hat\alpha_t) =
O(\nver)$ under \cite[\cref{main-ass:concent:degdist}]{caron:naulet:rousseau:main}. On the other hand,
$\nver \log(1 + \frac{\nedg}{-\spars \nver}) \geq \nver \log(1 +
\frac{1}{b})$, which can be made arbitrary larger than any multiple constant
of $\nver$ by choosing $b$ small enough. Hence
$F'(\spars) > 0$ on $(-b\nedg/\nver,-C)$ and for all
$\spars \in (-b\nedg/\nver,-C)$,
\begin{align*}
  \Psi(\spars) - \Psi(c_0)
  &\leq%
    \nver \log(C)%
    + \{\sparsfunc(-C) - \sparsfunc(0)\}%
    - C\nver \log\Big\{1 + \frac{\nedg}{C\nver}\Big\}%
    - C \nver + O\Big(\frac{\nver^2}{\nedg}\Big).
\end{align*}
With similar arguments $\sparsfunc(-C) - \sparsfunc(0) = O(\nver)$ under
\cite[\cref{main-ass:concent:degdist}]{caron:naulet:rousseau:main}, and \cite[\cref{main-eq:concent:63}]{caron:naulet:rousseau:main} holds.

We now study $\spars >c_2$.   Since $u > 0$ and $g(\spars,\varepsilon) > 0$, we have
\begin{align*}
  H_{\spars}(\varepsilon,u)%
  \leq \nver \log u - \frac{\nedg}{2}\log(1+u)%
  + \frac{\nedg}{2}\log(1-\varepsilon) + \frac{\nedg}{2}(1-\varepsilon)\beta_{\spars}.
\end{align*}
the right hand side is maximized in $u = 2\nver/\nedg(1+ O(\nver/\nedg))$ and
$\varepsilon=0$ which leads to
\begin{align*}
  H(\spars,\varepsilon,u)%
  \leq \nver \log \frac{\nver}{\nedg} - \nver + \nver \log(2)%
  + \frac{\nedg}{2}\beta_{\spars}%
  +O\Big( \frac{\nver^2}{\nedg}\Big), \quad \forall u,\varepsilon, \spars.
\end{align*}
Moreover  let $c_0 > 0$ then
$\Psi(c_0) = K(c_0) + \sup_{\varepsilon,u}H(c_0,\varepsilon,u) = \sparsfunc(c_0) +
\sup_{\varepsilon,u}H(0,\varepsilon,u) + O(\nver)$.  Choosing $\varepsilon_{*} = \frac{c_0\nver}{\nedg }=o(1)$, we have at $\spars = c_0$,
$\frac{1}{2} + \beta_{\spars}g(\spars,\varepsilon) =
(1+o(1))c_0^{-1}$
which combined with \cref{pro:concent:3}, leads to
$$
\sup_{\varepsilon,u}H(c_0,\varepsilon,u)%
\geq \sup_uH(c_0,\varepsilon_{*},u)  = -\nver \log \frac{\nver}{\nedg}%
+ \frac{\nedg}{2}%
+ O(\nver).
$$
Hence, as soon as $\nver = o(\nedg)$,
\begin{equation}
  \label{pro:concent:7}
  \Psi(c_0)%
  \geq \sparsfunc(c_0) + \nver \log \frac{\nver}{\nedg}%
  + \frac{\nedg}{2}%
  + O(\nver).
\end{equation}
Using \cref{pro:concent:7}, we then obtain that for all $\sigma > c_2$
\begin{align*}
  \Psi(\spars) - \Psi(c_0)%
  &\leq \{\sparsfunc(\spars) - \sparsfunc(c_0)\} + O(\nver)\\
  &= \sum_{j\geq 2}\ndeg{j}\sum_{k=1}^{j-1}\log \frac{k -
    \spars}{k-c_0} + O(\nver)\\
  &\leq \sum_{j\geq 2}\ndeg{j}\sum_{k=1}^{j-1}\log \frac{k -
    c_2}{k-c_0} + O(\nver)\\
  &\leq \sum_{j\geq 2}\ndeg{j}\log \frac{1 - c_2}{1-c_0} + O(\nver)\\
  &\leq -K(\nver - \ndeg{1})
\end{align*}
for any $K>0$, by choosing $c_2$ sufficiently close to 1. Furthermore,
\cite[\cref{main-ass:concent:degdist}]{caron:naulet:rousseau:main} implies that $\nver - \ndeg{1}
\asymp \nver$, otherwise $\hat\alpha_t$ would  converge to $1>\alpha_0$. Hence \cite[\cref{main-pro:concent:9}]{caron:naulet:rousseau:main} is proved.

\begin{lemma}
  \label{pro:concent:1}
  Let $\spars \leq -C$ for some $C > 0$. Then,
  \begin{multline*}
    \sup_{\varepsilon,u}H(\spars,\varepsilon,u)%
    =%
    \nver \log \frac{\nver}{\nedg}%
    +\nver \log(-\spars)%
    - \nver \log \beta_{\spars}%
    - \spars \nver \log \frac{\beta_{\spars}-1}{\beta_{\spars}}\\%
    + \frac{\nedg}{2}%
    - \frac{\nedg}{2}\log \beta_{\spars} + O(\nver).
  \end{multline*}
\end{lemma}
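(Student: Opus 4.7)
My plan is to compute the supremum in two stages: first optimize over $u$ for fixed $\varepsilon$, then over $\varepsilon$. Since the function $u \mapsto \nver \log u - \frac{\nedg}{2}\log(1+u) - \nedg\beta_{\spars} g(\spars,\varepsilon) u$ is strictly concave, it has a unique maximizer. A convenient upper bound comes from splitting: applying the elementary inequality $\sup_u\{\nver\log u - au\} = \nver\log(\nver/a) - \nver$ with $a = \nedg\beta_{\spars} g(\spars,\varepsilon)$, and using $-\frac{\nedg}{2}\log(1+u) \leq 0$ for $u \geq 0$, yields the pointwise bound
\[
\sup_u H(\spars,\varepsilon,u) \leq \nver\log\frac{\nver}{\nedg} - \nver\log \beta_{\spars} - \nver\log g(\spars,\varepsilon) - \nver + \frac{\nedg}{2}\log(1-\varepsilon) + \frac{\nedg}{2}(1-\varepsilon)\beta_{\spars}.
\]

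To maximize the right hand side over $\varepsilon$, I exploit that for $\spars < 0$ the quantity $\varepsilon^{\spars}$ is large for $\varepsilon$ bounded away from $1$, so $g(\spars,\varepsilon) = (\varepsilon^{\spars}-1)/(-\spars) - (1-\varepsilon)/2$ is well approximated by $\varepsilon^{\spars}/(-\spars)$ at the relevant critical point. Differentiating, using the identity $|\spars|\nver/\nedg = \beta_{\spars} - 1$ and rearranging, the first-order condition reduces at leading order to the clean equation $(1-\varepsilon)\beta_{\spars} = 1$, whose solution is $\varepsilon^{\ast} = (\beta_{\spars}-1)/\beta_{\spars}$. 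At $\varepsilon^{\ast}$ the last two terms of the display evaluate \emph{exactly} to $-\frac{\nedg}{2}\log\beta_{\spars}$ and $\frac{\nedg}{2}$ (matching two terms of the target), while $-\nver \log g(\spars,\varepsilon^{\ast}) = -\spars \nver \log((\beta_{\spars}-1)/\beta_{\spars}) + \nver \log(-\spars) + O(\nver)$ using the approximation for $g$; summing gives the claimed expression as an upper bound on $\sup_{\varepsilon, u} H$. For the matching lower bound, I evaluate $H(\spars,\varepsilon^{\ast},\hat u(\varepsilon^{\ast}))$ directly: since $\beta_{\spars} g(\spars,\varepsilon^{\ast})$ is large, $\hat u(\varepsilon^{\ast}) \approx \nver/(\nedg\beta_{\spars} g(\spars,\varepsilon^{\ast}))$ is small, and the dropped term $-\frac{\nedg}{2}\log(1+\hat u)$ contributes only $O(\nver/(\beta_{\spars} g(\spars,\varepsilon^{\ast}))) = o(\nver)$, recovering the same leading-order value.

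The main technical difficulty is controlling the error terms uniformly over the (potentially unbounded) range $\spars \leq -C$. When $|\spars|$ is large, the principal term $-\spars\nver\log((\beta_{\spars}-1)/\beta_{\spars})$ is itself of order $|\spars|\nver$ rather than $\nver$, so the corrections to the approximation $g(\spars,\varepsilon^{\ast}) \approx (\varepsilon^{\ast})^{\spars}/(-\spars)$ must be controlled to truly $O(\nver)$ and not $O(|\spars|\nver)$. The saving observation is the explicit identity $(\varepsilon^{\ast})^{\spars} = (1+1/(\beta_{\spars}-1))^{|\spars|}$, which, uniformly over $\spars \leq -C$, is large enough that the ratio $g(\spars,\varepsilon^{\ast}) / [(\varepsilon^{\ast})^{\spars}/(-\spars)]$ stays bounded between two positive constants. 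Consequently $\log g(\spars,\varepsilon^{\ast}) = \spars \log \varepsilon^{\ast} - \log(-\spars) + O(1)$, so the contribution to $\sup_{\varepsilon,u} H$ is the asserted expression up to $O(\nver)$.
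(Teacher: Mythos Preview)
Your approach is essentially the same as the paper's: optimize first in $u$ (the paper does this via its Lemma giving $\bar u(\spars,\varepsilon)$ and the asymptotic for $H(\spars,\varepsilon,\bar u(\spars,\varepsilon))$; you do it by dropping $-\tfrac{\nedg}{2}\log(1+u)\le 0$ and using $\sup_u\{\nver\log u - au\}$), then locate the optimal $\varepsilon^\ast=(\beta_\spars-1)/\beta_\spars$ and evaluate. Your treatment of the uniformity in $\spars\le -C$, in particular the explicit identity $(\varepsilon^\ast)^{\spars}=(1+1/(\beta_\spars-1))^{|\spars|}$ and the resulting bound on $g(\spars,\varepsilon^\ast)\big/\big[(\varepsilon^\ast)^\spars/(-\spars)\big]$, is correct and is exactly the content of the paper's observation that $\varepsilon^{-\spars}$ stays bounded away from $1$ so that $-\nver\log(1-\varepsilon^{-\spars})=O(\nver)$.

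There is, however, a genuine gap in your upper-bound direction over $\varepsilon$. You obtain
\[
\sup_u H(\spars,\varepsilon,u)\ \le\ -\nver\log g(\spars,\varepsilon)+\tfrac{\nedg}{2}\log(1-\varepsilon)+\tfrac{\nedg}{2}(1-\varepsilon)\beta_\spars+\text{const},
\]
then \emph{evaluate} the right-hand side at $\varepsilon^\ast$ found from an approximate first-order condition. Evaluation at a point gives a lower bound on the supremum of the right-hand side, not an upper bound; to conclude you must show $\varepsilon^\ast$ is the global maximizer, or equivalently that the approximation $g(\spars,\varepsilon)\approx \varepsilon^{\spars}/(-\spars)$ holds with $O(1)$ log-error uniformly in $\varepsilon$. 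It does not: as $\varepsilon\to 1$, $g(\spars,\varepsilon)\sim (1-\varepsilon)/2$ while $\varepsilon^{\spars}/(-\spars)\to 1/(-\spars)$, so the ratio collapses and your $O(\nver)$ control on $-\nver\log g$ fails. The paper handles this by splitting into the two regimes $1-\varepsilon\le a/\beta_\spars$ and $1-\varepsilon> a/\beta_\spars$: in the first, a crude bound using $g\ge 0$ and $\log(1-\varepsilon)\le -\log(1/a)$ already shows the value is too small; in the second, $\varepsilon$ is bounded away from $1$ uniformly (since $\varepsilon\le 1-a/\beta_\spars$), so $\varepsilon^{-\spars}\le e^{-Ca/(1+C\nver/\nedg)}$ is bounded away from $1$ and the replacement $-\nver\log f(\spars,\varepsilon)=\nver\log(-\spars)-\spars\nver\log\varepsilon+O(\nver)$ is valid uniformly, after which the resulting function $G_\spars(\varepsilon)=-\spars\nver\log\varepsilon+\tfrac{\nedg}{2}\log(1-\varepsilon)+\tfrac{\nedg}{2}(1-\varepsilon)\beta_\spars$ is maximized exactly (it reduces to a quadratic) at your $\varepsilon^\ast$. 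Inserting this region split before your evaluation step closes the gap; your lower-bound direction is fine as written.
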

\begin{proof}

  The starting point is \cref{pro:concent:3}, and in particular the
  \cref{eq:concent:boundH}. The term
  $-\log( \frac{1}{2} + \beta_{\spars}g(\spars,\varepsilon))$ is not trivial to
  apprehend. We split the analysis into two scenarios, according to whether
  $1-\varepsilon \leq \frac{a}{\beta_{\spars}}$ or not, for some $a \in (0,1)$ to
  be determined. Under the scenario
  $1 - \varepsilon \leq \frac{a}{\beta_{\spars}}$, we note that
  $\beta_{\spars}g(\spars,\varepsilon) \geq 0$, from \cref{eq:concent:boundH},
  \begin{align}
    \label{eq:concent:66}
    H(\spars,\varepsilon,u)%
    &\leq
      - \frac{\nedg}{2}\Big\{\log \frac{1}{a} - a\Big\}%
      + \nver \log \frac{\nver}{\nedg}%
      + O(\nver).
  \end{align}
  We claim that the previous implies that the supremum of
  $(\varepsilon,u) \mapsto H(\spars,\varepsilon,u)$ has to be achieved for
  $1 - \varepsilon > \frac{a}{\beta_{\spars}}$. We keep that claim in mind, and we
  now analyse $H(\spars,\varepsilon,u)$ for
  $1 - \varepsilon > \frac{a}{\beta_{\spars}}$. Then, we can simplify things a bit.
  Indeed, by a Taylor expansion of $f(\spars,\cdot)$ near
  $\varepsilon \approx 1$, we can obtain that
  $f(\spars,\varepsilon) \geq 1 - \varepsilon$ for all $\varepsilon \in
  (0,1)$. That means that
  $g(\spars,\varepsilon) \geq \frac{1}{2}f(\spars,\varepsilon)$. Thus,
  $\nver \log\big( \frac{1}{2} + \beta g(\spars,\varepsilon)\big) \geq \nver
  \log\big( \frac{1}{2} + \frac{\beta_{\spars}}{2}f(\spars,\varepsilon) \big)
  \geq \nver \log \beta_{\spars} + \nver \log f(\spars,\varepsilon) + O(\nver)$.
  But,
  $-\nver \log f(\spars,\varepsilon)= \nver\log(-\spars) + \nver (-\spars) \log
  \varepsilon - \nver \log(1 - \varepsilon^{-\spars})$, Since we assume that
  $\varepsilon < 1 - \frac{a}{\beta_{\spars}}$, and $-\spars > 0$, then
  $\varepsilon^{-\spars} \leq \exp\{-(-\spars)(-\log(1 -
  \frac{a}{1-\spars\nver/\nedg}) \} \leq \exp\{- \frac{(-\spars)a}{1 -
    \spars\nver/\nedg} \}$. Because $-\spars \geq C$, this means that
  $\varepsilon^{-\spars} \leq \exp\{- \frac{C a}{1 + C\nver/\nedg}\}$ uniformly,
  which is always bounded away from $1$. Consequently,
  $-\nver \log(1 - \varepsilon^{-\spars}) = O(\nver)$, uniformly, and in the
  second scenario \cref{eq:concent:boundH} becomes
  \begin{multline*}
    \sup_{u>0}H(\spars,\varepsilon,u)%
    = \nver \log \frac{\nver}{\nedg}%
    +\nver \log(-\spars)%
    - \nver \log\beta_{\spars}\\%
    - \spars \nver \log \varepsilon%
    + \frac{\nedg}{2}(1-\varepsilon)\beta_{\spars}%
    + \frac{\nedg}{2}\log(1-\varepsilon) + O(\nver).
  \end{multline*}
  That is, it is enough to maximize
  $G_{\spars}(\varepsilon) \coloneqq -\spars \nver\log \varepsilon +
  \frac{\nedg}{2}\log(1-\varepsilon) +
  \frac{\nedg}{2}(1-\varepsilon)\beta_{\spars}$. We note that
  $G_{\spars}'(\varepsilon) = \frac{-\spars \nver}{\varepsilon} -
  \frac{\nedg}{2}\frac{1}{1-\varepsilon} - \frac{\nedg}{2}\beta_{\spars}$, and
  clearly $G_{\spars}''(\varepsilon) < 0$ for all $\varepsilon > 0$, whence
  $G_{\spars}$ admits a unique maximizer solution to
  \begin{align*}
    \frac{-\spars \nver}{\varepsilon}%
    = \frac{\nedg}{2}\frac{1}{1-\varepsilon} + \frac{\nedg}{2}\Big(1 -
    \frac{\spars \nver}{\nedg}\Big)%
    &\iff%
      \frac{-\spars\nver}{\varepsilon}%
      = \frac{\nedg}{2} \frac{1}{1 - \varepsilon} + \frac{\nedg}{2}  +
      \frac{-\spars \nver}{2}\\
    &\iff%
      -\spars \nver(1 - \varepsilon)%
      = \frac{\nedg}{2} \varepsilon + \frac{\nedg}{2}(1-\varepsilon)\varepsilon +
      \frac{-\spars\nver}{2}(1-\varepsilon) \varepsilon\\
    &\iff%
      - \frac{1}{2}(\nedg  - \spars \nver)\varepsilon^2%
      + \Big( \nedg  - \frac{3\spars\nver}{2} \Big)\varepsilon%
      + \spars\nver = 0.
  \end{align*}
  The previous has two easy solutions, one is seen to be $\varepsilon = 2$ so it
  is outside the domain of $G$, and the other one, of interest, is
  \begin{equation*}
    \varepsilon_{*}%
    \coloneqq%
    \frac{-\spars \nver}{\nedg - \spars\nver}%
    = \frac{-\spars\nver}{\nedg}\cdot \frac{1}{1 - \spars \nver/\nedg }%
    = \frac{\beta_{\spars} - 1}{\beta_{\spars}}.
  \end{equation*}
  It follows, for all $\varepsilon$ such that
  $1-\varepsilon > \frac{a}{\beta_{\spars}}$,
  \begin{multline*}
    \sup_{u > 0}H(\spars,\varepsilon,u)%
    \leq%
      \nver \log \frac{\nver}{\nedg}%
      +\nver \log(-\spars)%
      - \nver \log \beta_{\spars}\\%
      - \spars \nver \log \frac{\beta_{\spars}-1}{\beta_{\spars}}%
      + \frac{\nedg}{2}%
      - \frac{\nedg}{2}\log \beta_{\spars} + O(\nver).
  \end{multline*}
  It is clear that the previous is indeed achieved at $\varepsilon_{*}$, and for
  $a > 0$ small enough (but constant), it is also bigger than the bound in
  \cref{eq:concent:66} when $\spars \leq -C \leq 0$. Hence the conclusion.
\end{proof}

\begin{lemma}
  \label{pro:concent:3}
  If $\nver = o(\nedg)$, for any fixed value of $(\spars,\varepsilon)$, the function
  $u \mapsto H(\spars,\varepsilon,u)$ admits a unique maximizer
  $\bar{u}(\spars,\varepsilon) > 0$. Furthermore, this maximizer has the asymptotic
  expansion
  \begin{align*}
    \bar{u}(\spars,\varepsilon)
    &= \frac{\nver}{\nedg}\cdot \frac{1 + O(\nver/\nedg)}{\frac{1}{2} + \beta_{\spars} g(\spars,\varepsilon)},
  \end{align*}
  and,
  \begin{multline}
    \label{eq:concent:boundH}
    H(\spars,\varepsilon,\bar{u}(\spars,\varepsilon))%
    =%
      -\nver + \nver \log \frac{\nver}{\nedg}%
      - \nver \log \Big(\frac{1}{2} + \beta_{\spars}
      g(\spars,\varepsilon)\Big)\\%
      + \frac{\nedg}{2}\log(1-\varepsilon)%
      + \frac{\nedg}{2}(1-\varepsilon)\beta_{\spars}%
      + O\Big( \frac{\nver^2}{\nedg}\Big).
  \end{multline}
\end{lemma}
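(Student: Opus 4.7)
The plan is to carry out a one-variable critical-point analysis of $u \mapsto H(\spars,\varepsilon,u)$, solve the resulting quadratic explicitly, and expand asymptotically using $\nver = o(\nedg)$. Differentiating gives
\begin{equation*}
\partial_u H(\spars,\varepsilon,u) = \frac{\nver}{u} - \frac{\nedg}{2(1+u)} - \nedg\beta_{\spars}\, g(\spars,\varepsilon),
\end{equation*}
so any critical point satisfies the quadratic
\begin{equation*}
2\nedg\beta_{\spars}g(\spars,\varepsilon)\,u^2 + \bigl(\nedg(1+2\beta_{\spars}g(\spars,\varepsilon))-2\nver\bigr)u - 2\nver = 0.
\end{equation*}
Since $\spars < 1$ and $\nver = o(\nedg)$, $\beta_{\spars}$ is eventually positive, and the inequality $f(\spars,\varepsilon) \geq 1-\varepsilon$ (used in the proof of \cref{pro:concent:1}, and which follows on $\spars<1$ from a direct comparison of derivatives) yields $g(\spars,\varepsilon) \geq (1-\varepsilon)/2 > 0$. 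Hence the quadratic has positive leading coefficient and strictly negative constant term, so it possesses exactly one positive root, which is my candidate $\bar u(\spars,\varepsilon)$.

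Uniqueness and global maximality are then immediate from the boundary behaviour $\partial_u H(\spars,\varepsilon,0^+) = +\infty$ and $\partial_u H(\spars,\varepsilon,u) \to -\nedg\beta_{\spars}g(\spars,\varepsilon) < 0$ as $u \to \infty$: since $\partial_u H$ has only one zero $\bar u$ on $(0,\infty)$, it must be positive on $(0,\bar u)$ and negative on $(\bar u,\infty)$, so $\bar u$ is the unique global maximizer. The asymptotic formula for $\bar u$ then follows from the closed-form root. Setting $a \coloneqq 2\nedg\beta_{\spars}g$, $b \coloneqq \nedg(1+2\beta_{\spars}g) - 2\nver$, $c \coloneqq -2\nver$, and using $-4ac = 16\nedg\nver\beta_{\spars}g = o(b^2)$, a first-order expansion of $\sqrt{b^2 - 4ac}$ gives $\bar u = -c/b + O(ac^2/b^3) = 2\nver/b\,(1 + O(\nver/\nedg))$, which after simplifying $b = \nedg(1+2\beta_{\spars}g)(1+O(\nver/\nedg))$ yields the claimed
\begin{equation*}
\bar u(\spars,\varepsilon) = \frac{\nver}{\nedg}\cdot\frac{1 + O(\nver/\nedg)}{\tfrac{1}{2}+\beta_{\spars}g(\spars,\varepsilon)}.
\end{equation*}

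The expansion of $H(\spars,\varepsilon,\bar u)$ then follows by direct substitution, the key simplifications being
\begin{equation*}
-\frac{\nedg}{2}\log(1+\bar u) - \nedg\beta_{\spars}g\,\bar u = -\nedg\bar u\bigl(\tfrac{1}{2}+\beta_{\spars}g\bigr) + O(\nedg\bar u^2) = -\nver + O(\nver^2/\nedg),
\end{equation*}
combined with $\nver\log\bar u = \nver\log(\nver/\nedg) - \nver\log(\tfrac{1}{2}+\beta_{\spars}g) + O(\nver^2/\nedg)$, while the remaining terms $\tfrac{\nedg}{2}\log(1-\varepsilon)$ and $\tfrac{\nedg}{2}(1-\varepsilon)\beta_{\spars}$ are unchanged by the maximization. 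I expect the main technical care to lie in the bookkeeping of error terms: in particular one must check that the $O(\nver/\nedg)$ factor inside $\log\bar u$ contributes only $O(\nver^2/\nedg)$ after multiplication by $\nver$, rather than an $O(\nver)$ term which would spoil the stated bound.
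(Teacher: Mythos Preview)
Your proof is correct and follows essentially the same route as the paper: first-order condition in $u$, boundary behaviour to pin down the sign of $\partial_uH$, then asymptotic expansion. The execution differs slightly: you reduce the critical-point equation to an explicit quadratic and invoke the sign pattern (positive leading coefficient, negative constant term) to get exactly one positive root, then expand the root via the quadratic formula. The paper instead shows any root must lie in $(0,3\nver/\nedg)$ and uses $\partial_u^2H<0$ there for uniqueness, then obtains the asymptotic by comparing $\bar u$ with the approximate solution $u=\frac{\nver}{\nedg}(\tfrac12+\beta_\sigma g)^{-1}$ via the identity $\frac{\nver}{u}=\frac{\nver}{\bar u}+\frac{\nedg}{2}\frac{\bar u}{1+\bar u}$. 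Your explicit-quadratic argument is a clean alternative; the paper's localization argument has the minor advantage of being later reused (the bound $\bar u\leq 3\nver/\nedg$ and the concavity on that interval reappear in the proof of \cref{main-pro:concent:16}).
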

\begin{proof}
  We first establish that for fixed values of $(\spars,\varepsilon)$ the
  function $u \mapsto H(\spars,\varepsilon,u)$ has a maximum. Indeed, any
  extremum of $H(\spars,\varepsilon,\cdot)$ must be solution to
  \begin{align}
    \label{eq:concent:70}
    \frac{\nver}{u}%
    - \frac{\nedg}{2}\frac{1}{1+u}%
    - \nedg \beta_{\spars}g(\spars,\varepsilon) = 0.%
  \end{align}
  The limit as $u \to 0$ of the lhs of \cref{eq:concent:70} is $+\infty$, and the limit
  as $u \to \infty$ is $-\nedg \beta_{\spars}g(\spars,\varepsilon) < 0$, and it is
  a continuous function of $u$. Hence, it is the case that
  $\partial_uH(\spars,\varepsilon,u) = 0$ has solutions. Furthermore, it is clear
  that any solution also satisfies
  $\frac{\nver}{u}\geq \frac{\nedg}{2}\frac{1}{1+u}$, \textit{i.e}
  $\frac{u}{1+u}\leq \frac{2\nver}{\nedg}$, whence
  $u \leq \frac{2\nver}{\nedg}(1+o(1))$. Hence, it is enough to look for
  solutions in $(0,\frac{3\nver}{\nedg})$. On that interval,
  $\partial_u^2 H(\spars,\varepsilon,u) = -\frac{\nver}{u^2} + \frac{\nedg}{2}
  \frac{1}{(1+u)^2} < 0$. Then, \cref{eq:concent:70} has a unique solution, and it is a
  maximum of $H(\spars,\varepsilon,\cdot)$. Regarding the asymptotic form of
  $\bar{u}(\spars,\varepsilon)$, let
  $u = \frac{\nver}{\nedg}\frac{1}{\frac{1}{2} +
    \beta_{\spars}g(\spars,\varepsilon)}$. Then, $u$ is the solution to
  $\frac{\nver}{u} = \frac{\nedg}{2} + \nedg \beta_{\spars}
  g(\spars,\varepsilon)$. It then follows from \cref{eq:concent:70} that
  $\frac{\nver}{u} = \frac{\nver}{\bar{u}(\spars,\varepsilon)} + \frac{\nedg}{2}
  \frac{\bar{u}(\spars,\varepsilon)}{1 + \bar{u}(\spars,\varepsilon)}$. The first
  claim follows. The second claim follows because $\log(1+x) = x + O(x^2)$ for
  all $x > -1$.
\end{proof}

\begin{lemma}
  \label{pro:concent:concent:1}
  Under \cite[\cref{main-ass:concent:degdist}]{caron:naulet:rousseau:main}, for all $\spars < 0$,
  \begin{multline*}
    \sparsfunc(\spars) - \sparsfunc(0)%
    \leq%
      - \nver \log(1 - \spars)%
      + \nedg \log\Big(1 - \frac{\spars\nver}{\nedg}\Big)\\%
      - \spars \nver \log\Big(1 - \frac{\nedg}{\spars \nver}\Big)%
      - \frac{1}{2}\frac{\nedg(1+o(1))}{1 + \frac{1}{2} \frac{\nedg}{-\spars\nver}}%
      + O(\nver).
  \end{multline*}
\end{lemma}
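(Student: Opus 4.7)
The plan is to express $\sparsfunc(\spars)-\sparsfunc(0)$ via an integral representation involving the digamma function $\digamma := (\log\Gamma)'$, apply Jensen's inequality on the empirical degree distribution, and expand via Stirling's formula to match the first three terms of the claimed right-hand side, with a second-order Taylor refinement producing the negative correction.

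Setting $a := -\spars > 0$ and $\bar{j} := \nedg/\nver$, I would first use the identity $\log\Gamma(j+a) - \log\Gamma(j) = \int_0^a \digamma(j+u)\,du$ together with $\log\Gamma(1) = 0$ to write
\[
\sparsfunc(\spars) - \sparsfunc(0) = \sum_{j\ge 2}\ndeg{j}\int_0^a \digamma(j+u)\,du - \nver\log\Gamma(1+a).
\]
Since $\digamma''(x) = -\sum_{k\ge 0}2/(k+x)^3 < 0$, the digamma is strictly concave on $(0,\infty)$, so Jensen's inequality applied to the empirical distribution $\{\ndeg{j}/\nver\}_{j\ge 2}$ yields $\sum_{j\ge 2}\ndeg{j}\digamma(j+u) \le \nver\,\digamma(\bar{j}+u)$. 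Integrating in $u\in[0,a]$ gives the preliminary bound $\sparsfunc(\spars)-\sparsfunc(0)\le \nver\log[\Gamma(\bar{j}+a)/(\Gamma(\bar{j})\Gamma(1+a))]$.

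I would then expand via Stirling's formula $\log\Gamma(x)=(x-\tfrac12)\log x - x + O(1)$. Setting $\mu := -\spars\nver/\nedg$, the algebraic identity
\[
(\nedg+a\nver)\log\tfrac{\nedg+a\nver}{\nver}-\nedg\log\tfrac{\nedg}{\nver} = \nedg\log(1+\mu)+a\nver\log\tfrac{1+\mu}{\mu}+a\nver\log a,
\]
combined with $-\nver(1+a)\log(1+a)$ arising from Stirling on $\log\Gamma(1+a)$, reproduces the first three terms of the claimed right-hand side once one identifies $1+\mu = 1-\spars\nver/\nedg$ and $(1+\mu)/\mu = 1-\nedg/(\spars\nver)$. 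The residual $-a\nver\log(1+1/a)$, which is $O(\nver)$ for $a$ large, together with subleading Stirling corrections, is absorbed into the $O(\nver)$ error.

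To capture the negative correction $-\tfrac12\nedg(1+o(1))/(1+\tfrac12\nedg/(-\spars\nver))$ I would sharpen the Jensen step by using the second-order Taylor expansion $\digamma(j+u) = \digamma(\bar{j}+u) + \digamma'(\bar{j}+u)(j-\bar{j}) + R_j(u)$ with integral remainder $R_j(u) = (j-\bar{j})^2\int_0^1(1-s)\digamma''(\bar{j}+s(j-\bar{j})+u)\,ds \le 0$. The linear term vanishes upon empirical averaging, and the approximation $\digamma''(x)\asymp -1/x^2$ for large $x$ together with $\int_0^a du/(\bar{j}+u)^2 = a/(\bar{j}(\bar{j}+a))$ converts $\sum_j\ndeg{j}\int_0^a R_j(u)\,du$ into a negative contribution of the form $-c\sum_j\ndeg{j}(j-\bar{j})^2 \cdot a/(\bar{j}(\bar{j}+a))$. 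The hard part will be matching the exact constant: a naive variance bound only produces an $O(\nver)$ correction rather than the claimed $\Theta(\nedg)$, so a sharper treatment exploiting the Karlin--Rouault-like structure of the degree distribution implicit in \cref{main-ass:concent:degdist} through the defining equation $-\hat\alpha_t\sparsfunc'(\hat\alpha_t)=\nver$ will be required, with Stirling errors of order $O(\nver/\bar{j})$ and boundary terms carefully tracked to fit into the $O(\nver)$ error.
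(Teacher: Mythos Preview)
Your digamma--Stirling framework is a legitimate alternative route and correctly recovers the three leading terms. The paper proceeds differently: it bounds $\sparsfunc(\spars)$ from above and $\sparsfunc(0)$ from below by integral comparison ($\sum_k\log(k-\spars)\leq\int\log(x-\spars)\,dx$), then applies Jensen \emph{twice}, once to $j\mapsto\log(1+(j-1)/(1-\spars))$ and once to $\Phi(x)\coloneqq x\log(1+(-\spars)/x)$ viewed as a function of the empirical degree distribution $p_{t,j}=\ndeg{j}/\nver$. The negative correction comes from refining the second Jensen step on $\Phi$.

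There is, however, a genuine gap where you yourself flag one: the correction term. The missing idea is \emph{not} a variance bound. The paper's key observation is that Assumption~1 forces $\sum_{j\ge 2}\ndeg{j}\log j=O(\nver)$ (since $-\sparsfunc'(\hat\alpha_t)=\nver/\hat\alpha_t$ and $\sum_{k=1}^{j-1}1/(k-\hat\alpha_t)\asymp\log j$), whence by Markov's inequality the event $\{J\le\tfrac12\bar{j}\}$ has empirical probability $1+o(1)$: almost all degrees sit far \emph{below} the mean. On that event $(J-\bar{j})^2\ge\bar{j}^2/4$ deterministically, and since $-\Phi''(x)=1/[x(1+x/(-\spars))^2]$ is explicitly bounded below for $x\le\tfrac12\bar{j}$, the second-order Taylor remainder contributes $-\tfrac12\nedg(1+o(1))/(1+\tfrac12\nedg/(-\spars\nver))$. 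Your attempt---substituting $\digamma''(x)\asymp-1/x^2$ at $x=\bar{j}$ and multiplying by a variance---localizes at the wrong point: the integral-form remainder $R_j(u)$ involves $\digamma''$ at arguments $\bar{j}+s(j-\bar{j})+u$ running all the way down to $j+u$, where $\digamma''$ is $O(1)$, not $O(1/\bar{j}^2)$. The ``most mass is far below the mean'' trick is transferable to your framework, but you would have to reorganize the remainder integral around small $j$ rather than around $\bar{j}$, and you have not identified this step. (Minor point: your Jensen should sum over $j\ge 1$, not $j\ge 2$, so that the empirical mean is exactly $\nedg/\nver$.)
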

\begin{proof}
  For $-\spars \geq 0$ the function $k \mapsto \log(k-\spars)$ is non-negative
  and monotone increasing on $(1,\infty)$, and hence we can bound
  \begin{align*}
    \sparsfunc(\spars)%
    &= \sum_{j\geq 2}\ndeg{j}\sum_{k=1}^{j-1}\log(k - \spars)\\
    &\leq%
      \sum_{j\geq 2}\ndeg{j}\int_1^j\log(x - \spars)\,\intd x\\
    &=\sum_{j\geq 2} \ndeg{j}\Big\{1 - j -(1-\spars)\log(1-\spars) +
      (j-\spars)\log(j-\spars) \Big\}\\
    &= - \nedg%
      - \nver (1-\spars)\log(1-\spars)%
      + \sum_{j\geq 1}\ndeg{j}(j-\spars)\log(j-\spars).
  \end{align*}
  That is,
  \begin{align*}
    \sparsfunc(\spars)%
    &\leq%
      - \nedg - \nver \log(1-\spars)%
      - \spars \sum_{j\geq 1}\ndeg{j}\log \Big\{1 + \frac{j-1}{1-\spars}\Big\}%
      + \sum_{j\geq 1}\ndeg{j}\cdot j \log(j - \spars).%
  \end{align*}
  By assumption, $-\spars > 0$, and then we remark that
  $x \mapsto \log(1 + \frac{x}{-\spars})$ is concave on $(1,\infty)$, hence by
  Jensen's inequality,
  \begin{align*}
    \sum_{j\geq 1}\ndeg{j}\log \Big\{1 + \frac{j-1}{1-\spars}\Big\}%
    &\leq \nver \sum_{j\geq 1}\frac{\ndeg{j}}{\nver}\log \Big\{1 +
      \frac{j}{-\spars}\Big\}\\%
    &\leq \nver \log\Big\{1 + \frac{\sum_{j\geq 1}\ndeg{j}\cdot j}{-\spars}
      \Big\}\\
    &= \nver \log\Big\{1 + \frac{\nedg}{-\spars \nver} \Big\}.
  \end{align*}
  So,
  \begin{align}
    \label{eq:concent:4}
    \sparsfunc(\spars)%
    &\leq%
      -\nedg - \nver\log(1-\spars)%
      - \spars \nver \log\Big(1 + \frac{\nedg}{-\spars\nver} \Big)%
      + \sum_{j\geq 1} \ndeg{j}\cdot j \log(j-\spars).
  \end{align}
  On the other hand, the function $x \mapsto \log(x)$ is non-negative and
  monotone increasing on $(1,\infty)$. Hence we can bound,
  \begin{align*}
    \sparsfunc(0)%
    &= \sum_{j\geq 2}\ndeg{j}\sum_{k=1}^{j-1}\log(k)
    = \sum_{j\geq 3}\ndeg{j}\sum_{k=2}^{j-1}\log(k)
    \geq
      \sum_{j\geq 3}\ndeg{j} \int_{1}^{j-1}\log(x)\,\intd x.
  \end{align*}
  Thus,
  \begin{align*}
    \sparsfunc(0)%
    &\geq
      \sum_{j\geq 3}\ndeg{j}%
      \Big\{2 - j + (j-1)\log(j-1) \Big\}
  \end{align*}
  That is,
  \begin{align}
    \notag
    \sparsfunc(0)%
    &\geq%
      -\nedg%
      + \sum_{j\geq 1}\ndeg{j}\cdot j\log(j)%
      + O(1)\cdot \sum_{j\geq 2}\ndeg{j}\log(j)\\
    \label{eq:concent:5}
    &=%
      -\nedg%
      + \sum_{j\geq 1}\ndeg{j}\cdot j\log(j)%
      + O(\nver),
  \end{align}
  where the second line is true under \cite[\cref{main-ass:concent:degdist}]{caron:naulet:rousseau:main}, because
  $-\sparsfunc'(\alpha_t) \asymp \sum_{j\geq 2} \ndeg{j} \log(j)$, and
  $-\sparsfunc'(\alpha_t) = \frac{\nver}{\alpha_t} = O(\nver)$. Hence, to finish
  the proof it is enough to understand,
  \begin{align}
    \sum_{j\geq 1}\ndeg{j}\cdot j\Big\{ \log(j-\spars) - \log(j)
    \Big\}%
    \label{eq:concent:8b}
    &= \nver \sum_{j\geq 1}\frac{\ndeg{j}}{\nver}\cdot j \log\Big\{1 +  \frac{- \spars}{j}\Big\}.
  \end{align}
  Let $p_{\TrueSize,j} \coloneqq \ndeg{j}/\nver$,
  $\bm{p}_{\TrueSize} = (p_{\TrueSize,1},p_{\TrueSize,2},\dots)$, and
  $\Phi(x) \coloneqq x \log(1 + \frac{-\spars}{x})$. Then we may see the rhs of
  the last display as $\nver\cdot\EE_{\bm{p}_{\TrueSize}}[\Phi(J)]$. We note
  that $\Phi$ is concave on $(1,\infty)$, so we can use Jensen's inequality to
  obtain a bound, but we actually need a finer estimate. To get the next order
  term, we remark that
  $\Phi'(x) = - \log(\frac{x}{-\spars}) - 1 + \frac{x}{1 + \frac{x}{-\spars}} +
  \log(1 + \frac{x}{-\spars}) = -\log( \frac{x}{-\spars}) + \log(1 +
  \frac{x}{-\spars}) - \frac{1}{1 + \frac{x}{-\spars}}$, and then
  $\Phi''(x) = - \frac{1}{x} + \frac{1}{-\spars} \frac{1}{1+\frac{x}{-\spars}} +
  \frac{1}{-\spars} \frac{1}{(1 + \frac{x}{-\spars})^2} =
  \frac{1}{-\spars}\frac{1}{x(1 + \frac{x}{-\spars})^2}\{x + x(1 +
  \frac{x}{-\spars}) - (-\spars)(1 + \frac{x}{-\spars})^2 \} = -\frac{1}{x(1 +
    \frac{x}{-\spars})^2}$, and thus $\Phi$ is concave as claimed. Furthermore,
  $\EE_{\bm{p}_{\TrueSize}}[J] = \frac{\nedg}{\nver}$, and thus by a Taylor
  expansion of $\Phi$ near $\EE_{\bm{p}_{\TrueSize}}[J]$, we find that there is
  some $J_{*}$ in the line segment between $J$ and $J_{*}$ such that,
  \begin{align*}
     \EE_{\bm{p}_{\TrueSize}}[\Phi(J)]%
    &=%
      \Phi(\EE_{\bm{p}_{\TrueSize}}[J])%
      + \EE[\Phi''(\EE_{\bm{p}_{\TrueSize}}[J])(J-\EE_{\bm{p}_{\TrueSize}}[J])]%
      + \frac{1}{2}\EE[\Phi''(J_{*})(J-\EE_{\bm{p}_{\TrueSize}}[J])^2]\\
    &= %
      \Phi(\EE_{\bm{p}_{\TrueSize}}[J])%
      + \frac{1}{2}\EE[\Phi''(J_{*})(J-\EE_{\bm{p}_{\TrueSize}}[J])^2].%
  \end{align*}
  Now we just need a tight enough upper bound on the second term of the last
  display. Since $\Phi' < 0$, we can upper-bound as follows
  \begin{align*}
    \EE_{\bm{p}_{\TrueSize}}[\Phi''(J_{*})(J-\EE_{\bm{p}_{\TrueSize}}[J])^2]%
    &\leq \EE_{\bm{p}_{\TrueSize}}[\Phi''(J_{*})(J-\EE_{\bm{p}_{\TrueSize}}[J])^2 \Ind_{\Set{J \leq
      \frac{1}{2}\EE_{\bm{p}_{\TrueSize}}[J] }}]\\
    &\leq - \frac{\EE_{\bm{p}_{\TrueSize}}[J]^2}{4}%
      \cdot%
      \min_{J \leq\frac{1}{2}\EE_{\bm{p}_{\TrueSize}}[J]}\{-\Phi''(J) \}%
      \cdot%
      \EE_{\bm{p}_{\TrueSize}}[\Ind_{\Set{J\leq\frac{1}{2}\EE_{\bm{p}_{\TrueSize}}[J]}}]\\%
    &\leq%
      - \frac{\EE_{\bm{p}_{\TrueSize}}[J]^2}{4}%
      \cdot%
      \frac{1}{\frac{1}{2}\EE_{\bm{p}_{\TrueSize}}[J]\big(1 + \frac{\frac{1}{2}\EE_{\bm{p}_{\TrueSize}}[J]}{-\spars} \big)}
      \cdot%
      \EE_{\bm{p}_{\TrueSize}}[\Ind_{\Set{J\leq\frac{1}{2}\EE_{\bm{p}_{\TrueSize}}[J]}}]\\
    &=- \frac{1}{2} \frac{\nedg}{\nver} \frac{1}{1 +
      \frac{1}{2}\frac{\nedg}{-\spars\nver} }%
      \sum_{j\geq 1} \frac{\ndeg{j}}{\nver}\Ind_{\Set{j\leq \frac{1}{2} \frac{\nedg}{\nver} } }.
  \end{align*}
  So the rhs of \cref{eq:concent:8b} is no more than,
  \begin{align}
    \label{eq:concent:1b}
    \nver%
    \EE_{\bm{p}_{\TrueSize}}[\Phi(J)]%
    &\leq  \nedg \log\Big(1 + \frac{-\spars \nver}{\nedg}\Big)%
      - \frac{1}{2}%
      \frac{\nedg}{1+\frac{1}{2}\frac{\nedg}{-\spars\nver}}%
      \sum_{j\geq 1} \frac{\ndeg{j}}{\nver}\Ind_{\Set{j\leq \frac{1}{2} \frac{\nedg}{\nver}}}.%
  \end{align}
  We finish the proof by noting that
  $j > \frac{\nedg}{2\nver} \Leftrightarrow \log(j) > \log(\frac{\nedg}{2\nver})$, and thus by
  Markov's inequality, we have
  $\sum_{j \geq 1}\ndeg{j}\Ind\Set{j > \frac{\nedg}{2\nver}} \leq \frac{1}{\log(\nedg/(2\nver))} \sum_{j\geq 1}\ndeg{j} \log(j) = o(\nver)$
  by \cite[\cref{main-ass:concent:degdist}]{caron:naulet:rousseau:main}. Consequently,
  $\sum_{j \geq 1}\ndeg{j}\Ind\Set{j \leq \frac{\nedg}{2\nver}} = \nver(1 + o(1))$. Then,
  the conclusion follows by combining
  \cref{eq:concent:4,eq:concent:5,eq:concent:8b,eq:concent:1b}.
\end{proof}


\subsection{Proof of \texorpdfstring{\cite[\cref{main-pro:concent:5}]{caron:naulet:rousseau:main}}{Lemma \ref{main-pro:concent:5}}}
\label{sec:concent:case-where-}

We first establish the existence and uniqueness
of the maximizer of $(\varepsilon,u) \mapsto H(\spars,\varepsilon,u)$ when $\spars \in (-C,c_2)$.

As already established in \cref{pro:concent:3}, the equation has a unique solution $\bar{u}(\spars,\varepsilon) \in (0,3\nver/\nedg)$, and
$\bar{u}(\spars,\varepsilon) = \frac{\nver}{\nedg}\frac{1 +
  O(\nver/\nedg)}{\frac{1}{2} + \beta_{\spars}g(\spars,\varepsilon)}$. Similarly, if there is a solution
$\tilde{\varepsilon}$ to $\partial_{\varepsilon}H(\spars,\varepsilon,u) = 0$, it must
be the case that
\begin{align*}
  \nedg \beta_{\spars} u \tilde{\varepsilon}^{-1+\spars}%
  &\geq \nedg \beta_{\spars} u\Big( \tilde{\varepsilon}^{-1+\spars} -
    \frac{1}{2}\Big)= \frac{\nedg}{2}\frac{1}{1-\tilde{\varepsilon}} +
    \frac{\nedg}{2}\beta_{\spars} \geq \frac{\nedg}{2}(1 + \beta_{\spars}).
\end{align*}
Since $\spars \geq -C$, we have $\beta_{\spars} = 1 + O(\nver/\nedg)$, and
thus any solution $\tilde{\varepsilon}$ to
$\partial_{\varepsilon}H_{\spars}(\varepsilon,u) = 0$ must satisfy
$\tilde{\varepsilon}^{-1+\spars} \geq u^{-1}(1 + O(\nver/\nedg))$. In particular,
if $(\tilde{\varepsilon},\tilde{u})$ is solution to
$\partial_{\varepsilon}H(\spars,\varepsilon,u) = 0$ and
$\partial_uH(\spars,\varepsilon,u) = 0$, then it has to be the case that
\begin{equation*}
  \tilde{\varepsilon}^{-1+\spars} \geq \frac{1 + O\big(\frac{\nver}{\nedg}\big)}{\tilde{u}}%
  = \frac{\nedg}{\nver}\Big( \frac{1}{2} +
  \beta_{\spars}g(\spars,\tilde{\varepsilon})\Big)\Big(1+O\Big(\frac{\nver}{\nedg}\Big)\Big)%
  \geq \frac{\nedg}{\nver}g(\spars,\tilde{\varepsilon})\Big(1+O\Big(\frac{\nver}{\nedg}\Big)\Big).
\end{equation*}
Now we remark that $g(\spars,\varepsilon) = f(\spars,\varepsilon) -
\frac{1-\varepsilon}{2}\geq \frac{1}{2}f(\spars,\varepsilon) = \frac{1}{2}\frac{1 -
  \varepsilon^{\spars}}{\spars}$, where the last equality is assuming without
loss of generality that $\spars \ne 0$. It follows, whenever $\spars \geq -C$,
because the mapping $\spars \mapsto \frac{\tilde{\varepsilon}^{-\spars} -
  1}{\spars}$ is monotone increasing (recall $\tilde{\varepsilon} \in (0,1)$),
\begin{align*}
  \tilde{\varepsilon}^{-1}%
  \geq
  \frac{1}{2}\frac{\nedg}{\nver}\frac{\tilde{\varepsilon}^{-\spars}-1}{\spars}\Big(1+O\Big(\frac{\nver}{\nedg}\Big)\Big)%
  \geq \frac{1}{2}\frac{\nedg}{\nver}\frac{1 - \tilde{\varepsilon}^C}{C}\Big(1+O\Big(\frac{\nver}{\nedg}\Big)\Big).
\end{align*}
We have proved that $\tilde{\varepsilon}(\spars)$, if it exists, must satisfies
$\tilde{\varepsilon}(\spars) \leq 3C\nver/\nedg$ for all $\spars \geq -C$, at least
when $t$ is large enough.

To prove that $(\tilde{\varepsilon}(\spars),\tilde{u}(\spars))$ exists uniquely, it is
enough to establish that $\varepsilon \mapsto H(\spars,\varepsilon,\bar{u}(\spars,\varepsilon))$ has a unique
maximum at $\tilde{\varepsilon}(\spars)$, and then
$\tilde{u}(\spars) = \bar{u}(\spars,\tilde{\varepsilon}(\spars))$. Let
$\tilde{H}_{\spars}(\varepsilon) \coloneqq H(\spars,\varepsilon,\bar{u}(\spars,\varepsilon))$. Clearly,
$\tilde{H}_{\spars}'(\varepsilon) = \partial_{\varepsilon}H_{\spars}(\varepsilon, \bar{u}(\spars,\varepsilon))$ since
$\partial_uH_{\spars}(\varepsilon,\bar{u}(\spars,\varepsilon)) = 0$. Then, for any $\varepsilon \leq 3C\nver/\nedg$,
recalling that $\bar{u}(\spars,\varepsilon) \leq 3\nver/\nedg$,
\begin{align*}
  \tilde{H}_{\spars}'(\varepsilon)%
  &= \nedg \beta_{\spars} \bar{u}(\spars,\varepsilon)\Big(\varepsilon^{-1+\spars} -
    \frac{1}{2}\Big)%
    - \frac{\nedg}{2}\frac{1}{1-\varepsilon} - \frac{\nedg}{2}\beta_{\spars}\\
  &= \nedg \bar{u}(\spars,\varepsilon)(1+o(1))\varepsilon^{-1+\spars} - \nedg(1+o(1)).
\end{align*}
By \cref{pro:concent:3}, we obtain easily that when $\varepsilon\to 0$ we have
$\bar{u}(\spars,\varepsilon) \asymp \spars \nver/\nedg$ if $\spars > 0$,
$\bar{u}(\spars,\varepsilon) \asymp -\spars\varepsilon^{-\spars}\nver/\nedg$ if
$\spars < 0$, and
$\bar{u}(\spars,\varepsilon) \asymp
\frac{\nver}{\nedg}\frac{1}{\log(1/\varepsilon)}$ if $\spars = 0$. So  $\lim_{\varepsilon \to 0}\tilde{H}_{\spars}'(\varepsilon) = +\infty$. We
already know from the above that $\tilde{H}_{\spars}'(\varepsilon) < 0$ when
$\varepsilon > 3C\nver/\nedg$. Since $\tilde{H}_{\spars}'$ is a continuous
function of $\varepsilon$, there are solutions to
$\tilde{H}_{\spars}'(\varepsilon) = 0$ in $(0,3C\nver/\nedg)$, and only in this
interval. To prove the uniqueness, it is enough to show that
$\tilde{H}_{\spars}''(\varepsilon) < 0$ for all $\varepsilon \in
(0,3\nver/\nedg)$. We have,
\begin{align*}
  \tilde{H}_{\spars}''(\varepsilon)%
  &= \partial_{\varepsilon}^2H(\spars,\varepsilon,\bar{u}(\spars,\varepsilon))%
    +
    \partial_u\partial_{\varepsilon}H(\spars,\varepsilon,\bar{u}(\spars,\varepsilon))%
    \partial_{\varepsilon}\bar{u}(\spars,\varepsilon).%
\end{align*}
Using $\partial_uH(\spars,\varepsilon,\bar{u}(\spars,\varepsilon))
= 0$, we find that
$\partial_{\varepsilon}\partial_uH(\spars,\varepsilon,\bar{u}(\spars,\varepsilon)) +
\partial_u^2H(\spars,\varepsilon,\bar{u}(\spars,\varepsilon))
\partial_{\varepsilon}\bar{u}(\spars,\varepsilon) = 0$, \textit{i.e.}
\begin{align*}
  \partial_{\varepsilon}\bar{u}(\spars,\varepsilon)%
  &=
    \frac{\partial_{\varepsilon}\partial_uH(\spars,\varepsilon,\bar{u}(\spars,\varepsilon))}{-\partial_u^2
    H(\spars,\varepsilon,\bar{u}(\spars,\varepsilon)) }.
\end{align*}
Then,
\begin{align*}
  \tilde{H}_{\spars}''(\varepsilon)%
  &= \partial_{\varepsilon}^2H(\spars,\varepsilon,\bar{u}(\spars,\varepsilon))%
    +
    \frac{\{\partial_{\varepsilon}\partial_uH(\spars,\varepsilon,\bar{u}(\spars,\varepsilon))
    \}^2 }{-\partial_u^2H(\spars,\varepsilon,\bar{u}(\spars,\varepsilon)) }\\
  &=  - \frac{\nedg}{2}\frac{1}{(1-\varepsilon)^2}%
    - (1-\spars)\nedg \beta_{\spars}\bar{u}(\spars,\varepsilon)\varepsilon^{-2+\spars}%
    + \frac{\bar{u}(\spars,\varepsilon)^2 \nedgsq}{\nver}\Big(\varepsilon^{-1+\spars}-
    \frac{1}{2}\Big)^2(1+o(1))\\
  &\leq - \frac{\nedg}{2} - \nedg \bar{u}(\spars,\varepsilon) \varepsilon^{-2+\spars}\Big((1 -
    \spars)(1+o(1))%
    - \frac{\bar{u}(\spars,\varepsilon) \nedg \varepsilon^{\spars}}{\nver}\Big(1 -
    \frac{\varepsilon^{1-\spars}}{2}\Big)^2(1+o(1)) \Big)\\
  &\leq - \frac{\nedg}{2} - \nedg \bar{u}(\spars,\varepsilon) \varepsilon^{-2+\spars}\Big((1 -
    \spars)(1+o(1))%
    - \frac{\bar{u}(\spars,\varepsilon) \nedg \varepsilon^{\spars}}{4\nver}(1+o(1)) \Big).
\end{align*}
Recall that
$$\bar{u}(\spars,\varepsilon) =
\frac{\nver}{\nedg}\frac{1+O(\nver/\nedg)}{\frac{1}{2} +
  \beta_{\spars}g(\spars,\varepsilon)}=
\frac{\nver}{\nedg}\frac{1+O(\nver/\nedg)}{f(\spars,\varepsilon)}, \quad \text{if} \quad \varepsilon \leq 3C\nver/\nedg.$$
Assuming without loss of generality that
$\spars \ne 0$, we have
$\frac{\bar{u}(\spars,\varepsilon) \nedg \varepsilon^{\spars}}{4\nver} = \frac{1 +
  o(1)}{4} \frac{\spars\varepsilon^{\spars}}{1 - \varepsilon^{\spars}}$ and
remark that
\begin{align*}
  1 - \spars
  - \frac{\bar{u}(\spars,\varepsilon) \nedg \varepsilon^{\spars}}{4\nver}
  &= 1-\spars -  \frac{\spars\varepsilon^{\spars}(1+o(1))}{4(1 - \varepsilon^{\spars})}= 1-\spars + o(1) \quad \text{if } \spars >0  \\
  & =      1 - \frac{3\spars }{4} + o(1)  \quad \text{if } \quad \spars<0,
\end{align*}
and in all cases it is large than $1-c_2$. This result extends to $\spars = 0$ by continuity.
Therefore,
$\tilde{H}_{\spars}''(\varepsilon) \leq -\nedg /2< 0$ for
all $\varepsilon \in (0,3C\nver/\nedg)$ and $(\tilde u(\spars), \tilde \varepsilon(\spars))$ exists and is unique.

We now prove \cite[\cref{main-eq:concent:103}]{caron:naulet:rousseau:main}.
We have
$\Psi(\spars) = K(\spars) +
H(\spars,\tilde{\varepsilon}(\spars),\tilde{u}(\spars))$, and,
$\partial_{\varepsilon}H(\spars,\tilde{\varepsilon}(\spars),\tilde{u}(\spars)) = 0$
and $\partial_{u}H(\spars,\tilde{\varepsilon}(\spars),\tilde{u}(\spars)) =
0$. Then,
$\Psi'(\spars) = K'(\spars) +
\partial_{\spars}H(\spars,\tilde{\varepsilon}(\spars),\tilde{u}(\spars))$. Recall
that $\beta_{\spars} = 1 - \spars\nver/\nedg$, and
$\tilde{\varepsilon}(\spars) \leq 3C \nver/\nedg$. then
\begin{align*}
  \Psi'(\spars)%
  &=- \frac{\nver^2}{\nedg} \frac{1}{\beta_{\spars}}%
    +\sparsfunc'(\spars) + \frac{\nver}{2}\frac{1}{\beta_{\spars}}%
    - \nedg \beta_{\spars} \tilde{u}(\spars)
    \partial_{\spars}g(\spars,\tilde{\varepsilon}(\spars))%
    + \nver \tilde{u}(\spars) g(\spars,\tilde{\varepsilon}(\spars))%
    - \frac{\nver}{2}(1-\tilde{\varepsilon}(\spars))\\
  &=\partial_\spars \zeta(\params)%
    - \nedg \beta_{\spars} \tilde{u}(\spars)
    \partial_{\spars}g(\spars,\tilde{\varepsilon}(\spars))%
    + \nver \tilde{u}(\spars) g(\spars,\tilde{\varepsilon}(\spars))%
    + O\Big(\frac{\nver^2}{\nedg}\Big).
\end{align*}
Since
$\tilde{u}(\spars) = \frac{\nver}{\nedg} \frac{1 +
  O(\nver/\nedg)}{f(\spars,\tilde{\varepsilon}(\spars)}$, so that
$\nver \tilde{u}(\spars) g(\spars,\tilde{\varepsilon}(\spars)) =
O(\nver^2/\nedg)$. Hence, \cite[\cref{main-eq:concent:103}]{caron:naulet:rousseau:main} follows because
$\partial_{\spars}g(\spars,\tilde{\varepsilon}(\spars)) =
\partial_{\spars}f(\spars,\tilde{\varepsilon}(\spars))$.%

Finally,  assuming $\spars \ne 0$ we have,
\begin{align*}
  \frac{\partial_{\spars}f(\spars,\varepsilon)}{f(\spars,\varepsilon)}
  &= \frac{\spars}{1 - \varepsilon^{\spars}}%
    \Big\{ - \frac{1 - \varepsilon^{\spars}}{\spars^2} -
    \frac{\varepsilon^{\spars}\log(\varepsilon)}{\spars} \Big\}\\
  &= -\frac{1}{\spars} \frac{\varepsilon^{-\spars} - 1 - \spars
    \log(1/\varepsilon)}{\varepsilon^{-\spars} - 1},
\end{align*}
which can be extended by continuity at $\spars = 0$. Since
$\tilde{\varepsilon}(\spars) = o(1)$, this establishes that for any $K > 0$ we
can choose $c_1> 0$ such that
$-\nver
\frac{\partial_{\spars}f(\spars,\tilde{\varepsilon}(\spars))}{f(\spars,\tilde{\varepsilon}
  (\spars))} \geq K \nver$ for all $\spars \geq c_1$. But on the other hand,
$\partial_\spars^2 \zeta(\params)< 0$, meaning that
$\sparsfunc'(\spars) \geq \sparsfunc'(c_1)$ for all $\spars \in (-C,c_1)$. But
for $c_1 \in (0,1)$,
$\sparsfunc'(c_1) = -\sum_{j\geq 2}\ndeg{j} \sum_{k=1}^{j-1}\frac{1}{k-c_1}
\asymp \sum_{j\geq 1}\ndeg{j}\cdot \log(j) = O(\nver)$, by
\cite[\cref{main-ass:concent:degdist}]{caron:naulet:rousseau:main}. It follows that $\Psi'(\spars) > 0$ for all
$\spars < c_1$ if $c_1 > 0$ is small enough. With similar arguments, if $c_2$
is close enough to one, it must be the case that $\Psi'(c_2) < 0$, by
\cite[\cref{main-eq:concent:103}]{caron:naulet:rousseau:main}.

\subsection{Proof of \texorpdfstring{\cite[\cref{main-pro:concent:11}]{caron:naulet:rousseau:main}}{Lemma \ref{main-pro:concent:11}}}%
\label{sec:concent:case-where-spars-2}

By \cite[\cref{main-pro:concent:5}]{caron:naulet:rousseau:main},
$\Psi(\spars) = K(\spars) + H(\spars,\tilde{\varepsilon}(\spars),\tilde{u}(\spars))$,
$\partial_{\varepsilon}H(\spars,\tilde{\varepsilon}(\spars),\tilde{u}(\spars)) = 0$ and
$\partial_{u}H(\spars,\tilde{\varepsilon}(\spars),\tilde{u}(\spars)) = 0$. It follows that
$\Psi'(\spars) = K'(\spars) + \partial_{\spars}H(\spars,\tilde{\varepsilon}(\spars),\tilde{u}(\spars))$.
We now write $\tilde{\varepsilon} \equiv \tilde{\varepsilon}(\spars)$ and $\tilde{u} \equiv \tilde{u}(\spars)$
to ease the notations. Then,
\begin{align*}
  \Psi''(\spars)%
  &= K''(\spars)%
    + \partial_{\spars}^2H(\spars,\tilde{\varepsilon},\tilde{u})%
    +
    \partial_{\varepsilon}\partial_{\spars}H(\spars,\tilde{\varepsilon},\tilde{u})
    \tilde{\varepsilon}'%
    +
    \partial_u\partial_{\spars}H(\spars,\tilde{\varepsilon},\tilde{u}) \tilde{u}'.
\end{align*}
By definition, $\tilde{u} = \bar{u}(\spars,\tilde{\varepsilon})$, so that
$\tilde{u}' = \partial_{\spars}\bar{u}(\spars,\tilde{\varepsilon}) +
\partial_{\varepsilon}\bar{u}(\spars,\tilde{\varepsilon}) \tilde{\varepsilon}'$. From
the fact that $\partial_uH(\spars,\varepsilon,\bar{u}(\spars,\varepsilon)) = 0$, we
deduce that
$\partial_{\spars}\partial_uH(\spars,\varepsilon,\bar{u}(\spars,\varepsilon)) +
\partial_u^2H(\spars,\varepsilon,\bar{u}(\spars,\varepsilon))
\partial_{\spars}\bar{u}(\spars,\varepsilon) = 0$, and
$\partial_{\varepsilon}\partial_uH(\spars,\varepsilon,\bar{u}(\spars,\varepsilon)) +
\partial_u^2H(\spars,\varepsilon,\bar{u}(\spars,\varepsilon))
\partial_{\varepsilon}\bar{u}(\spars,\varepsilon) = 0$; that is,
\begin{equation}
  \label{eq:concent:107}
  \tilde{u}'%
  = \frac{\partial_{\spars}\partial_uH(\spars,\tilde{\varepsilon},\tilde{u})
  }{-\partial_u^2H(\spars,\tilde{\varepsilon},\tilde{u})}%
  +
  \frac{\partial_{\varepsilon}\partial_uH(\spars,\tilde{\varepsilon},\tilde{u})}{-\partial_u^2
    H(\spars,\tilde{\varepsilon},\tilde{u})} \tilde{\varepsilon}'.
\end{equation}
It follows,
\begin{multline*}
  \Psi''(\spars)%
  = K''(\spars)%
  + \partial_{\spars}^2H(\spars,\tilde{\varepsilon},\tilde{u})%
  + \frac{\{\partial_{\spars}\partial_uH(\spars,\tilde{\varepsilon},\tilde{u})
    \}^2 }{-\partial_u^2H(\spars,\tilde{\varepsilon},\tilde{u}) }\\%
  +\Big\{%
  \partial_{\varepsilon}\partial_{\spars}H(\spars,\tilde{\varepsilon},\tilde{u})+
  \frac{\partial_u\partial_{\spars}H(\spars,\tilde{\varepsilon},\tilde{u})\cdot
    \partial_{\varepsilon}\partial_uH(\spars,\tilde{\varepsilon},\tilde{u})}{-\partial_u^2
    H(\spars,\tilde{\varepsilon},\tilde{u})} \Big\}\tilde{\varepsilon}'.
\end{multline*}
By definition
$K'(\spars) = -\frac{\nver^2}{\nedg}\frac{1}{\beta_{\spars}} +
\sparsfunc'(\spars)%
+ \frac{\nver}{2} \frac{1}{\beta_{\spars}}$, and hence
$K''(\spars) = \partial_\spars^2 \zeta(\params)+ o(\nver)$. It follows using the
estimates established in \cref{pro:concent:10},
\begin{align*}
  \Psi''(\spars)%
  &= \sparsfunc''(\spars)%
    - \frac{\nver(1+o(1))}{\spars^2}%
    - \frac{\nedg(1+o(1))\log \frac{\nedg}{\nver}}{1-\spars} \tilde{\varepsilon}'
    + o(\nver).
\end{align*}
We obtain an estimate on $\tilde{\varepsilon}'$ by using that
$\partial_{\varepsilon}H(\spars,\tilde{\varepsilon},\tilde{u}) = 0$. Differentiating
both sides of the equation with respect to $\spars$ gives
$\partial_{\spars}\partial_{\varepsilon}H(\spars,\tilde{\varepsilon},\tilde{u}) +
\partial_{\varepsilon}^2H(\spars,\tilde{\varepsilon},\tilde{u})\tilde{\varepsilon}' +
\partial_u\partial_{\varepsilon}H(\spars,\tilde{\varepsilon},\tilde{u})\tilde{u}' =
0$. Hence, by \cref{eq:concent:107}, and then by \cref{pro:concent:10}, since
$\tilde{\varepsilon} \lesssim [\nver/\nedg]^{1/(1-\spars)}$ and $0<\spars<1$,
\begin{align}
  \label{eq:concent:73}
  \tilde{\varepsilon}'%
  &= - \frac{\partial_{\spars}\partial_{\varepsilon}H(\spars,\tilde{\varepsilon},\tilde{u})
    +
    \frac{\partial_u\partial_{\varepsilon}H(\spars,\tilde{\varepsilon},\tilde{u})\cdot
    \partial_{\spars}\partial_uH(\spars,\tilde{\varepsilon},\tilde{u})}{-\partial_u^2
    H(\spars,\tilde{\varepsilon},\tilde{u})}}{\partial_{\varepsilon}^2H(\spars,\tilde{\varepsilon},\tilde{u})%
    +
    \frac{\{\partial_u\partial_{\varepsilon}H(\spars,\tilde{\varepsilon},\tilde{u})\}^2}{
    - \partial_u^2H(\spars,\tilde{\varepsilon},\tilde{u}) }}
  = - \frac{\tilde{\varepsilon} (1+o(1)) \log \frac{\nedg}{\nver}}{(1-\spars)^2}.
\end{align}
It follows,
\begin{align*}
  \Psi''(\spars)%
  &=\partial_\spars^2 \zeta(\params)- \frac{\nver(1+o(1))}{\spars^2}%
    +  \nver\frac{\tilde{\varepsilon} \frac{\nedg}{\nver}\log^2
    \frac{\nedg}{\nver} }{(1-\spars)^3} + o(\nver)=\partial_\spars^2 \zeta(\params)- \frac{\nver(1+o(1))}{\spars^2} <0
\end{align*}
since $\sparsfunc''(\spars)<0$.

\begin{lemma}
  \label{pro:concent:10}
  Let $\spars \in [c_1,c_2]$, and let
  $\tilde{\varepsilon}\equiv \tilde{\varepsilon}(\spars)$ and
  $\tilde{u} \equiv \tilde{u}(\spars)$ as given in \cite[\cref{main-pro:concent:5}]{caron:naulet:rousseau:main}. Then, the
  following estimates are true.
  \begin{enumerate}
    \item\label{item:concent:1}
    $\partial_{\spars}\log f(\spars,\tilde{\varepsilon}) = -\frac{1+o(1)}{\spars}$; and
    $\partial_{\spars}^2 \log f(\spars,\tilde{\varepsilon}) = \frac{1+o(1)}{\spars^2}$.

    \item\label{item:concent:8}
    $\tilde{u} = \frac{\spars \nver}{\nedg}(1+o(1))$; and
    $\tilde{\varepsilon}^{1-\spars} = \frac{\spars \nver}{\nedg}(1+o(1))$.

    \item\label{item:concent:2}
     $$\partial_{\spars}^2H(\spars,\tilde{\varepsilon},\tilde{u})
    \sim - \frac{2\nver}{\spars^2}, \quad \partial_u^2 H(\spars,\tilde{\varepsilon},\tilde{u}) \sim - \frac{\nedgsq}{\spars^2\nver}, \quad \partial_{\varepsilon}^2H(\spars,\tilde{\varepsilon},\tilde{u}) \sim - (1-\spars)\nedg
    \tilde{\varepsilon}^{-1}.$$%

    \item\label{item:concent:4}
    $\partial_{\spars}\partial_u H(\spars,\tilde{\varepsilon},\tilde{u}) = \frac{\nedg(1+o(1))}{\spars^2}$.

    \item\label{item:concent:5}
    $\partial_{\varepsilon}\partial_{\spars}H(\spars,\tilde{\varepsilon},\tilde{u}) = - \frac{\nedg(1+o(1))\log \frac{\nedg}{\nver}}{1-\spars}$.

    \item\label{item:concent:6}
    $\partial_{\varepsilon}\partial_{u}H(\spars,\tilde{\varepsilon},\tilde{u}) = \frac{\nedgsq(1+o(1))}{\spars \nver}$.
  \end{enumerate}
\end{lemma}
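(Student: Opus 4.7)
The plan is to start with the first-order optimality conditions characterizing $(\tilde{\varepsilon}(\spars),\tilde{u}(\spars))$ to extract sharp asymptotic expressions for these quantities (item~\ref{item:concent:8}), then to substitute these into closed-form derivative formulas for $H$ and $\log f$ to verify items~\ref{item:concent:1}--\ref{item:concent:6} by direct computation.

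\textbf{Step 1 (item~\ref{item:concent:8}).} From \cref{pro:concent:3}, the condition $\partial_u H(\spars,\tilde{\varepsilon},\tilde{u}) = 0$ gives $\tilde{u} = \frac{\nver}{\nedg}(1+O(\nver/\nedg))/(\tfrac{1}{2}+\beta_\spars g(\spars,\tilde{\varepsilon}))$. For $\spars \in [c_1,c_2] \subset (0,1)$ and $\tilde{\varepsilon} = o(1)$ (which is justified a posteriori and follows from the proof of \cref{main-thm:concent:1}), we have $f(\spars,\tilde{\varepsilon}) \to 1/\spars$, $g(\spars,\tilde{\varepsilon}) \to 1/\spars - 1/2$, and $\beta_\spars = 1+o(1)$, hence $\tilde{u} \sim \spars \nver/\nedg$. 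The condition $\partial_\varepsilon H(\spars,\tilde{\varepsilon},\tilde{u}) = 0$ rearranges as $\nedg \beta_\spars \tilde{u}(\tilde{\varepsilon}^{\spars-1} - 1/2) = \frac{\nedg}{2(1-\tilde{\varepsilon})} + \frac{\nedg \beta_\spars}{2}$, i.e.\ $\tilde{\varepsilon}^{\spars-1} = \nedg/(\spars \nver)(1+o(1))$; inverting gives $\tilde{\varepsilon}^{1-\spars} \sim \spars \nver/\nedg$, as claimed.

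\textbf{Step 2 (item~\ref{item:concent:1}).} A direct computation gives, for $\spars \ne 0$,
\[
  \partial_\spars \log f(\spars,\varepsilon)
  = -\frac{1}{\spars}\left[1 + \frac{\spars \varepsilon^\spars \log \varepsilon}{1-\varepsilon^\spars}\right].
\]
At $\varepsilon = \tilde{\varepsilon}$, we have $\tilde{\varepsilon}^\spars \asymp (\nver/\nedg)^{\spars/(1-\spars)} = o(1)$ and $\log(1/\tilde{\varepsilon}) \asymp \log(\nedg/\nver)$, so $\tilde{\varepsilon}^\spars \log(1/\tilde{\varepsilon}) = o(1)$, yielding $\partial_\spars \log f(\spars,\tilde{\varepsilon}) = -(1+o(1))/\spars$. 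Differentiating once more and applying the same bound gives $\partial_\spars^2 \log f(\spars,\tilde{\varepsilon}) = (1+o(1))/\spars^2$.

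\textbf{Step 3 (items~\ref{item:concent:2}--\ref{item:concent:6}).} Using $\partial_\spars \beta_\spars = -\nver/\nedg$ and $\partial_\varepsilon g = -\varepsilon^{\spars-1} + 1/2$, all the required partial derivatives of $H$ admit closed-form expressions:
\begin{align*}
  \partial_\spars H &= \nver u g - \nedg \beta_\spars u (\partial_\spars f) - \tfrac{\nver}{2}(1-\varepsilon),\\
  \partial_u H &= \nver/u - \tfrac{\nedg}{2(1+u)} - \nedg \beta_\spars g,\\
  \partial_\varepsilon H &= \nedg \beta_\spars u (\varepsilon^{\spars-1}-\tfrac{1}{2}) - \tfrac{\nedg}{2(1-\varepsilon)} - \tfrac{\nedg \beta_\spars}{2}.
\end{align*}
Differentiating these and using $\partial_\varepsilon \partial_\spars f = \varepsilon^{\spars-1}\log(1/\varepsilon)$, one obtains explicit expressions for the six second derivatives. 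Substituting the asymptotics of Step~1 (together with $\tilde{\varepsilon}^{\spars-1} \sim \nedg/(\spars\nver)$, $\beta_\spars = 1+o(1)$, and Step~2) yields each estimate; for instance, $\partial_u^2 H = -\nver/\tilde{u}^2 + \nedg/(2(1+\tilde{u})^2) \sim -\nedgsq/(\spars^2 \nver)$, and $\partial_\varepsilon^2 H = -(1-\spars)\nedg \beta_\spars \tilde{u} \tilde{\varepsilon}^{\spars-2} - \nedg/(2(1-\tilde{\varepsilon})^2) \sim -(1-\spars)\nedg/\tilde{\varepsilon}$.

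\textbf{Main obstacle.} The computations are all routine, but the genuine care lies in the $\spars$-derivatives of $H$, which involve $\partial_\spars f$ and $\partial_\spars^2 f$; these quantities are \emph{not} individually of order $1/\spars^k$ but their appropriate combinations with $g$ and $\beta_\spars$ are. In particular, for $\partial_\sigma^2 H$ one must verify that the two leading contributions $2\nver(\partial_\spars f)\tilde u$ and $-\nedg\beta_\spars(\partial_\spars^2 f)\tilde u$ combine to give the expected $-2\nver/\spars^2$ leading order, using $\partial_\spars^2 f / f = \partial_\spars^2 \log f + (\partial_\spars \log f)^2 \sim 2/\spars^2$ from Step~2. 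Once this combinatorial accounting is carried out, all error terms are controlled uniformly for $\spars \in [c_1,c_2]$ because $1/\spars$, $1/(1-\spars)$, and the slowly-varying remainders $\tilde{\varepsilon}^\spars \log(1/\tilde{\varepsilon})$ are uniformly bounded on this interval.
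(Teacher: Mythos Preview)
Your proposal is correct and follows essentially the same approach as the paper: first extract the asymptotics of $\tilde u$ and $\tilde\varepsilon$ from the first-order conditions (via \cref{pro:concent:3} and $\partial_\varepsilon H=0$), then compute the derivatives of $\log f$ directly, and finally substitute into closed-form expressions for the second partials of $H$, using the identity $\partial_\spars^2 f/f=\partial_\spars^2\log f+(\partial_\spars\log f)^2$ for the $\partial_\spars^2 H$ term exactly as the paper does. One small correction: the fact that $\tilde\varepsilon=o(1)$ is not ``a posteriori'' from the proof of the main theorem; it is already established in \cite[\cref{main-pro:concent:5}]{caron:naulet:rousseau:main} (where $\tilde\varepsilon(\spars)\le 3C\nver/\nedg$), which this lemma explicitly invokes.
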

\begin{proof}
  \Cref{item:concent:1}. It follows by definition of $f$, whenever $\varepsilon = o(1)$,
  and because $\spars \in (c_2,c_1)$,
  \begin{align*}
    \partial_{\spars}\log f(\spars,\varepsilon)%
    &=
      \frac{\partial_{\spars}f(\spars,\tilde{\varepsilon})}{\partial_{\spars}f(\spars,\tilde{\varepsilon})}%
      = -\frac{1}{\spars} - \frac{\tilde{\varepsilon}^{\spars}\log(\tilde{\varepsilon})}{1 -
      \varepsilon^{\spars}}%
      = - \frac{1 + o(1)}{\spars}.
  \end{align*}
  From the previous computation,
  \begin{align*}
    \partial_{\spars}^2 \log f(\spars,\varepsilon)%
    &= \frac{1}{\spars^2}%
      - \frac{\varepsilon^{\spars}\log^2(\varepsilon)}{(1 - \varepsilon^{\spars})^2}%
      = \frac{1 + o(1)}{\spars^2}.
  \end{align*}
  \Cref{item:concent:8}. We already know that
  $\tilde{u} = \frac{\nver}{\nedg} \frac{1+o(1)}{f(\spars,\tilde{\varepsilon})}$,
  which follows for instance from \cref{pro:concent:3}, as $\tilde{\varepsilon}=o(1)$ and
  $\frac{1}{2} + \beta_{\spars}g(\spars,\tilde{\varepsilon}) =
x1  f(\spars,\tilde{\varepsilon})(1+o(1))$. Then it is obvious that
  $f(\spars,\tilde{\varepsilon}) = (1/\spars)(1+o(1))$ as $\spars \in
  (c_2,c_1)$. The second claim follows from
  $\partial_{\varepsilon}H(\spars,\tilde{\varepsilon},\tilde{u}) = 0$.
  \Cref{item:concent:2}. By definition of $H$,
  $\partial_{\spars}^2H(\spars,\varepsilon,u) = - \nedg \beta_{\spars} u
  \partial_{\spars}^2g(\spars,\varepsilon)%
  + 2\nver u \partial_{\spars}g(\spars,\varepsilon)$. Clearly
  $\partial_{\spars}g(\spars,\varepsilon) = \partial_{\spars}f(\spars,\varepsilon)$,
  and
  $\partial_{\spars}^2\log f(\spars,\varepsilon) =
  \frac{\partial_{\spars}^2f(\spars,\varepsilon)}{f(\spars,\varepsilon)} -
  \frac{\{\partial_{\spars}f(\spars,\varepsilon)\}^2}{f(\spars,\varepsilon)^2}$. It
  follows,
  \begin{align*}
    \partial_{\spars}^2H(\spars,\tilde{\varepsilon},\tilde{u})%
    &= -\nedg \beta_{\spars}\tilde{u} f(\spars,\tilde{\varepsilon})\partial_{\spars}^2\log
      f(\spars,\tilde{\varepsilon})%
      - \nedg \beta_{\spars}\tilde{u}
      \frac{\{\partial_{\spars}f(\spars,\tilde{\varepsilon})\}^2}{f(\spars,\tilde{\varepsilon})}%
      + o(\nver)\\
    &= -\nedg \beta_{\spars}u f(\spars,\varepsilon)\partial_{\spars}^2\log
      f(\spars,\varepsilon)%
      - u f(\spars,\varepsilon)\nedg\beta_{\spars}\{\partial_{\spars}\log
      f(\spars,\varepsilon)\}^2 + o(\nver).
  \end{align*}
  Now, we have that
  $\tilde{u}f(\spars,\tilde{\varepsilon}) = \frac{\nver}{\nedg}(1+o(1))$, and then
  $\partial_{\spars}^2 H(\spars,\tilde{\varepsilon},\tilde{u}) =
  -\frac{2\nver(1+o(1))}{\spars^2}$ by \cref{item:concent:1}.  Since,
  $\tilde{u} = \frac{\nver}{\nedg}\frac{1+o(1)}{f(\spars,\tilde{\varepsilon})} =
  \frac{\spars \nver}{\nedg}(1+o(1))$, and
  $\partial_u^2H(\spars,\tilde{\varepsilon},\tilde{u}) = -
  \frac{\nver}{\tilde{u}^2} + \frac{\nedg}{2}\frac{1}{(1+\tilde{u})^2} =
  -\frac{\nver}{\tilde{u}^2}(1+o(1))$ we obtain the second term of  \cref{item:concent:2}.
Moreover $\partial_{\varepsilon}^2H(\spars,\varepsilon,u) = -(1-\spars)\nedg
  \beta_{\spars} u \varepsilon^{-2+\spars} -
  \frac{\nedg}{2}\frac{1}{(1-\varepsilon)^2}$. But we know that $\tilde{\varepsilon}=
  o(1)$ and that $\tilde{\varepsilon}^{-1+\spars} =
  \frac{1}{\tilde{u}}(1+o(1))$. Then,
  \begin{align*}
    \partial_{\varepsilon}^2H(\spars,\tilde{\varepsilon},\tilde{u})%
    &= - (1-\spars)\nedg \tilde{\varepsilon}^{-1}(1+o(1))%
      - \frac{\nedg(1+o(1))}{2}.
  \end{align*}
  Since $\tilde{\varepsilon} = o(1)$ and $\spars \leq c_1 < 1$, the result follows.

  \Cref{item:concent:4}, we have that
  $\partial_u\partial_{\spars}H(\spars,\varepsilon,u) = -\nedg \beta_{\spars} \partial_{\spars}g(\spars,\varepsilon) + \nver g(\spars,\varepsilon)$,
  and since $\partial_{\spars}g(\spars,\varepsilon) = \partial_{\spars}f(\spars,\varepsilon)$,
  $f(\spars,\tilde{\varepsilon}) = (1+o(1))/\spars$ and
  $g(\spars,\tilde{\varepsilon}) = (\frac{1}{\spars} - \frac{1}{2})(1+o(1))$, we deduce
  that
  \begin{align*}
    \partial_u\partial_{\spars}H(\spars,\tilde{\varepsilon},\tilde{u})%
    &= -\nedg(1+o(1)) f(\spars,\tilde{\varepsilon}) \partial_{\spars}\log
      f(\spars,\tilde{\varepsilon})%
      + \frac{\nver}{\spars}(1+o(1)) - \frac{\nver}{2}(1+o(1))\\
    &= \frac{\nedg(1+o(1))}{\spars^2}.
  \end{align*}
  \cref{item:concent:5}. By definition of $H$,
  $\partial_{\varepsilon}\partial_{\spars}H(\spars,\varepsilon,u) = \nedg
  \beta_{\spars}u \varepsilon^{-1+\spars}\log(\varepsilon) - \nver u
  \varepsilon^{-1+\spars} + \frac{\nver}{2}$. But
  $\tilde{\varepsilon}^{-1+\spars} = \tilde{u}^{-1}(1+o(1))$ by \cref{item:concent:8}, so
  that,
  \begin{align*}
    \partial_{\varepsilon}\partial_{\spars}H(\spars,\tilde{\varepsilon},\tilde{u})%
    &= \nedg(1+o(1))\log(\tilde{\varepsilon}) - \frac{\nver(1+o(1))}{2}\\
    &= \frac{\nedg(1+o(1))}{1 - \spars}\log \tilde{u}%
      - \frac{\nver(1+o(1))}{2}\\
    &= - \frac{\nedg(1+o(1))\log \frac{\nedg}{\nver} }{1-\spars}.
  \end{align*}
  \Cref{item:concent:6}. We have $\partial_{\varepsilon}\partial_uH(\spars,\varepsilon,u) =
  \nedg \beta_{\spars}\big(\varepsilon^{-1+\spars} - \frac{1}{2} \big)$. We already
  know that $\tilde{\varepsilon}^{-1+\spars} = \frac{1}{\tilde{u}}(1+o(1)) =
  \frac{\nedg}{\spars \nver}(1+o(1))$. Hence the result.
\end{proof}

\subsection{Proof of \texorpdfstring{\cite[\cref{main-pro:concent:16}]{caron:naulet:rousseau:main}}{Lemma \ref{main-pro:concent:16}}}
\label{sec:concent:asympt-expans-mle}

We first prove \cref{main-item:pro:concent:16:1}. From
\cite[\cref{main-pro:concent:4}]{caron:naulet:rousseau:main}, we need only prove
$$\Psi(\spars)  -\sup  \Psi \leq -K \log\nedg, \quad \text{for } \quad \spars \in [-C,c_2]^c$$
and from \cite[\cref{main-pro:concent:11}]{caron:naulet:rousseau:main}, the function $\Psi$ is concave on $[c_1,c_2]$, $\Psi(\spars) \leq \Psi(c_1)$ if $\spars \leq c_1$ and for any $\eta = o(1)$, and any $\spars \in [c_1,c_2]$ such that
$|\spars - \qMLEspars| > \eta$,
\begin{align*}
  \Psi(\spars) - \sup \Psi
  &\leq \max\{ \Psi(\qMLEspars + \eta) - \Psi(\qMLEspars),\, \Psi(\qMLEspars -
    \eta) - \Psi(\qMLEspars) \}\\
  &= \frac{1}{2}\Psi''(\qMLEspars)(1+o(1))\eta^2\\
  &= -\frac{1 + o(1)}{2}\Big(- \sparsfunc''(\qMLEspars) + \frac{\nver}{\qMLEspars^2}\Big)\eta^2\\
  & \leq  -K\log(\nedg),
\end{align*}
choosing $\eta^2 = 4K \alpha_0\log(\nedg)/\nver$  since $-\sparsfunc''(\qMLEspars) > 0$,
when $\TrueSize$ is large enough.

Consider now $| \spars - \qMLEspars| \leq C \sqrt{\log(\nedg)/\nver}$ and $|\varepsilon - \qMLEeps| \geq C \sqrt{\log (\nedg)}/(\nedg)^{3/4}$.
We know from \cref{pro:concent:3} that
$\tilde{H}_{\spars}(\varepsilon) \coloneqq \sup_uH(\spars,u,\varepsilon) =
H(\spars,\varepsilon,\bar{u}(\spars,\varepsilon))$. Furthermore, we proved in
\cite[\cref{main-pro:concent:5}]{caron:naulet:rousseau:main} that $\tilde{H}_{\spars}$ attains its unique maximum at
$\tilde{\varepsilon}(\spars)$, and that $\tilde{H}_{\spars}''(\varepsilon) < 0$ for
all $\varepsilon \in (0,1)$ whenever $\spars \approx \qMLEspars \in
[c_1,c_2]$. Concretely, $\sup_uH(\spars,\varepsilon,u)$ is a concave function
attaining its maximum at $\tilde{\varepsilon}(\spars)$, and thus for all
$\varepsilon \in (0,1)$ such that
$|\varepsilon - \tilde{\varepsilon}(\spars)| > (C/2)\sqrt{\log \nedg}/ (\nedg)^{3/4} \eqqcolon \eta_t$
\begin{align*}
  \tilde{H}_{\spars}(\varepsilon) - \tilde{H}_{\spars}(\tilde{\varepsilon}(\spars))
  &\leq \frac{1}{2}\sup_{|x| \leq \eta_t}\tilde{H}_{\spars}''(\tilde{\varepsilon}(\spars) +
    x)\frac{ 4C^2 \log \nedg }{ (\nedg)^{3/2} },
\end{align*}
and,
\begin{align*}
  \sup_{|x|\leq \eta_t}\tilde{H}_{\spars}''(\tilde{\varepsilon}(\spars) + x)%
  &\lesssim - \nedg \cdot
    \frac{\nver}{\nedg}\tilde{\varepsilon}(\spars)^{-2+\spars}
  \asymp - \nedg \cdot  \frac{1}{\tilde{\varepsilon}(\spars)}. 
\end{align*}
Since $|\spars- \qMLEspars| \leq C\sqrt{\log (\nedg)/\nver}$, the
\cref{pro:concent:10} together with \cite[\cref{main-qMLeps}]{caron:naulet:rousseau:main} imply
$$\tilde{\varepsilon}(\spars) \asymp [\nver/\nedg]^{1/(1-\qMLEspars)} \asymp [\nver/\nedg]^{1/(1-\alpha_0)} \asymp 1/\sqrt{\nedg}$$
so that
$
\sup_{|x|\leq \eta_t}\tilde{H}_{\spars}''(\tilde{\varepsilon}(\spars) + x)%
\lesssim -\sqrt{\nedg},
$
which implies that
\begin{align*}
  \tilde{H}_{\spars}(\varepsilon) - \tilde{H}_{\spars}(\tilde{\varepsilon}(\spars))
  &\lesssim - C \log \nedg.
\end{align*}
Moreover using a Taylor expansion of $\tilde{\varepsilon}(\spars)$, there exists
$\bar{\spars} \in (\spars,\qMLEspars)$ such that
$\tilde{\varepsilon}(\spars) - \tilde{\varepsilon}(\qMLEspars) =
\tilde{\varepsilon}'(\bar{\spars})(\spars - \qMLEspars)$.
Using the approximation of $  \tilde{\varepsilon}'(\spars)$ in \cref{eq:concent:73}, we obtain
\begin{align*}
  |\tilde{\varepsilon}(\spars) - \tilde{\varepsilon}(\qMLEspars)|%
  &= \frac{\tilde{\varepsilon}(\bar{\spars})(1+o(1)) \log
    \frac{\nedg}{\nver}}{(1-\bar{\spars})^2}|\spars - \qMLEspars|
    \lesssim \frac{1}{\sqrt{\nedg}}\cdot  \log
    \frac{\nedg}{\nver} \cdot  \sqrt{\frac{\log(\nedg)}{\nver}}=o((\nedg)^{-3/2}).
\end{align*}
Hence, if $|\varepsilon - \tilde{\varepsilon}(\qMLEspars) |\geq  C\sqrt{\log \nedg}/ (\nedg)^{3/4} $ then $|\varepsilon - \tilde{\varepsilon}(\qMLEspars) |\geq C\sqrt{\log \nedg}/ (\nedg)^{3/4} $ and for all $|\spars - \qMLEspars| \leq C \sqrt{\log (\nedg)/\nver})$ and all $u>0$,
\begin{equation*}
  \sup_{|\spars - \qMLEspars| \leq C \sqrt{\log (\nedg)/\nver}, u>0} \qloglik^{*}( \sigma, \varepsilon,u) - \sup \Psi \leq -K \log \nedg.
\end{equation*}

Finally we consider $|\spars - \qMLEspars| \leq C \sqrt{\log (\nedg)/\nver})$,
$|\varepsilon - \tilde{\varepsilon}(\qMLEspars) |\leq C\sqrt{\log \nedg}/ (\nedg)^{3/4} $ and
$| u - \qMLEu| > C\sqrt{\nver \log \nedg}/\nedg$. Recall that
$\sup_uH(\spars,u,\varepsilon) = H(\spars,\varepsilon,\bar{u}(\spars,\varepsilon))$. Furthermore, from the
proof of \cref{pro:concent:3} we know that $\partial_uH(\spars,\varepsilon,u) < 0$ if
$u > 3\nver/\nedg$, and that $\partial_u^2H(\spars,\varepsilon,u) = -\frac{\nver}{u^2}(1+o(1))$
for all $0 < u \leq 3\nver/\nedg$. Therefore, when $u > 3\nver/\nedg$,
\begin{align*}
  H(\spars,\varepsilon,u) - \sup_u H(\spars,\varepsilon,u)%
  &\leq H(\spars,\varepsilon,3\nver/\nedg) - \sup_u H(\spars,\varepsilon,u) \\
  &= \frac{ \partial_u^2H(\spars,\varepsilon,\bar{u}(\spars,\varepsilon))\Big(
    \frac{3\nver}{\nedg} - \bar{u}(\spars,\varepsilon) \Big)^2}{2}\\
  &\lesssim - \frac{\nver}{(\nver/\nedg)^2} \frac{\nver^2}{\nedgsq}\lesssim  - \nver,
\end{align*}
and whenever $0 < u \leq 3\nver/\nedg$,
\begin{align*}
  H(\spars,\varepsilon,u) - \sup_u H(\spars,\varepsilon,u)%
  &\leq - \frac{1}{2}\frac{\nver}{(3\nver/\nedg)^2}(u -
    \bar{u}(\spars,\varepsilon))^2\\
  &\leq -\frac{1}{18}\frac{\nedgsq}{\nver}(u - \bar{u}(\spars,\varepsilon))^2.
\end{align*}
To conclude we thus only need to prove that
$| u - \qMLEu| > C\sqrt{\nver \log \nedg}/\nedg \implies | u - \bar{u}(\spars,\varepsilon)| >(C/2)\sqrt{\nver \log \nedg}/\nedg$.
It is enough to show that
$|\qMLEu - \bar{u}(\sigma,\varepsilon)| \leq (C/2)\sqrt{\nver \log \nedg}/\nedg$. But
$\qMLEu = \bar{u}(\qMLEspars,\qMLEeps)$, and thus by a Taylor expansion, we can
find $\bar{\spars} \in (\spars,\qMLEspars)$ and $\bar{\varepsilon} \in (\varepsilon,\qMLEeps)$ such that
$\bar{u}(\spars,\varepsilon) - \bar{u}(\qMLEspars,\qMLEeps) = \partial_{\spars}\bar{u}(\bar{\spars},\bar{\varepsilon})(\spars - \qMLEspars) + \partial_{\varepsilon}\bar{u}(\bar{\spars},\bar{\varepsilon})(\varepsilon - \qMLEeps)$.
That is (see for instance the proof of \cite[\cref{main-pro:concent:11}]{caron:naulet:rousseau:main} for details,
and also \cref{pro:concent:10}),
\begin{align*}
  \bar{u}(\spars,\varepsilon) - \bar{u}(\qMLEspars,\qMLEeps)%
  &=
    \frac{\partial_{\spars}\partial_uH(\bar{\spars},\bar{\varepsilon},\bar{u}(\bar{\spars},\bar{\varepsilon}))}{
    -\partial_u^2H(\bar{\spars},\bar{\varepsilon},\bar{u}(\bar{\spars},\bar{\varepsilon}))
    } (\spars - \qMLEspars)%
    + \frac{\partial_{\varepsilon}\partial_uH(\bar{\spars},\bar{\varepsilon},\bar{u}(\bar{\spars},\bar{\varepsilon}))}{
    -\partial_u^2H(\bar{\spars},\bar{\varepsilon},\bar{u}(\bar{\spars},\bar{\varepsilon}))
    } (\varepsilon - \qMLEeps)\\
  &= O\Big( \frac{\nver}{\nedg} \sqrt{\frac{\log(\nedg)}{\nver}}\Big)%
    + O\Big( \sqrt{\frac{\log(\nedg)}{(\nedg)^{3/2}}} \Big)\\
  &= O\Big( \frac{\sqrt{\nver \log (\nedg)}}{\nedg}\Big),%
\end{align*}
which concludes the proof of \cref{main-item:pro:concent:16:1} by choosing $C$ large enough.

We now prove \cref{main-item:pro:concent:16:2}. Recall that
$\varphi(\spars,\tau,\size) = (\spars,
\varepsilon(\spars,\tau,\size),u(\spars,\tau,\size))$ and
$$ U_t(C) = \Set*{\params \given%
  \textstyle%
  |\spars - \qMLEspars|^2 \leq \frac{C\log(\nedg)}{\nver},\ %
  |\varepsilon(\params) - \qMLEeps|^2 \leq
  \frac{C\log(\nedg)}{(\nedg)^{3/2}},\ %
  |u(\params) - \qMLEu|^2 \leq \frac{C \nver \log(\nedg)}{\nedgsq}%
}
$$
The maximum belongs to $U_t(C)$ from \cref{main-item:pro:concent:16:1}.  We now prove that $\varphi$ is
invertible on $U_t$. To do so we compute the Jacobian of the transformation. We first compute $   \nabla \zeta(\params)$. Using \cite[\cref{main-eq:7}]{caron:naulet:rousseau:main} which defined $\zeta$,
\begin{align}
  \notag
  \nabla \zeta(\params)%
  &= \frac{2\zeta(\params)}{2\zeta(\params)^2 - \tau \zeta(\params) - 2\spars
    \size \zeta(\params)^{\spars}}%
    \begin{pmatrix}
      \size \zeta(\params)^{\spars} \log \zeta(\params) - \nver\\
      \zeta(\params)/2\\
      \zeta(\params)^{\spars}
    \end{pmatrix}\\%
  \label{eq:concent:17}
  &=
    \frac{1}{\zeta(\params)}
    \begin{pmatrix}
      \size \zeta(\params)^{\spars} \log \zeta(\params) - \nver\\
      \zeta(\params)/2\\
      \zeta(\params)^{\spars}
    \end{pmatrix}(1+o(1)), \quad \forall \params \in U_t(C).
\end{align}
Next, since $\tau = \zeta(\params) \varepsilon(\params)$,
\begin{align}
  \label{eq:concent:16}
  \nabla \varepsilon(\params)%
  &= \frac{1}{\zeta(\params)}%
    \begin{pmatrix}
      0\\
      1\\
      0
    \end{pmatrix}%
  - \frac{\varepsilon(\params)}{\zeta(\params)}\nabla \zeta(\params),
\end{align}
and, from the fact that $\nedg(1 - \spars\nver/\nedg)u(\params) = \size
\zeta(\params)^{\spars}$, when $\phi \in U_t$,
\begin{align}
  \label{eq:concent:25}
  \nabla u(\params)%
  &= \frac{1}{\nedg(1-\spars\nver/\nedg)}\Bigg\{%
    \begin{pmatrix}
      \nver u(\params) + \size \zeta(\params)^{\spars} \log\zeta(\params)\\
      0\\
      \zeta(\params)^{\spars}
    \end{pmatrix}%
  +%
  \spars\size \zeta(\params)^{-1+\spars}\nabla \zeta(\params)
  \Bigg\}.
\end{align}
Now we define,
\begin{equation}
  \label{eq:concent:30}
  J_{*}(\params)%
  \coloneqq%
  \begin{pmatrix}
    1 & 0 & 0\\
    0 & \frac{1}{\zeta(\params)} & 0\\
    \frac{\nver u(\params) + \size \zeta(\params)^{\spars}
      \log\zeta(\params)}{\nedg \beta_{\spars}}%
    & 0 & \frac{\zeta(\params)^{\spars}}{\nedg \beta_{\spars}}%
  \end{pmatrix}
  ,
\end{equation}
as well as,
\begin{equation}
  \label{eq:concent:31}
  E(\params)%
  \coloneqq%
  \begin{pmatrix}
    0 & 0 & 0\\
    - \frac{\varepsilon(\params)}{\zeta(\params)^2}(\size
    \zeta(\params)^{\spars}\log\zeta(\params) - \nver)%
    & - \frac{\varepsilon(\params)}{\zeta(\params)^2}
    \frac{\zeta(\params)}{2}%
    & - \frac{\varepsilon(\params)}{\zeta(\params)^2}
    \zeta(\params)^{\spars}\\
    \frac{\spars \size \zeta(\params)^{-1+\spars}}{\nedg\beta_{\spars}\zeta(\params)} (\size
    \zeta(\params)^{\spars}\log\zeta(\params) - \nver)%
    & \frac{\spars \size \zeta(\params)^{-1+\spars}}{\nedg\beta_{\spars}} \frac{1}{2}
    &\frac{\spars \size \zeta(\params)^{-1+\spars}}{\nedg\beta_{\spars}\zeta(\params)} \zeta(\params)^{\spars}%
  \end{pmatrix}%
  .
\end{equation}
Likewise, by \cref{eq:concent:17,eq:concent:16,eq:concent:25,eq:concent:30,eq:concent:31}, we have that the Jacobian
matrix of $\varphi$ is given by $J(\params) \sim J_{*}(\params) + E(\params)$
as $t \to \infty$, at least when $\params \in U_t$. We further remark that
\begin{equation*}
  J_{*}(\params)^{-1}
  =%
  \begin{pmatrix}
    1 & 0 & 0\\
    0 & \zeta(\params) & 0\\
    - \frac{\nver u(\params)}{\zeta(\params)^{\spars}}%
    + \size \log \zeta(\params)%
    & 0 & \frac{\nedg\beta_{\spars}}{\zeta(\params)^{\spars}},
  \end{pmatrix}
\end{equation*}
and, When $\params \in U_t$, we can find constants
$c_1,\dots,c_6 \in \Reals$, such that asymptotically as $t \to \infty$,
\begin{equation*}
  E(\params)%
  \sim%
  \begin{pmatrix}
    0 & 0 & 0\\
    c_1\frac{\nver \log(\nedg)}{(2\nedg)^{3/2}}%
    & c_2\frac{1}{2\nedg}%
    & c_3\frac{\nver}{4\nedgsq}\\
    c_4\frac{\nver^2\log(\nedg)}{4\nedgsq}%
    & c_5\frac{\nver}{(2\nedg)^{3/2}}%
    & c_6\frac{\nver^2}{(2\nedg)^{5/2}}%
  \end{pmatrix}
  .
\end{equation*}
The constants $c_1,\dots,c_6$ depend uniquely on
$\alpha_0$ and $\tau_{*}$ and can be made explicit. We choose to not do it
since this is not needed for our purpose. Similarly, we have $\zeta(\params)
\sim \sqrt{2\nedg}$, and for some constant $c_7 \in \Reals$, we also have
that $\frac{\nedg \beta_{\spars}}{\zeta(\params)^{\spars}}\sim c_7
\frac{(2\nedg)^{3/2}}{\nver}$. Thus,
\begin{equation*}
  J_{*}(\params)^{-1}E(\params)%
  \sim%
  \begin{pmatrix}
    0 & 0 & 0\\
    c_1\frac{\nver \log(\nedg)}{2\nedg}%
    & \frac{c_2}{\sqrt{2\nedg}}%
    & c_3\frac{\nver}{(2\nedg)^{3/2}}\\
    c_4c_7 \frac{\nver \log(\nedg)}{\sqrt{2\nedg}}%
    & c_5 c_7%
    & c_6c_7 \frac{\nver}{2\nedg}%
  \end{pmatrix}
  .
\end{equation*}
The eigenvalues  $(\lambda_1,\lambda_2,\lambda_3)$ of
$J_{*}(\params)^{-1}E(\params)$ go to zero with $\TrueSize$  under \cite[\cref{main-ass:concent:degdist}]{caron:naulet:rousseau:main} and $\lambda_1 = 0$. Then, by a
Neumann series expansion of the inverse of $J(\params) = J_{*}(\params)(I +
J_{*}(\params)^{-1}E(\params))$, we obtain that whenever $\params \in U_t$,
\begin{equation*}
  J(\params)^{-1}
  \sim
  \begin{pmatrix}
    1 & 0 & 0\\
    0  &\sqrt{2\nedg}        &  0 \\
    c_8 \sqrt{2\nedg}  \log( 2 \nedg)
    & 0
    & c_7 \frac{(2\nedg)^{3/2}}{ \nver}
  \end{pmatrix}.
\end{equation*}

\section{Local analysis of the likelihood}
\label{sec:local-analys-likel}

\subsection{Computation of $\hat \Sigma_t $ }
\label{sec:Sigmat}

In the whole \cref{sec:local-analys-likel}, we define for convenience
$\qloglik^*(\spars, \tau, u ) = \qloglik(\spars, \tau,\KLsize u)$. With this definition in
mind, we recall that by definition
$\Sigma_t(\spars, \tau, u)^{-1} = - D^2\qloglik^*(\spars, \tau, u )$. From \cite[\cref{main-qMLspars,main-ass:concent:degdist}]{caron:naulet:rousseau:main},
$| \qMLEspars - \alpha_0| = o(1/\log \nver) $, $| \qMLEtau - \KLtau |= o(1)  $ and
$|\qMLEsize -\KLsize| =o(\KLsize)$. Now let $ |\params_u - \hat{\params_u}| \leq \nver^{-\delta}$ with $\delta>0$, \cref{lem:Sigmat} implies that 
 $\Sigma_t (\spars, \tau, \size/\KLsize)^{-1} $ can be written as 
 $$\left(- \sparsfunc''(\spars) +   \frac{ \KLsize  (2\nedg)^{\spars/2} }{ \alpha_0^3} (1 +o(1) )\right) \text{diag}(1,0,0) +
 \left( \begin{array}{ccc}
 M_1(\params) & O(\KLsize) & M_2(\params)\\
O(\KLsize) & \size (1 -\sigma)\tau^{\sigma-2} + O(1)  & O(\KLsize) \\
M_2(\params) &  O(\KLsize) &  \frac{ \nver }{ u^2 } +O(\KLsize^{2\alpha_0+2\epsilon} ) \end{array}\right)
 $$
where 
$$ M_1(\params) \coloneqq  u \KLsize \frac{ \zeta(\params)^{\spars} \log^2 ( \zeta(\params) ) }{2\spars } \left( 1 - \frac{ 1  }{ \spars\log (\zeta(\params)) } \right)^2 ,$$
and
$$ M_2(\params) \coloneqq \KLsize \frac{ \zeta(\params)^{\spars} \log ( \zeta(\params) ) }{ \spars } \left( 1 - \frac{ 1  }{ \spars\log (\zeta(\params)) } \right)(1 + O( \KLsize^{-\delta_1} ).$$
Let $X = (x_1, x_2, x_3)^t $. We bound
$X^t \Sigma_t (\spars, \tau, \size/\KLsize)^{-1} X $ using that for $i\neq j $ it holds
$2|a_{ij} x_ix_j| = \inf_{b > 0}\{ a_{ij}^2 x_i^2 b + x_j^2/b\}$. In particular we apply the above  $i,j = 1,2$ with $a_{12} = O(\KLsize) $ and $b = \KLsize^{a-1} $ for $a < \alpha_0$ so that
$$2|\KLsize x_1x_2| \leq x_1^2 \KLsize^{1 + a} + x_2^2\KLsize^{1-a}$$
and similarly for $i,j=2,3$
$$2|\KLsize x_2x_3| \leq x_3^2 \KLsize^{1 + a} + x_2^2\KLsize^{1-a}.$$
which implies that 
\begin{align*}
  X^t \Sigma_t (\spars, \tau, \size/\KLsize)^{-1} X%
  &\leq
    x_1^2 \left[- \sparsfunc''(\spars) +   \frac{ \KLsize  (2\nedg)^{\spars/2} }{ \alpha_0^3} (1 +o(1) )\right]\\
  &\quad+ x_2^2 \KLsize (1 -\alpha_0)\KLtau^{\alpha_0-2} (1 +o(1)) + 2x_3^2 \KLsize^{1 + a}  \\
 &\quad +(x_1, x_3) \left( \begin{array}{cc}
 M_1(\qMLEparams)  & M_2(\qMLEparams) \\
M_2(\params) &  \frac{ \nver }{ \hat{u}^2 } \end{array}\right)(x_1, x_3)^t 
\end{align*}
and similarly 
\begin{align*}
  X^t \Sigma_t (\spars, \tau, \size/\KLsize)^{-1} X
  &\geq
    x_1^2 \left[- \sparsfunc''(\spars) +   \frac{ \KLsize  (2\nedg)^{\spars/2} }{ \alpha_0^3} (1 +o(1) )\right]\\
  &\quad+ x_2^2 \KLsize (1 -\alpha_0)\KLtau^{\alpha_0-2} (1 +o(1)) - 2x_3^2 \KLsize^{1 + a}  \\
 &\quad + (x_1, x_3) \left( \begin{array}{cc}
  M_1(\qMLEparams)  & M_2(\qMLEparams) \\
M_2(\qMLEparams) &  \frac{ \nver }{ \hat{u}^2 } \end{array}\right)(x_1, x_3)^t 
\end{align*}
From \cite[\cref{main-hatu}]{caron:naulet:rousseau:main}, we have that $\qMLEsize\zeta(\params)^{\qMLEspars}  = \qMLEspars \nver (1 + O(\KLsize^{-\delta_1})$ for some $\delta_1 >0$ which in turns implies that
 $ M_1(\qMLEparams)\frac{ \nver }{ \hat{u}^2 } - M_2(\qMLEparams)^2 = O( \KLsize^{-\delta_1} M_1(\qMLEparams)\nver  )$. 
It thus implies that if $|\params - \hat{\params}_{t,u}| \leq \KLsize^{-\delta}$ for some $\delta>0$ then
\begin{align*}
X^t \Sigma_t (\spars, \tau, \size/\KLsize)^{-1} X & \gtrsim
 x_1^2  \KLsize^{1 + \alpha_0}+ x_2^2 \KLsize  +  x_3^2\frac{ \nver }{ \log^2 \nedg } 
 \end{align*} 
and 
\begin{align*}
X^t \Sigma_t (\spars, \tau, \size/\KLsize)^{-1} X & \lesssim
 x_1^2  \KLsize^{1 + \alpha_0}\log^2 (\nedg)+ x_2^2 \KLsize  +  x_3^2\nver.
 \end{align*}

\begin{lemma}
  \label{lem:Sigmat}
  Let
  $\epsilon >0$, and $$ B_\epsilon = \Set{  \params \given |\spars - \KLspars| \leq \epsilon,\,  |\tau -\KLtau|\leq \epsilon, \, |\size  - \KLsize|\leq \epsilon\KLsize}.$$
Then, there exists $\delta_1>0$ such that  for all $\phi \in B_{\epsilon}$ and $\epsilon$ small enough
  \begin{gather*}
    \partial_{\spars}^2 \qloglik^{*}(\spars,\tau,u)%
    = \sparsfunc''(\spars) - \size \frac{ \zeta(\params)^\spars \log^2\zeta(\params)  }{ \spars } \left[ 1 - \frac{1    }{ \spars \log \zeta(\params)  } \right]^2( 1 + O( \KLsize^{-\delta_1})), \\
    \partial_{\tau}^2 \qloglik^{*}(\spars,\tau,u)%
    = 
    -\size (1-\spars)\KLtau^{\spars-2} + O(1) ,\quad 
    \partial_u^2 \qloglik^{*} (\spars,\tau,u)    = - \frac{ \nver }{ u^2 } ( 1 + \KLsize^{-\delta_1}),
  \end{gather*}
 and  
  \begin{gather*}
    \partial_{\spars}\partial_{u} \qloglik^{*}(\spars,\tau,u)%
    = -  \KLsize \frac{ \zeta(\params)^\spars \log^2\zeta(\params)  }{ \spars } \left[ 1 - \frac{1    }{ \spars \log \zeta(\params)  } \right]( 1 + O( \KLsize^{-\delta_1})),\\
    \partial_{\tau}\partial_{\spars} \qloglik^{*}(\spars,\tau,u)%
    = -  \size \KLtau^{\spars-1}\log (\tau) ( 1 + O( \KLsize^{-\delta_1})),\\
    \partial_{u} \partial_{\tau} \qloglik^{*}(\spars,\tau,u)%
    = \KLsize \tau^{\spars -1} ( 1 + O( \KLsize^{-\delta_1})).
  \end{gather*}
\end{lemma}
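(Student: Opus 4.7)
The plan is to compute the Hessian of $\qloglik^{*}$ by exploiting the envelope structure of the composition $\params\mapsto\mathcal{A}(\params;\zeta(\params))$ appearing in \cite[\cref{main-eq:304}]{caron:naulet:rousseau:main}. Since $\zeta(\params)$ is, by \cite[\cref{main-pro:4}]{caron:naulet:rousseau:main}, the unique critical point of $z\mapsto\mathcal{A}(\params;z)$, implicit differentiation of $\partial_z\mathcal{A}(\params;\zeta(\params))=0$ gives $\partial_\theta\zeta=-\partial_z\partial_\theta\mathcal{A}/\partial_z^2\mathcal{A}$ at $z=\zeta$, and combining this with the chain rule yields the Schur-complement identity
\begin{equation*}
\partial_\theta\partial_{\theta'}\bigl[\mathcal{A}(\params;\zeta(\params))\bigr]=\partial_\theta\partial_{\theta'}\mathcal{A}-\frac{\partial_z\partial_\theta\mathcal{A}\,\partial_z\partial_{\theta'}\mathcal{A}}{\partial_z^2\mathcal{A}}\bigg|_{z=\zeta(\params)},
\end{equation*}
for $\theta,\theta'\in\{\spars,\tau,\size\}$. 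Differentiating $\qloglik$ thus reduces to evaluating the right-hand side of this identity, together with the elementary contributions coming from $\sparsfunc''(\spars)$ and from the additive term $\nver\log\size$; the Hessian of $\qloglik^{*}$ then follows by the rescaling $\partial_u=\KLsize\,\partial_\size$.

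Next I would plug in the explicit partials of
\begin{equation*}
\mathcal{A}(\params;z)=-\frac{(z-\tau)^2}{4\nedg}+\Big(1-\frac{\spars\nver}{\nedg}\Big)\log z+\frac{\size}{\nedg}\,\psi(\spars,\tau;z),
\end{equation*}
with $\psi(\spars,\tau;z)=(z^\spars-\tau^\spars)/\spars$ (the case $\spars=0$ being recovered by continuity). This yields closed forms for $\partial_z^2\mathcal{A}$, $\partial_z\partial_\theta\mathcal{A}$, and $\partial_\theta\partial_{\theta'}\mathcal{A}$ in terms of $\zeta^\spars$, $\log\zeta$, $\log\tau$ and the data. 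The only non-trivial piece is the computation of $\partial_\spars^k\psi$, which involves repeated differentiation of $z^\spars\log z/\spars$ and produces the combinations $\log\zeta-1/\spars$ and $(\log\zeta-1/\spars)^2$ responsible for the factors $(1-1/(\spars\log\zeta))$ and $(1-1/(\spars\log\zeta))^2$ announced in the statement. The defining identity \cite[\cref{main-eq:7}]{caron:naulet:rousseau:main}, namely $\zeta^2=\tau\zeta+2\size\zeta^\spars+2\nedg\beta_\spars$ with $\beta_\spars=1-\spars\nver/\nedg$, then rewrites $\partial_z^2\mathcal{A}|_{z=\zeta}$ in the compact form $-(2\nedg\zeta^2)^{-1}(\zeta^2-\tau\zeta-\spars\,\size\,\zeta^\spars)$ already used in \cite[\cref{main-eq:concent:17}]{caron:naulet:rousseau:main}.

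Third, I would insert the uniform estimates valid on $B_\epsilon$: by \cite[\cref{main-thm:concent:1,main-hatu,main-qMLE:equiv}]{caron:naulet:rousseau:main} combined with \cite[\cref{main-ass:concent:degdist}]{caron:naulet:rousseau:main}, for $\epsilon$ small enough and $\TrueSize$ large enough one has $\zeta(\params)=\sqrt{2\nedg}(1+o(1))$, $\log\zeta(\params)=\tfrac{1}{2}\log(2\nedg)(1+o(1))$, $\tau=\KLtau(1+o(1))$, $u=1+o(1)$, $\nver/\nedg\asymp\KLsize^{-(1-\alpha_0)}$, and the key relation $\size\,\zeta(\params)^\spars=\spars\nver(1+O(\KLsize^{-\delta_1}))$ for some $\delta_1>0$. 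Substituting these into each Schur-complement expression, multiplying by $-\nedg$ (and by an extra factor of $\KLsize$ for every $\partial_u$), and keeping only the dominant order, one obtains after simplification the six Hessian entries of the statement, with all subdominant contributions absorbed into the $O(\KLsize^{-\delta_1})$ remainders.

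The main obstacle is the pure $\partial_\spars^2$ entry, where both $-\nedg\partial_\spars^2\mathcal{A}|_\zeta$ and the Schur correction $\nedg(\partial_z\partial_\spars\mathcal{A})^2/\partial_z^2\mathcal{A}|_\zeta$ contribute terms of the same order $\nver\log^2\zeta$: the relation $\size\zeta^\spars=\spars\nver(1+o(1))$ must be applied carefully to resolve them into a single expression proportional to $\size\zeta^\spars\log^2\zeta/\spars\cdot(1-1/(\spars\log\zeta))^2$, with the residual absorbed into the stated relative error. The remaining entries are structurally simpler: $\partial_\tau^2\qloglik^{*}$ is dominated by $-\size(1-\spars)\tau^{\spars-2}$ coming directly from $\partial_\tau^2\mathcal{A}$ (the Schur correction being only $O(1)$ since $\partial_z\partial_\tau\mathcal{A}=1/(2\nedg)$), and $\partial_u^2\qloglik^{*}$ is dominated by the $-\nver/u^2$ contribution coming from $\partial_\size^2(\nver\log\size)$ rescaled; the cross entries are handled analogously. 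Once the leading cancellation in the $\partial_\spars^2$ entry is in hand, the remaining bookkeeping is tedious but routine.
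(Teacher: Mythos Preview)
Your approach is essentially the same as the paper's: both differentiate $\qloglik^{*}=\sparsfunc+\nver\log\size-\nedg\mathcal{A}(\params;\zeta(\params))-\log(2)/2$ using the envelope property $\partial_z\mathcal{A}(\params;\zeta(\params))=0$, then compute the explicit partials of $\mathcal{A}$ and $\psi$, and finally insert the asymptotics $\zeta\sim\sqrt{2\nedg}$, $\tau\approx\KLtau$, $\size\approx\KLsize$ valid on $B_\epsilon$. Your Schur-complement formulation is just a compact rewriting of what the paper does (it computes $\partial_\theta\zeta$ directly from the defining equation for $\zeta$ and then evaluates $-\nedg\partial_\theta\partial_{\theta'}\mathcal{A}-\nedg\,\partial_\theta\zeta\cdot\partial_z\partial_{\theta'}\mathcal{A}$); the two are algebraically identical.

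One point of disagreement: you describe the $\partial_\spars^2$ entry as requiring a delicate ``leading cancellation'' between $-\nedg\partial_\spars^2\mathcal{A}$ and the Schur correction, both allegedly of order $\nver\log^2\zeta$. In fact the Schur correction $\nedg(\partial_z\partial_\spars\mathcal{A})^2/\partial_z^2\mathcal{A}$ is only of order $(\size\zeta^{\spars-1}\log\zeta-\nver/\zeta)^2\asymp\KLsize^{2\alpha_0}\log^2\KLsize$ on $B_\epsilon$, which is smaller than the main term $\size\zeta^\spars\log^2\zeta/\spars\asymp\KLsize^{1+\alpha_0}\log^2\KLsize$ by a full power $\KLsize^{1-\alpha_0}$. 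So there is no cancellation to resolve: the direct term $-\nedg\partial_\spars^2\mathcal{A}$ alone already yields the displayed expression, and the Schur correction is absorbed into the $O(\KLsize^{2\alpha_0+2\epsilon})$ remainder (this is how the paper handles it). Consequently the ``key relation'' $\size\zeta^\spars=\spars\nver(1+O(\KLsize^{-\delta_1}))$ you invoke is not needed here---which is fortunate, because on $B_\epsilon$ (where $|\spars-\alpha_0|\leq\epsilon$ only) it does not hold with a polynomial error. Your computation would still go through; you would simply find the correction negligible rather than comparable.
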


\begin{proof}
  We recall the convention in the main paper that $\phi = (\sigma,\tau,\size)$ and
  $\phi_u = (\sigma,\tau,u)$ with $u = s/\KLsize$.  Recall that 
  $$ \qloglik^{*}(\params_u) =   \sparsfunc(\sigma) + \nver \log \size - \nedg \mathcal A( \params, \zeta(\params)) - \log 2/2 .$$
  By direct computations,
  \begin{align*}
    \partial_{\spars}\qloglik^{*}(\params_u)
    &= 
    \sparsfunc'(\sigma) - \nedg \partial_\spars \mathcal A( \params, \zeta(\params) )-  \nedg \partial_\spars \zeta(\params)\partial_z\mathcal A( \params, \zeta(\params))\\
    & =  \sparsfunc'(\sigma) - \nedg \partial_\spars \mathcal A( \params, \zeta(\params)
  \end{align*}
  since $\partial_z\mathcal A( \params, \zeta(\params))=0$ for all $\params$. 
  Similarly 
  \begin{align*}
    \partial_{\tau}\qloglik^{*}(\params_u)=
 - \nedg \partial_\tau \mathcal A( \params, \zeta(\params), \quad \partial_{u}\qloglik^{*}(\params_u)%
    =      \frac{ \nver }{ u} - \KLsize \nedg \partial_\size  \mathcal A( \params, \zeta(\params)),
  \end{align*}
so that 
 \begin{align*}
    \partial_{\spars}^2\qloglik^{*}(\params_u)
    &= 
    \sparsfunc''(\sigma) - \nedg \partial_\spars^2 \mathcal A( \params, \zeta(\params) ) - \nedg \partial_\spars \zeta(\params)\partial_z\partial_\spars \mathcal A( \params, \zeta(\params) ), \\
\partial_{\spars,\tau}^2\qloglik^{*}(\params_u)   &= 
- \nedg \partial_{\spars,\tau}^2 \mathcal A( \params, \zeta(\params) ) - \nedg \partial_\tau \zeta(\params)\partial_z\partial_\spars \mathcal A( \params, \zeta(\params) ),  \\
      \partial_{\tau}^2\qloglik^{*}(\params_u) &=
 - \nedg \partial_\tau^2 \mathcal A( \params, \zeta(\params) -\nedg \partial_\tau \zeta(\params)\partial_z\partial_\tau \mathcal A( \params, \zeta(\params) ), \\ \partial_{u}^2\qloglik^{*}(\params_u)%
    &=      -\frac{ \nver }{ u^2} - \KLsize^2 \nedg \partial_\size^2  \mathcal A( \params, \zeta(\params))-\KLsize^2\nedg \partial_\size \zeta(\params)\partial_z\partial_\size \mathcal A( \params, \zeta(\params) ),\\
  \partial_{\spars,u}^2\qloglik^{*}(\params_u)   &=  - \nedg \KLsize \partial_{\spars,\size}^2 \mathcal A( \params, \zeta(\params) ) -\KLsize \nedg \partial_\spars \zeta(\params)\partial_z\partial_\size \mathcal A( \params, \zeta(\params) ), \\
   \partial_{\tau,u}^2\qloglik^{*}(\params_u)   &=  - \nedg \KLsize \partial_{\tau,\size}^2 \mathcal A( \params, \zeta(\params) )-\KLsize \nedg \partial_\size \zeta(\params)\partial_z\partial_\tau \mathcal A( \params, \zeta(\params) ),
  \end{align*}
Moreover  
  \begin{align*}
    \nedg \partial_\spars \mathcal A( \params, \zeta(\params) ) &= - \nver \log \zeta(\params) + \size \left[ \frac{ \zeta^\spars \log \zeta - \tau^\spars \log \tau }{ \spars } - \frac{ \zeta^\spars  - \tau^\spars}{ \spars^2 } \right] ,\\
    \nedg \partial_\tau \mathcal A( \params, \zeta(\params) ) &= \frac{ \zeta(\params) - \tau }{ 2 } - \size  \tau^{\spars-1},\\
 \nedg \partial_\size \mathcal A( \params, \zeta(\params) ) &= \frac{ \zeta^\spars - \tau^\spars  }{ \spars },
  \end{align*}
  which implies that 
 \begin{align*}
   \nedg \partial_\spars^2 \mathcal A( \params, \zeta(\params) )
   &=  \size \left[ \frac{ \zeta^\spars \log^2\zeta - \tau^\spars \log^2 \tau }{ \spars } - 2\frac{ \zeta^\spars\log \zeta  - \tau^\spars\log \tau }{ \spars^2 } +2 \frac{ \zeta^\spars - \tau^\spars }{ \spars^3 } \right] \\
   \nedg \partial_\tau^2 \mathcal A( \params, \zeta(\params) )
   &= -\frac{  1 }{ 2 } - \size (\spars-1) \tau^{\spars-2},\\
   \nedg \partial_{\size}^2 \mathcal A( \params, \zeta(\params) )
   &= 0,\\
   \nedg \partial_{\spars,\tau}^2 \mathcal A( \params, \zeta(\params) ) &=-\size    \tau^{\spars-1} \log \tau \\
   \nedg \partial_{\spars\size }^2 \mathcal A( \params, \zeta(\params) ) &=   \frac{ \zeta^\spars \log \zeta - \tau^\spars \log \tau }{ \spars } - \frac{ \zeta^\spars  - \tau^\spars}{ \spars^2 },\\
   \nedg \partial_{\tau, \size}^2 \mathcal A( \params, \zeta(\params) ) &= -   \tau^{\spars-1},
  \end{align*}
  and that 
  \begin{align*}
    \nedg\partial_z \partial_\spars \mathcal A( \params, \zeta(\params) ) &= -\frac{ \nver }{ \zeta(\params) } + \size\zeta^{\spars-1}  \log \zeta ,\\
    \nedg \partial_z \partial_\tau \mathcal A( \params, \zeta(\params) ) &= \frac{ 1 }{ 2 }, \\
 \nedg \partial_z\partial_\size \mathcal A( \params, \zeta(\params) ) &=  \zeta^{\spars-1}.
  \end{align*} 
   Note also that: 
  \begin{align}
  \label{zetaprime1}
  \partial_{\size} \zeta(\params) = \spars  \zeta(\params)^{\spars-1}\left(1 - \frac{ \tau }{ 2\zeta(\params)}- \size\spars  \zeta(\params)^{\spars-3}\right)^{-1}=\alpha_0 (2\nedg)^{(\spars-1)/2}(1 + o(1)),
\end{align}
and similarly
\begin{align*}
\partial_\tau \zeta(\params) = \frac{ 1 }{2} \left(1 - \frac{ \tau }{ 2\zeta(\params)}- \size\spars  \zeta(\params)^{\spars-3}\right)^{-1}= \frac{ 1 }{2} (1+o(1)),
\end{align*}
and,
\begin{align}
  \notag
  \partial_\spars \zeta(\params) &= \left( \size  \zeta(\params)^{\spars-1}\log \zeta(\params) - \frac{\nver  }{\zeta(\params)}\right) \left(1 - \frac{ \tau }{ 2\zeta(\params)}- \size\spars  \zeta(\params)^{\spars-3}\right)^{-1}\\
  \label{zetaprime3}
&=\left( \frac{\KLsize  (2\nedg)^{(\spars-1)/2}\log (2\nedg)}{ 2} (1+o(1))  - \frac{\nver  }{\sqrt{2\nedg}}+O(1)\right)(1+o(1)).
\end{align}

  Finally we obtain 
  \begin{align*}
    \partial_\spars^2\qloglik^{*}(\params_u)
   & =  \sparsfunc''(\sigma) - \size \left[ \frac{ \zeta^\spars \log^2\zeta  }{ \spars } - 2\frac{ \zeta^\spars\log \zeta   }{ \spars^2 } + 2\frac{ \zeta^\spars  }{ \spars^3 } +O(1)\right] +O(\KLsize^{2\alpha_0+2\epsilon}) \\
     \partial_{u}^2\qloglik^{*}(\params_u) &=-\frac{ \nver }{ u^2} + O( \KLsize^{2\alpha_0+2\epsilon} ),\\
     \partial_{\tau}^2\qloglik^{*}(\params_u) &=  \frac{  1 }{ 2 } - \size (1-\spars) \tau^{\spars-2}  + O(1)= - \size (1-\spars) \tau^{\spars-2} + O(1),\\
        \partial_{\spars,u}^2\qloglik^{*}(\params_u)   &=  -  \KLsize     \frac{ \zeta^\spars \log \zeta - \tau^\spars \log \tau }{ \spars } - \KLsize\frac{ \zeta^\spars  - \tau^\spars}{ \spars^2 } + O(\KLsize^{2\alpha_0+2\epsilon}), \\
        \partial_{\tau,u}^2\qloglik^{*}(\params_u)   &=   \KLsize \tau^{\spars-1} + O(\KLsize^{\alpha_0+\epsilon}),\\         \partial_{\tau,\spars}^2\qloglik^{*}(\params_u) &=\size    \tau^{\spars-1} \log \tau +O(\KLsize^{\alpha_0+\epsilon}).
  \end{align*}
\end{proof}

\subsection{Proof of \texorpdfstring{\cite[\cref{main-lem:bias:score}]{caron:naulet:rousseau:main}}{Lemma \ref{main-lem:bias:score}}}
\label{sec:pr:lem:bias:score}

We write  $\tilde I_0(\params) =  e^{ - \nedg \mathcal{A}(\params;\zeta(\params))}/(2\sqrt{\pi}) $, from \cite[\cref{main-eq:80}]{caron:naulet:rousseau:main},
\begin{align} 
I(\params) &= \tilde I_0(\params) \int_{\Reals}\exp\{-\nedg [\mathcal{A}(\params;
    \zeta(\params) - iy) - \mathcal{A}(\params;
    \zeta(\params))] \}du ;= \tilde I_0(\params)  \int_{\Reals} H_\params(y)dy \nonumber\\
I_0(\params) &= \tilde I_0(\params) \int_{\Reals}\exp\{- \frac{ y^2\nedg [-\partial_z^2\mathcal{A}(\params; \zeta(\params))] }{2} \} dy ;=\tilde I_0(\params)\int_{\Reals} H_{0,\params}(y)dy
\end{align} 
and
$$\nabla [\qloglik - \loglik]  = - \nabla \log I(\params) + \nabla \log I_0(\params) + \frac{1}{2}\nabla \log \left( \nedg \partial_z^2\mathcal{A}(\params;\zeta(\params))\right)
$$
  Then,
 \begin{align*}
   \nabla I(\params)  & = \tilde I_0(\params) \int_{\Reals}\nabla H_\params(y) dy + \frac{  \nabla \tilde I_0(\params) }{ \tilde I_0(\params) }  I(\params)\\
     \nabla I_0(\params) &= - \frac{I_0(\params) }{2}\nabla \log \left( \nedg \partial_z^2\mathcal{A}(\params;\zeta(\params))\right)
             +\frac{  \nabla \tilde I_0(\params) }{ \tilde I_0(\params) }  I_0(\params)
  \end{align*}
  and 
    \begin{align} \label{Nabla_qloglik}
    \nabla [\qloglik - \loglik] 
        & =\frac{  -\tilde I_0(\params)  \int_{\Reals}\nabla H_\params(y) dy}{ I(\params) } 
            -\frac{  \nabla \tilde I_0(\params) }{ \tilde I_0(\params) } \frac{  I(\params) - I_0(\params)}{ I_0(\params)}  \nonumber \\
            &=  \frac{  -  \int_{\Reals}\nabla H_\params(y) dy}{ \int_{\Reals} H_{0,\params}(y)dy } 
     + \frac{  I(\params) - I_0(\params)}{ I_0(\params)} \left[ \frac{  \tilde I_0(\params)  \int_{\Reals}\nabla H_\params(y) dy}{ I(\params) } 
 -    \frac{  \nabla \tilde I_0(\params) }{ \tilde I_0(\params) } \right] 
 \end{align}
We have
\begin{align*}  
\int_{\Reals}\nabla H_\params(y) dy &=     -\nedg
\int_{\Reals} e^{-\nedg [\mathcal{A}(\params;
    \zeta(\params) - iy) - \mathcal{A}(\params;
    \zeta(\params))} [ \nabla_\params \mathcal{A}(\params;
    \zeta(\params) - iy) - \nabla_\params \mathcal{A}(\params; \zeta(\params))  ] dy \\
     &    -\nedg \nabla \zeta(\params) \int_{\Reals} e^{-\nedg [\mathcal{A}(\params;
    \zeta(\params) - iy) - \mathcal{A}(\params;
    \zeta(\params))} [ \nabla_z \mathcal{A}(\params;
       \zeta(\params) - iy) - \nabla_z \mathcal{A}(\params; \zeta(\params))  ] dy\\
  &\eqqcolon \Delta_1 + \Delta_2
 \end{align*}
Recall that 
$\sqrt{-2\nedg \partial_z^2\mathcal{A}(\params_0;\zeta(\params_0))} = 2^{-1/2}(1+o(1))$ so that 
$$  \int_{\Reals} H_{0,\params}(y)dy  =  2\sqrt{\pi}(1 +o(1)) .$$
Also 
\begin{align}
  \label{nablaphiA}
\nedg \partial_\spars\mathcal{A}(\params; z) &= - \nver \log z + \frac{ \size  }{2 } \left[ \frac{ \log (z) z^\spars - \log (\tau) \tau^\spars }{ \spars } - \frac{ z^\spars -  \tau^\spars}{\spars^2 } \right]\\
\nedg \partial_{\tau}\mathcal{A}(\params; z) &= \frac{ z-\tau }{2} -  \size \tau^{\spars-1}
, \quad \nedg \partial_{\size}\mathcal{A}(\params; z) =  \frac{ z^\spars -  \tau^\spars}{\spars }\nonumber
\end{align}
  In particular  when $|y| \leq \sqrt{ \zeta(\params_0)}$,  i.e. $|y| \leq c_0\sqrt{ t} $  for some $c_0>0$, then
\begin{align*}
 \log ( \zeta(\params_0) -iy )- \log ( \zeta(\params_0) )  &= \frac{ \log ( \zeta(\params_0)^2 +u^2 )-\log ( \zeta(\params_0)^2  ) }{ 2}  - \frac{ i u }{ \zeta(\params_0)  } + O( |y|^2 / t^2 ),\\  ( \zeta(\params_0) -iy )^{\spars_0}- ( \zeta(\params) )^{\spars_0} &=    -iy\spars_0 \zeta(\params_0)^{\spars_0-1} + O(|y|^2t^{\spars_0-2} )
 \end{align*}
and also, from  \cref{eq:24,eq:25,eq:29}, writing $A_0 = 2[-\nedg \partial^2_z\mathcal A(\params;\zeta(\params))]$
\begin{align*}
  &e^{-\nedg [\mathcal{A}(\params; \zeta(\params) - iy) - \mathcal{A}(\params;
    \zeta(\params)) ]} = e^{-y^2 A_0/4}\times \\
     & \quad \left(  1 - \frac{ iy^3\nedg  }{ 2 \pi } \left[ 
\int_{|\xi| = \zeta(\params_0)/2} \frac{ \mathcal A( \params_0; \zeta(\params_0) + \xi) }{ \xi^4 }  + O\left( \frac{  \sup_{1/2\leq x\leq 2} \sup_{\varphi \in [-\pi ,\pi]}| \mathcal A( \params_0; x \zeta(\params_0)e^{i\varphi})| }{ \zeta(\params_0)^{4} } \right) \right]\right)\\
& = e^{-y^2 A_0/4}\left(  1 - \frac{ iy^3\nedg  }{ 2 \pi } 
\int_{|\xi| = \zeta(\params_0)/2} \frac{ \mathcal A( \params_0; \zeta(\params_0) + \xi) }{ \xi^4 } d\xi + O\left( \frac{ |y|^4 \log t }{ t^2} \right) \right).
\end{align*}
We combine this with the arguments as in the proof of \cite[\cref{main-lem:3}]{caron:naulet:rousseau:main} in \cref{sec:proof-lem:6}, but we consider a decomposition  into $|y|\leq \zeta(\params)^{1/2}$ and $|y|>\zeta(\params)^{1/2}$,  instead of $|y| \leq C a(\spars)$ and its complement we obtain  that
  \begin{align*}
& \left| \nedg \int_{|y|\leq \sqrt{ \zeta(\params_0) }} e^{-\nedg [\mathcal{A}(\params;
    \zeta(\params) - iy) - \mathcal{A}(\params;
    \zeta(\params)) ]} [ \nabla_\spars \mathcal{A}(\params;
    \zeta(\params) - iy) - \nabla_\spars \mathcal{A}(\params; \zeta(\params))  ] dy \right| \\
     &\quad \lesssim  (\int e^{-y^2/4} |y|^6 dy) t^{\spars_0-1}  \log t=  O(t^{\spars_0-1}  \log t) \\
 & \left| \nedg \int_{|y|\leq \sqrt{ \zeta(\params_0) }} e^{-\nedg [\mathcal{A}(\params;
    \zeta(\params) - iy) - \mathcal{A}(\params;
    \zeta(\params)) ]} [ \nabla_\tau \mathcal{A}(\params;
    \zeta(\params) - iy) - \nabla_\tau \mathcal{A}(\params; \zeta(\params))  ] dy \right|       \\
 &\quad \lesssim    t^{-1} \\
 &\left| \nedg \int_{|y|\leq \sqrt{ \zeta(\params_0) }} e^{-\nedg [\mathcal{A}(\params;
    \zeta(\params) - iy) - \mathcal{A}(\params;
    \zeta(\params)) ]} [ \nabla_{\size} \mathcal{A}(\params;
    \zeta(\params) - iy) - \nabla_{\size} \mathcal{A}(\params; \zeta(\params))  ] dy \right|    \\
     &\quad \lesssim   t^{\spars_0-2} 
  \end{align*}

  Similarly, using \cref{eq:82}, if $|y| \leq c_0\delta t$
  \begin{multline*}
    \nedg[ \partial_z\mathcal{A}(\params_0; \zeta(\params_0) - iy) - \partial_z\mathcal{A}(\params_0;    \zeta(\params_0) )] 
    \\=  \frac{ iy }{2} \left(  1 + \frac{ 2(\nedg - \spars_0\nver) }{ \zeta(\params_0)^2 } +2(1-\spars_0) t\zeta(\params_0)^{\spars_0-2} \right)+ O(|y|^2t^{-1} )
  \end{multline*}
so that combining with $\nabla \zeta(\params_0)$ computed in the previous Section, 
\begin{align*}
|\Delta_2(1) | & \lesssim  \frac{ \log t }{ t^{1-\spars_0}} , \quad 
|\Delta_2(2) |  \lesssim  \frac{ \log t }{ t } , \quad 
|\Delta_2(3) |  \lesssim  \frac{ 1 }{ t^{1-\spars_0}}.
\end{align*}
Finally, since the integrals over $|y| > \sqrt{\zeta(\params_0)}$ are exponentially small. 
 we obtain that 
\begin{align}  \label{nablaH}
\int_{\Reals}\nabla_\spars H_\params(y) dy &= O(\frac{ \log t }{ t^{1-\spars_0}}), \quad 
\int_{\Reals}\nabla_\tau H_\params(y) dy = O(\frac{ 1 }{ t}), \quad 
\int_{\Reals}\nabla_u H_\params(y) dy  = O(\frac{ 1 }{ t^{1-\spars_0}}).
\end{align}
Combining the above upper bound, with  $|I(\params_0)-I_0(\params_0)|/I_0(\params_0) = O(\log t /t^2)$ and 
 \cref{nablaphiA} to bound $\nabla I_0(\params_0)/I_0(\params_0)$ we obtain 
 \begin{align*}
    \nabla_\spars [\qloglik - \loglik] \lesssim \frac{ (\log t)^2 }{ t^{1-\spars_0}} , \quad     \nabla_\tau [\qloglik - \loglik] \lesssim \frac{ \log t }{ t} , \quad
       t \nabla_{\size} [\qloglik - \loglik] \lesssim \frac{ \log t }{ t^{1-\spars_0}}.
 \end{align*}
 Note that the above control on  $\nabla \qloglik - \nabla L_t$ is valid in expectation 
since  $$\zeta(\params)^2 \geq 2 \nedg - 2 \spars_0 \nver \geq 2 \nedg(1-\spars_0) ,\quad  \zeta(\params) \geq (2t)^{1/(2-\spars_0)}. $$

We now prove that 
$$\Delta = \left|\Tr\left(V_0[D_{\params_u}^2\loglik - D_{\params_u}^2\qloglik^*] (\params_{0,u}) \right)\right|  =o(1)$$
with
$$V_0 = \frac{ 1 }{ t } 
\left( \begin{array}{ccccc} 
 c_1t^{-\sigma_0}&  & 0 & & c_2t^{-\sigma_0}\log t \\
0 & & 1 & & 0 \\
t^{-\sigma_0}\log t &  &0 & & t^{-\sigma_0} \log^2 t   \end{array}\right).$$
This boils down to proving that
\begin{align*}
 t^{-1-\spars_0}| D^2_{\spars,\spars}[\loglik -\qloglik](\params_0) | & = o(1)\\
 t^{-1-\spars_0} \log t t | D^2_{\spars,\size}[\loglik -\qloglik](\params_0) | & = o(1)\\
  t^{-1} | D^2_{\tau,\tau}[\loglik -\qloglik](\params_0) | & = o(1)\\
   t^{-1-\spars_0} (\log t)^2 t^2 | D^2_{\size,\size}[\loglik -\qloglik](\spars_0) | & = o(1)
\end{align*}
 We use \cref{Nabla_qloglik} so that 
  \begin{align*} 
    D^2_{\params} [\qloglik - \loglik] 
                   &=  \frac{  -  \int_{\Reals}D^2 H_\params(u) du}{ \int_{\Reals} H_{0,\params}(u)du }  +  \frac{   \int_{\Reals}\nabla H_\params(u) du \int_{\Reals} \nabla H_{0,\params}(u)du}{ [\int_{\Reals} H_{0,\params}(u)du]^2 } \\
                   & \quad
     + \frac{  I(\params) - I_0(\params)}{ I_0(\params)} \left[ \frac{  \tilde I_0(\params) \int_{\Reals}\nabla H_\params(u) du}{ I(\params) } 
 -    \frac{  \nabla \tilde I_0(\params) }{ \tilde I_0(\params) } \right] .
 \end{align*}
First since 
$$-\nedg  \partial^2_z \mathcal A(\params,z)  = \frac{ \nedg - \spars \nver }{ z^2 } + \size (1-\spars ) z^{-2+\spars} $$ 
simple computations imply that 
\begin{align*} 
\nabla_\spars [-\nedg  \partial^2_z \mathcal A(\params_0,\zeta(\params_0))] & = O(t^{-1+\spars_0}\log t) , \quad \nabla_\tau [-\nedg  \partial^2_z \mathcal A(\params_0,\zeta(\params_0))]  = O(t^{-1}) \\
 t^2\nabla_{\size} [-\nedg  \partial^2_z \mathcal A(\params_0,\zeta(\params_0))]  &= O(t^{\spars_0}) 
  \end{align*}
 which implies the same orders for  $\int_{\Reals} \nabla H_{0,\params}(u)du$.
 Moreover \cref{nablaphiA}, together with the relation $ \spars_0 N = t \zeta(\params_0)^{\spars_0}( 1 + O(t^{-\delta}) )$ for some $\delta>0$,  imply that 
 \begin{align*}
| \nabla_{\spars} \log \tilde I_0(\params_0) | & \lesssim t^{1 +\spars_0} , \quad |\nabla_{\tau} \log \tilde I_0(\params_0) |  \lesssim t, \quad t| \nabla_{\size} \log \tilde I_0(\params_0) |  \lesssim t^{1 +\spars_0} .
 \end{align*}
   Combining this with 
 $$ |I(\params_0) -I_0(\params_0)|/I_0(\params_0) =O(\log t /t^2) $$
 implies that term by term with $D^2_{\params_u} $ representing either $D^2_{\spars, \spars}$, $D^2_{\tau, \tau}$ or $t^2 D^2_{\size, \size}$ and $\nabla_{\params_u}$ representing either $\nabla_{\spars}$, $\nabla_{\tau}$ or $t \nabla_{\size}$
\begin{align*} 
  |  D^2_{\params_u} [\qloglik - \loglik] |
                   &\lesssim   | \int_{\Reals}D^2_{\params_u} H_{\params_u}(x) dx|   +   |\int_{\Reals}\nabla_{\params_u} H_{\params_u}(x) dx| |\int_{\Reals} \nabla_{\params_u} H_{0,\params_u}(x)dx|  \\
 &\quad      + \frac{ \log t }{t^2 }   |\int_{\Reals}\nabla_{\params_u} H_{\params_u}(x) dx | +o(1)
 \end{align*} 
Using \cref{nablaH} we then obtain that
 \begin{align*} 
  |  D^2_{\params_u} [\qloglik - \loglik] |&\lesssim   | \int_{\Reals}D^2_{\params_u} H_{\params_u}(x) dx|   +o(1)
 \end{align*} 
 We have 
 We now study the second derivatives .
\begin{align*}
\int_{\Reals} & D^2_{\params_u} H_{\params_u}(x) dx   = -\nedg \int_{\Reals} e^{-\nedg[ \mathcal{A}(\params; \zeta(\params) - iu) - \mathcal{A}(\params; \zeta(\params))] } \left(    D^2_{\params_u}\mathcal{A}(\params;
    \zeta(\params) - iu)-   D^2_{\params_u}\mathcal{A}(\params;
    \zeta(\params) )\right)\intd u \\
    & -2\nedg \nabla_{\params_u}\zeta(\params_u) \int_{\Reals}e^{-\nedg[ \mathcal{A}(\params; \zeta(\params) - iu) - \mathcal{A}(\params; \zeta(\params))] }  \left( \partial_{\params_u}\partial_z\mathcal{A}(\params;
    \zeta(\params) - iu) - \partial_{\params_u}\partial_z\mathcal{A}(\params;
    \zeta(\params) ) \right)\intd u \\
 &    - \nedg D^2_{\params_u} \zeta(\params) \int_{\Reals}e^{-\nedg[ \mathcal{A}(\params; \zeta(\params) - iu) - \mathcal{A}(\params; \zeta(\params))] } \left( \partial_z^2\mathcal{A}(\params;
    \zeta(\params) - iu) - \partial_z^2\mathcal{A}(\params;
    \zeta(\params) ) \right)\intd u \\
    & := \Delta_1 + \Delta_2 + \Delta_3
 \end{align*}
Using \cref{zetaprime1,zetaprime3}, we can bound 
$$ 
\partial^2_{\spars,\spars} \zeta(\params) = O(t^{\spars_0} \log^2 t), \quad \partial^2_{\tau,\tau} \zeta(\params) = O(1/t), \quad  t^2 \partial^2_{\size,\size} \zeta(\params) = O(t^{2\spars_0-1}), \quad t \partial^2_{\spars,\size} \zeta(\params) = O(t^{\spars_0} \log t).$$
Since $\nedg\partial_z^2\mathcal{A}(\params;
    \zeta(\params) ) = O(1)$ , 
  $$ \Tr[ V_0 \Delta_3 ]  = o(1). $$
Also using 
$$  \nedg  \partial_z\mathcal{A}(\params;z)  = -z/2 + \tau/2 + (\nedg- \spars \nver)/z + \size z^{-1+ \spars}$$
we have that 
\begin{align*}
|\nedg & \partial_\spars[ \partial_z\mathcal{A}(\params; \zeta(\params)- iu) - \partial_z \mathcal{A}(\params;\zeta(\params) ) ] | \lesssim   |u| t^{\spars_0-1} \log t,  \\
|\nedg & \partial_\tau[ \partial_z\mathcal{A}(\params; \zeta(\params)- iu) - \partial_z \mathcal{A}(\params;\zeta(\params) ) ] | =0, \\
t |\nedg & \partial_\spars[ \partial_z\mathcal{A}(\params; \zeta(\params)- iu) - \partial_z \mathcal{A}(\params;\zeta(\params) ) ] | \lesssim   |u| t^{\spars_0-1} ,  \\
\end{align*}
which together with 
$$ \partial_\spars \zeta(\params_0) = O ( t^{\spars_0}\log t), \quad  \partial_\tau  \zeta(\params_0) = O ( 1) , \quad  t \partial_{\size} \zeta(\params_0) = O ( t^{\spars_0}) $$
  lead to 
 $$\Tr [V_0 \Delta_2] = o(1).$$
Finally using \cref{nablaphiA}
\begin{align*}
\nedg \partial_{\spars,\spars}^2\mathcal{A}(\params; z) &=  \frac{ \size  }{2 } \left[ \frac{ \log (z)^2 z^\spars - \log (\tau)^2 \tau^\spars }{ \spars } -2 \frac{ z^\spars\log z - \log \tau \tau^\spars}{\spars^2 } +2 \frac{ z^\spars - \tau^\spars}{\spars^3 }  \right]  \\
  \nedg \partial_{\tau,\tau}^2\mathcal{A}(\params; z) &= \frac{ 1 }{2} -  \size (\spars -1)\tau^{\spars-1},\\
  \nedg \partial_{\size,\size}^2\mathcal{A}(\params; z) &=  0,\\
 \nedg \partial_{\size,\spars}^2\mathcal{A}(\params; z) &=  \frac{ z^\spars \log z-  \tau^\spars\log \tau }{\spars } -  \frac{ z^\spars  -  \tau^\spars }{\spars^2 },
\end{align*}
which implies that 
\begin{align*}
\nedg |\partial_{\spars,\spars}^2[\mathcal{A}(\params; z-iu)-\mathcal{A}(\params; z)]| &\lesssim   t^{\spars_0} |u| \log^2 t \\
|\nedg \partial_{\tau,\tau}^2[\mathcal{A}(\params; z-iu) -\mathcal{A}(\params; z)]| &= 0 \\
t| \nedg \partial_{\size,\spars}^2[\mathcal{A}(\params; z-iu)-\mathcal{A}(\params; z)]| &\lesssim    t^{\spars_0} \log t |u|.
\end{align*}
Hence
 $$\Tr [V_0 \Delta_1] = o(1)$$
 and 
 $$\left|\Tr\left(V_0 [D_{\params_u}^2\loglik - D_{\params_u}^2\qloglik^*] (\params_{0,u}) \right)\right|  =o(1).$$


\section{Asymptotic properties of multigraphex processes}
\label{sec:asympt-prop-mult}

\subsection{Proof of \texorpdfstring{\cite[\cref{main-th:asympmultigraphex}]{caron:naulet:rousseau:main}}{Theorem \ref{main-th:asympmultigraphex}}}
\label{sec:proof-theorem-asympmultigraphex}

\subsubsection{Number of nodes and number of multiedges}

The number of nodes $N_t$ is the same in the multigraph and simple graph. The asymptotic results for the number of nodes $N_t$ then follow directly from Theorems 3 and 4 in \cite{caron:rousseau:18}.

Recall that $\nedg$ is twice the number of multiedges. We have, using the Slivnyak-Mecke formula,
\begin{align*}
\mathbb E[\nedg]&=\mathbb E\left [\sum_{i\geq 1} D_{t,i}\1{\theta_i\leq t}\right ]\\
&=t\int_0^\infty \oW_1(x,x)dx +t^2 \int_{\mathbb R_+^2} \oW_1(x,y)dxdy.
\end{align*}


Similarly,
\begin{align*}
&\mathbb E[(\nedg)^2]\\&=\mathbb E\left [\sum_{i,j,k,\ell} \widetilde n_{ik}\widetilde n_{j\ell}\1{\theta_i\leq t}\1{\theta_j\leq t}\1{\theta_k\leq t}\1{\theta_\ell\leq t}\right ]\\
&=t^4 \int_{\mathbb R_+^4} \oW_1(x,u)\oW_1(y,v)dxdydudv +4t^3 \int_{\mathbb R_+^3} \oW_1(x,u)\oW_1(y,u)dxdydu \\
 & + 2t^3 \int_{\mathbb R_+^3} \oW_1(x,u)\oW_1(y,y)dxdydu + t^2\int_{\mathbb R_+^2} \oW_1(x,x)\oW_1(y,y)dxdy+2t^2\int_{\mathbb R_+^2} \oW_2(x,y)dxdy\\
 &+4t^2\int_{\mathbb R_+^2} \oW_1(x,x)\oW_1(x,y)dxdy+t\int_0^\infty \oW_2(x,x)dx\\
&=\mathbb E[\nedg]^2+4t^3 \int_{\mathbb R_+^3} \oW_1(x,u)\oW_1(y,u)dxdydu\\
&+2t^2\int_{\mathbb R_+^2} \oW_2(x,y)dxdy+4t^2\int_{\mathbb R_+^2} \oW_1(x,x)\oW_1(x,y)dxdy+t\int_0^\infty \oW_2(x,x)dx
\end{align*}
hence
\begin{align*}
\var(\nedg)&=4t^3 \int_{\mathbb R_+^3} \oW_1(x,u)\oW_1(y,u)dxdydu\\
&+2t^2\int_{\mathbb R_+^2} \oW_2(x,y)dxdy+4t^2\int_{\mathbb R_+^2} \oW_1(x,x)\oW_1(x,y)dxdy+t\int_0^\infty \oW_2(x,x)dx.
\end{align*}
Note that, for all $x,y>0$,
$$
W_1(x,y)^2\leq W_2(x,y).
$$
Hence, by Cauchy-Schwarz
\begin{align*}
\int_{\mathbb R_+^2} \oW_1(x,x)\oW_1(x,y)dxdy&\leq \left (\int \oW_1(x,x)^2dx \int \left (\int \oW_1(x,y)dy\right )^2 dx   \right )^{1/2}\\
&\leq \left (\int \oW_2(x,x)dx \int \left (\int \oW_1(x,y)dy\right )^2 dx   \right )^{1/2}<\infty.
\end{align*}
Therefore,
\begin{align*}
\var(\nedg)=O(t^{-1}\mathbb E[\nedg]^2)
\end{align*}
and it follows from Lemma S3.1 in \cite{caron:rousseau:18} that
$$
\nedg\sim \mathbb E[\nedg]\sim t^2 \int_{\mathbb R_+^2} \oW_1(x,y)dxdy
$$
almost surely as $t$ tends to infinity.


\subsubsection{Number of nodes of degree $j$.}

\paragraph*{ Case $\alpha_0=0$.}

Let $N^{(s)}_{tj}$ denote the number of nodes of degree $j$ in the simple graph. A node with degree $j$ in the multigraph $\mathcal G_t$ has a degree smaller or equal to $j$ in the corresponding simple graph $\mathcal G^{(s)}_t$, and we therefore have, for all $j\geq 1$.
\begin{align*}
N_{tj}\leq \sum_{k=1}^j N^{(s)}_{tk}
\end{align*}
If \cite[\cref{main-assumpt:1}]{caron:naulet:rousseau:main} holds for $\alpha_0=0$ and some slowly varying function $\ell_1$, Theorems 3 and 4 in \cite{caron:rousseau:18} imply that $N^{(s)}_{tk}=o(t\ell_1(t))$ in mean, and almost surely if \cite[\cref{main-assumpt:2}]{caron:naulet:rousseau:main} also holds. The result for $N_{tj}$ then follows by comparison.

\paragraph*{Case $\alpha_0\in(0,1]$.}

We have
$$
N_{tj} = N_{j}^{(s)}  + (N_{tj}-N_{tj}^{(s)})
$$
where, using \cite[\cref{main-lemma:Ntilde}]{caron:naulet:rousseau:main},
$$
|N_{tj}-N_{tj}^{(s)}|\leq \widetilde N_t=o(N_t).
$$
and therefore, using the results for the simple graph~\cite{caron:rousseau:18}, for $\alpha_0>0$, for all $j\geq 1$
$$
N_{tj}\sim N_{tj}^{(s)}
$$
almost surely, and for $\alpha_0=1$
$N_{t1}\sim N_{t1}^{(s)} $ and, for $j\geq 2$, $N_{tj}=o(N_t).$

 %
%
%

\subsection{Second order asymptotics of number of vertices}
\label{sec:second-order-asympt}

\cite[\cref{main-th:asympmultigraphex}]{caron:naulet:rousseau:main} gives the first order asymptotic of the number
of vertices as $t \to \infty$. We need, however, in the proof of \cite[\cref{main-thm:2}]{caron:naulet:rousseau:main}
the second order asymptotics under the additional \cite[\cref{main-ass:tails}]{caron:naulet:rousseau:main}. This is
given in the next Lemma.

\begin{lemma}
  \label{lem:4}
  Let \cite[\cref{main-assumpt:1,main-ass:tails}]{caron:naulet:rousseau:main} be satisfied for some $\alpha_0 \in (0,1)$, and
  let $\int_{\NNReals}W(x,x)\,\intd x < \infty$. Then, there exists $\eta > 0$
  such that as $t\to \infty$
  \begin{equation}
    \label{eq:33}
    \EE[\nver]%
    = t \mu^{-1}(t^{-1}) \Gamma(1 - \alpha_0)(1 + O(t^{-\eta}).
  \end{equation}
\end{lemma}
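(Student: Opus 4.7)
The plan is to compute $\mathbb{E}[N_t]$ in closed form via Slivnyak--Mecke, reduce it to a Laplace-type integral against $\mu^{-1}$, and extract the leading two orders from the refined tail of $\mu^{-1}$. First, a node $i$ with $\theta_i\le t$ belongs to $\mathcal V_t$ iff $\tilde n_{ii}>0$ or $\tilde n_{ij}>0$ for some $j\neq i$ with $\theta_j\le t$. Conditioning on $\vartheta_i=x$ and applying Slivnyak together with the Poisson p.g.f.\ against the intensity $\mathbf{1}\{\theta'\le t\}\,d\vartheta'\,d\theta'$ yields
\begin{equation*}
\mathbb{E}[N_t] \;=\; t\int_0^\infty \bigl(1-(1-W(x,x))e^{-t\mu(x)}\bigr)\,dx,
\end{equation*}
the self-loop correction being $O\bigl(t\int W(x,x)\,dx\bigr) = O(t)$. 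Writing $1-e^{-t\mu(x)}=\int_0^{t\mu(x)}e^{-y}\,dy$, Fubini together with the identity $|\{x:\mu(x)>y/t\}|=\mu^{-1}(y/t)$ (valid since $\mu$ is non-increasing and right-continuous) gives the clean representation
\begin{equation*}
t\int_0^\infty (1-e^{-t\mu(x)})\,dx \;=\; t\int_0^\infty e^{-y}\mu^{-1}(y/t)\,dy.
\end{equation*}

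The key step is to insert the second-order expansion of $\mu^{-1}$. Fix a small $\delta>0$ and split at $A=t^{1-\delta}$. For $y\le A$ one has $y/t\le t^{-\delta}\to 0$ uniformly, so the tail assumption applies and $\mu^{-1}(y/t)=c_0(y/t)^{-\alpha_0}+O((y/t)^{\beta-\alpha_0})$. The remainder is integrable against $e^{-y}$ on $[0,\infty)$ because $\beta-\alpha_0>-1$, so
\begin{equation*}
t\int_0^A e^{-y}\mu^{-1}(y/t)\,dy \;=\; c_0\, t^{1+\alpha_0}\Gamma(1-\alpha_0)+O(t^{1+\alpha_0-\beta}),
\end{equation*}
where extending the principal integral from $[0,A]$ back to $[0,\infty)$ costs only exponentially small corrections. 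For $y>A$, the tail assumption gives $\mu^{-1}(y/t)\le\mu^{-1}(t^{-\delta})\lesssim t^{\delta\alpha_0}$, so combined with $e^{-y}\le e^{-A}$ the remaining integral is negligible.

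Applying the tail assumption once more at argument $t^{-1}$ gives $\mu^{-1}(t^{-1})=c_0\, t^{\alpha_0}(1+O(t^{-\beta}))$, and combining all the pieces yields
\begin{equation*}
\mathbb{E}[N_t] \;=\; t\mu^{-1}(t^{-1})\Gamma(1-\alpha_0)\bigl(1+O(t^{-\eta})\bigr),\qquad \eta=\min(\beta,\alpha_0).
\end{equation*}
The delicate part is the middle step: one has to verify that the $O((y/t)^{\beta-\alpha_0})$ remainder integrates harmlessly against $e^{-y}$ uniformly in $t$ (which is where $\beta>0$ enters), and that the split at $A=t^{1-\delta}$ cleanly separates the polynomial-in-$t^{-1}$ regime from the exponentially suppressed tail. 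The Slivnyak--Mecke step and the layer-cake identity are routine and parallel those used for the simple graphex in Caron--Rousseau.
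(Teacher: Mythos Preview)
Your proof is correct and in fact somewhat cleaner than the paper's. Both arguments start from the same Slivnyak--Mecke identity
\[
\mathbb{E}[N_t] = t\int_0^\infty\bigl(1-(1-W(x,x))e^{-t\mu(x)}\bigr)\,dx
\]
and handle the self-loop term as $O(t)$. The difference is in how the main integral is analysed. The paper stays in the $x$-variable: it introduces the exact power law $\bar\mu(x)=c_0^{1/\alpha_0}x^{-1/\alpha_0}$, translates the second-order assumption on $\mu^{-1}$ into a pointwise bound $|\mu(x)-\bar\mu(x)|\lesssim \bar\mu(x)^{\beta+\alpha_0+1}$ for large $x$, and then bounds $t\int|e^{-t\mu}-e^{-t\bar\mu}|$ by a mean-value estimate. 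You instead pass to the $y$-variable via the layer-cake identity $t\int(1-e^{-t\mu(x)})dx=t\int e^{-y}\mu^{-1}(y/t)\,dy$, which lets you plug the tail expansion of $\mu^{-1}$ in directly without ever inverting it back to $\mu$. This is a genuine simplification: it avoids the somewhat delicate step of converting a remainder bound on $\mu^{-1}$ into one on $\mu$, and the integrability of the $O((y/t)^{\beta-\alpha_0})$ remainder against $e^{-y}$ is immediate from $\beta>0>\alpha_0-1$. Both routes give $\eta=\min(\beta,\alpha_0)$ once the $O(t)$ self-loop term is accounted for; the paper's stated $\eta=\alpha_0$ should be read as any $\eta<\alpha_0$ when $\beta\ge\alpha_0$.
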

\begin{proof}
  As for the proof of \cite[\cref{main-th:asympmultigraphex}]{caron:naulet:rousseau:main},
\begin{align}
  \label{eq:234}
  \EE[\nver]%
  &= t\int_{\NNReals}\Big(1 - e^{-t\mu(x)}\Big)\,\intd x%
    +t \int_{\NNReals}W(x,x)e^{-t\mu(x)}\,\intd x.
\end{align}
The second term in the previous display is $O(t)$ as $x \mapsto W(x,x)$ is
integrable. Define $\bar \mu$ as
\begin{equation}\label{barmu}
\bar \mu(x) = c_0^{1/\alpha_0} x^{-1/\alpha_0}, \quad \text{so that } \,  t\int_{\NNReals}\Big(1 - e^{-t\bar \mu(x)}\Big)dx = t^{1+\alpha_0} c_0\Gamma(1 - \alpha_0).
\end{equation}
Then since $|\mu^{-1}(y)  - \bar\mu^{-1}(y) | \leq C y^{\beta}$ when $y$ is close to 0, then there exists $x_0$ such that for all $x \geq x_0$,
\begin{equation}\label{Deltamu}
\left|  x  -c_0 \mu(x)^{-\alpha_0} \right| \leq C \mu(x)^{ \beta }  \quad |\mu(x) - \bar \mu(x) | \leq C' \bar \mu(x)^{ \beta + \alpha_0 +1}
\end{equation}
for some $C'>0$. We split the first  integral of \cref{eq:234} between $x\leq x_0$ and $x>x_0$.
We have
\begin{align*}
|t\int_{0}^{x_0}\Big( e^{-t\bar \mu(x)} - e^{-t\mu(x)}\Big)\,\intd x | \leq t [e^{-t\mu(x_0}+e^{-t\mu(x_0)}] x_0 = o(t)
\end{align*}
and
\begin{align*}
\left|t\int_{x_0}^\infty\Big( e^{-t\bar \mu(x)} - e^{-t\mu(x)}\Big)dx \right| & \leq t^2 \int_{x_0}^\infty |\mu(x)-\bar \mu(x) |e^{-t\bar \mu(x)} dx  \\
& \lesssim t^{ 1 - \beta} = o(t)
\end{align*}
so that \cref{eq:33} is proved with $\eta = \alpha_0$.
\end{proof}

\subsection{Proof of \texorpdfstring{\cite[\cref{main-lem:2}]{caron:naulet:rousseau:main}}{Lemma \ref{main-lem:2}}}
\label{sec:proof-lem:2}


The proof first approximates  $-\sparsfunc'(\alpha)$ using the subgraph $\mathcal G_t^{(s)}$ obtained from
$\mathcal{G}_t$ by removing the multiedges and selfedges and \cite[\cref{main-lemma:Ntilde}]{caron:naulet:rousseau:main}.  More precisely define
$Z_{i,j} \coloneqq \Ind_{\tilde{n}_{i,j}\geq 1}$ if $i\ne j$, and $Z_{i,i} =
0$. The variables $Z_{i,j}$ with $i\neq j $ have a Bernoulli distribution with expectation
$W(x,y)$ where $W(x,y) = 1 - W_m(x,y,0)$ and
let $D_{t,i}^{(s)} \coloneqq \sum_{j\neq i} Z_{i,j}\Ind_{\theta_j \leq t}$ denote the
degrees of node $i$ in the simple graph $\mathcal{G}_t^{(s)}$.   We define
$X_t(\alpha) \coloneqq \sum_i\Ind_{\theta_i\leq t}\sum_{k\geq
  1}\frac{1}{k-\alpha}\Ind_{D_{t,i}^{(s)}>k}$ which  is the
simple graph analogous of $-\sparsfunc'(\alpha)$.

In a first step, we reduce to the simple graph model by showing that under
\cite[\cref{main-ass:Wm}]{caron:naulet:rousseau:main}, we have
$-\sparsfunc'(\alpha) - X_t(\alpha) = O(( \widetilde{N}_t + N_t^{\mathrm{se}}) \log(\nedg)) = o(t^{1+\alpha_0/2+\delta})$
for any $\delta>0$, almost-surely, where $\widetilde{N}_t$ is as in the
\cite[\cref{main-lemma:Ntilde}]{caron:naulet:rousseau:main} and $N_t^{\mathrm{se}}$ is the number of vertices with at
least one selfedge in $\mathcal{G}_t$. This implies that is enough to understand
$X_t(\alpha)$. We will proceed by showing in a second time that
\begin{equation*}
  X_t(\alpha) = U_t(\alpha) + V_t(\alpha),
\end{equation*}
where
\begin{equation*}
  U_t(\alpha)%
  \coloneqq%
  \sum_i\Ind_{\theta_i \leq t} \int_0^1 \frac{1 -
    (1-u)^{D_{t,i}^{(s)}}}{u(1-u)^{\alpha}}\,\intd u,
\end{equation*}
and,
\begin{equation*}
  V_t(\alpha)%
  \coloneqq%
  - \sum_i\Ind_{\theta_i\leq t}\Ind_{D_{t,i}^{(s)} \geq 1}\int_0 ^1
  \frac{(1-u)^{D_{t,i}^{(s)}}}{(1-u)^{1+\alpha}}\,\intd u.
\end{equation*}
Finally, the proof is finished by computing the expectations and variances of
$U_t(\alpha)$ and $V_t(\alpha)$.

\subsubsection{Control of $-\sparsfunc'(\alpha) - X_t(\alpha)$.}

By definition,
\begin{equation*}
  -\sparsfunc'(\alpha) - X_t(\alpha)%
  = \sum_i \Ind_{\theta_i \leq t}\sum_{k\geq
  1}\frac{1}{k-\alpha}\big(\Ind_{D_{t,i}> k} - \Ind_{D_{t,i}^{(s)}> k} \big).
\end{equation*}
Since $Z_{i,j} \leq \tilde{n}_{i,j}$,  $D_{t,i}^{(s)} \leq
D_{t,i}$  and
\begin{align*}
 0 \leq  -\sparsfunc'(\alpha)& - X_t(\alpha)%
  = \sum_i \Ind_{\theta_i \leq t}\sum_{k\geq 1}
    \frac{1}{k-\alpha}\Ind_{D_{t,i}^{(s)}\leq k}\Ind_{D_{t,i}> k}\\
  &\leq \sum_i\Ind_{\theta_i\leq t}\Ind_{D_{t,i} - D_{t,i}^{(s)}\geq
    1}\sum_{k\geq 1}\frac{1}{k-\alpha}\Ind_{D_{t,i} > k},
\end{align*}
where the last line follows because
$(D_{t,i}^{(s)} \leq k \ \mathrm{and}\ D_{t,i}> k) \Rightarrow D_{t,i} - D_{t,i}^{(s)}\geq 1$.
Now we remark that
$\sum_{k \geq 1}\frac{1}{k-\alpha}\Ind_{D_{t,i}>k} \leq \sum_{k \geq 1}\frac{1}{k-\alpha}\Ind_{\nedg > k} =(1+o(1)) \log(\nedg)$.
In addition,
\begin{align*}
  \sum_i\Ind_{\PPlab_i\leq t}\Ind_{D_{t,i} - D_{t,i}^{(s)}\geq 1}%
  &= \sum_i\Ind_{\PPlab_i\leq t}\Ind_{D_{t,i} - D_{t,i}^{(s)}\geq
    1}\Ind_{\tilde{n}_{i,i}= 0}%
    + \sum_i\Ind_{\PPlab_i\leq t}\Ind_{D_{t,i} - D_{t,i}^{(s)}\geq
    1}\Ind_{\tilde{n}_{i,i}\geq 1}\\
  &\leq \widetilde{N}_t + N_t^{\mathrm{se}}.
\end{align*}
Hence
\begin{align*}
 0 \leq  -\sparsfunc'(\alpha)& - X_t(\alpha) \leq 2(\widetilde{N}_t + N_t^{\mathrm{se}} ) \log(\nedg).
\end{align*}
Moreover, from \cite[\cref{main-lemma:Ntilde}]{caron:naulet:rousseau:main}, for any $\delta>0$,%
$$ \EE( \widetilde{N}_t ) =o (t^{1 + \alpha_0/2+ \delta})$$
and
$$ \EE (N_t^{\mathrm{se}} ) \leq \EE ( \sum_i \1{\theta_i\leq t} \tilde n_{ii}  ) = t \int_{\mathbb R_+} W(x,x)dx $$
so that for any $\delta>0$,
\begin{align}
  \label{eq:90}
\EE[ -\sparsfunc'(\alpha)] = \EE(X_t(\alpha) ) + O(\EE( [\widetilde{N}_t + N_t^{\mathrm{se}} ]) \log t)  = o (t^{1 + \alpha_0/2+ \delta}).
\end{align}

\subsubsection{Proof that $X_t(\alpha) = U_t(\alpha) + V_t(\alpha)$}

The decomposition  of $X_t$ into $U_t + V_t $ follows by remarking that
$\sum_{k=1}^{m-1}\frac{1}{k-\alpha} = \varphi(m-\alpha) - \varphi(1 - \alpha)$,
where $\varphi(x) = \Gamma'(x)/\Gamma(x)$ is the digamma function. Using the
integral representation of the digamma function, we can rewrite
\begin{align*}
  \varphi(m-\alpha) - \varphi(1- \alpha)
  &= \Ind_{m\geq 1}\int_0^{\infty}\frac{e^{-(1-\alpha)u}}{1 - e^{-u}}\Big(1 - e^{-(m-1)u}
    \Big)\,\intd u\\
  &= \Ind_{m\geq 1}\int_0^1 \frac{1}{u(1-u)^{\alpha}}\Big(1 - (1-u)^{m-1} \Big)\,\intd u.
\end{align*}

\subsubsection{Computation of $\EE[U_t(\alpha)+V_t(\alpha)]$ and proof of
  \texorpdfstring{\cite[\cref{main-eq:170}]{caron:naulet:rousseau:main}}{equation \ref{main-eq:170}}}
\label{sec:comp-eeu_t-proof}

By the \cref{eq:90} and by the fact that $X_t(\alpha) = U_t(\alpha) + V_t(\alpha)$, it is
obvious that the \cite[\cref{main-eq:170}]{caron:naulet:rousseau:main} will follow from the computation of
$\EE[U_t(\alpha) + V_t(\alpha)]$, which we do now.

Since
$\EE[(1 - u)^{D_{t,i}^{(s)}} \mid \PoiPr] = \exp\{\sum_{j\neq i}\Ind_{\PPlab_i\leq
  t}\log(1 - u W(\PPlat_i,\PPlat_j)) \}$
   and by  a combination of the Slivnyak-Mecke's formula, Fubini's theorem, and Campbell's
formula, we obtain
\begin{align*}
  \EE[U_t(\alpha)+V_t(\alpha)]
  &= t \int_0^{\infty} \int_0^1 \left( \frac{1 - e^{- u t\mu(x)}}{u(1-u)^{\alpha}} -\frac{  e^{- u t\mu(x)} (1- e^{-  (1-u)t\mu(x)} )}{(1- u)^{1+\alpha}} \right)\,\intd u \\
\notag
   &\eqqcolon G( t, \mu)
\end{align*}
We show below that $G(t,\mu) - G(t, \bar \mu)  = O( G(t, \bar \mu) t^{-\eta})$ for some $\eta>0$ with
$ \bar \mu(x)$ defined in \cref{barmu}, and
\begin{align*}
 G(t, \bar \mu)  &= \alpha_0 c_0 t^{1+\alpha_0} \int_0^{\infty} \int_0^1 \left( \frac{1 - e^{- u y }}{u} -\frac{  e^{- uy}- e^{-y} )}{1- u} \right)(1-u)^{-\alpha} y^{-1-\alpha_0}\,\intd \\
 & =  \alpha_0 c_0 t^{1+\alpha_0}  \left( \int_0^{\infty} \int_0^1\frac{ y^{-\alpha_0} }{ (1-u)^{\alpha}}e^{-uy}  dydu\left[ \frac{1}{\alpha_0}  - \frac{1}{\alpha} \right]  + \int_0^{\infty} \frac{ (1  - e^{-y}) y^{-\alpha_0-1} }{\alpha }  dy \right)  \\
 &= \frac{  \alpha_0 c_0 t^{1+\alpha_0} \Gamma(1-\alpha_0)}{ \alpha_0}\left( \frac{ 1 }{\alpha}
 +\left[ 1 - \frac{\alpha_0}{\alpha} \right]\int_0^1u^{\alpha_0-1} (1-u)^{\alpha} du\right) \\
 &= \frac{  \alpha_0 c_0 t^{1+\alpha_0} \Gamma(1-\alpha_0)}{ \alpha_0}\left( \frac{ 1 }{\alpha}
 +\left[ 1 - \frac{\alpha_0}{\alpha} \right]\frac{ \Gamma(1 -\alpha) \Gamma(\alpha_0) }{ \Gamma(1 + \alpha_0-\alpha)} \right).
 \end{align*}
 Hence to prove the first part of \cite[\cref{main-lem:2}]{caron:naulet:rousseau:main}, it remains to show that $G(t,\mu) - G(t, \bar \mu)  = O( G(t, \bar \mu) t^{-\eta})$.

We have, writing $\Delta(x)  = \mu(x) - \bar \mu(x) $,
\begin{align*}
  &G(t,\mu) - G(t, \bar \mu)\\
  &\qquad= t \int_0^{\infty} \int_0^1 \left( \frac{e^{- u t\bar\mu(x)  } - e^{- u t\mu(x)}}{u(1-u)^{\alpha}} -\frac{ [ e^{- u t\mu(x)} -e^{- u t\bar \mu(x)}- e^{-  t\mu(x)}  +e^{-  t\bar\mu(x)}  )  }{(1- u)^{1+\alpha}} \right)\,\intd u \\
  &\qquad=  t \int_0^{\infty} \int_0^1   \left( \frac{e^{- u t\bar\mu(x)  }(1- e^{ u t\Delta(x) } ) }{u(1-u)^{\alpha}} -\frac{ [ e^{- u t\bar \mu(x)}( 1 - e^{ u t\Delta(x)} )- e^{-  t\bar\mu(x)}  (1-e^{  t\Delta(x)}  )]  }{(1- u)^{1+\alpha}} \right)\,\intd u.
 \end{align*}
 We split the above integrals into $x \leq x_0$ and $x>x_0$ , with $x_0$ defined by \cref{Deltamu}.
We have
\begin{align*}
 I_1 &= t \int_0^{x_0} \int_0^1 \left( \frac{e^{- u t\bar\mu(x)  } - e^{- u t\mu(x)}}{u(1-u)^{\alpha}} -\frac{ [ e^{- u t\mu(x)} -e^{- u t\bar \mu(x)}- e^{-  t\mu(x)}  +e^{-  t\bar\mu(x)}  )  }{(1- u)^{1+\alpha}} \right)\,\intd u \\
     & \leq x_0 \int_0^1  \frac{e^{- u t\bar\mu(x_0)  } + e^{- u t\mu(x_0)} }{u(1-u)^{\alpha}} du + t\int_0^1\frac{ \int_0^{x_0}[ \mu(x) e^{- u t\mu(x)} + \bar \mu(x) e^{- u t\bar \mu(x)} }{ (1-u)^{\alpha}} \intd x \intd u\\
 & \lesssim  e^{- a_0 t } + 1
\end{align*}
for some $a_0>0$ when $t$ is large enough and
\begin{align*}
 I_2 &= t \int_{x_0}^\infty\int_0^1 \left( \frac{ e^{- u t\bar\mu(x)  } - e^{- u t\mu(x)} }{u(1-u)^{\alpha}} -\frac{ [ e^{- u t\mu(x)} -e^{- u t\bar \mu(x)}- e^{-  t\mu(x)}  +e^{-  t\bar\mu(x)}  ]  }{(1- u)^{1+\alpha}} \right)\,\intd u \\
 &=   t \int_{x_0}^{\infty} \int_0^1 (1 - e^{ u t\Delta(x)} ) \left( \frac{ e^{- u t\bar\mu(x)  }  }{ u }  -\frac{ [ e^{- u t\bar \mu(x)} - e^{-  t\bar\mu(x)} ]  }{1- u} \right)(1-u)^{-\alpha}\,\intd u \\
&\quad  + t \int_{x_0}^{\infty} \int_0^1 (  e^{ u t\Delta(x)} -e^{  t\Delta(x)} ) \frac{  e^{-  t\bar\mu(x)}   }{ (1-u)^{\alpha+1} }\,\intd u \\
& \leq t^2 \int_{x_0}^\infty \bar \mu(x)^{\beta+ \alpha_0+1} \int_0^1\frac{ e^{- u t\bar\mu(x)  } }{ (1-u)^\alpha} du dx + t^2 \int_{x_0}^\infty \bar \mu(x)^{\beta+ \alpha_0+1}\frac{ u| e^{- u t\bar \mu(x)} - e^{-  t\bar\mu(x)} |  }{(1- u)^{\alpha+1}} \intd u\\
&\quad +  t \int_{x_0}^{\infty} \int_0^1 e^{ u t\Delta(x)} (  1 -e^{ (1-u) t\Delta(x)} ) \frac{  e^{-  t\bar\mu(x)}   }{ (1-u)^{\alpha+1} }\,\intd u\\
& \lesssim t^{ 1-\beta}  = o( t^{1+ \alpha_0 - \eta} ).
\end{align*}
So that we finally obtain that
$$ G(t,\mu) = G ( t , \bar \mu ) (1 + o(1) ) $$
and
$$ \EE[X_t(\alpha)]
    = t \mu^{-1}(t^{-1})\Gamma(1-\alpha_0)\Big(1 +
    O\big(t^{-\eta}\big) \Big)\Big\{ \frac{1}{\alpha} + \Big(1 -
    \frac{\alpha_0}{\alpha}\Big)\frac{\Gamma(1-\alpha)\Gamma(\alpha_0)}{\Gamma(1+\alpha_0
      - \alpha)}\Big\}.
$$

\subsubsection{Proof of \texorpdfstring{\cite[\cref{main-eq:232}]{caron:naulet:rousseau:main}}{equation \ref{main-eq:232}}}

By the results of
\begin{align*}
  &\PP( |-\sparsfunc'(\alpha) - \EE[-\sparsfunc'(\alpha)] | > t^{ 1 + \alpha_0 - \eta+\delta} )\\
  &\qquad\qquad\leq \PP( |X(t) - \EE[X_t(\alpha)] | > t^{ 1 + \alpha_0 - \eta+\delta}/2 ) + \PP(  [\widetilde{N}_t + N_t^{\mathrm{se}} ] > t^{ 1 + \alpha_0 - \eta+\delta}/2 )\\
  &\qquad\qquad\leq \PP( |X(t) - \EE[X_t(\alpha)] | > t^{ 1 + \alpha_0 - \eta+\delta}/2 ) + t^{ -\alpha_0/2 + \eta - \delta })\\
  &\qquad\qquad\leq \PP( |X(t) - \EE[X_t(\alpha)] | > t^{ 1 + \alpha_0 - \eta+\delta}/2 ) + 2t^{-\delta},
\end{align*}
by choosing $\eta \leq \alpha_0$. We control $\PP( |X(t) - \EE[X_t(\alpha)] | > t^{ 1 + \alpha_0 - \eta+\delta}/2 $ by bounding the variance of $U_t$ and of $V_t$.

\subsubsection{Variance of  $U_t(\alpha)$}

We split $\EE[U_t(\alpha)^2]$ into two terms:
\begin{multline*}
  \EE\Big[\sum_{i \in \Nats}\Ind_{\PPlab_i\leq t}\int_{[0,1]^2}%
  \frac{\big( 1 - (1-u)^{D_{t,i}^{(s)}} \big)\big( 1 - (1-v)^{D_{t,i}^{(s)}}
    \big)}{uv(1-u)^{\alpha}(1-v)^{\alpha}}
  \intd u \intd v \Big]\\
  +\EE\Big[\sum_{i \in \Nats}\sum_{j\ne i}\Ind_{\PPlab_i\leq t}\Ind_{\PPlab_j
    \leq t} \int_{[0,1]^2}%
  \frac{\big( 1 - (1-u)^{D_{t,i}^{(s)}} \big)\big( 1 - (1-v)^{D_{t,j}^{(s)}}
    \big)}{uv(1-u)^{\alpha}(1-v)^{\alpha}}
  \intd u \intd v \Big].
\end{multline*}
Let's call for simplicity those two terms $A_1$ and $A_2$, respectively. We
start with the more delicate one, $A_2$. For simplicity from now on we will
write $D_i = D_{t,i}^{(s)}$, and we also define $D_i^{-j} = \sum_{k\ne
  j}Z_{i,k}\Ind_{\PPlab_k\leq t}$.

\paragraph*{Bound on $A_2$}

Using that $D_i = D_i^{-j} + Z_{i,j}$, we decompose,
\begin{align*}
  (1 - (1-u)^{D_i}  )(1 - (1-v)^{D_j} )%
  &= (1 - (1-u)^{D_i^{-j}} )(1 - (1-v)^{D_j^{-i}} )\\
  &\quad + (1-v)^{D_j^{-i}}(1 - (1-v)^{Z_{i,j}})\\
  &\quad + (1-u)^{D_i^{-j}}(1 - (1-u)^{Z_{i,j}})\\
  &\quad - (1-u)^{D_i^{-j}}(1-v)^{D_j^{-i}}(1 - (1-u)^{Z_{i,j}}(1-v)^{Z_{i,j}}),
\end{align*}
which is equal to,
\begin{align*}
  &(1 - (1-u)^{D_i^{-j}} )(1 - (1-v)^{D_j^{-i}} )\\
  &+ (1-v)^{D_j^{-i}}(1-(1-u)^{D_i^{-j}})(1 - (1-v)^{Z_{i,j}})\\
  &+ (1-u)^{D_i^{-j}}(1-(1-v)^{D_j^{-i}})(1 - (1-u)^{Z_{i,j}})\\
  &+ (1-u)^{D_i^{-j}}(1-v)^{D_j^{-i}}(1 - (1-u)^{Z_{i,j}} - (1-v)^{Z_{i,j}} + (1-u)^{Z_{i,j}}(1-v)^{Z_{i,j}}).
\end{align*}
Using that $Z_{i,j} \in \Set{0,1}$, we deduce that that,
\begin{align*}
  (1 - (1-u)^{D_i}  )(1 - (1-v)^{D_j} )%
  &=(1 - (1-u)^{D_i^{-j}} )(1 - (1-v)^{D_j^{-i}} )\\
  &\quad+ (1-v)^{D_j^{-i}}(1-(1-u)^{D_i^{-j}})(1 - (1-v)^{Z_{i,j}})\\
  &\quad+ (1-u)^{D_i^{-j}}(1-(1-v)^{D_j^{-i}})(1 - (1-u)^{Z_{i,j}})\\
  &\quad+ (1-u)^{D_i^{-j}}(1-v)^{D_j^{-i}}(1 - (1-uv)^{Z_{i,j}}).
\end{align*}
Then by symmetry, $A_2 \leq A_{2,1} + 2A_{2,2} + A_{2,3}$, where,
\begin{gather*}
  A_{2,1}%
  \coloneqq%
  \EE\Big[\sum_{i \in \Nats}\sum_{j\ne i}\Ind_{\PPlab_i\leq t}\Ind_{\PPlab_j
    \leq t} \int_{[0,1]^2}%
  \frac{\big( 1 - (1-u)^{D_i^{-j}} \big)\big( 1 -
    (1-v)^{D_j^{-i}}\big)}{uv(1-u)^{\alpha}(1-v)^{\alpha}}
  \intd u \intd v \Big],\\
  A_{2,2}%
  \coloneqq%
  \EE\Big[\sum_{i \in \Nats}\sum_{j\ne i}\Ind_{\PPlab_i\leq t}\Ind_{\PPlab_j
    \leq t} \int_{[0,1]^2}%
  \frac{(1-u)^{D_i^{-j}}(1-(1-v)^{D_j^{-i}})(1-(1-u)^{Z_{i,j}})
  }{uv(1-u)^{\alpha}(1-v)^{\alpha}} \intd u \intd v \Big],\\
  A_{2,3}%
  \coloneqq%
  \EE\Big[\sum_{i \in \Nats}\sum_{j\ne i}\Ind_{\PPlab_i\leq t}\Ind_{\PPlab_j
    \leq t} \int_{[0,1]^2}%
  \frac{(1-u)^{D_i^{-j}}(1-v)^{D_j^{-i}}(1-(1-uv)^{Z_{i,j}}) }{uv(1-u)^{\alpha}(1-v)^{\alpha}} \intd u \intd v \Big]
\end{gather*}

\paragraph*{Bound on $A_{2,1}$}

Conditional on $\PoiPr$, the variables $D_i^{-j}$ and $D_j^{-i}$ are independent
as long as $i \ne j$. Then, we obtain that,
\begin{align*}
  \EE[(1 - (1-u)^{D_i^{-j}})(1-(1-v)^{D_j^{-i}})\mid\PoiPr]%
  &= \Big(1 - e^{\sum_{k\ne j}\Ind_{\PPlab_k\leq t}\log(1 -
    uW(\PPlat_i,\PPlat_k) }\Big)\\
  &\quad \times\Big(1 - e^{\sum_{k\ne i}\Ind_{\PPlab_k\leq t}\log(1 -
    uW(\PPlat_j,\PPlat_k)} \Big).
\end{align*}
Write
$\lambda_{u,x} \coloneqq \sum_k\Ind_{\PPlab_k\leq t}\log(1-uW(\PPlat_k,x))$ for
simplicity. Recall that by assumption that $W(x,x)$ by assumption. From the last
display and the Slivnyak-Mecke formula,
\begin{align*}
  A_{2,1} = t^2\int_{\NNReals^2} \int_{[0,1]^2} \frac{\EE[(1-e^{\lambda_{u,x}})(1-e^{\lambda_{v,y} })]}{u(1-u)^{\alpha} v(1-v)^{\alpha}} \,\intd u\intd
  v\intd x\intd y.
\end{align*}
We compute the expectation within the last display using Campbell's formula,
\begin{align*}
  \EE[(1-e^{\lambda_{u,x}})(1-e^{\lambda_{v,y} })]
  &=1%
    -e^{-u t\mu(x)}%
    -e^{-v t \mu(y)}%
    + e^{-u t\mu(x)}e^{-vt\mu(y)}e^{uv t\nu(x,y)}\\
  &=%
    \big(1-e^{-ut\mu(x)}\Big)\Big(1-e^{-vt\mu(y)}\Big)%
    + e^{-u t\mu(x)}e^{-vt\mu(y)}\big(e^{uv t\nu(x,y)} - 1\big).
\end{align*}
From the computations of \cref{sec:comp-eeu_t-proof}, it is easily deduced that
\begin{equation*}
  \EE[U_t(\alpha)] = t \int_{\NNReals}\int_{[0,1]} \frac{1- e^{-ut\mu(x)}}{u(1-u)^{\alpha}}\intd u \intd x.
\end{equation*}
Therefore,
\begin{align*}
  A_{2,1}%
  &\leq \EE[U_t(\alpha)]^2 + t^2 \int_{\NNReals^2}\int_{[0,1]^2}
    \frac{e^{-u t\mu(x)}e^{-vt\mu(y)}\big(e^{uvt\nu(x,y)}-1
    \big)}{u(1-u)^{\alpha}v(1-v)^{\alpha}}\intd u \intd v \intd x \intd y.
\end{align*}
Since $e^{uvt\nu(x,y)}-1 \leq uvt\nu(x,y)e^{uvt\nu(x,y)} \leq
uvt\nu(x,y)e^{\frac{u^2}{2}t\mu(x)}e^{\frac{v^2}{2}t\mu(y)}$, see for instance
\cite[Proof of Lemma~S3.7]{caron:rousseau:18}
then $e^{uvt\nu(x,y)}-1 \leq \ell_3(x)\ell_3(y) \mu(x)^a\mu(y)^a uv t
e^{\frac{u}{2}\mu(x)}e^{\frac{v}{2}\mu(y)}$ under the \cite[\cref{main-assumpt:2}]{caron:naulet:rousseau:main}, and thus
\begin{align*}
  A_{2,1}%
  &\leq%
    \EE[U_t(\alpha)^2]%
    +t^3\Big\{
    \int_{\NNReals}\int_{[0,1]}\frac{\ell_3(x)\mu(x)^ae^{-\frac{u}{2}t\mu(x)}}{(1
    -u)^{\alpha}}\,\intd u\intd x \Big\}^2
\end{align*}
But,
\begin{align*}
  \int_0^1\frac{e^{-\frac{1}{2}ut\mu(x)}\intd u}{(1-u)^{\alpha}}%
  &=\int_0^{1/2}\frac{e^{-\frac{1}{2}ut\mu(x)}\intd u}{(1-u)^{\alpha}} +
    \int_{1/2}^1\frac{e^{-\frac{1}{2}ut\mu(x)}\,\intd u}{(1-u)^{\alpha}}\\
  &\leq 2^{\alpha}\int_0^{1/2}e^{-\frac{1}{2}ut\mu(x)}\,\intd u%
    + e^{-\frac{1}{4}t\mu(x)}\int_{1/2}^1\frac{\intd u}{(1-u)^{\alpha}}\\
  &\leq 2^{\alpha}\frac{1 - e^{-\frac{1}{4}t\mu(x)}}{t\mu(x)}%
    + \frac{1}{1-\alpha}e^{-\frac{1}{4}t\mu(x)}.
\end{align*}
Therefore,
\begin{multline*}
  t^{3/2}\int_{\NNReals}\int_{[0,1]}\frac{\ell_3(x)\mu(x)^ae^{-\frac{u}{2}t\mu(x)}}{(1 -u)^{\alpha}}\,\intd u\intd
  x\\%
  \lesssim t^{3/2}\int_{\NNReals} \ell_3(x)\mu(x)^a \frac{(1-e^{-\frac{1}{4}t\mu(x)})}{t \mu(x)}\intd
  x%
  + t^{3/2}\int_{\NNReals}\ell_3(x)\mu(x)^ae^{-\frac{1}{4}t\mu(x)}\intd x.
\end{multline*}
The second integral in the rhs of the last display is $O(t^{3/2+\alpha_0-a+\delta})$ for
every $\delta > 0$ by
\cite[Lemma~S3.4]{caron:rousseau:18}%
Regarding the first integral, we observe that $(1-e^{-x})/x \leq 1$ for all
$x > 0$, so that
\begin{align*}
  \int_{\NNReals}\1{t\mu(x) \leq 1} \ell_3(x)\mu(x)^a \frac{(1-e^{-\frac{1}{4}t\mu(x)})}{t \mu(x)/4}\intd x%
  &\leq \int_{\NNReals}\1{t\mu(x)\leq 1}\ell_3(x)\mu(x)^a\intd x\\
  &\lesssim \int_{\NNReals}\ell_3(x)\mu(x)^a e^{-\frac{1}{4}t\mu(x)}\intd x\\
  &= O(t^{\alpha_0-a+\delta}),
\end{align*}
by the same argument as above, and also because $a \leq 1$ necessarily,%
\begin{align*}
  \int_{\NNReals}\1{t\mu(x) > 1} \ell_3(x)\mu(x)^a \frac{(1-e^{-\frac{1}{4}t\mu(x)})}{t \mu(x)}\intd x%
  &\lesssim \frac{1}{t^a} \int_{\NNReals}\1{t\mu(x)> 1}\ell_3(x)\frac{1 - e^{-\frac{1}{4}t\mu(x)}}{(t\mu(x))^{1-a}}\intd x\\
  &\lesssim \frac{1}{t^a} \int_{\NNReals}\ell_3(x)(1 - e^{-\frac{1}{4}t\mu(x)})\intd x\\%
  &= O(t^{\alpha_0-a+\delta}),
\end{align*}
where the last equality follows from \cite[Lemma~S3.5]{caron:rousseau:18}.
We have shown that, for any $\delta > 0$,%
\begin{equation}
  \label{eq:223}
  A_{2,1} \leq \EE[U_t(\alpha)]^2 + O(t^{3-2a+2\alpha_0+\delta}).
\end{equation}

\paragraph*{Bound on $A_{2,2}$}

Obviously $A_{2,2} \geq 0$. Further, since $(1-u)^{D_i^{-j}} \leq 1$, we have
the upper bound,
\begin{align*}
  A_{2,2}%
  &\leq  \EE\Big[\sum_{i\in \Nats}\sum_{j\ne i}\Ind_{\PPlab_i \leq t}\Ind_{\PPlab_j
    \leq t} \int_{[0,1]^2} \frac{(1-(1-v)^{D_j^{-i}})(1-(1-u)^{Z_{i,j}})}{uv(1-u)^{\alpha}(1-v)^{\alpha}} \,\intd u \intd v \Big]\\
  &= \EE\Big[\sum_{i\in \Nats}\sum_{j\ne i}\Ind_{\PPlab_i \leq
    t}\Ind_{\PPlab_j \leq t}\int_{[0,1]^2}
    \frac{\big(1 - e^{\sum_{k\ne i}\Ind_{\PPlab_k\leq t}\log(1-v
    W(\PPlat_k,\PPlat_j)) } \big)}{v(1-v)^{\alpha}} \frac{W(\PPlat_i,\PPlat_j)
    }{(1-u)^{\alpha}} \,\intd u \intd v \Big]\\
  &= \EE\Big[\sum_{i\in \Nats}\sum_{j\ne i}\Ind_{\PPlab_i \leq t}\Ind_{\PPlab_j
    \leq t}\frac{W(\PPlat_i,\PPlat_j)}{1-\alpha}%
    \int_{[0,1]}\frac{\big(1 - e^{\sum_{k\ne i}\Ind_{\PPlab_k\leq t}\log(1-v
    W(\PPlat_k,\PPlat_j)) } \big)}{v(1-v)^{\alpha}}\,\intd v
    \Big]
\end{align*}
where the second line follows by taking the conditional expectation with respect
to $\PoiPr$. Then, by the Slivnyak-Mecke formula, and then by Campbell's
formula,
\begin{align*}
  A_{2,2}%
  &\leq \frac{t^2}{1-\alpha}
    \int_{\NNReals^2} W(x,y) \int_{[0,1]}\frac{1 -
    \EE[e^{\sum_k\Ind_{\PPlab_k\leq t}\log(1-vW(\PPlat_k,y))}](1-v W(y,y))}{v(1-v)^{\alpha}}%
    \,\intd v \intd x \intd y\\
  &= \frac{t^2}{1-\alpha}
    \int_{\NNReals} \mu(y)\int_{[0,1]}\frac{1 - e^{-t v
    \mu(y)}(1-vW(y,y))}{v(1-v)^{\alpha}}\, \intd v\intd y\\
  &=\frac{t^2}{1-\alpha}
    \int_{\NNReals} \mu(y)\int_{[0,1]}\frac{1 - e^{-t v
    \mu(y)}}{v(1-v)^{\alpha}}\, \intd v\intd y%
    + \frac{t^2}{1-\alpha}
    \int_{\NNReals} \int_{[0,1]}\frac{\mu(y)W(y,y)e^{-t v
    \mu(y)}}{(1-v)^{\alpha}}\, \intd v\intd y
\end{align*}
The second term of the last display is $o(t^2)$ by dominated convergence, so it
is enough to bound the first term. For some $q \in (0,1/2)$, we first rewrite
the inner integral as
\begin{align*}
  \int_{[0,1]}\frac{1 - e^{-u t\mu(x)}}{u(1-u)^{\alpha}}\,\intd u%
  &=\int_0^q\frac{1 - e^{-u t\mu(x)}}{u(1-u)^{\alpha}}\,\intd u%
    + \int_q^1 \frac{1 - e^{-u t\mu(x)}}{u(1-u)^{\alpha}}\,\intd u\\
  &\leq \frac{1}{(1-q)^{\alpha}}\int_0^q\frac{1 - e^{-ut\mu(x)}}{u}\,\intd u%
    +\Big(1 - e^{-t\mu(x)}\Big) \int_q^1 \frac{\intd u }{u(1- u)^{\alpha}}\\
  &\leq \frac{1}{(1-q)^{\alpha}}\int_0^q t\mu(x) \,\intd u%
    + \Big(1 - e^{-t\mu(x)}\Big)\Big\{2^{\alpha}\int_q^{1/2}\frac{1}{u} + \int_{1/2}^1\frac{\intd
    u}{(1-u)^{\alpha}}\Big\}\\
  &\leq \frac{q t\mu(x)}{(1-q)^{\alpha}} + \Big(1 - e^{-t\mu(x)}\Big)\Big( 2^{\alpha}\log \frac{1}{q}%
    + \frac{1}{1-\alpha}\Big).
\end{align*}
Choosing $q = 1/t$, we obtain that as $t \to \infty$,
\begin{equation}
  \label{eq:211}
  \int_{[0,1]}\frac{1 - e^{-ut\mu(x)}}{u(1-u)^{\alpha}} \intd u%
  \leq \mu(x)(1 + o(1))%
  + 2^{\alpha}\Big(1 - e^{-t\mu(x)}\Big)(1+o(1))\log(t).
\end{equation}
Then,%
\begin{align}
  \notag
  A_{2,2}%
  &\leq%
  \frac{t^2(1+o(1))}{1-\alpha}\int_{\NNReals}\mu(x)^2\,\intd x%
  + \frac{2^{\alpha}t^2(1+o(1))}{(1-\alpha)^2}\int_{\NNReals}\mu(x)\,\intd x%
  + o(t^2)\\
  \label{eq:40}
  &=O\big(t^2\log(t)\big),
\end{align}
as both $\mu$ and $\mu^2$ are integrable by assumption.

\paragraph*{Bound on $A_{2,3}$}

Obviously $A_{2,3} \geq 0$. Furthermore, $(1 - u)^{D_i^{-j}} \leq 1$ and
$(1-v)^{D_j^{-i}} \leq 1$ too, so that we have the upper bound,
\begin{align}
  \notag
  A_{2,3}%
  &\leq \EE\Big[\sum_{i \in \Nats}\sum_{j\ne i}\Ind_{\PPlab_i\leq t}\Ind_{\PPlab_j \leq
    t}\int_{[0,1]^2} \frac{1 - (1-uv)^{Z_{i,j}}}{uv(1-u)^{\alpha}(1-v)^{\alpha}}
    \,\intd u \intd v\Big]\\
  \notag
  &= \EE\Big[\sum_{i \in \Nats}\sum_{j\ne i}\Ind_{\PPlab_i\leq t}\Ind_{\PPlab_j \leq
    t} W(\PPlat_i,\PPlat_j) \Big\{\int_{[0,1]} \frac{1}{(1-u)^{\alpha}}
    \,\intd u \Big\}^2\Big]\\
  \label{eq:209}
  &= \frac{t^2}{(1-\alpha)^2} \int_{\NNReals^2}W(x,y)\,\intd x\intd y,
\end{align}
where the second line follows by taking the conditional expectation with respect
to $\PoiPr$, and third line by the Slivnyak-Mecke formula.

\paragraph*{Bound on $A_1$}

Remark that we have,
\begin{align*}
  \EE[(1-(1-u)^{D_{t,i}^{(s)}})(1 - (1-v)^{D_{t,i}^{(s)}})  \mid \PoiPr]%
  &= 1%
    - e^{\sum_j\Ind_{\PPlab_j\leq t}\log(1-uW(\PPlat_i,\PPlat_j))}\\
  &\quad%
    - e^{\sum_j\Ind_{\PPlab_j\leq t}\log(1-vW(\PPlat_i,\PPlat_j))}\\
  &\quad%
    + e^{\sum_j\Ind_{\PPlab_j\leq t}\log(1 - (u + v -uv)W(\PPlat_i,\PPlat_j))}.
\end{align*}
Recall that $W(x,x) =0$ by assumption. Hence, by the Slivnyak-Mecke formula and
by Campbell's theorem,
\begin{align*}
  A_1%
  &=t \int_{\NNReals}\int_{[0,1]^2}\frac{F(x,u,v)}{uv(1-u)^{\alpha}(1-v)^{\alpha}}\,\intd u
    \intd v \intd x,
\end{align*}
where,
\begin{align*}
  F(x,u,v)%
  &\coloneqq%
    1 - e^{-ut\mu(x)}%
    -e^{-vt \mu(x)}%
    +e^{-(u+v-uv)t\mu(x)}\\
  &= \big(1 - e^{-ut\mu(x)} \big)\big(1 - e^{-t\mu(y)}\big)%
    +  e^{-(u+v-uv)t\mu(x)}\big(1 - e^{-uvt\mu(x)}\big).
\end{align*}
Then,
\begin{align*}
  A_1%
  &\leq%
    t \int_{\NNReals}\Big\{ \int_{[0,1]}\frac{1 - e^{-ut\mu(x)}}{u(1-u)^{\alpha}}\,\intd u \Big\}^2\, \intd x\\
  &\quad%
    + t \int_{\NNReals} \int_{[0,1]^2}\frac{e^{-(u+v-uv)t\mu(x)}(1 - e^{-uv
    t\mu(x)})}{uv(1-u)^{\alpha}(1-v)^{\alpha}}\,\intd u\intd v \intd x.
\end{align*}
Using that $e^{-(u+v-uv)t\mu(x)} \leq 1$ and
$1 - e^{-uvt(\mu(x)} \leq uv t\mu(x)$, it is immediately seen that the second
term is no more than $\frac{t^2}{(1-\alpha)^2}\int_{\NNReals}\mu(x) \,\intd
x$. So it is enough to bound the first term. We remark that the integral within
brackets has already been bounded in \cref{eq:211} and is no more than a $1+o(1)$
times $\mu(x) + 2^{\alpha}(1- e^{-t\mu(x)})\log(t)$. Then,
\begin{align}
  \label{eq:12}
  A_1
  &\lesssim t \int_{\NNReals}\mu(x)^2\,\intd x%
    + t \log^2(t) \int_{\NNReals}(1 - e^{-t\mu(x)})^2\,\intd x + O(t^2)%
  = O(t^2),
\end{align}
as $\mu^2$ is integrable by assumption, and as
$\int_{\NNReals}(1 - e^{-t\mu(x)})^2\intd x \leq \int_{\NNReals}(1 - e^{-t\mu(x)})\intd x = O(t^{\alpha})$;
see for instance \cite[Lemma~S3.5]{caron:rousseau:18}.

\paragraph*{Conclusion}

Gathering \cref{eq:223,eq:40,eq:209,eq:12}, as $t \to \infty$, for any
$\delta > 0$,
\begin{equation*}
  \EE[U_t(\alpha)^2]%
  \leq \EE[U_t(\alpha)]^2 + O\Big(t^2\log(t) \bigvee t^{3-2a+2\alpha_0+\delta} \Big).%
\end{equation*}

\subsubsection{Variance  of $V_t(\alpha)$}

Here again we write $D_i \equiv D_{t,i}^{(s)}$ and
$D_i^{-j} \equiv \sum_{k\ne j}Z_{i,k}\Ind_{\PPlab_k \leq t}$ for simplicity. We split
$\EE[V_t(\alpha)^2]$ into two terms
\begin{multline*}
  \EE\Big[\sum_{i\in \Nats}\Ind_{\PPlab_i \leq t}\Ind_{D_i \geq
    1}\int_{[0,1]^2} \frac{(1-u)^{D_i}(1-v)^{D_i}}{(1-u)^{1+\alpha}(1-v)^{1+\alpha}} \,\intd u \intd v \Big]\\
  + \EE\Big[\sum_{i \in \Nats}\sum_{j\ne i}\Ind_{\PPlab_i \leq t}\Ind_{\PPlab_j
    \leq t}\Ind_{D_i \geq 1}\Ind_{D_j \geq
    1}\int_{[0,1]^2} \frac{(1-u)^{D_i}(1-v)^{D_j}}{(1-u)^{1+\alpha}(1-v)^{1+\alpha}}  \,\intd u \intd v \Big].
\end{multline*}
Let call these two terms $A'_1$ and $A'_2$, respectively. We start by bounding
$A'_2$.

\paragraph*{Bound on $A'_2$}

Using that $Z_{i,j}\in \Set{0,1}$, we decompose
\begin{align*}
  (1-u)^{D_i}(1-v)^{D_j}\Ind_{D_i \geq 1}\Ind_{D_j\geq 1}%
  &= (1-u)^{D_i^{-j}}(1-v)^{D_j^{-i}}(1-u)^{Z_{i,j}}(1-v)^{Z_{i,j}}%
    \Ind_{D_i \geq 1}\Ind_{D_j \geq 1}\\
  &=(1-u)^{D_i^{-j}}(1-v)^{D_j^{-i}}\Ind_{D_i\geq 1}\Ind_{D_j\geq
    1}\Ind_{Z_{i,j}=0}\\%
  &\quad%
    +(1-u)^{D_i^{-j}+1}(1-v)^{D_j^{-i}+1}\Ind_{D_i\geq 1}\Ind_{D_j\geq
    1}\Ind_{Z_{i,j}=1}.
\end{align*}
Then,
\begin{equation*}
  (1-u)^{D_i}(1-v)^{D_j}\Ind_{D_i \geq 1}\Ind_{D_j\geq 1}%
  \leq%
  (1-u)^{D_i^{-j}}(1-v)^{D_j^{-i}}\Ind_{D_i^{-j}\geq 1}\Ind_{D_j^{-i}\geq
    1}%
  + (1-u)(1-v)\Ind_{Z_{i,j}=1}.
\end{equation*}
Since $D_i^{-j}$, $D_j^{-i}$ and $Z_{i,j}$ are independent conditional on
$\PoiPr$, we obtain,
\begin{multline*}
  \EE[(1-u)^{D_i}(1-v)^{D_j}\Ind_{D_i \geq 1}\Ind_{D_j\geq 1} \mid \PoiPr]\\%
  \begin{aligned}
    &\leq
    e^{\sum_{k\ne j}\Ind_{\PPlab_k\leq t}\log(1-uW(\PPlat_i,\PPlat_k))}%
    e^{\sum_{k\ne i}\Ind_{\PPlab_k\leq t}\log(1-vW(\PPlat_j,\PPlat_k))}\\%
    &\quad%
    - e^{\sum_{k\ne j}\Ind_{\PPlab_k\leq t}\log(1-uW(\PPlat_i,\PPlat_k))}%
    e^{\sum_{k\ne i}\Ind_{\PPlab_k\leq t}\log(1-W(\PPlat_j,\PPlat_k))}\\
    &\quad%
    - e^{\sum_{k\ne j}\Ind_{\PPlab_k\leq t}\log(1-W(\PPlat_i,\PPlat_k))}%
    e^{\sum_{k\ne i}\Ind_{\PPlab_k\leq t}\log(1-vW(\PPlat_j,\PPlat_k))}\\
    &\quad%
    + e^{\sum_{k\ne j}\Ind_{\PPlab_k\leq t}\log(1-W(\PPlat_i,\PPlat_k))}%
    e^{\sum_{k\ne i}\Ind_{\PPlab_k\leq t}\log(1-W(\PPlat_j,\PPlat_k))}\\%
    &\quad+ (1-u)(1-v)W(\PPlat_i,\PPlat_j).
  \end{aligned}
\end{multline*}
Recall that $W(x,x) = 0$ by assumption. Then, by Slivnyak-Mecke's formula and
by Campbell's theorem,
\begin{align*}
  A_2'%
  &\leq%
    t^2
    \int_{\NNReals^2}\int_{[0,1]^2}\frac{F(x,y,u,v)\, \intd u \intd v\intd d
    x\intd y}{(1-u)^{1+\alpha}(1-v)^{1+\alpha}}%
    + \frac{t^2}{(1-\alpha)^2}\int_{\NNReals^2}W(x,y)\intd x\intd y,
\end{align*}
where,
\begin{align*}
  F(x,y,u,v)%
  &=e^{-ut\mu(x)}e^{-vt\mu(y)}e^{uvt\nu(x,y)}%
    - e^{-ut\mu(x)}e^{-t\mu(y)}e^{ut\nu(x,y)}\\
  &\quad%
    - e^{-t\mu(x)}e^{-vt(\mu(y)}e^{vt \nu(x,y)}%
    + e^{-t\mu(x)}e^{-t\mu(y)}e^{t\nu(x,y)}.
\end{align*}
In view of the computations made in \cref{sec:comp-eeu_t-proof}, we have
\begin{equation*}
  \EE[V_t(\alpha)]%
  = - \int_{\NNReals}\int_{[0,1]}\frac{e^{-ut\mu(x)}(1-e^{-(1-u)t\mu(x)})}{(1+u)^{\alpha}}  \intd u \intd x,%
\end{equation*}
so the previous rewrites as
\begin{align}
  \label{eq:56}
  A_2'%
  \leq \EE[V_t(\alpha)]^2%
  + t^2
  \int_{\NNReals^2}\int_{[0,1]^2}\frac{\tilde{F}(x,y,u,v)\, \intd u \intd v\intd d
  x\intd y}{(1-u)^{1+\alpha}(1-v)^{1+\alpha}}%
  + O(t^2),
\end{align}
where,
\begin{align*}
  \tilde{F}(x,y,u,v)%
    &=e^{-ut\mu(x)}e^{-vt\mu(y)}(e^{uvt\nu(x,y)}-1)%
    - e^{-ut\mu(x)}e^{-t\mu(y)}(e^{ut\nu(x,y)} - 1)\\
  &\quad%
    - e^{-t\mu(x)}e^{-vt\mu(y)}(e^{vt \nu(x,y)} - 1)%
    + e^{-t\mu(x)}e^{-t\mu(y)}(e^{t\nu(x,y)} - 1).
\end{align*}
The main difficulty here is that none of term composing $\tilde{F}(u,v;x,y)$ is
integrable with respect to the measure
$\frac{\intd u \intd v }{(1-u)^{1+\alpha}(1-v)^{1+\alpha}}$, though their sum
is. We bypass the difficulty by decomposing the region of integration into four
subdomains. For some $q \in [0,1]$ to be chosen accordingly later, we let
$D_1 \coloneqq \Set{(u,v)\in [0,1]^2 \given 0 \leq u \leq q,\, 0 \leq v \leq
  q}$,
$D_2 \coloneqq \Set{(u,v) \in [0,1]^2 \given q < u \leq 1,\, 0 \leq v \leq q}$,
$D_3 \coloneqq \Set{(u,v)\in [0,1]^2 \given q < u \leq 1, q < v \leq 1}$, and
$D_4 \coloneqq \Set{(u,v) \in [0,1]^2 \given 0 \leq u \leq q,\, q < v \leq
  1}$. By symmetry, the integral of $(u,v) \mapsto \tilde{F}(u,v;x,y)$ over
$D_4$ is the same as the integral over $D_2$, and thus
\begin{equation*}
  A'_{2,1} %
  \leq  \EE[V_t(\alpha)]^2 + A'_{2,1} + 2 A'_{2,2} + A'_{2,3},
\end{equation*}
where, for $j=1,\dots,3$,
\begin{equation*}
  A'_{2,j} \coloneqq%
  t^2\int_{\NNReals^2}\int_{D_j}%
  \frac{\tilde{F}(u,v;x,y)}{(1-u)^{1+\alpha}(1-v)^{1+\alpha}}\,\intd u \intd v\intd x
  \intd y.
\end{equation*}

\paragraph*{Bound on $A'_{2,1}$}

Over $D_1$, we can bound rather quickly the integral of
$(u,v)\mapsto \tilde{F}(u,v;x,y)$ as there is no convergence issue. Indeed, it
is enough to keep the non-negative terms,%
\begin{align*}
  \tilde{F}(u,v;x,y)%
  &\leq e^{-ut\mu(x)}e^{-vt\mu(y)}\big(e^{uvt\nu(x,y)} - 1\big)%
  + e^{-t\mu(x)}e^{-t\mu(y)}\big(e^{t\nu(x,y)} -1 \big)\\
  &\leq 2t e^{-\frac{u}{2}t\mu(x)}e^{-\frac{v}{2}t\mu(y)} \ell_3(x)\ell_3(y)\mu(x)^a\mu(y)^a,
\end{align*}
where the second line follows from the same arguments as in
\cite[Lemma~S3.7]{caron:rousseau:18}. Therefore,
\begin{align}
  \notag
  A_{2,1}'%
  &\leq 2t^3\Big\{ \int_{\NNReals}\ell_3(x) \mu(x)^a \int_0^q
    \frac{e^{-\frac{u}{2}t\mu(x)}}{(1-u)^{1+\alpha}} \,\intd u \intd x
    \Big\}^2\\
  \notag
  &\leq \frac{8t}{(1-q)^{2+2\alpha}} \Big\{
    \int_{\NNReals}\ell_3(x)\mu(x)^{a-1}(1 - e^{-\frac{q}{2}t\mu(x)})\intd x
    \Big\}^2\\
  \label{eq:226}
  &= O\big(t^{3-2a+2\alpha_0 + \delta} \big).
\end{align}

\paragraph*{Bound on $A'_{2,2}$}

The challenge here is to reorganize the terms in $I(u,v;x,y)$ such that we can
obtain bounds and all the integrals still converge. We rewrite,
\begin{align*}
  \tilde{F}(u,v;x,y)%
  &=%
    e^{-vt\mu(y)}\big(e^{-ut\mu(x)} -e^{-t\mu(x)}
    \big)\big(e^{uvt\nu(x,y)} - 1\big) \\
  &\quad+ e^{-t\mu(x)}e^{-vt\mu(y)}\big(e^{uvt\nu(x,y)} - e^{vt\nu(x,y)}\Big)\\
  &\quad +e^{-t\mu(x)}e^{-t\mu(y)}\big(e^{t\nu(x,y)}- e^{ut\nu(x,y)}\big)\\
  &\quad - e^{-t\mu(y)}\big(e^{-ut \mu(x)} - e^{-t\mu(x)} \big) \big(e^{ut\nu(x,y)}
    - 1 \big).
\end{align*}
As we only need an upper bound, we keep only the non-negative terms,
\begin{align*}
  \tilde{F}(u,v;x,y)%
  &\leq%
    e^{-vt\mu(y)}\big(e^{-ut\mu(x)} -e^{-t\mu(x)}
    \big)\big(e^{uvt\nu(x,y)} - 1\big) \\
  &\quad +e^{-t\mu(x)}e^{-t\mu(y)}\big(e^{t\nu(x,y)}- e^{ut\nu(x,y)}\big)
\end{align*}
With the same arguments as usual \cite[see][Lemma~S3.7]{caron:rousseau:18}%
\begin{align*}
  \tilde{F}(u,v;x,y)%
  &\leq t^2 uv (1-u)\ell_3(x)\ell_3(y)\mu(x)^{1+a}\mu(y)^a
    e^{-\frac{v}{2}t\mu(x)}e^{- \frac{u}{2}t\mu(x)}\\
  &\quad+ t(1-u)\ell_3(x)\ell_3(y) \mu(x)^a\mu(y)^a
    e^{-\frac{1}{2}t\mu(x)}e^{-\frac{1}{2}t\mu(y)}.
\end{align*}
Then,
\begin{align*}
  A_{2,2}'%
  &\leq%
    t^4 \Big\{\int_{\NNReals} \int_q^1
    \frac{u\ell_3(x)\mu(x)^{1+a}e^{-\frac{u}{2}t\mu(x)}\,\intd u \intd x}{(1-u)^{\alpha}}
    \Big\} \Big\{\int_{\NNReals}\int_0^q \frac{v \ell_3(y) \mu(y)^a e^{-\frac{v}{2}t\mu(y)}\,\intd
    v \intd y}{(1-v)^{1+\alpha}}\Big\}\\
  &\quad+ t^3 \Big\{ \int_{\NNReals} \int_q^1
    \frac{\ell_3(x)\mu(x)^ae^{-\frac{1}{2}t\mu(x)}\intd u \intd x}{(1-u)^{\alpha}}\Big\}%
    \Big\{\int_{\NNReals}\int_0^q
    \frac{\ell_3(y)\mu(y)^ae^{-\frac{v}{2}t\mu(y)}\intd v \intd y}{(1-v)^{1+\alpha}}\Big\}.
\end{align*}
Again with the usual arguments \cite[see][Lemma~S3.4 and Lemma~S3.5]{caron:rousseau:18}
\begin{equation}
  \label{eq:228}
  A'_{2,2}%
  \leq O\big(t^{3-2a + 2\alpha_0 + \delta} \big).
\end{equation}

\paragraph*{Bound on $A'_{2,3}$}

The main challenge is to reorganize the terms in a way such that we can get
sharp upper-bounds and such that the integrals still converges.  Using that
$e^{-ut\mu(x)} = e^{-t\mu(x)} + (e^{-u t\mu(x)} - e^{-t\mu(x)})$, similarly for
$e^{-vt\mu(y)}$, we can rewrite that
\begin{align*}
  \tilde{F}(u,v;x,y)%
  &=%
    e^{-t\mu(x)}e^{-t\mu(y)}\big(e^{uv t \nu(x,y)} - 1\big)%
    +e^{-t\mu(x)}\big(e^{-vt\mu (y)}-e^{-t\mu(y)}\big)\big(e^{uvt\nu(x,y)}-1\big)\\
  &\quad%
    +\big(e^{-ut\mu(x)}-e^{-t\mu(x)}\big)\big(e^{-vt\mu(y)}-e^{-t\mu(y)}\big)\big(e^{uvt\nu(x,y)}-1\big)\\
  &\quad%
    -e^{-t\mu(x)}e^{-t\mu(y)}\big(e^{ut\nu(x,y)}-1\big)%
    -\big(e^{-ut\mu(x)}- e^{-t\mu(x)}\big)e^{-t\mu(y)}\big(e^{ut\nu(x,y)}- 1\big)\\
  &\quad%
    -e^{-t\mu(x)}e^{-t\mu(y)}\big(e^{vt\nu(x,y)}-1\big)%
    -e^{-t\mu(x)}\big(e^{-vt\mu(y)}-e^{-t\mu(y)}\big)\big(e^{vt\nu(x,y)}-1\big)\\
  &\quad%
    +\big(e^{-u t\mu(x)}-e^{-t\mu(x)}\big)e^{-t\mu(y)}\big(e^{uvt\nu(x,y)}-1\big)%
    + e^{-t\mu(x)}e^{-t\mu(y)}\big(e^{t\nu(x,y)} - 1\big).
\end{align*}
Reorganizing the previous, we find that,
\begin{align*}
  \tilde{F}(u,v;x,y)%
  &= e^{-t\mu(x)}e^{-t\mu(y)}\big(e^{uvt\nu(x,y)}  + e^{t\nu(x,y)} - e^{ut\nu(x,y)} -
    e^{-vt\nu(x,y)} \big)\\
  &\quad%
    + e^{-t\mu(x)}\big(e^{-vt\mu(y)} - e^{-t\mu(y)} \big)\big(
    e^{uvt\nu(x,y)}-e^{vt\nu(x,y)}\big)\\
  &\quad%
    + e^{-t\mu(y)}\big(e^{-ut\mu(x)} - e^{-t\mu(x)} \big)\big(e^{uvt\nu(x,y)} -
    e^{ut\nu(x,y)}\big)\\
  &\quad%
  +\big(e^{-ut\mu(x)}-e^{-t\mu(x)}\big)\big(e^{-vt\mu(y)}-e^{-t\mu(y)}\big)\big(e^{uvt\nu(x,y)}-1\big).
\end{align*}
As we are only interested in an upper bound, it is enough to keep only the
non-negative terms. That is,
\begin{align}
  \tilde{F}(u,v;x,y)%
  &\leq%
    \label{eq:78a}
    e^{-t\mu(x)}e^{-t\mu(y)}\big(e^{uvt\nu(x,y)}  + e^{t\nu(x,y)} - e^{ut\nu(x,y)} -
    e^{-vt\nu(x,y)} \big)\\
  \label{eq:78b}
  &\quad%
    +\big(e^{-ut\mu(x)}-e^{-t\mu(x)}\big)\big(e^{-vt\mu(y)}-e^{-t\mu(y)}\big)\big(e^{uvt\nu(x,y)}-1\big).
\end{align}
We now bound each of the terms in the last display. For fixed $(x,y)$, let
define $\phi(u,v) \coloneqq e^{-t(1-uv)\nu(x,y)}$, so that the term
\cref{eq:78a} can be rewritten as
$e^{-t\mu(x)}e^{-t\mu(y)}e^{t\nu(x,y)}(\phi(1,1) + \phi(u,v) - \phi(u,1) -
\phi(1,v))$. By a Taylor expansion of $\phi$, for any $(u,v) \in D_3$,
\begin{equation*}
  \phi(1,1)  - \phi(u,1) + \phi(u,v) - \phi(1,v)%
  \leq (1 - u)(1-v)\sup_{(\bar{u},\bar{v}) \in D_3}
  \partial_u\partial_v\phi(\bar{u},\bar{v}).
\end{equation*}
It is clear that,
\begin{equation*}
  \partial_u\partial_v \phi(u,v)%
  = t\nu(x,y) \phi(u,v) + t^2 uv \nu(x,y)^2 \phi(u,v)
\end{equation*}
Therefore, $\phi(1,1) - \phi(u,1) + \phi(u,v) - \phi(1,v)\leq
(1-u)(1-v)(t\nu(x,y) + t^2\nu(x,y)^2)$, at least when $(u,v) \in D_3$. By the
usual arguments, it is rapidly seen that the term in \cref{eq:78b} is bounded by
$t^3uv(1-u)(1-v)\mu(x)\mu(y)\nu(x,y)e^{-ut\mu(x)}e^{-vt\mu(y)}e^{uvt\nu(x,y)}$,
and then when $(u,v)\in D_3$,
\begin{align*}
  \tilde{F}(u,v;x,y)%
  &\leq%
    t(1-u)(1-v)\ell_3(x)\ell_3(y)\mu(x)^a\mu(y)^ae^{-\frac{1}{2}t\mu(x)}e^{-\frac{1}{2} t\mu(y)}\\%
  &\quad%
    +t^2(1-u)(1-v)\ell_3(x)^2\ell_3(y)^2\mu(x)^{2a}\mu(y)^{2a}e^{-\frac{1}{2}t\mu(x)}e^{-\frac{1}{2}
    t\mu(y)}\\%
  &\quad%
    +
    t^3uv(1-u)(1-v)\ell_3(x)\ell_3(y)\mu(x)^{1+a}\mu(y)^{1+a}e^{-\frac{u}{2}t\mu(x)}e^{-\frac{v}{2}t\mu(y)}.
\end{align*}
Therefore,
\begin{align*}
  A_{2,3}'%
  &\leq%
    t^3 \Big\{ \int_{\NNReals}\int_q^1
    \frac{\ell_3(x)\mu(x)^ae^{-\frac{1}{2}t\mu(x)} \intd u \intd
    x}{(1-u)^{\alpha}}  \Big\}^2\\
  &\quad%
    + t^4 \Big\{ \int_{\NNReals}\int_q^1
    \frac{\ell_3(x)^2\mu(x)^{2a}e^{-\frac{1}{2}t\mu(x)}\intd u \intd
    x}{(1-u)^{\alpha}} \Big\}^2\\
  &\quad%
    + t^5 \Big\{ \int_{\NNReals}\int_q^1
    \frac{\ell_3(x)\mu(x)^{1+a}e^{-\frac{u}{2}t\mu(x)}\intd u \intd x}{(1-u)^{\alpha}} \Big\}^2.
\end{align*}
Thus, by \cite[Lemma~S3.4 and Lemma~S3.5]{caron:rousseau:18} for any $\delta > 0$,
\begin{equation}
  \label{eq:229}
  A_{2,3}'%
  \leq O\big(t^{3-2a + 2\alpha_0 + \delta}  \big).
\end{equation}

\paragraph*{Bound on $A'_1$}

We first remark that, for any $D_i \geq 1$,
\begin{align*}
  \int_{[0,1]^2}\frac{(1-u)^{D_i}(1-v)^{D_i}}{(1-u)^{1+\alpha}}(1-v)^{1-\alpha} %
  = \Big\{\int_{[0,1]} \frac{(1-u)^{D_i}}{(1-u)^{1+\alpha}} \,\intd u\Big\}^2
  = \frac{1}{(D_i - \alpha)^2}%
  \leq \frac{1}{(1 - \alpha)^2}.
\end{align*}
Therefore,
\begin{align}
  \label{eq:147}
  A'_1%
  &\leq \frac{1}{(1-\alpha)^2}\EE\Big[\sum_{i\in \Nats}\Ind_{\PPlab_i \leq t}\Ind_{D_i\geq 1}\Big]
  = \frac{\EE[\nver]}{(1-\alpha)^2}%
  = O\big(t^{1+\alpha_0}\ell_1(t) \big),
\end{align}
where the last estimate follows by \cite[\cref{main-th:asympmultigraphex}]{caron:naulet:rousseau:main}.

\paragraph*{Conclusion}

Gathering the \cref{eq:56,eq:226,eq:228,eq:229,eq:147}, we have as $t
\to \infty$, for any $\delta > 0$
\begin{equation*}
  \EE[V_t(\alpha)^2]%
  \leq \EE[V_t(\alpha)]^2 + O\Big(t^2 \bigvee t^{3-2a+2\alpha_0 + \delta} \Big).
\end{equation*}

\section{Proofs of the examples}
\label{sec:proofs-examples}

\subsection{Proof of \texorpdfstring{\cite[\cref{main-lem:configuration-models}]{caron:naulet:rousseau:main}}{Lemma \ref{lem:z9f-xsj8-bpx}}}

Let define on $(0,1)$ the function
$F^{*}(\alpha) \coloneqq \sum_{j\geq 2}f_j \sum_{k=1}^{j-1}\frac{\alpha}{k-\alpha} - 1$;
so that $\alpha_0$ is a solution of $F^{*}(\alpha)= 0$. In the conditions of the
lemma, it is the case that $f_1 < 1$. Then, the function $\ell$ is monotonic
with $\lim_{\alpha \to 0}F^{*}(\alpha) = -1$ and
$\lim_{\alpha\to 1}F^{*}(\alpha) = \infty$. This establishes existence and
uniqueness of $\alpha_0$.

We now prove that the second part of condition~\eqref{main-eq:assumpt1_cond1} of
Assumption~\ref{main-ass:concent:degdist} is satisfied (in probability). We define for simplicity
$F_t(\alpha) = \frac{-\alpha \mathcal{C}_t'(\alpha)}{t} - 1$. Then,
\begin{align*}
  F_t(\hat{\alpha}_t) - F_t(\alpha_0)%
  &= \sum_{j\geq 2} \frac{N_{t,j}}{t} \sum_{k=1}^{j-1}\Bigg( \frac{\hat{\alpha}_t}{k - \hat{\alpha}_t} - \frac{\alpha_0}{k-\alpha_0} \Bigg)\\
  &= (\hat{\alpha}_t - \alpha_0)\sum_{j\geq 2}\frac{N_{t,j}}{t}\sum_{k=1}^{j-1} \frac{k}{(k-\alpha_0)(k - \hat{\alpha}_t)}\\
  &\geq (\hat{\alpha}_t - \alpha_0)\sum_{j\geq 2}\frac{N_{t,j}}{t}\sum_{k=1}^{j-1} \frac{k}{(k-\alpha_0)^2}.
\end{align*}
Similarly,
\begin{align*}
  F_t(\hat{\alpha}_t) - F_t(\alpha_0)%
  &= (\hat{\alpha}_t - \alpha_0) \sum_{j\geq 2} \frac{N_{t,j}}{t} \sum_{k=1}^{j-1} \frac{k}{(k-\alpha_0)^2} \frac{1}{1 - \frac{\hat{\alpha}_t - \alpha_0}{k-\alpha_0}}\\
  &\leq \frac{\hat{\alpha}_t - \alpha_0}{1 - \frac{\hat{\alpha}_t - \alpha_0}{1-\alpha_0}}
    \sum_{j\geq 2}\frac{N_{t,j}}{t} \sum_{k=1}^{j-1} \frac{k}{(k-\alpha_0)^2},
\end{align*}
Since $F_t(\hat{\alpha}_t) = 0$ by definition of $\hat{\alpha}_t$, it follows
\begin{align*}
  \hat{\alpha}_t - \alpha_0%
  \leq%
  \underbrace{\frac{- F_t(\alpha_0)}{\sum_{j\geq 2}\frac{N_{t,j}}{t}\sum_{k=1}^{j-1} \frac{k}{(k-\alpha_0)^2}}}_{\eqqcolon Z_t}%
  \leq \frac{\hat{\alpha}_t - \alpha_0}{1 - \frac{\hat{\alpha}_t - \alpha_0}{1-\alpha_0}}
\end{align*}
or in other words,
\begin{align}
  \label{eq:cf7}
  \frac{Z_t}{1 + \frac{Z_t}{1-\alpha_0}}%
  \leq \hat{\alpha}_t - \alpha_0 \leq Z_t.
\end{align}
Hence to prove that the second part of condition~\eqref{main-eq:assumpt1_cond1}
of Assumption~\ref{main-ass:concent:degdist} is satisfied (in probability) it is
enough to show that $\log(t)|Z_t| \to 0$ (in probability). First,
\begin{equation}
  \label{eq:cf39}
  |Z_t|%
  \leq%
  \frac{(1-\alpha_0)^2|F_t(\alpha_0)|}{\sum_{j\geq 2} \frac{N_{t,j}}{t}}%
  = \frac{(1-\alpha_0)^2|F_t(\alpha_0)|}{1 - N_{t,1}/t}.
\end{equation}
Second, Recall that
$N_{t,j} = \sum_{i=1}^t\Ind_{\tilde{D}_{t,i}=j}$. Hence,
\begin{align*}
  |F_t(\alpha_0)|%
  &= \Bigg| \frac{1}{t}\sum_{i=1}^{t} \sum_{j\geq 2} \Ind_{\tilde{D}_{t,i} = j}\sum_{k=1}^{j-1} \frac{\alpha_0}{k-\alpha_0} - 1 \Bigg|\\
  &\leq \Bigg| \frac{1}{t}\sum_{i=1}^{t} \sum_{j\geq 2} \Ind_{D_{t,i} = j}\sum_{k=1}^{j-1} \frac{\alpha_0}{k-\alpha_0} - 1 \Bigg|%
    + \frac{1}{t}\sum_{j\geq 2}|\Ind_{\tilde{D}_{t,t} = j} - \Ind_{D_{t,t} = j}|\sum_{k=1}^{j-1}\frac{\alpha_0}{k-\alpha_0}\\
  &\leq \Bigg| \frac{1}{t}\sum_{i=1}^{t} \sum_{j\geq 2} \Ind_{D_{t,i} = j}\sum_{k=1}^{j-1} \frac{\alpha_0}{k-\alpha_0} - 1 \Bigg|%
    + \frac{1}{t}\sum_{k= 1}^{D_{\max,t}-1}\frac{\alpha_0}{k-\alpha_0}.
\end{align*}
[Note that the previous estimate will cost a $\log(D_{\max,t})/t$ term that
can be improved to $1/t$ at the price of longer computations, which is not worth
for our purpose]. But, by the fact that $F^{*}(\alpha_0) = 0$,
\begin{align*}
  \EE\Bigg[\frac{1}{t}\sum_{i=1}^{t} \sum_{j\geq 2} \Ind_{D_{t,i} = j}\sum_{k=1}^{j-1} \frac{\alpha_0}{k-\alpha_0} \Bigg]%
  &= \frac{1}{\sum_{k=1}^{D_{\max,t}}f_k} \sum_{j\geq 2} f_j \sum_{k=1}^{j-1} \frac{\alpha_0}{k - \alpha_0}\\
  &= \frac{1}{1 - \sum_{k>D_{\max,t}}f_k}\\
  &= 1 + \frac{\sum_{k>D_{\max,t}}f_k}{1 - \sum_{k>D_{\max,t}}f_k}
\end{align*}
Also,
\begin{align*}
  \var\Bigg(\frac{1}{t}\sum_{i=1}^{t} \sum_{j\geq 2} \Ind_{D_{t,i} = j}\sum_{k=1}^{j-1} \frac{\alpha_0}{k-\alpha_0} \Bigg)%
  &\leq \frac{1}{t}\sum_{j\geq 2} \frac{f_j}{1 - \sum_{k>D_{\max,t}}f_k} \Bigg(\sum_{k=1}^{j-1} \frac{\alpha_0}{k-\alpha_0} \Bigg)^2.
\end{align*}
It follows from these estimates that and the fact that $D_{\max,t}$ cannot
exceed some power of $t$
\begin{align}
  \label{eq:cf38}
  |F_t(\alpha_0)|%
  = O_p\Big( D_{\max,t}^{-\alpha_1} \bigvee \frac{\log(t)^2}{t} \Big)%
  = o\Big(\frac{1}{\log(t)} \Big).
\end{align}
With a similar reasoning, it is easily seen that
$1 - \frac{N_{t,1}}{t} = 1 - f_1 + o_p(1)$, so that by combining the
equations~\eqref{eq:cf7},~\eqref{eq:cf39}, and~\eqref{eq:cf38} we obtain that
$\log(t)|\hat{\alpha}_t - \alpha_0| = o_p(1)$.

We now prove that the first part of condition~\eqref{main-eq:assumpt1_cond1} of
Assumption~\ref{main-ass:concent:degdist} is satisfied (in probability). We see that
$D_t^{\star} = \sum_{i=1}^t\tilde{D}_{t,i} = \sum_{i=1}^tD_{t,i} + O(1)$
almost-surely, and
\begin{equation*}
  \EE\Big[\sum_{i=1}^tD_{t,i}\Big]%
  = t \frac{\sum_{j\geq 1}j  p_j\Ind_{j\leq D_{\max,t}}}{\sum_{j\geq 1}p_j\Ind_{j\leq D_{\max,t}}}%
  \sim \frac{\alpha_1L}{1-\alpha_1} \cdot t D_{\max,t}^{1-\alpha_1}%
  \eqqcolon \bar{D}_t^{\star}
\end{equation*}
as $t \to \infty$ by Lemma~\ref{lem:cf2}. Also, by the same Lemma, as $t \to \infty$
\begin{align*}
  \var\Big(\sum_{i=1}^tD_{t,i} \Big)%
  &\leq t \frac{\sum_{j\geq 1}j^2 p_j\Ind_{j\leq D_{\max,t}}}{\sum_{j\geq 1}p_j\Ind_{j\leq D_{\max,t}}}%
    \sim \frac{\alpha_1 L}{2-\alpha_1} \cdot t D_{\max,t}^{2-\alpha_1}.
\end{align*}
Therefore,
\begin{equation*}
  \frac{D_t^{\star} - \bar{D}_t^{\star}}{\bar{D}_t^{\star}}%
  = O_p\Bigg( \sqrt{\frac{D_{\max,t}^{\alpha_1}}{t}} \Bigg).
\end{equation*}
It follows that if
$D_{\max,t} \sim A\cdot t^{\frac{1-\alpha_0}{(1+\alpha_0)(1-\alpha_1)}}$ for
some constant $A > 0$, we have
\begin{align*}
  D_t^{\star} = \frac{\alpha_1 L A^{1-\alpha_1}}{1-\alpha_1} t^{\frac{2}{1+\alpha_0}}\Big(1 + o_p(1)\Big),
\end{align*}
and,
\begin{align*}
  \sqrt{2 D_t^{\star}}\Big(\frac{\alpha_0 N_t}{D_t^{\star}}\Big)^{1-\alpha_0}%
  &= \sqrt{2}\alpha_0^{1-\alpha_0} \Big( \frac{\alpha_1 L A^{1-\alpha_1}}{1-\alpha_1} \Big)^{\frac{1}{2}-(1-\alpha_0)}\Big(1 + o_p(1)\Big).
\end{align*}
This concludes the proof.

\begin{lemma}
  \label{lem:cf2}
  Let $(f_j)_{j\geq 1}$ be a a probability mass function on $\{1,2,,\ldots\}$
  such that
$1 - \sum_{k=1}^jf_k \sim Lj^{-\alpha_1}$ as $j \to \infty$, for some
$\alpha_1 \in (0,1)$ and some $L > 0$. Then, as $D \to \infty$
\begin{equation*}
  \sum_{j=1}^D j f_j \sim \frac{\alpha_1 L D^{1-\alpha_1}}{1-\alpha_1},\qquad%
  \sum_{j=1}^D j^2f_j \sim \frac{\alpha_1 L D^{2-\alpha_1}}{2-\alpha_1}.
\end{equation*}
\end{lemma}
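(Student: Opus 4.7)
The plan is to reduce both sums to sums of the tail $\bar{F}(j) \coloneqq 1 - \sum_{k=1}^j f_k$ via Abel summation, and then apply a Karamata-type estimate using the assumed regular variation $\bar{F}(j) \sim L j^{-\alpha_1}$.

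First I would swap the order of summation. Writing $j = \sum_{k=1}^j 1$, Fubini gives
\begin{equation*}
\sum_{j=1}^D j f_j = \sum_{k=1}^D \sum_{j=k}^D f_j = \sum_{k=1}^D \bigl[\bar{F}(k-1) - \bar{F}(D)\bigr] = \sum_{k=0}^{D-1}\bar{F}(k) - D\bar{F}(D).
\end{equation*}
Similarly, writing $j^2 = \sum_{k=1}^j (2k-1)$ and exchanging the order of summation,
\begin{equation*}
\sum_{j=1}^D j^2 f_j = \sum_{k=1}^D (2k-1)\bar{F}(k-1) - D^2\bar{F}(D).
\end{equation*}
These identities are the only algebraic step; the rest is purely asymptotic analysis of partial sums of the regularly varying sequence $\bar{F}$.

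Next I would handle the tail term and the partial sums separately. Since $\bar{F}(D) \sim L D^{-\alpha_1}$, one has $D\bar{F}(D) \sim L D^{1-\alpha_1}$ and $D^2\bar{F}(D) \sim L D^{2-\alpha_1}$. For the partial sums, a standard Karamata/Cesàro-type argument for regularly varying sequences (split into a bounded head $k \leq K$ whose contribution is $O(1)$ and $O(K)$ respectively, and a tail $K < k \leq D$ where $\bar{F}(k) = L k^{-\alpha_1}(1+o(1))$ uniformly, then compare to the Riemann sum $\int_K^D x^{-\alpha_1}\,dx$) yields
\begin{equation*}
\sum_{k=0}^{D-1}\bar{F}(k) \sim \frac{L}{1-\alpha_1}D^{1-\alpha_1},\qquad \sum_{k=1}^D (2k-1)\bar{F}(k-1) \sim \frac{2L}{2-\alpha_1}D^{2-\alpha_1},
\end{equation*}
the second estimate using $(2k-1)\bar{F}(k-1) \sim 2Lk^{1-\alpha_1}$.

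Combining these gives, for the first sum,
\begin{equation*}
\sum_{j=1}^D j f_j \sim L D^{1-\alpha_1}\Big(\frac{1}{1-\alpha_1} - 1\Big) = \frac{\alpha_1 L}{1-\alpha_1} D^{1-\alpha_1},
\end{equation*}
and for the second,
\begin{equation*}
\sum_{j=1}^D j^2 f_j \sim L D^{2-\alpha_1}\Big(\frac{2}{2-\alpha_1} - 1\Big) = \frac{\alpha_1 L}{2-\alpha_1} D^{2-\alpha_1},
\end{equation*}
which are the claimed equivalents. There is no real obstacle here; the only point requiring minor care is making the $o(1)$ in $\bar{F}(k) = Lk^{-\alpha_1}(1+o(1))$ uniform on the tail before comparing with the integral, which is standard for partial sums of a sequence whose tail is a regularly varying function at infinity with negative index $>-1$.
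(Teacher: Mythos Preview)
Your proof is correct and follows essentially the same route as the paper: rewrite the partial moment sums in terms of the tail $\bar F(j)=1-F_j$ via Abel summation, then evaluate the resulting partial sums of $\bar F$ by splitting into a bounded head and a tail where $\bar F(k)=Lk^{-\alpha_1}(1+o(1))$, and compare with the corresponding integral. The paper's sketch is the same argument, stated a bit more loosely.
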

\begin{proof}
  These are famous results about regular variations \cite{Bingham1987}, we
  briefly sketch a proof for completeness. Let $F_j \coloneqq \sum_{k=1}^jf_k$. By standard
  computations it is seen that $\sum_{j=1}^D(1 - F_j) = \sum_{j=1}^D j f_j + D(1- F_D)$.
  By assumption, for all $\varepsilon > 0$ we can find $K > 0$ such that
  $1 - \varepsilon \leq \frac{1- F_j}{Lj^{-\alpha_1}} \leq 1 + \varepsilon$ for all
  $j \geq K$. It follows, since $0 \leq 1 -F_j \leq 1$ for all $j\geq 1$, that when
  $D \gg K$,
  \begin{align*}
    \sum_{j=1}^D(1 - F_j)%
    \leq \sum_{j=1}^{K-1}(1 - F_j) + \sum_{k=K}^D \frac{1 - F_j}{Lj^{-\alpha_1}}\cdot Lj^{-\alpha_1}%
    \leq K + (1+\varepsilon) \sum_{k=K}^D Lj^{-\alpha_1}
  \end{align*}
  and,
  \begin{align*}
    \label{eq:36}
    \sum_{j=1}^D(1 - F_j)%
    \geq \sum_{k=K}^D \frac{1 - F_j}{Lj^{-\alpha_1}}\cdot Lj^{-\alpha_1}%
    \geq (1 - \varepsilon) \sum_{k=K}^D Lj^{-\alpha_1}.
  \end{align*}
  But for any $K > 0$, it can be shown that
  $\sum_{k=K}^Dj^{-\alpha_1} \sim \frac{D^{1-\alpha_1}}{1-\alpha_1}$, and since
  the last two estimates are true for any $\varepsilon > 0$, we deduce the first
  result. The other result is proved similarly.
\end{proof}

\subsection{Proof of \texorpdfstring{\cite[\cref{main-lem:z9f-xsj8-bpx}]{caron:naulet:rousseau:main}}{Lemma \ref{lem:z9f-xsj8-bpx}}}
\label{sec:yyv-dh1c-y6a}

We have $\EE[\nedg] = N^2(\bar \theta)^2p_N$, so that
\begin{equation*}
  \EE[\nedg]^{-(1+\alpha_0)/(2 - 2\alpha_0)}N^{1/(1-\alpha_0)} = c_0^{-(1+\alpha_0)/(2 - 2\alpha_0)} \eqqcolon \tau_*.
\end{equation*}
Furthermore
$$ \sum_{j=2}^N \frac{ N_{t,j} }{N}\sum_{k=1}^{j-1} \frac{ \alpha_0 }{ k - \alpha_0} =  1+\left( \sum_{j=2}^N \left[ \frac{ N_{t,j} }{ N} -p(j)\right]  \sum_{k=1}^{j-1} \frac{ \alpha_0 }{ k - \alpha_0}\right).$$
Writing $$ \text{sgn}(D_i - \mu_i)= \left\{ \begin{array}{cc}
 1 & \text{ if  } \, D_i > \mu_i \\
  0 & \text{ if  } \, D_i = \mu_i \\
   -1 & \text{ if  } \, D_i < \mu_i \end{array}
  \right. $$
we have
\begin{align*}
 \left| \sum_{j=2}^N \left[ \frac{ N_{t,j} }{ N} -p(j)\right]  \sum_{k=1}^{j-1} \frac{ \alpha_0 }{ k - \alpha_0} \right|  &=\left| \frac{ 1  }{ N}\sum_i \text{sgn}(D_i - \mu_i) \sum_{l=\mu_i\wedge D_i}^{\mu_i \vee D_i-1} \frac{ \alpha_0 }{ l - \alpha_0 }\right| , \quad \mu_i = \lceil\theta_i Np_N \bar \theta \rceil  \\
 & \leq \left| \frac{ 1  }{ N}\sum_i (D_i - \mu_i)   \frac{ \alpha_0 }{\mu_i - \alpha_0 }\right| + \left| \frac{ 1  }{ N}\sum_i (D_i - \mu_i)^2   \frac{ \alpha_0 }{(\mu_i\wedge D_i - \alpha_0)^2 }\right| .
 \end{align*}
Note that
\begin{align*}
\frac{ 1  }{ N}\sum_i (D_i - \mu_i)^2   \frac{ \alpha_0 }{(\mu_i\wedge D_i - \alpha_0)^2 } &\leq  \frac{ \alpha_0  }{ N}\sum_i   \frac{  (D_i - \mu_i)^2 }{(\mu_i - \alpha_0)^2 } +
\frac{ \alpha_0  }{ N}\sum_i   \frac{  (D_i - \mu_i)^4+| D_i - \mu_i|^3}{(\mu_i - \alpha_0)^2 }
\end{align*}
and Using Theorem 2 in \cite{ouadah20},
$$ \sum_i E  \frac{  (D_i - \mu_i)^2 }{(\mu_i - \alpha_0)^2 } \lesssim  p_N \sum_i \frac{ \theta_i N\bar \theta p_N }{ \lceil \theta_i N\bar \theta p_N \rceil } \lesssim Np_N$$
and similar computations lead to
$$ \sum_i E  \frac{  (D_i - \mu_i)^4 }{(\mu_i - \alpha_0)^2 } \lesssim Np_N$$
so that
\begin{align*}
\frac{ 1  }{ N}\sum_i (D_i - \mu_i)^2   \frac{ \alpha_0 }{(\mu_i\wedge D_i - \alpha_0)^2 } =O_p( p_N),\ \mathrm{and},\ %
\frac{ 1  }{ N}\sum_i (D_i - \mu_i)   \frac{ \alpha_0 }{ \mu_i - \alpha_0 } = O_p(\sqrt{p_N}  )
\end{align*}
which in turns implies that
$$ |\hat \alpha_t - \alpha_0 | = O_p(\sqrt{p_N})$$
and \cite[\cref{main-ass:concent:degdist}]{caron:naulet:rousseau:main} is satisfied.

  \putbib%
  \stopcontents[supp]%
\end{bibunit}


\end{document}